\newcommand{\N}{\mathbb{N}}
\newcommand{\R}{\mathbb{R}}
\newcommand{\C}{\mathbb{C}}
\newcommand{\dx}{{\rm d}x }
\newcommand{\dt}{{\rm d}t }
\newcommand{\dxi}{{\rm d}\xi }
\newcommand{\dl}{{\rm d}\lambda }
\newcommand{\dgamma}{{\rm d}\gamma}
\newtheorem{theorem}{Theorem}[section]
\newtheorem{proposition}[theorem]{Proposition}
\newtheorem{lemma}[theorem]{Lemma}
\newtheorem{corollary}[theorem]{Corollary}
\theoremstyle{definition}
\theoremstyle{remark}
\newtheorem{remark}[theorem]{Remark}
\numberwithin{equation}{section}
\begin{document}
\title[Weighted inductive limits of spaces of ultradifferentiable functions]{On weighted inductive limits of spaces of ultradifferentiable functions and their duals
}
\author[A. Debrouwere]{Andreas Debrouwere}
\address{Department of Mathematics, Ghent University, Krijgslaan 281, 9000 Gent, Belgium}
\email{Andreas.Debrouwere@UGent.be}
\thanks{A. Debrouwere gratefully acknowledges support by Ghent University, through a BOF Ph.D.-grant.}

\author[J. Vindas]{Jasson Vindas}
\thanks{The work of J. Vindas was supported by Ghent University, through the BOF-grants 01N01014 and 01J04017.}
\address{Department of Mathematics, Ghent University, Krijgslaan 281, 9000 Gent, Belgium}
\email{jasson.vindas@UGent.be}

\subjclass[2010]{46A13, 46E10, 46F05, 46F10.}
\keywords{Gelfand-Shilov spaces; Completeness of inductive limits; Short-time Fourier transform; Convolution; Ultrabornological (PLS)-spaces}
\begin{abstract}
In the first part of this paper we discuss the completeness of two general classes of weighted inductive limits of  spaces of ultradifferentiable functions. In the second part we study their duals and characterize these spaces in terms of the growth of convolution averages of their elements. This characterization gives a canonical way to define a locally convex topology on these spaces and we give necessary and sufficient conditions for them to be ultrabornological. In particular, our results apply to spaces of convolutors for Gelfand-Shilov spaces. 
\end{abstract}
\maketitle

\section{Introduction}
The determination of topological properties of $(LF)$-spaces of functions is an important problem in functional analysis that usually demands a delicate treatment. In the case of weighted inductive limits of spaces of (vector-valued) continuous and holomorphic functions the subject has a long tradition that goes back to the work of Bierstedt, Meise, and Summers \cite{B-M,B-M-S}. These kinds of spaces naturally arise in numerous fields of analysis like linear partial differential equations, Fourier analysis, or analytic representations of (ultra)distribution spaces. 
Inspired by this line of research, we introduce and study in the first part of this paper two general classes of weighted inductive limits of spaces of \emph{ultradifferentiable} functions. These spaces can be viewed as natural counterparts of the space $\mathcal{O}_C(\R^d)$ of ``very slowly increasing smooth functions'' in the ultradifferentiable setting; one type corresponding to the Beurling case and the other one to the Roumieu case. 
 
The second part of the article is devoted to studying the topological duals of these spaces. Our first goal is to characterize these duals in terms of the growth of convolution averages of their elements, thereby generalizing various classical results of Schwartz \cite{Schwartz} from distributions to ultradistributions; see \cite{D-P-P-V,P-P-V} for earlier work in this direction. Schwartz (and the authors of \cite{D-P-P-V,P-P-V}) use the  parametrix method while we develop here a completely different approach based on descriptions of these ultradistribution spaces in terms of the \emph{short-time Fourier transform (STFT)}. This approach allows us to work under much milder assumptions than those needed when working with the parametrix method. In this respect, we mention the interesting paper \cite{B-O} by Bargetz and Ortner in which the mapping properties of the STFT on $\mathcal{O}'_C(\R^d)$ are established by using Schwartz' theory of vector-valued distributions. Our next aim is to study the topological properties of the duals under consideration and characterize when they are ultrabornological. This  can be regarded as the ultradistributional analogue of the last part of Grothendieck's doctoral thesis \cite{Grothendieck}. Our method however entirely differs from the one employed by Grothendieck; again, our arguments exploit the STFT.

Our results apply to the important case of spaces of convolutors for Gelfand-Shilov spaces. Such convolutor spaces have already been considered in \cite{D-P-Ve, D-P-P-V}; we shall improve various of the results shown there. For instance, we also treat here the quasianalytic case, and, moreover, Corollary \ref{topology-OCD} and Corollary \ref{topology-OCD-Roumieu} essentially solve the question posed after \cite[Thm.\ 3.3]{D-P-P-V}.  It is worth mentioning that convolutor spaces appear naturally in the study of abstract partial differential and convolution equations, see e.g.\ \cite{Chazarain, C-Z,Kostic, S-Z}.

The plan of the article is as follows. In the preliminary Section \ref{sect-prel} we recall some notions from the theory of $(LF)$-spaces that will be frequently used throughout the first part of the paper. We also discuss there the mapping properties of the STFT on Gelfand-Shilov spaces and their duals. In Section \ref{sect-reg-ultra}  we introduce two new types of weighted inductive limits of ultradifferentiable functions and characterize when these spaces are complete in terms of the defining family of weight functions. Our arguments make use of a result of Albanese \cite{Albanese} on the completeness of weighted inductive limits of spaces of Fr\'echet-valued continuous functions (which will be discussed in Subsection \ref{sect-reg-cont}). Section \ref{sect-duals} deals with the duals of our weighted inductive limits of spaces of ultradifferentiable functions.  As mentioned before, a key to our arguments is to employ the STFT. Subsection \ref{Char STFT dual inductive} provides a detailed study of the mapping properties of the STFT on our spaces. We establish the sought convolution average characterizations in Subsection \ref{conv average subsection}. The characterization in terms of convolution averages suggests a natural way to define a locally convex topology on these duals and, based upon the results from Section \ref{sect-reg-ultra} and the mapping properties of the STFT, we give in Subsection \ref{subsection topological properties} necessary and sufficient conditions for these spaces to be ultrabornological.

\section{preliminaries}\label{sect-prel}
In the first part of this preliminary section we recall several regularity conditions for $(LF)$-spaces and state how they are related to each other; see \cite{Wengenroth-96} and \cite[Chap.\ 6]{Wengenroth} for more detailed accounts on the subject. Secondly, we discuss a result of Albanese \cite{Albanese} concerning the completeness of weighted inductive limits of spaces of Fr\'echet-valued continuous functions. This result will play a key role in Section \ref{sect-reg-ultra}. Next, we collect some facts about the Gelfand-Shilov spaces $\mathcal{S}^{(M_p)}_{(A_p)}(\R^d)$ and $\mathcal{S}^{\{M_p\}}_{\{A_p\}}(\R^d)$  and their duals. We also discuss the mapping properties of the short-time Fourier transform on these spaces, they will be employed in Section \ref{sect-duals}. 

\subsection{Regularity conditions for $(LF)$-spaces}\label{sect-reg-cond}  
A Hausdorff l.c.s. (locally convex space) $E$ is called an $(LF)$-space if there is a sequence $(E_n)_{n \in \N}$ of Fr\'echet spaces with $E_n \subset E_{n + 1}$ and continuous inclusion mappings such that $E = \bigcup_{n \in \N} E_n$ and the topology of $E$ coincides with the finest locally convex topology for which all inclusion mappings $E_n \rightarrow E$ are continuous. We call  $(E_n)_{n}$ a defining inductive spectrum for $E$ and write $E = \varinjlim E_n$. We emphasize that, for us, $(LF)$-spaces are Hausdorff by definition. If the sequence $(E_n)_{n}$ consists of $(FS)$-spaces, $E$ is called an $(LFS)$-space. Similarly,  $E$ is said to be an $(LB)$-space if  the sequence $(E_n)_{n}$ consists of Banach spaces.

An $(LF)$-space $E=\varinjlim E_n$ is said to be \emph{regular} if every bounded set $B$ in $E$ is contained and bounded in $E_n$ for some $n \in \N$. By Grothendieck's factorization theorem (see e.g.\ \cite[p.\ 225 (4)]{Kothe}), every quasi-complete $(LF)$-space is regular.  We now discuss several related concepts. An $(LF)$-space $E = \varinjlim E_n$  is said to satisfy  condition $(wQ)$ if for every $n \in \N$ there are a neighborhood $U$ of $0$ in $E_n$ and $m > n$ such that for every $k > m$ and every neighborhood $W$ of $0$ in $E_m$ there are neighborhood $V$ of $0$ in $E_k$ and $C > 0$ with $V \cap U \subseteq CW$.  If $ (\| \, \cdot \,  \|_{n,N})_{N \in \N}$ is a fundamental sequence of seminorms for $E_n$, then $E$ satisfies $(wQ)$ if and only if
\begin{gather*}
\forall n \in \N \, \exists m > n \, \exists N \in \N \, \forall k > m \, \forall M \in \N \, \exists K \in \N \, \exists C > 0\, \forall e \in E_n: \\
\|e\|_{m,M} \leq C(\|e\|_{n,N} + \|e\|_{k,K}).
\end{gather*}
Clearly, every $(LB)$-space satisfies $(wQ)$. Moreover, every regular $(LF)$-space satisfies $(wQ)$ 
\cite[Thm.\ 4.7]{Vogt-92}.  Next, we introduce two strong regularity conditions. An $(LF)$-space $E$ is said to be \emph{boundedly retractive} if for every bounded set $B$ in $E$ there is $n \in \N$ such that $B$ is contained in $E_n$ and $E$ and $E_n$ induce the same topology on $B$, while
$E$ is said to be \emph{sequentially retractive} if for every null sequence in $E$ there is $n \in \N$ such that the sequence is contained and converges to zero in $E_n$. Finally, $E$ is said to be \emph{boundedly stable} if for every $n \in \N$ and every bounded set $B$ in $E_n$ there is $m \geq n$ such that for every $k \geq m$ the spaces $E_m$ and $E_k$ induce the same topology on $B$.  

Notice that, in view of Grothendieck's factorization theorem, these conditions do not depend on the defining inductive spectrum of $E$. This justifies calling an $(LF)$-space boundedly retractive, etc., if one (and thus all) of its defining inductive spectra has this property. These concepts are related to each other in the following way:
\begin{theorem}[{\cite[Thm.\ 6.4 and Cor.\ 6.5]{Wengenroth}}] \label{reg-cond} Let $E$ be an $(LF)$-space. The following statements are equivalent:
\begin{itemize}
\item[$(i)$] $E$ is boundedly retractive.
\item[$(ii)$] $E$ is sequentially retractive.
\item[$(iii)$] $E$ is boundedly stable and satisfies $(wQ)$.
\end{itemize}
In such a case, $E$ is complete.
\end{theorem}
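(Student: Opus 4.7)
The plan is to establish the cyclic implications $(i)\Rightarrow(ii)\Rightarrow(iii)\Rightarrow(i)$ and then to deduce completeness from $(i)$. The step $(i)\Rightarrow(ii)$ is straightforward: if $(x_k)_k$ is a null sequence in $E$, then $B=\{x_k:k\in\N\}\cup\{0\}$ is bounded in $E$, so by bounded retractivity it is contained in some $E_n$ and $E$ induces on $B$ the topology of $E_n$; in particular $x_k\to 0$ in $E_n$.

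For $(ii)\Rightarrow(iii)$ I would treat both assertions by contradiction. If $(wQ)$ fails, one obtains an index $n_0$ together with, for each candidate $m>n_0$ and each seminorm $\|\cdot\|_{n_0,N}$, some $k>m$, some $M$, and a sequence of elements $e_j\in E_{n_0}$ whose $E_{n_0}$- and $E_{k}$-seminorms can be made arbitrarily small while $\|e_j\|_{m,M}$ stays bounded below. Combining these choices diagonally and rescaling, I would manufacture a null sequence in $E$ that fails to be a null sequence in any defining step, contradicting $(ii)$. Failure of bounded stability is handled in the same spirit: given a bounded $B\subset E_n$ for which no $m$ works, pick for each $m\geq n$ an index $k(m)\geq m$ witnessing different induced topologies, and extract from $\lambda_j B$ (with $\lambda_j\downarrow 0$) a null sequence in $E$ that does not tend to $0$ in any single $E_m$.

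For $(iii)\Rightarrow(i)$, the $(wQ)$ seminorm inequality is the main tool for absorbing a bounded set $B\subset E$ into one step: iterating the estimate one shows that $\sup_{e\in B}\|e\|_{m,M}<\infty$ for every $M$, so that $B$ is contained and bounded in some $E_n$. Bounded stability then provides $m\geq n$ such that $E_m$ and $E_k$ induce the same topology on $B$ for all $k\geq m$; since the topology of $E$ on $B$ is the final topology with respect to the inclusions $E_k\hookrightarrow E$, it agrees with the $E_m$-topology there, which gives $(i)$. Completeness follows at once: a Cauchy net in $E$ is bounded, so by $(i)$ it lies in some $E_n$ with matching induced topology, and since $E_n$ is Fr\'echet the net converges in $E_n$, hence in $E$.

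The most delicate step is expected to be $(ii)\Rightarrow(iii)$: turning the negation of the quantifier chains defining $(wQ)$ and bounded stability into honest null sequences in $E$ witnessing failure of $(ii)$ is the technical heart, requiring a careful diagonal selection of indices, seminorms, elements, and scaling factors. A secondary subtlety lies in $(iii)\Rightarrow(i)$, where one must leverage $(wQ)$ together with bounded stability to control all $E_m$-seminorms on a bounded set simultaneously; the order of quantifiers in $(wQ)$ makes this accounting the main bookkeeping obstacle.
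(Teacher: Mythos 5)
This is a result the paper does not prove at all: it is quoted verbatim from Wengenroth's book (Thm.\ 6.4 and Cor.\ 6.5), and the known proofs go through the homological characterization of acyclic $(LF)$-spaces (Retakh's condition $(M)$, the derived projective limit functor) together with Mujica-type completeness criteria. Your sketch of $(i)\Rightarrow(ii)$ is correct, but the remaining steps contain genuine gaps precisely where the theorem is hard. In $(iii)\Rightarrow(i)$, deriving regularity from $(wQ)$ by ``iterating the estimate'' is circular: the inequality $\|e\|_{m,M}\leq C(\|e\|_{n,N}+\|e\|_{k,K})$ is only asserted for $e\in E_n$, so you cannot apply it to an arbitrary bounded set $B\subset E$ before knowing that $B$ lies in some step---which is exactly what regularity claims; getting from $(wQ)$ plus bounded stability to regularity is the substance of the theorem, not bookkeeping. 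The subsequent assertion that the topology of $E$ on $B$ ``is the final topology with respect to the inclusions $E_k\hookrightarrow E$'' and hence agrees with the common $E_m$-topology is false: a final topology restricted to a subset is not the final topology of the restrictions, and manufacturing a $0$-neighborhood of $E$ whose trace on $B$ is contained in a prescribed $E_m$-neighborhood is the central difficulty, not a formality.

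The step $(ii)\Rightarrow(iii)$ is only gestured at, and the gesture hides the real obstruction: from the negation of $(wQ)$ you obtain elements that are small with respect to \emph{two} seminorms ($\|\cdot\|_{n,N}$ and $\|\cdot\|_{k,K}$) while $\|\cdot\|_{m,M}$ stays large, but a null sequence in $E$ must be small with respect to every $0$-neighborhood $U$ of $E$, i.e.\ with respect to sets whose traces on \emph{all} steps are neighborhoods; nothing in the construction controls these. (The standard route is sequentially retractive $\Rightarrow$ regular $\Rightarrow (wQ)$, the last implication being Vogt's theorem, proved by a bipolar/factorization argument in the dual rather than by exhibiting a bad null sequence.) Finally, the completeness argument rests on ``a Cauchy net in $E$ is bounded'', which is false: unlike Cauchy sequences, Cauchy nets need not have bounded range, and $(LF)$-spaces are not metrizable, so one cannot reduce to sequences; what your argument would give is quasi-completeness, and the passage from there to completeness for these spaces is again a theorem (Mujica, Wengenroth), not a remark. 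Since the paper itself cites this result rather than proving it, the appropriate course is to do the same.
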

In fact, the conditions in Theorem \ref{reg-cond} are also equivalent to the fact that $E$ is acyclic or that $E$ satisfies Retakh's condition $(M)$. 
We refer to \cite{Wengenroth-96} and the discussion above \cite[Thm.\ 6.4, p.\ 112]{Wengenroth} for  the history of this important theorem. 
\subsection{Weighted inductive limits of spaces of Fr\'echet-valued continuous functions}\label{sect-reg-cont}
Let $X$ be a completely regular Hausdorff topological space. A (pointwise) decreasing sequence $\mathcal{V} := (v_n)_{n \in \N}$ of positive continuous functions on $X$ is called a \emph{decreasing weight system on X}. Given a Fr\'echet space $E$ with a fundamental sequence of seminorms $(\| \, \cdot \, \|_N)_{N}$ we define $Cv_n(X;E)$ as the Fr\'echet space consisting of all $f \in C(X;E)$ such that
$$
\sup_{x \in X} \|f(x)\|_N v_n(x)  < \infty
$$
for all $N \in \N$. We set
$$
\mathcal{V}C(X;E) := \varinjlim_{n \in \N} Cv_n(X;E),
$$
an $(LF)$-space (it is Hausdorff because the topology of $\mathcal{V}C(X;E)$ is finer than the one induced by $C(X;E)$ and the latter space is Hausdorff). If $E = \C$, we simply write $\mathcal{V}C(X;\C) = \mathcal{V}C(X)$, an $(LB)$-space. We remark that $\mathcal{V}C(X)$ is always complete \cite{B-B} while it is boundedly retractive if and only if $\mathcal{V}$ is \emph{regularly decreasing} \cite[p.\ 118 Thm.\ 7]{Bierstedt}, i.e., for every $n \in \N$ there is $m \geq n$ such that for every $k > m$ and every subset $Y$ of $X$ it holds that
$$
\inf_{y \in Y} \frac{v_m(y)}{v_n(y)} > 0 \Longrightarrow \inf_{y \in Y} \frac{v_k(y)}{v_n(y)}  > 0.
$$
For example, constant weight systems and weight systems $\mathcal{V} = (v_n)_{n}$ satisfying
\begin{equation}
\forall n \in \N \, \exists m > n \, : \,  \mbox{$v_m/v_n$ vanishes at $\infty$}
\label{decay-weights-1}
\end{equation}
are regularly decreasing.
 Albanese characterized the completeness of the $(LF)$-space $\mathcal{V}C(X;E)$ in the ensuing way: 
\begin{theorem}[{\cite[Thm.\ 2.3]{Albanese}}]\label{completeness-F}
Let $\mathcal{V} = (v_n)_{n}$ be a decreasing weight system. For a non-normable Fr\'echet space $E$ with a fundamental increasing sequence of seminorms $(\| \, \cdot \,  \|_N)_{N}$ the following statements are equivalent:
\begin{itemize}
\item[$(i)$] $\mathcal{V}C(X;E)$ is boundedly retractive.
\item[$(ii)$] $\mathcal{V}C(X;E)$ is (quasi-)complete.
\item[$(iii)$] $\mathcal{V}C(X;E)$ is regular.
\item[$(iv)$] $\mathcal{V}C(X;E)$ satisfies $(wQ)$.
\item[$(v)$] The pair $(E, \mathcal{V})$ satisfies $(S_2)^*$, i.e.,
\begin{gather*}
\forall n \in \N \, \exists m > n \, \exists N \in \N \, \forall k > m \, \forall M \in \N \, \exists K \in \N \, \exists C > 0\, \forall e \in E \,  \forall x \in X: \\
v_m(x)\|e\|_M \leq C(v_n(x)\|e\|_N + v_k(x)\|e\|_K).
\end{gather*}
\end{itemize}
\end{theorem}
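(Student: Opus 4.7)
The plan is to establish the circular chain $(i) \Rightarrow (ii) \Rightarrow (iii) \Rightarrow (iv) \Rightarrow (v) \Rightarrow (i)$. Three of the arrows come for free: $(i) \Rightarrow (ii)$ is the closing statement of Theorem~\ref{reg-cond}, $(ii) \Rightarrow (iii)$ is Grothendieck's factorization theorem, and $(iii) \Rightarrow (iv)$ is Vogt's theorem \cite[Thm.\ 4.7]{Vogt-92} cited above. Throughout I would work with the canonical fundamental sequence $\|f\|_{n,N} := \sup_{x \in X} v_n(x)\|f(x)\|_N$ of seminorms on $Cv_n(X;E)$, so that the scalar inequality of $(wQ)$ for $\mathcal{V}C(X;E)$ and the pointwise inequality defining $(S_2)^*$ are directly comparable.

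For $(iv) \Rightarrow (v)$, I would test the $(wQ)$-inequality on \emph{rank-one} functions $f : x \mapsto \varphi(x)\, e$, where $e \in E$ is arbitrary and $\varphi \in C(X)$ is a scalar bump peaked at a prescribed point $x_0 \in X$. Complete regularity of $X$ furnishes, for every neighborhood $U$ of $x_0$, a continuous $\varphi : X \to [0,1]$ with $\varphi(x_0) = 1$ and $\varphi \equiv 0$ off $U$. Choosing $U$ so small that each of the finitely many relevant weights $v_j$ satisfies $(1-\varepsilon)v_j(x_0) \le v_j \le (1+\varepsilon)v_j(x_0)$ on $U$, substituting $\varphi\cdot e$ into $(wQ)$, and letting $\varepsilon \to 0^+$, yields exactly the scalar inequality of $(v)$ at the pair $(x_0, e)$. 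This step does not invoke the non-normability of $E$.

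The main obstacle is $(v) \Rightarrow (i)$. My plan is to verify condition (iii) of Theorem~\ref{reg-cond}, namely that $\mathcal{V}C(X;E)$ is boundedly stable and satisfies $(wQ)$. The $(wQ)$ property is immediate by applying the pointwise inequality of $(S_2)^*$ to $e = f(x)$ and taking $\sup_{x}$. Bounded stability is the delicate substep: given $n$ and a bounded set $B \subset Cv_n(X;E)$, I would pick the $m,N$ supplied by $(S_2)^*$ and, for any $k \ge m$ and any $M$, show that the $Cv_m$- and $Cv_k$-topologies coincide on $B$ by partitioning $X$ according to whether the quotient $v_n(x)/v_m(x)$ lies below or above a free parameter $\delta > 0$. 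On the small-ratio region the uniform $\|\cdot\|_{n,N}$-bound on $B$ absorbs the $v_n(x)\|f(x)\|_N$-contribution of $(S_2)^*$ into a term of size $O(\delta)$, while on the complementary region $v_m$ is comparable to $v_k$, so the $\|\cdot\|_{k,K}$-term dominates $\|\cdot\|_{m,M}$ up to a $\delta$-dependent constant. Non-normability of $E$ is what makes this quantitative argument meaningful: if $E$ carried a continuous norm then the Fr\'echet seminorm family could be collapsed to a single norm, the problem would reduce to the scalar $(LB)$-case $\mathcal{V}C(X)$ which is always complete \cite{B-B}, and the statement would lose its content.
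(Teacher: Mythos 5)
First, a point of order: the paper does not prove this theorem at all — it is quoted verbatim from Albanese \cite[Thm.\ 2.3]{Albanese} and used as a black box — so there is no internal proof to compare against; I assess your argument on its own merits. The easy arrows $(i)\Rightarrow(ii)\Rightarrow(iii)\Rightarrow(iv)$ are correctly sourced, your rank-one test functions $\varphi\cdot e$ for $(iv)\Rightarrow(v)$ work (complete regularity plus continuity of the finitely many weights involved is exactly what is needed), and the derivation of $(wQ)$ from $(S_2)^*$ by setting $e=f(x)$ and taking suprema is fine. The strategy of closing the loop by verifying bounded stability plus $(wQ)$ and invoking Theorem \ref{reg-cond} is also the right one.

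The genuine gap is in the bounded-stability substep of $(v)\Rightarrow(i)$. Your partition of $X$ by the size of $v_m(x)/v_n(x)$ (note: you wrote $v_n/v_m$, which is $\geq 1$ since $\mathcal{V}$ is decreasing, so the orientation needs fixing) handles the small-ratio region correctly via the boundedness of $B$ in $Cv_n(X;E)$: there $v_m(x)\|h(x)\|_M\leq\delta\,v_n(x)\|h(x)\|_M=O(\delta)$. But on the complementary region $Y_\delta=\{x: v_m(x)/v_n(x)>\delta\}$ you simply assert that ``$v_m$ is comparable to $v_k$''. This does not follow from the definition of $Y_\delta$, which controls only the ratio $v_m/v_n$; since $v_k\leq v_m$, the needed reverse inequality $v_m\leq C_\delta v_k$ on $Y_\delta$ is precisely the statement that $\mathcal{V}$ is \emph{regularly decreasing}, and that is a nontrivial consequence of $(S_2)^*$ which must be proved. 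It is extracted as follows, and this is exactly where non-normability of $E$ enters: given the $N$ produced by $(S_2)^*$, non-normability yields $M>N$ with $\sup\{\|e\|_M:\|e\|_N\leq1\}=\infty$, hence a vector $e_0$ with $\|e_0\|_M=1$ and $\|e_0\|_N\leq\delta/(2C)$; feeding $e_0$ into the pointwise inequality at $x\in Y_\delta$ and using $v_n(x)<v_m(x)/\delta$ gives $v_m(x)\leq\tfrac12 v_m(x)+C\|e_0\|_K v_k(x)$, i.e.\ $v_m(x)\leq 2C\|e_0\|_K\,v_k(x)$ on $Y_\delta$. With this lemma in hand your two-region argument does close. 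Your stated rationale for non-normability is, however, incorrect: a non-normable Fr\'echet space may well carry a continuous norm (most power series spaces do), so the seminorm family cannot in general be ``collapsed to a single norm''; the hypothesis is used quantitatively, as above, to manufacture vectors with $\|e\|_M$ large relative to $\|e\|_N$, not to exclude a degenerate reduction to the scalar case.
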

\begin{remark}\label{S-2-inv}
Let $\mathcal{V}$ be a decreasing weight system. If $E$ and $F$ are topologically isomorphic Fr\'echet spaces, then $(E, \mathcal{V})$ satisfies $(S_2)^*$ if and only if  $(F, \mathcal{V})$ does so.
\end{remark}
We now state separate conditions on $E$ and $\mathcal{V}$ which ensure that the pair $(E, \mathcal{V})$ satisfies $(S_2)^*$. Since this is very similar to the analysis of the conditions $(S^*_1)$ and $(S^*_2)$ in the splitting theory of Fr\'echet spaces \cite{Vogt-87}, we omit all proofs.

A Fr\'echet space $E$ with a fundamental increasing sequence of seminorms $(\| \, \cdot \,  \|_N)_{N}$ is said to satisfy $(DN)$ if
$$
\exists N \in \N \, \forall M > N \, \exists K > M \, \, \exists C > 0 \, \forall e \in E: \, \|e\|^2_M \leq C \|e\|_N\|e\|_K,
$$
while it is said to satisfy $(\Omega)$ if 
$$
\forall N \in \N \, \exists M > N \, \forall K > M \, \exists \theta \in (0,1) \, \exists C > 0 \, \forall e' \in E': \, \|e'\|^*_M \leq  C (\|e'\|^*_N)^{1-\theta}( \|e'\|^*_K)^{\theta},
$$

where
$$
\|e'\|^*_N := \sup \{ |\langle e', e \rangle | \, : \, e \in E, \,  \|e\|_N \leq 1 \} \in [0,\infty], \qquad e' \in E'.
$$

A decreasing weight system $\mathcal{V} = (v_n)_{n}$ is said to satisfy $(\Omega)$ if 
$$
\forall n \in \N \, \exists m > n \, \forall k > m \, \exists \theta \in (0,1) \, \exists C > 0 \, \forall x \in X:  \, v_m(x) \leq C v_n(x)^{1-\theta}v_k(x)^{\theta}.
$$
This terminology is justified because one  can show that $\mathcal{V}$ satisfies $(\Omega)$ if and only if the strong dual of $\mathcal{V}C(X)$ satisfies $(\Omega)$; indeed, by using an obvious analogue of \cite[Lemma 20]{B-D}, this follows from a similar argument as in \cite[Lemma 2.1]{Albanese}. We then have:
\begin{proposition} [{cf.\ \cite[Thm.\ 5.1]{Vogt-87}}]\label{(DN)-Omega-1}
Let $\mathcal{V}$ be a decreasing weight system and let $E$ be a Fr\'echet space. If $\mathcal{V}$ satisfies $(\Omega)$ and $E$ satisfies $(DN)$, then $(E,\mathcal{V})$ satisfies $(S_2)^\ast$.
\end{proposition}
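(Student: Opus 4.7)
The plan is to combine the two interpolation conditions via the weighted AM-GM inequality $a^{1-\theta}b^{\theta}\leq a+b$ (valid for $a,b\geq 0$, $\theta\in(0,1)$), following the scheme of the analogous splitting result \cite[Thm.\ 5.1]{Vogt-87}. Fix $n\in\N$. Applying $(\Omega)$ for $\mathcal{V}$ to $n$ provides an $m>n$ such that for every $k>m$ there exist $\theta=\theta(k)\in(0,1)$ and $C_1=C_1(k)>0$ with $v_m(x)\leq C_1\, v_n(x)^{1-\theta}v_k(x)^{\theta}$ for all $x\in X$; we take the $N$ appearing in $(S_2)^{\ast}$ to be the $(DN)$-index for $E$.

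The crucial preparatory step is to upgrade the quadratic form of $(DN)$ to an interpolation statement at arbitrary exponents: for the $(DN)$-index $N$, for every $M\in\N$ and every $\theta\in(0,1)$, there exist $K\in\N$ and $C>0$ such that $\|e\|_M\leq C\|e\|_N^{1-\theta}\|e\|_K^{\theta}$ for all $e\in E$. This is obtained by iterating the defining inequality $\|e\|_{M'}^{2}\leq C\|e\|_N\|e\|_{K'}$ successively to $M,K',K'',\dots$, giving the interpolation at all dyadic exponents $\theta=2^{-j}$; for arbitrary $\theta\in(0,1)$ one chooses $j$ with $2^{-j}\leq\theta$ and absorbs the exponent deficit using that the fundamental sequence $(\|\cdot\|_N)_N$ is increasing (so $\|e\|_N^{1-2^{-j}}\leq\|e\|_N^{1-\theta}\|e\|_K^{\theta-2^{-j}}$ when $K\geq N$). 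The case $M\leq N$ is trivial since then $\|e\|_M\leq\|e\|_N$.

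With this preparation in place, given $k>m$ and $M\in\N$, we first extract $\theta=\theta(k)$ from $(\Omega)$ for $\mathcal{V}$ and then apply the upgraded form of $(DN)$ with this $\theta$ to obtain $K\in\N$ and $C_2>0$ with $\|e\|_M\leq C_2\|e\|_N^{1-\theta}\|e\|_K^{\theta}$. Multiplying both bounds and invoking the weighted AM-GM inequality yields
\begin{equation*}
v_m(x)\|e\|_M\leq C_1C_2\bigl(v_n(x)\|e\|_N\bigr)^{1-\theta}\bigl(v_k(x)\|e\|_K\bigr)^{\theta}\leq C\bigl(v_n(x)\|e\|_N+v_k(x)\|e\|_K\bigr)
\end{equation*}
with $C=C_1C_2$, which is precisely $(S_2)^{\ast}$; the quantifier pattern matches because $m$ and $N$ depend only on $n$, while $K$ and $C$ are allowed to depend on $k$ and $M$.

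The main technical point is the preparatory iteration that promotes $(DN)$ to interpolation at a \emph{prescribed} exponent, since the $\theta$ dictated by $(\Omega)$ for $\mathcal{V}$ depends on $k$ and we need to choose $K$ as a function of both $M$ and that $\theta$. Once this coupling is made, the remainder of the argument is a direct calculation.
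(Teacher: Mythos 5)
Your proof is correct, and it follows exactly the route the paper intends: the paper omits the proof, referring to the scheme of \cite[Thm.\ 5.1]{Vogt-87}, which is precisely what you carry out (upgrading $(DN)$ to interpolation at an arbitrary prescribed exponent $\theta$ by iterating the quadratic inequality, then coupling that $\theta$ to the one produced by $(\Omega)$ for $\mathcal{V}$ and finishing with the weighted AM--GM inequality). The quantifier bookkeeping is handled properly, so nothing further is needed.
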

If $E$ is a power series space, the conditions in Proposition \ref{(DN)-Omega-1} turn out to be necessary as well. In the rest of this subsection $\beta = (\beta_j)_{j \in \N}$ will stand for a strictly increasing sequence of positive numbers such that $\beta_j  \rightarrow \infty$.  We define $\Lambda_\infty(\beta)$ ($\Lambda_0(\beta)$, respectively) as the Fr\'echet space consisting of all sequences $(c_j)_j \in \C^\N$ such that
$$
\left(\sum_{j = 0}^\infty |c_j|^2e^{2n\beta_j}\right)^{1/2} < \infty \qquad \left(\left(\sum_{j = 0}^\infty |c_j|^2e^{-2\beta_j/n}\right)^{1/2} <\infty \right)
$$
for all $n \in \N$. Furthermore, we shall always assume that $\beta$ is shift-stable, i.e.,
\begin{equation}
\sup_{j \in \N} \frac{\beta_{j+1}}{\beta_j} < \infty.
\label{stable}
\end{equation}
\begin{proposition}[{cf.\ \cite[Thm.\ 4.1]{Vogt-87}}]\label{E-fixed-1}
Let $\mathcal{V}$ be a decreasing weight system. Then, $\mathcal{V}$ satisfies $(\Omega)$ if and only if $(\Lambda_\infty(\beta),\mathcal{V})$ satisfies $(S_2)^\ast$.
\end{proposition}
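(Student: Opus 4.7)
The "if" direction is immediate from Proposition \ref{(DN)-Omega-1}, since $\Lambda_\infty(\beta)$ satisfies $(DN)$: indeed, the Cauchy--Schwarz inequality gives $\|c\|_M^2 \leq \|c\|_0 \|c\|_{2M}$ for all $c \in \Lambda_\infty(\beta)$ and all $M \geq 1$, which is $(DN)$ with $N_0 = 0$ and $K = 2M$. Combined with the hypothesis that $\mathcal{V}$ satisfies $(\Omega)$, Proposition \ref{(DN)-Omega-1} yields $(S_2)^\ast$ for the pair.

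For the converse, I would assume $(S_2)^\ast$ for $(\Lambda_\infty(\beta),\mathcal{V})$ and test the condition against the unit vectors $e_j \in \Lambda_\infty(\beta)$, for which $\|e_j\|_N = e^{N\beta_j}$. Given $n \in \N$, let $m > n$ and $N \in \N$ be provided by $(S_2)^\ast$; for each $k > m$, applying $(S_2)^\ast$ with the choice $M = N+1$ yields $K \in \N$ and $C > 0$ such that
\begin{equation*}
v_m(x) \leq C\bigl( v_n(x)\, e^{-\beta_j} + v_k(x)\, e^{(K-N-1)\beta_j} \bigr), \qquad \forall j \in \N,\ \forall x \in X.
\end{equation*}
For each fixed $x \in X$, the plan is to optimize this bound over $j$ near the continuous minimizer $\tau(x) := \ln(v_n(x)/v_k(x))/(K-N) \geq 0$, which is non-negative because $\mathcal{V}$ is decreasing.

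If $\tau(x) \leq \beta_0$, then $v_n(x) \leq e^{(K-N)\beta_0} v_k(x)$, so $v_n$ and $v_k$ are comparable at $x$ and the desired interpolation inequality $v_m(x) \leq C' v_n(x)^{1-\theta} v_k(x)^\theta$ is trivial. Otherwise, the shift-stability hypothesis \eqref{stable} with $D := \sup_j \beta_{j+1}/\beta_j < \infty$ produces some $j$ with $\beta_j \leq \tau(x) \leq \beta_{j+1} \leq D\beta_j$. At this $j$, using $e^{(K-N-1)\beta_j} \leq e^{(K-N-1)\tau(x)}$ and $e^{-\beta_j} \leq e^{-\tau(x)/D}$ together with $e^{\tau(x)} = (v_n(x)/v_k(x))^{1/(K-N)}$, a direct computation gives
\begin{equation*}
v_k(x)\, e^{(K-N-1)\beta_j} \leq v_n(x)^{1-\theta_1} v_k(x)^{\theta_1},\qquad v_n(x)\, e^{-\beta_j} \leq v_n(x)^{1-\theta_2} v_k(x)^{\theta_2},
\end{equation*}
with $\theta_1 = 1/(K-N)$ and $\theta_2 = 1/(D(K-N))$. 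Since $\theta_2 \leq \theta_1$ and $v_k \leq v_n$, the first bound is dominated by the second, yielding $v_m(x) \leq 2C\, v_n(x)^{1-\theta} v_k(x)^\theta$ with $\theta := 1/(D(K-N)) \in (0,1)$, which is precisely the required $(\Omega)$ condition for $\mathcal{V}$ (with $\theta$ depending on $k$ through $K$).

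The main obstacle is the discretization step: the continuous optimum $\tau(x)$ need not lie in the sequence $(\beta_j)_j$, so one cannot directly balance the two terms on the right-hand side of the pointwise estimate. Assumption \eqref{stable} is precisely what is needed here — it provides a $\beta_j$ comparable to $\tau(x)$, and the resulting loss factor $D$ is absorbed by weakening the exponent from the ``naive'' value $1/(K-N)$ to $1/(D(K-N))$. Everything else is routine bookkeeping, and the argument is the discrete counterpart of the standard derivation in Vogt's splitting theory for power series spaces \cite{Vogt-87}.
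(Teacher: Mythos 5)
Your proof is correct and follows exactly the route the paper intends: the paper omits the argument, pointing to \cite[Thm.\ 4.1]{Vogt-87}, and your combination of the unit-vector test $e\mapsto e_j$ with the discretization of the continuous balance point via the shift-stability assumption \eqref{stable} is precisely that standard argument (likewise, deducing the forward implication from $(DN)$ for $\Lambda_\infty(\beta)$ and Proposition \ref{(DN)-Omega-1} is the intended shortcut). Two cosmetic points: your ``if''/``only if'' labels are swapped relative to the statement as written, and in the converse you should record that one may assume $K>N+1$ without loss of generality (since $\|\cdot\|_K$ increases with $K$, the $(S_2)^\ast$ estimate persists when $K$ is enlarged), as otherwise the exponent $K-N-1$ in your optimization step is not positive.
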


Finally, we introduce increasing weight systems. They will also play an essential role in the rest of this article. An increasing sequence $\mathcal{W} := (w_N)_{N \in \N}$ of positive continuous functions on $X$ is called an \emph{increasing weight system on X}. Sometimes we shall impose the following condition on $\mathcal{W}$ (cf.\ \eqref{decay-weights-1}):
\begin{equation}
\forall N \in \N \, \exists M > N \, : \mbox{$w_N/w_M$ vanishes at $\infty$}.
\label{decay-weights}
\end{equation}
We define $\mathcal{W}C(X)$ as the Fr\'echet space consisting of all $f \in C(X)$ such that
$$
\| f \|_{w_N} := \sup_{x \in X}|f(x)|w_N(x)  < \infty
$$
for all $N \in \N$. Let $E = \varinjlim E_n$ be an $(LB)$-space. The pair $(\mathcal{W}, E)$ is said to satisfy $(S_2)^*$ if
\begin{gather*}
\forall n \in \N \, \exists m > n \, \exists N \in \N \, \forall k > m \, \forall M \in \N \, \exists K \in \N \, \exists C > 0\, \forall e \in E_n \,  \forall x \in X: \\
\|e\|_{E_m} w_M(x) \leq C(\|e\|_{E_n}w_N(x) + \|e\|_{E_k}w_K(x)).
\end{gather*}
\begin{remark}\label{S-2-inv-1}
Let $\mathcal{W}$ be an increasing weight system. If $E$ and $F$ are topologically isomorphic $(LB)$-spaces, then $(\mathcal{W},E)$ satisfies $(S_2)^*$ if and only if  $(\mathcal{W},F)$ does so, as follows from Grothendieck's factorization theorem. In particular, condition $(S_2)^*$ does not depend on the defining inductive spectrum of $E$.
\end{remark}
We now wish to formulate an analogue of Proposition \ref{E-fixed-1} for increasing weight systems.
The weight system $\mathcal{W} = (w_N)_N$ is said to satisfy $(DN)$ if 
$$
\exists N \in \N \, \forall M > N \, \exists K > M \,  \exists C > 0 \, \forall x \in X: \, w^2_M(x) \leq C w_N(x)w_K(x).
$$
As before, this terminology is justified because one can show that  $\mathcal{W}$ satisfies $(DN)$ if and only if $\mathcal{W}C(X)$ satisfies $(DN)$. 

\begin{proposition}[{cf.\ \cite[Thm.\ 4.3]{Vogt-87}}]\label{W-fixed-1}
Let $\mathcal{W}$ be an increasing weight system. Then, $\mathcal{W}$ satisfies $(DN)$ if and in only if $(\mathcal{W}, \Lambda'_0(\beta))$ satisfies $(S_2)^\ast$.
\end{proposition}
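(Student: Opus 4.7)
The strategy is to interpolate on both sides and match exponents. Realize $\Lambda_0'(\beta)=\varinjlim E_n$, where $E_n$ is the Hilbert space of $(d_j)$ with $\|d\|_{E_n}=(\sum_j|d_j|^2 e^{2\beta_j/n})^{1/2}<\infty$. Applying H\"older's inequality to this weighted $\ell^2$-structure yields the log-convex estimate
\[
\|d\|_{E_m}\le (\|d\|_{E_n})^{1-\theta}(\|d\|_{E_k})^{\theta},\qquad n<m<k,\ \tfrac{1}{m}=\tfrac{1-\theta}{n}+\tfrac{\theta}{k};
\]
as $k$ ranges over $(m,\infty)$, the exponent $\theta$ sweeps out $(1-n/m,1)$. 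On the basis one has $\|e_j\|_{E_n}=e^{\beta_j/n}$.

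For the implication $(DN)\Rightarrow (S_2)^{\ast}$, fix $n$ and take $m:=n+1$, so that every admissible $\theta$ satisfies $\theta>1/(n+1)$. Let $N$ be the index produced by $(DN)$. Iterating $(DN)$ finitely many times---starting with $w_M^2\le C w_N w_{K_1}$, then applying $(DN)$ to $K_1$, and telescoping---gives
\[
w_M\le D_j\, w_N^{1-2^{-j}}\, w_{K_j}^{2^{-j}}\qquad (j\in\N).
\]
Choose $j$ with $2^{-j}\le 1/(n+1)$. Since $\mathcal W$ is increasing, $w_N\le w_{K_j}$, and the exponent $2^{-j}$ may be freely enlarged to any $\theta\ge 2^{-j}$. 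Multiplying by the log-convex estimate for $\|d\|_{E_m}$ and applying $a^{1-\theta}b^{\theta}\le a+b$ to $a=\|d\|_{E_n}w_N(x)$, $b=\|d\|_{E_k}w_{K_j}(x)$ delivers $(S_2)^{\ast}$ with $K:=K_j$.

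For the converse, evaluate $(S_2)^{\ast}$ at the basis vector $d=e_j$ and divide by $e^{\beta_j/m}$:
\[
w_M(x)\le C\bigl(e^{\beta_j a}w_N(x)+e^{-\beta_j b}w_K(x)\bigr),\qquad a:=\tfrac{1}{n}-\tfrac{1}{m},\ b:=\tfrac{1}{m}-\tfrac{1}{k}.
\]
The right-hand side, treated as a function of the continuous variable $s=\beta_j$, is minimized at $s^\ast=\log(w_K(x)/w_N(x))/(a+b)$, with minimum value comparable to $w_N(x)^{1-\alpha}w_K(x)^{\alpha}$ where $\alpha:=a/(a+b)=(1/n-1/m)/(1/n-1/k)\in(0,1)$. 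Shift-stability \eqref{stable} provides an actual $\beta_{j_0}$ within a fixed factor of $s^\ast$; this costs an enlargement of $\alpha$ to some $\gamma\in(\alpha,1)$, giving $w_M\le C'w_N^{1-\gamma}w_K^{\gamma}$. Iterating this estimate, with $M$ replaced by $K$ and keeping $n,m,k$ (hence $\gamma$) fixed, telescopes to $w_M\le D_\ell w_N^{1-\gamma^\ell}w_{K^{(\ell)}}^{\gamma^\ell}$. Picking $\ell$ with $\gamma^\ell\le 1/2$ and using once more $w_N\le w_{K^{(\ell)}}$, one squares to reach $w_M^2\le D_\ell^2\, w_N\, w_{K^{(\ell)}}$, which is $(DN)$.

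The main technical nuisance is the shift-stability step in the converse: the continuous infimum over $s\in[0,\infty)$ must be realized, up to a fixed multiplicative loss, by some $\beta_{j_0}\in\{\beta_j\}$, and \eqref{stable} is precisely what prevents the resulting exponent $\gamma$ from being pushed up to $1$ and thus ruining the eventual iteration $\gamma^\ell\le 1/2$.
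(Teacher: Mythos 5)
The paper gives no proof of this proposition --- it is stated with a citation to Vogt's splitting theory and the explicit remark that all proofs in that subsection are omitted --- and your argument is correct and is precisely the standard Vogt-type argument that citation points to: log-convexity of the weighted-$\ell^{2}$ norms plus iteration of $(DN)$ in one direction, and evaluation on basis vectors, discretization of the continuous minimizer via the shift-stability \eqref{stable}, and a geometric iteration of exponents in the other. The only points left implicit are trivial and absorbed into constants: the case $M\le N$ in the forward direction (where $\|e\|_{E_m}w_M\le\|e\|_{E_n}w_N$ outright), and the boundary case in the converse where the minimizing $s^{\ast}$ falls below the smallest $\beta_j$.
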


\subsection{Gelfand-Shilov spaces and the short-time Fourier transform.}\label{gelfand-shilov-spaces} 
Let $(M_p)_{p \in \N}$ be a sequence of positive real numbers and define $m_p := M_p / M_{p -1}$, $p \geq 1$. We call $M_p$ a \emph{weight sequence} if $\lim_{p \to \infty} m_p = \infty$. Furthermore, we will make use of some of the following conditions:
\begin{enumerate}
\item[$ (M.1)$ ] $M_p^2 \leq M_{p-1}M_{p+1}$, $p \geq 1$.
\item[$(M.2)$ ] $M_{p+q} \leq C_0H^{p+q}M_pM_q$, $p,q \in \N$, for some $C_0,H \geq 1$.
\item[ $(M.2)^\ast $] $2m_p \leq m_{Qp}$, $p \geq 1$, for some $Q \in \N$.
\item[$(M.3)$ ] $\sum_{p = q}^\infty 1/m_p< Cq/m_{q}$, $q  \geq 1$, for some $C \geq 1$.
\end{enumerate}
The reader can consult  \cite{Komatsu,B-M-M} for the meaning of these conditions. It is worth mentioning that $(M.1)$  and $(M.3)$ imply $(M.2)^*$ \cite[Prop.\ 1.1]{Petzsche88}. We write $M_\alpha = M_{|\alpha|}$, $\alpha \in \N^d$.  As usual, the relation $M_p\subset N_p$ between two weight sequences means that there are $C,h>0$ such that 
$M_p\leq Ch^{p}N_{p},$ $p\in\N$.  The stronger relation $M_p\prec N_p$ means that the latter inequality remains valid for every $h>0$ and a suitable $C=C_{h}>0$. 

The \emph{associated function of $M_p$} is defined as
$$
M(t):=\sup_{p\in\N}\log\frac{t^pM_0}{M_p},\qquad t > 0,
$$
and $M(0):=0$. We define $M$ on $\R^d$ as the radial function $M(x) = M(|x|)$, $x \in \R^d$. Under $(M.1)$, the assumption $(M.2)$ holds \cite[Prop.\ 3.6]{Komatsu} if and only if
\begin{equation*}
2M(t) \leq M(Ht) + \log C_0, \qquad  t \geq 0,
\end{equation*}
while, under $(M.1)$ and $(M.2)$, the assumption $(M.2)^*$ \cite[Prop.\ 13]{B-M-M} holds if and only if
\begin{equation*}
M(2t) \leq H'M(t) + \log C_0', \qquad t \geq 0,
\end{equation*}
for some $C_0',H' \geq  1$. 
Let $M_p$ and $A_p$ be two weight sequences. We denote by $A$ the associated function of $A_p$. For $h, \lambda > 0$ we write $\mathcal{S}^{M_p,h}_{A_p,\lambda}(\R^d)$ for the Banach space consisting of all $\varphi \in C^\infty(\R^d)$ such that
$$
\| \varphi \|_{\mathcal{S}^{M_p,h}_{A_p,\lambda}} :=\sup_{\alpha \in \N^d} \sup_{x \in \R^d} \frac{h^{|\alpha|}|\varphi^{(\alpha)}(x)|e^{A(\lambda x)}}{M_{\alpha}} < \infty. 
$$
We define
$$
\mathcal{S}^{(M_p)}_{(A_p)}(\R^d) := \varprojlim_{h \to \infty} \mathcal{S}^{M_p,h}_{A_p,h}(\R^d), \qquad \mathcal{S}^{\{M_p\}}_{\{A_p\}}(\R^d) := \varinjlim_{h \to 0^+} \mathcal{S}^{M_p,h}_{A_p,h}(\R^d).
$$
Elements of their dual spaces $\mathcal{S}'^{(M_p)}_{(A_p)}(\R^d)$ and $\mathcal{S}'^{\{M_p\}}_{\{A_p\}}(\R^d) $ are called \emph{tempered ultradistributions of Beurling and Roumieu type}, respectively. In the sequel we shall write $\ast$ instead of $(M_p)$ or $\{M_p\}$ and $\dagger$ instead of $(A_p)$ or $\{A_p\}$ if we want to treat both cases simultaneously. In addition, we shall often first state assertions for the Beurling case followed in parenthesis by the corresponding statements for the Roumieu case.

Let $M_p$ and $A_p$ be weight sequences. We introduce the following set of conditions on $M_p$ and $A_p$: 
\begin{equation}
\mbox{$M_p$ and $A_p$ satisfy $(M.1)$ and $(M.2)$, $p! \prec M_pA_p$, and $\mathcal{S}^{(M_p)}_{(A_p)}(\R^d)$ is non-trivial.}
\label{group-cond}
\end{equation}
 A sufficient condition for the non-triviality of $\mathcal{S}^{(M_p)}_{(A_p)}(\R^d)$ is $p!^\sigma \subset M_p$ and $p!^\tau \subset A_p$ for some $\sigma, \tau > 0$ with $\sigma + \tau > 1$ \cite[p.\ 235]{G-S}. Other non-triviality conditions can be found in \cite{Debrouwere-VindasUH2016}. Under the general conditions
 \eqref{group-cond}, we have the ensuing properties:
\begin{itemize}
\item[$(i)$] The Fourier transform is a topological isomorphism from $\mathcal{S}^\ast_\dagger(\R^d)$  onto $\mathcal{S}^\dagger_\ast(\R^d)$, where we fix the constants in the Fourier transform as follows
$$
\mathcal{F}(\varphi)(\xi)  =\widehat{\varphi}(\xi) := \int_{\R^d}\varphi(x) e^{-2\pi i x\xi}\dx.
$$
We define the Fourier transform from $\mathcal{S}'^\dagger_\ast(\R^d)$ onto $\mathcal{S}'^\ast_\dagger(\R^d)$ via duality.
\item[$(ii)$]  $\mathcal{S}^{(M_p)}_{(A_p)}(\R^d)\hookrightarrow \mathcal{S}^{\{M_p\}}_{\{A_p\}}(\R^d)  $ (the symbol ``$\hookrightarrow$'' stands for dense and continuous inclusion) \cite[Lemma 2.4]{P-P-V}. 
\item[$(iii)$] $\mathcal{S}^{(M_p)}_{(A_p)}(\R^d)$ is an $(FN)$-space while $\mathcal{S}^{\{M_p\}}_{\{A_p\}}(\R^d)$ is a $(DFN)$-space \cite[Prop.\ 2.11]{P-P-V}.
\end{itemize}
We shall use these properties without explicitly referring to them.

Next, we discuss the mapping properties of the short-time Fourier transform on the spaces $\mathcal{S}^\ast_\dagger(\R^d)$ and $\mathcal{S}'^\ast_\dagger(\R^d)$. The translation and modulation operators are denoted by $T_xf = f(\:\cdot\: - x)$ and $M_\xi f = e^{2\pi i \xi \cdot} f$, for 
$x, \xi \in \R^d$. We also write $\check{f} = f(-\ \cdot\ )$ for reflection about the origin. The short-time Fourier transform (STFT) of a function $f \in L^2(\R^d)$ with respect to a window function $\psi \in L^2(\R^d)$ is defined as
$$
V_\psi f(x,\xi) := (f, M_\xi T_x\psi)_{L^2} = 
\int_{\R^d} f(t) \overline{\psi(t-x)}e^{-2\pi i \xi t} \dt, \qquad (x, \xi) \in \R^{2d}.
$$
It holds that $\|V_\psi f\|_{L^2(\R^{2d})} = \|\psi \|_{L^2}\|f\|_{L^2}$. In particular, the linear mapping $V_\psi : L^2(\R^d) \rightarrow L^2(\R^{2d})$ is continuous. The adjoint of $V_\psi$ is given by the weak integral
$$
V^\ast_\psi F = \int \int_{\R^{2d}} F(x,\xi) M_\xi T_x\psi \dx \dxi, \qquad F \in L^2(\R^{2d}).
$$
If $\psi \neq 0$ and $\gamma \in L^2(\R^d)$ is a synthesis window for $\psi$, that is, $(\gamma, \psi)_{L^2} \neq 0$, then
\begin{equation}
\frac{1}{(\gamma, \psi)_{L^2}} V^\ast_\gamma \circ V_\psi = \operatorname{id}_{L^2(\R^d)}.
\label{reconstruction-L2}
\end{equation}
We refer to \cite{Grochenig} for further properties of the STFT. Throughout  the rest of this subsection $M_p$ and $A_p$ will stand for weight sequences satisfying \eqref{group-cond}. As usual, given two l.c.s.\ $E$ and $F$ we write $E\widehat{\otimes}F$ for the completion of the tensor product $E\otimes F$ with respect to the $\varepsilon$- or $\pi$-topology provided that either $E$  or $F$ is nuclear.

Let $N_p$ and $B_p$ be two other weight sequences satisfying \eqref{group-cond}. We denote by $B$ the associated function of $B_p$. For $h>0$ we write $X_h$ for the Banach space consisting of all $\varphi \in C^\infty(\R^{d_1+d_2})$ such that 
$$
\sup_{(\alpha, \beta) \in \N^{d_1+d_2}} \sup_{(x, t) \in \R^{d_1+d_2}} \frac{h^{|\alpha| + |\beta|}|\partial^\beta_t \partial^\alpha_x\varphi(x,t)|e^{A(hx) + B(ht)}}{M_\alpha N_\beta} < \infty;
$$
one can then show that (cf.\ \cite[Prop.\  2.12]{P-P-V})
$$
 \mathcal{S}^{(M_p)}_{(A_p)}(\R^{d_1}_x) \widehat{\otimes} \mathcal{S}^{(N_p)}_{(B_p)}(\R^{d_2}_t) \cong \varprojlim_{h \to \infty} X_h, \qquad  \mathcal{S}^{\{M_p\}}_{\{A_p\}}(\R^{d_1}_x) \widehat{\otimes} \mathcal{S}^{\{N_p\}}_{\{B_p\}}(\R^{d_2}_t) \cong \varinjlim_{h\to 0^{+}} X_{h},
$$
as locally convex spaces. In particular, we have that $ \mathcal{S}^\ast_\dagger(\R^{d_1}) \widehat{\otimes} \mathcal{S}^\ast_\dagger(\R^{d_2})  \cong \mathcal{S}^\ast_\dagger(\R^{d_1+ d_2})$. Furthermore, we have the following isomorphisms of l.c.s.
$$
( \mathcal{S}^{(M_p)}_{(A_p)}(\R^{d_1}_x) \widehat{\otimes} \mathcal{S}^{(N_p)}_{(B_p)}(\R^{d_2}_t))' \cong  \mathcal{S}'^{(M_p)}_{(A_p)}(\R^{d_1}_x) \widehat{\otimes} \mathcal{S}'^{(N_p)}_{(B_p)}(\R^{d_2}_t)
$$
and
$$
( \mathcal{S}^{\{M_p\}}_{\{A_p\}}(\R^{d_1}_x) \widehat{\otimes} \mathcal{S}^{\{N_p\}}_{\{B_p\}}(\R^{d_2}_t))' \cong  \mathcal{S}'^{\{M_p\}}_{\{A_p\}}(\R^{d_1}_x) \widehat{\otimes} \mathcal{S}'^{\{N_p\}}_{\{B_p\}}(\R^{d_2}_t).
$$
Naturally, the partial Fourier transforms
$$
\mathcal{F}_t :  \mathcal{S}^{(M_p)}_{(A_p)}(\R^{d_1}_x) \widehat{\otimes} \mathcal{S}^{(N_p)}_{(B_p)}(\R^{d_2}_t) \rightarrow \mathcal{S}^{(M_p)}_{(A_p)}(\R^{d_1}_x) \widehat{\otimes} \mathcal{S}^{(B_p)}_{(N_p)}(\R^{d_2}_\xi) 
$$
and
$$
\mathcal{F}_t :  \mathcal{S}^{\{M_p\}}_{\{A_p\}}(\R^{d_1}_x) \widehat{\otimes} \mathcal{S}^{\{N_p\}}_{\{B_p\}}(\R^{d_2}_t) \rightarrow \mathcal{S}^{\{M_p\}}_{\{A_p\}}(\R^{d_1}_x) \widehat{\otimes} \mathcal{S}^{\{B_p\}}_{\{N_p\}}(\R^{d_2}_\xi),
$$
given by
$$
\mathcal{F}_t(\varphi)(x, \xi) :=  \int_{\R^{d_2}}\varphi(x,t) e^{-2\pi i t\xi}\dt,
$$
are topological isomorphisms. 
\begin{proposition}\label{STFT-testfunctions} Let $\psi \in \mathcal{S}^{(M_p)}_{(A_p)}(\R^d)$. The following mappings are continuous: 
$$
V_\psi: \mathcal{S}^\ast_\dagger(\R^d) \rightarrow  \mathcal{S}^\ast_\dagger(\R^d_x) \widehat{\otimes} \mathcal{S}^\dagger_\ast(\R^d_\xi)
$$
and
$$ 
V^\ast_\psi: \mathcal{S}^\ast_\dagger(\R^d_x) \widehat{\otimes} \mathcal{S}^\dagger_\ast(\R^d_\xi)  \rightarrow  \mathcal{S}^\ast_\dagger(\R^d).
$$
\end{proposition}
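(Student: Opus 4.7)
\medskip

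\noindent\textbf{Proof plan.} The strategy is to factor both $V_\psi$ and $V^*_\psi$ through the partial Fourier transform $\mathcal{F}_t$ (applied with $N_p = M_p$, $B_p = A_p$), whose isomorphism properties between the tensor product spaces are already supplied by the paper, and reduce to the continuity of elementary multiplication/integration operators defined by the translated window $\psi(t-x)$.

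For $V_\psi$, observe that
$$
V_\psi f(x,\xi) = \mathcal{F}_t\bigl[F_f\bigr](x,\xi), \qquad F_f(x,t) := f(t)\,\overline{\psi(t-x)}.
$$
Because $\mathcal{F}_t$ is a topological isomorphism of $\mathcal{S}^\ast_\dagger(\R^d_x)\widehat{\otimes}\mathcal{S}^\ast_\dagger(\R^d_t)\cong \mathcal{S}^\ast_\dagger(\R^{2d})$ onto $\mathcal{S}^\ast_\dagger(\R^d_x)\widehat{\otimes}\mathcal{S}^\dagger_\ast(\R^d_\xi)$, it is enough to prove that $f\mapsto F_f$ is continuous from $\mathcal{S}^\ast_\dagger(\R^d)$ into $\mathcal{S}^\ast_\dagger(\R^{2d})$. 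I would do this via direct seminorm estimates: by the Leibniz rule
$$
\partial_t^\beta\partial_x^\alpha F_f(x,t)=(-1)^{|\alpha|}\sum_{\gamma\leq \beta}\binom{\beta}{\gamma}f^{(\gamma)}(t)\,\overline{\psi^{(\alpha+\beta-\gamma)}(t-x)},
$$
and I would plug in the bounds coming from $\|f\|_{\mathcal{S}^{M_p,h_1}_{A_p,h_1}}$ and $\|\psi\|_{\mathcal{S}^{M_p,h_2}_{A_p,h_2}}$. The combinatorial factors are tamed using log-convexity ($M_\gamma M_{\alpha+\beta-\gamma}\leq M_0 M_{|\alpha|+|\beta|}$) together with $(M.2)$ ($M_{|\alpha|+|\beta|}\leq C_0H^{|\alpha|+|\beta|}M_\alpha M_\beta$) and $\sum_\gamma\binom{\beta}{\gamma}=2^{|\beta|}$. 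The decay factor $e^{A(hx)+A(ht)}$ required for membership in $X_h$ is absorbed by the product $e^{-A(h_1t)-A(h_2(t-x))}$ after using subadditivity $A(hx)+A(ht)\leq 2A(2ht)+A(2h(t-x))$ (which follows from $A$ being non-decreasing and $A(0)=0$) combined with the consequence $2A(2ht)\leq A(2Hht)+\log C_0$ of $(M.2)$; this forces the choices $h_1\geq 2Hh$ and $h_2\geq 2h$. For the Beurling case ($\ast = (M_p)$) all such choices are available since $f$ and $\psi$ lie in every $\mathcal{S}^{M_p,h'}_{A_p,h'}$; for the Roumieu case one fixes $h_0>0$ with $f\in \mathcal{S}^{M_p,h_0}_{A_p,h_0}$ and extracts a single $h\leq h_0/(2H)$ for which $F_f\in X_h$, so continuity into the inductive limit follows from the universal property.

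For $V^*_\psi$, I would first write, for $F\in \mathcal{S}^\ast_\dagger(\R^d_x)\widehat{\otimes}\mathcal{S}^\dagger_\ast(\R^d_\xi)$,
$$
V^*_\psi F(t)=\int_{\R^d}\psi(t-x)\,G(x,t)\,\dx, \qquad G(x,t):=(\mathcal{F}^{-1}_\xi F)(x,t).
$$
By the partial inverse Fourier transform (the inverse of the isomorphism used above), the assignment $F\mapsto G$ is continuous into $\mathcal{S}^\ast_\dagger(\R^{2d})$, so the remaining task is the continuity of the partial integration $G\mapsto \int\psi(t-x)G(x,t)\dx$ from $\mathcal{S}^\ast_\dagger(\R^{2d})$ into $\mathcal{S}^\ast_\dagger(\R^d)$. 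I would differentiate under the integral and apply Leibniz again:
$$
\partial_t^\alpha\!\int\psi(t-x)G(x,t)\,\dx=\sum_{\gamma\leq\alpha}\binom{\alpha}{\gamma}\!\int\psi^{(\gamma)}(t-x)\partial_t^{\alpha-\gamma}G(x,t)\,\dx,
$$
then use the decay of $\psi$ and $G$ as before, and bound the $x$-integral through a split-region argument: either $|x|\geq |t|/2$ (so $e^{-A(\lambda_G x)}\leq e^{-A(\lambda_G t/2)}$) or $|t-x|\geq |t|/2$ (so $e^{-A(\lambda_\psi(t-x))}\leq e^{-A(\lambda_\psi t/2)}$). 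In either region the leftover factor provides an integrable exponential in $x$ because $A_p$ is a weight sequence and hence $e^{-A(\kappa x)}\in L^1(\R^d)$ for every $\kappa>0$. Combining factorially with $(M.2)$ and $\sum_\gamma\binom{\alpha}{\gamma}=2^{|\alpha|}$ yields the required bound on $\|V^*_\psi F\|_{\mathcal{S}^{M_p,h}_{A_p,\lambda}}$ in terms of an $X_{h'}$-seminorm of $G$, with the parameter tracking again adapted to the Beurling/Roumieu dichotomy.

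The main technical obstacle is the careful bookkeeping of the constants $h,\lambda$ across the two cases and the repeated use of the subadditivity $A(s+u)\leq A(2s)+A(2u)$ together with $(M.2)$ to make the decay factor $e^{-A(h_1t)-A(h_2(t-x))}$ dominate $e^{A(hx)+A(ht)}$; the rest is essentially a Leibniz-rule bookkeeping exercise. The fact that $\psi$ is chosen in the Beurling--Beurling space $\mathcal{S}^{(M_p)}_{(A_p)}$ is crucial precisely because it provides arbitrarily large values of $h_\psi,\lambda_\psi$ in both cases of $\ast$ and $\dagger$, which is what allows the absorption argument to succeed uniformly.
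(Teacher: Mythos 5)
Your argument for $V_\psi$ is essentially the paper's: the paper also factors $V_\psi=\mathcal{F}_t\circ T\circ(\cdot\otimes\overline{\psi})$, i.e., through exactly your map $f\mapsto f(t)\overline{\psi(t-x)}$ followed by the partial Fourier transform; you merely verify the continuity of the first factor by a direct Leibniz/seminorm computation, whereas the paper splits it further into the (standard) continuous operations of tensoring with $\overline{\psi}$ and the substitution $\chi(t,y)\mapsto\chi(t,t-x)$. For $V^\ast_\psi$, however, your route genuinely differs. The paper does not conjugate by $\mathcal{F}^{-1}_\xi$ at all: it first estimates the norm $\|M_\xi T_x\psi\|_{\mathcal{B}^{M_p,h}_v}\lesssim w(x)e^{M(4\pi h\xi)}$ (Lemma \ref{time-freq-norm}) and then bounds $\|\int\int F(x,\xi)M_\xi T_x\psi\,\dx\dxi\|$ by integrating this against the pointwise bound on $|F|$ (Lemma \ref{double-int-test}); the proposition's second half is quoted as an immediate consequence. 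Your version instead writes $V^\ast_\psi F(t)=\int\psi(t-x)(\mathcal{F}^{-1}_\xi F)(x,t)\,\dx$ and reduces to a one-variable integration against the translated window, which is correct (the Fubini step is justified by absolute convergence, $e^{-A(\kappa\,\cdot)}\in L^1(\R^d)$ holds for every $\kappa>0$ since $e^{-A(t)}\leq A_pA_0^{-1}t^{-p}$, and your split-region device -- or, even more directly, the $e^{-A(ht)}$ decay already present in $G(x,t)$ in its second variable -- yields the required decay in $t$). What each approach buys: yours is symmetric with the $V_\psi$ argument and self-contained, while the paper's lemmas \ref{time-freq-norm} and \ref{double-int-test} are formulated for general weight pairs $(v,w,u)$ precisely because they are reused verbatim for the $\mathcal{B}^\ast_{\mathcal{W}}$ and $\mathcal{B}^\ast_{\mathcal{V}}$ characterizations later in Section \ref{sect-duals}, so the paper gets the proposition essentially for free from machinery it needs anyway. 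Your constant bookkeeping ($M_\gamma M_{\alpha+\beta-\gamma}\leq M_0M_{|\alpha|+|\beta|}$ from $(M.1)$, the $(M.2)$ absorption $2A(2ht)\leq A(2Hht)+\log C_0$, and the Beurling/Roumieu quantifier tracking) is sound; note only that for the derivative factors you in fact need $h_1,h_2\gtrsim Hh$ rather than just $h_2\geq 2h$, but since $\psi$ is of Beurling type and, in the Roumieu case, $h$ may be taken small relative to $h_0$, this is harmless.
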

\begin{proof}Consider the continuous linear mappings
$$
\cdot \otimes \overline{\psi} :\mathcal{S}^\ast_\dagger(\R^d_t) \rightarrow\mathcal{S}^\ast_\dagger(\R^{2d}_{t,y}): \varphi(t) \rightarrow \varphi(t)\otimes \overline{\psi}(y)
$$
and
$$
T: \mathcal{S}^\ast_\dagger(\R^{2d}_{t,y}) \rightarrow \mathcal{S}^\ast_\dagger(\R^{2d}_{t,x}): \chi(t,y) \rightarrow \chi(t,t -x).
$$
That $V_\psi$ is continuous then follows from the representation $V_\psi = \mathcal{F}_t \circ T \circ (\cdot \otimes \overline{\psi})$. The continuity of $V^{\ast}_{\psi}$ is an immediate consequence of (the proof of) Lemma \ref{double-int-test} shown below. 

\end{proof}
The STFT of an ultradistribution $f \in \mathcal{S}'^\ast_\dagger(\R^d)$ with respect to a window function $\psi \in \mathcal{S}^\ast_\dagger(\R^d)$ is defined as
\begin{equation*}
V_\psi f(x,\xi):= \langle f, \overline{M_\xi T_x\psi}\rangle= e^{-2\pi i \xi x}(f \ast M_\xi  \check{\overline {\psi}})(x), \qquad (x, \xi) \in \R^{2d}.
\end{equation*}
Clearly, $V_\psi f$ is a smooth function on $\R^{2d}$. We \emph{define} the adjoint STFT of $F \in \mathcal{S}'^\ast_\dagger(\R^d) \widehat{\otimes} \mathcal{S}'^\dagger_\ast(\R^d)$ as
$$
\langle V^\ast_\psi F, \varphi \rangle := \langle F, \overline{V_\psi\overline{\varphi}} \rangle, \qquad \varphi \in \mathcal{S}^\ast_\dagger(\R^d).
$$
Proposition \ref{STFT-testfunctions}, the reconstruction formula (\ref{reconstruction-L2}), and the same argument given in \cite[Sect.\ 3]{K-P-S-V2016} yield:
\begin{proposition}\label{STFT-duals} Let $\psi \in \mathcal{S}^{(M_p)}_{(A_p)}(\R^d)$. The mappings 
$$
V_\psi: \mathcal{S}'^\ast_\dagger(\R^d) \rightarrow  \mathcal{S}'^\ast_\dagger(\R^d_x) \widehat{\otimes} \mathcal{S}'^\dagger_\ast(\R^d_\xi)
$$
and
$$
V^\ast_\psi: \mathcal{S}'^\ast_\dagger(\R^d_x) \widehat{\otimes} \mathcal{S}'^\dagger_\ast(\R^d_\xi)  \rightarrow  \mathcal{S}'^\ast_\dagger(\R^d)
$$
are continuous.
Moreover, if $\gamma \in \mathcal{S}^{(M_p)}_{(A_p)}(\R^d)$ is a synthesis window for $\psi$, then
$$
\frac{1}{(\gamma, \psi)_{L^2}} V^\ast_\gamma \circ V_\psi = \operatorname{id}_{ \mathcal{S}'^\ast_\dagger(\R^d)}
$$
and the desingularization formula
\begin{equation}
\langle f, \varphi \rangle = \frac{1}{(\gamma, \psi)_{L^2}} \int \int_{\R^{2d}}V_\psi f(x, \xi) V_{\overline{\gamma}}\varphi(x, - \xi) \dx \dxi
\label{regularization-via-STFT}
\end{equation}
holds for all $f \in  \mathcal{S}'^\ast_\dagger(\R^d)$ and $\varphi \in  \mathcal{S}^\ast_\dagger(\R^d)$.
\end{proposition}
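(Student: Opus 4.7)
The plan is to transfer the continuity results of Proposition~\ref{STFT-testfunctions} to the dual spaces by duality, exploiting the fact that $V_\psi$ and $V^\ast_\psi$ on ultradistributions are essentially transposes of (complex conjugated and partially reflected) versions of their actions on test functions.

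I would first dispose of $V^\ast_\psi$, which is the most direct. The defining relation $\langle V^\ast_\psi F, \varphi\rangle := \langle F, \overline{V_\psi\overline{\varphi}}\rangle$ exhibits $V^\ast_\psi$ as the transpose of the mapping $\varphi \mapsto \overline{V_\psi\overline{\varphi}}$ from $\mathcal{S}^\ast_\dagger(\R^d)$ into $\mathcal{S}^\ast_\dagger(\R^d_x) \widehat{\otimes} \mathcal{S}^\dagger_\ast(\R^d_\xi)$, which is continuous by Proposition~\ref{STFT-testfunctions} together with the continuity of complex conjugation on Gelfand--Shilov spaces. Hence $V^\ast_\psi$ is continuous between the corresponding strong duals.

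For $V_\psi$, the subtle point is to identify the pointwise-defined smooth function $V_\psi f(x,\xi) = \langle f, \overline{M_\xi T_x\psi}\rangle$ with an element of $\mathcal{S}'^\ast_\dagger(\R^d_x) \widehat{\otimes} \mathcal{S}'^\dagger_\ast(\R^d_\xi)$. I would first verify, for $f \in \mathcal{S}^\ast_\dagger(\R^d)$ and a test function $\Phi \in \mathcal{S}^\ast_\dagger(\R^d_x) \widehat{\otimes} \mathcal{S}^\dagger_\ast(\R^d_\xi)$, the Fubini identity
$$
\int\int_{\R^{2d}} V_\psi f(x,\xi)\Phi(x,\xi)\dx \dxi = \langle f, V^\ast_{\overline{\psi}}\Phi_{-}\rangle,
$$
where $\Phi_{-}(x,\xi) := \Phi(x,-\xi)$. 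Since the mapping $\Phi \mapsto V^\ast_{\overline{\psi}}\Phi_{-}$ is continuous from $\mathcal{S}^\ast_\dagger(\R^d_x) \widehat{\otimes} \mathcal{S}^\dagger_\ast(\R^d_\xi)$ into $\mathcal{S}^\ast_\dagger(\R^d)$ by Proposition~\ref{STFT-testfunctions} (reflection in the $\xi$-variable and conjugation preserve the spaces), the right-hand side makes sense for every $f \in \mathcal{S}'^\ast_\dagger(\R^d)$ and \emph{defines} $V_\psi f$ as a continuous functional, giving the desired continuity. This extension is consistent with the pointwise smooth-function definition, because on elementary tensors $\Phi = \varphi \otimes \chi$ the identity above reduces to applying $f$ to a function and then integrating, which is the same as integrating the pointwise values $V_\psi f(x,\xi)$ against $\Phi$.

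For the reconstruction formula, unraveling the definitions yields, for $f \in \mathcal{S}'^\ast_\dagger(\R^d)$ and $\varphi \in \mathcal{S}^\ast_\dagger(\R^d)$,
$$
\langle V^\ast_\gamma V_\psi f, \varphi \rangle = \langle V_\psi f, \overline{V_\gamma \overline{\varphi}}\rangle = \langle f, V^\ast_{\overline{\psi}} V_{\overline{\gamma}}\varphi \rangle,
$$
where the last equality uses the pointwise identity $(\overline{V_\gamma\overline{\varphi}})_{-}(x,\xi) = V_{\overline{\gamma}}\varphi(x,\xi)$, which is a short direct computation. Since $\varphi \in L^2(\R^d)$, the $L^2$-reconstruction formula~\eqref{reconstruction-L2} applied with analysis window $\overline{\gamma}$ and synthesis window $\overline{\psi}$ gives $V^\ast_{\overline{\psi}} V_{\overline{\gamma}}\varphi = \overline{(\psi,\gamma)_{L^2}}\,\varphi = (\gamma,\psi)_{L^2}\,\varphi$, whence $V^\ast_\gamma V_\psi f = (\gamma,\psi)_{L^2} f$. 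The desingularization formula~\eqref{regularization-via-STFT} follows by rewriting the same pairing as a double integral and using the companion identity $\overline{V_\gamma\overline{\varphi}}(x,\xi) = V_{\overline{\gamma}}\varphi(x,-\xi)$. The main obstacle I expect is the careful bookkeeping of complex conjugations and $\xi$-reflections throughout, and of the dual role of $V_\psi f$ as both a smooth function and a continuous linear functional; once these identifications are nailed down, everything else reduces to Proposition~\ref{STFT-testfunctions} and the $L^2$-theory.
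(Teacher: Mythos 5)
Your proposal is correct and follows essentially the same route as the paper, which proves this proposition by combining Proposition \ref{STFT-testfunctions} with the $L^2$ reconstruction formula \eqref{reconstruction-L2} via the duality/transposition argument of \cite[Sect.\ 3]{K-P-S-V2016} — precisely the argument you reconstruct, including the key identities $\langle V_\psi f,\Phi\rangle=\langle f,V^\ast_{\overline{\psi}}\Phi_-\rangle$ and $\overline{V_\gamma\overline{\varphi}}(x,\xi)=V_{\overline{\gamma}}\varphi(x,-\xi)$. Your bookkeeping of conjugations, reflections, and the constant $(\overline{\psi},\overline{\gamma})_{L^2}=(\gamma,\psi)_{L^2}$ checks out.
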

\section{The Gelfand-Shilov type spaces $\mathcal{B}^*_\mathcal{W}(\R^d)$ and $\mathcal{B}^*_\mathcal{V}(\R^d)$}\label{sect-reg-ultra}
We now introduce new classes of Gelfand-Shilov type spaces as weighted spaces of ultradifferentiable functions. Let $w$ be a nonnegative function on $\R^d$ and let $M_p$ be a weight sequence. For $h > 0$ we write $\mathcal{B}^{M_p,h}_w(\R^d)$ for the seminormed space consisting of all $\varphi \in C^\infty(\R^d)$ such that
$$
\| \varphi \|_{\mathcal{B}^{M_p,h}_w} := \sup_{\alpha \in \N^d} \sup_{x \in \R^d} \frac{h^{|\alpha|}|\varphi^{(\alpha)}(x)|w(x)}{M_\alpha} < \infty.
$$ 
If $w$ is positive and $w^{-1}$ is locally bounded, then $\mathcal{B}^{M_p,h}_w(\R^d)$ is a Banach space. These requirements are fulfilled if $w$ is positive and continuous. We set
$$
\mathcal{B}^{(M_p)}_w(\R^d) := \varprojlim_{h \rightarrow \infty} \mathcal{B}^{M_p,h}_w(\R^d),  \qquad \mathcal{B}^{\{M_p\}}_w(\R^d) := \varinjlim_{h \rightarrow 0^+} \mathcal{B}^{M_p,h}_w(\R^d).
$$
Let $\mathcal{W} = (w_N)_{N}$ be an increasing weight system on $\R^d$. We define
$$
\mathcal{B}^{M_p,h}_\mathcal{W}(\R^d) := \varprojlim_{N \in \N} \mathcal{B}^{M_p,h}_{w_N}(\R^d)
$$
and 
$$
\mathcal{B}^{(M_p)}_\mathcal{W}(\R^d) := \varprojlim_{h \rightarrow \infty} \mathcal{B}^{M_p,h}_\mathcal{W}(\R^d), \qquad \mathcal{B}^{\{M_p\}}_\mathcal{W}(\R^d) :=  \varinjlim_{h \rightarrow 0^+} \mathcal{B}^{M_p,h}_\mathcal{W}(\R^d).
$$
The spaces $\mathcal{B}^{M_p,h}_\mathcal{W}(\R^d)$ and $\mathcal{B}^{(M_p)}_\mathcal{W}(\R^d)$ are Fr\'echet spaces while $\mathcal{B}^{\{M_p\}}_\mathcal{W}(\R^d)$ is an $(LF)$-space. Similarly, given a decreasing weight system $\mathcal{V} = (v_n)_{n}$ on $\R^d$  we define  
$$
\mathcal{B}^{(M_p)}_\mathcal{V}(\R^d) := \varinjlim_{n \in \N} \mathcal{B}^{(M_p)}_{v_n}(\R^d), \qquad \mathcal{B}^{\{M_p\}}_\mathcal{V}(\R^d) :=  \varinjlim_{n \in \N} \mathcal{B}^{\{M_p\}}_{v_n}(\R^d).
$$
The space $\mathcal{B}^{(M_p)}_\mathcal{V}(\R^d)$ is an $(LF)$-space while $\mathcal{B}^{\{M_p\}}_\mathcal{V}(\R^d)$  is an $(LB)$-space. The main goal of this section is to characterize the regularity properties of the $(LF)$-spaces $\mathcal{B}^{\{M_p\}}_\mathcal{W}(\R^d)$  and $\mathcal{B}^{(M_p)}_\mathcal{V}(\R^d)$ in terms of the weight systems $\mathcal{W}$ and $\mathcal{V}$, respectively. As a first step, we discuss when these spaces are boundedly stable.

\begin{proposition} \label{boundedly-stable}
Let $M_p$ be a weight sequence and let  $\mathcal{W} = (w_N)_{N}$ be an increasing weight system. Then, $\mathcal{B}^{\{M_p\}}_\mathcal{W}(\R^d)$ is boundedly stable.
\end{proposition}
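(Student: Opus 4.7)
The plan is to verify the definition of bounded stability directly via the standard Denjoy-Carleman style interpolation between two steps of the inductive spectrum. Fix $h_1 > 0$ and a bounded set $B \subset \mathcal{B}^{M_p,h_1}_\mathcal{W}(\R^d)$; by definition, for each $N \in \N$ there is $C_N = C_N(B) > 0$ with
$$
|\varphi^{(\alpha)}(x)|\, w_N(x) \leq C_N \, M_\alpha \, h_1^{-|\alpha|}, \qquad \varphi \in B,\ \alpha \in \N^d,\ x \in \R^d.
$$
I would then take $h_2 := h_1/2$ (any strictly smaller step would do) and show that for every $h_3 \in (0, h_2]$ the Fr\'echet topologies of $\mathcal{B}^{M_p,h_2}_\mathcal{W}(\R^d)$ and $\mathcal{B}^{M_p,h_3}_\mathcal{W}(\R^d)$ coincide on $B$.

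Since the step inclusion $\mathcal{B}^{M_p,h_2}_\mathcal{W}(\R^d) \hookrightarrow \mathcal{B}^{M_p,h_3}_\mathcal{W}(\R^d)$ is continuous, one direction is for free; only showing that the $\mathcal{B}^{M_p,h_2}_\mathcal{W}(\R^d)$-topology is dominated by the $\mathcal{B}^{M_p,h_3}_\mathcal{W}(\R^d)$-topology on $B$ needs work. For fixed $N \in \N$ and a threshold $A \in \N$, I would split the sup defining $\|\varphi\|_{\mathcal{B}^{M_p,h_2}_{w_N}}$ according to whether $|\alpha| \leq A$ or $|\alpha| > A$. The first range is bounded by $(h_2/h_3)^A \|\varphi\|_{\mathcal{B}^{M_p,h_3}_{w_N}}$; the second is bounded, using the a priori estimate above, by $C_N(h_2/h_1)^{|\alpha|} \leq C_N(h_2/h_1)^{A+1}$. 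Combining the two estimates gives, uniformly for $\varphi \in B$,
$$
\|\varphi\|_{\mathcal{B}^{M_p,h_2}_{w_N}} \leq \left(\frac{h_2}{h_3}\right)^{A} \|\varphi\|_{\mathcal{B}^{M_p,h_3}_{w_N}} + C_N\left(\frac{h_2}{h_1}\right)^{A+1}.
$$
Because $h_2/h_1 < 1$, for any prescribed $\varepsilon > 0$ one first chooses $A$ so large that the tail term falls below $\varepsilon/2$, and then requires $\|\varphi\|_{\mathcal{B}^{M_p,h_3}_{w_N}}$ to be small enough to make the first term below $\varepsilon/2$. Since $N$ was arbitrary, this shows that the two Fr\'echet topologies agree on $B$.

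I foresee no real obstacle: the weight system $\mathcal{W}$ enters the estimate only through the fixed factor $w_N$ on both sides of the inequality and is untouched by the $h$-interpolation, which explains why no regularity assumption on $\mathcal{W}$ is required by the proposition. The one subtlety worth flagging is that the strict drop $h_2 < h_1$ is essential -- it is precisely the geometric factor $(h_2/h_1)^{A+1} \to 0$ that swallows the high-order tail -- and this is exactly what is available since in the Roumieu limit $\varinjlim_{h \to 0^+}$ there is always room to pass to a strictly smaller step.
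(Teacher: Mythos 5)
Your argument is correct and is essentially the paper's own proof: both split the seminorm $\|\cdot\|_{\mathcal{B}^{M_p,h_2}_{w_N}}$ at a threshold $|\alpha|\leq A$ versus $|\alpha|>A$, control the tail by the a priori bound coming from boundedness in the $h_1$-step together with the geometric factor $(h_2/h_1)^{A}$, and control the low-order part by the coarser norm at a cost of $(h_2/h_3)^{A}$ (the paper takes a max where you take a sum, and works with general $0<h_3\leq h_2<h_1$ rather than fixing $h_2=h_1/2$, but these are cosmetic). The only detail worth adding is the reduction from ``the induced topologies on $B$ agree'' to the neighborhood-of-zero statement you actually prove, which the paper disposes of by noting one may translate and assume the point under consideration is $0\in B$.
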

\begin{proof}
Let $h > 0$ be arbitrary and let $B$ be a bounded set of $\mathcal{B}^{M_p,h}_\mathcal{W}(\R^d)$. We shall show that, for all $0 < l \leq k < h$, the spaces $\mathcal{B}^{M_p,k}_\mathcal{W}(\R^d)$ and $\mathcal{B}^{M_p,l}_\mathcal{W}(\R^d)$ induce the same topology on $B$. Clearly, it is enough to prove that the filter of neighborhoods of each point of $B$ induced by $\mathcal{B}^{M_p,l}_\mathcal{W}(\mathbb{R}^d)$ is finer than the one induced by $\mathcal{B}^{M_p,k}_\mathcal{W}(\mathbb{R}^d)$; we may of course assume without loss of generality that the point under consideration is $0\in B$. A basis of neighborhoods of $0$ in $\mathcal{B}^{M_p,k}_\mathcal{W}(\R^d)$ is given by
$$
U(N,\varepsilon) =  \{ \varphi \in \mathcal{B}^{M_p,k}_\mathcal{W}(\R^d) \, : \, \|\varphi\|_{\mathcal{B}^{M_p,k}_{w_N}} \leq \varepsilon \}, \qquad  N \in \N, \varepsilon > 0.
$$
Let $N \in \N$ and $\varepsilon > 0$ be arbitrary. Let $C > 0$ be such that $\|\varphi\|_{\mathcal{B}^{M_p,h}_{w_N}} \leq C$ for all $\varphi \in B$. Next, choose $N_0 \in \N$ so large that $(k/h)^{N_0} \leq \varepsilon /C$. Finally, set $\delta = (l/k)^{N_0}\varepsilon$ and 
$$
V = \{ \varphi \in \mathcal{B}^{M_p,l}_\mathcal{W}(\R^d) \, : \, \|\varphi\|_{\mathcal{B}^{M_p,l}_{w_N}} \leq \delta \},
$$
a  neighborhood of $0$ in $\mathcal{B}^{M_p,l}_\mathcal{W}(\R^d)$. For $\varphi \in B \cap V$ we have that
\begin{align*}
\|\varphi\|_{\mathcal{B}^{M_p,k}_{w_N}} &= \max \left \{ \sup_{|\alpha| \leq N_0 } \frac{k^{|\alpha|} \| \varphi^{(\alpha)}\|_{w_N}}{M_\alpha}, \sup_{|\alpha| \geq N_0 } \frac{k^{|\alpha|} \| \varphi^{(\alpha)}\|_{w_N}}{M_\alpha}  \right \} \\
&\leq \max \{ (k/l)^{N_0} \delta, (k/h)^{N_0} C \} \leq \varepsilon,
\end{align*} 
which means that $B \cap V \subseteq U(N,\varepsilon)$.
\end{proof}
\begin{proposition} \label{boundedly-stable-1}
Let $M_p$ be a weight sequence and let  $\mathcal{V} = (v_n)_{n}$ be a decreasing weight system that is regularly decreasing. Then, $\mathcal{B}^{\ast}_\mathcal{V}(\R^d)$ is  boundedly stable.
\end{proposition}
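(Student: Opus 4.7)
The plan is to adapt the strategy of Proposition~\ref{boundedly-stable} by combining an $\alpha$-truncation with an $x$-splitting furnished by the regularly decreasing assumption. To treat both cases on equal footing, I would first replace the defining spectrum in the Roumieu case by the cofinal Banach spectrum $\bigl(\mathcal{B}^{M_p,1/n}_{v_n}(\R^d)\bigr)_n$; in the Beurling case I retain the defining Fr\'echet spectrum $\bigl(\mathcal{B}^{(M_p)}_{v_n}(\R^d)\bigr)_n$. In either case a bounded set $B$ in the $n$-th step of the spectrum admits, for the relevant $h_0$, a constant $C$ with $\|\varphi\|_{\mathcal{B}^{M_p,h_0}_{v_n}} \leq C$ for all $\varphi \in B$ (with $h_0 = 1/n$ in the Roumieu case and $h_0$ arbitrary in the Beurling case).

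Given $n$, the regularly decreasing property provides some $m \geq n$ with the stated implication; after replacing $m$ by $m+1$ I may assume $m > n$ (the property passes to any larger index). Let $k > m$ and, given any $c > 0$, set $Y_c := \{x \in \R^d : v_m(x)/v_n(x) > c\}$. The regularly decreasing property, applied to $Y_c$, produces $c' = c'(c,k) > 0$ with $v_k(x)/v_n(x) \geq c'$ on $Y_c$; hence $v_m/v_k \leq 1/c'$ on $Y_c$, while $v_m \leq c v_n$ on the complement $Y_c^c$. The aim is to show that, for any seminorm $\|\cdot\|_{\mathcal{B}^{M_p,h}_{v_m}}$ defining the $E_m$-topology and any $\varepsilon > 0$, there exist a seminorm $\|\cdot\|_{\mathcal{B}^{M_p,h'}_{v_k}}$ defining the $E_k$-topology and $\delta > 0$ with $\{\varphi \in B : \|\varphi\|_{\mathcal{B}^{M_p,h'}_{v_k}} \leq \delta\} \subseteq \{\varphi \in B : \|\varphi\|_{\mathcal{B}^{M_p,h}_{v_m}} \leq \varepsilon\}$. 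Since $E_m \hookrightarrow E_k$, this will force $E_m$ and $E_k$ to induce the same topology on $B$.

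To produce the bound for $\varphi \in B$, I would split the supremum defining $\|\varphi\|_{\mathcal{B}^{M_p,h}_{v_m}}$ along $Y_c$ and $Y_c^c$, and, in the Roumieu case only, additionally split in $\alpha$ at a large threshold $N_0$. On $Y_c^c$ the estimate $v_m \leq c v_n$ together with the boundedness of $B$ in $E_n$ yields a contribution $\leq cC$. On $Y_c$ the estimate $v_m/v_k \leq 1/c'$ yields a contribution $(1/c')\|\varphi\|_{\mathcal{B}^{M_p,h}_{v_k}}$ in the Beurling case (same $h$ on both sides), and $(k/m)^{N_0}(1/c')\|\varphi\|_{\mathcal{B}^{M_p,1/k}_{v_k}}$ in the Roumieu case (the extra factor coming from $(k/m)^{|\alpha|}$ for $|\alpha| \leq N_0$). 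In the Roumieu case the high-$\alpha$ tail is absorbed via
\[
\frac{(1/m)^{|\alpha|}|\varphi^{(\alpha)}(x)|v_m(x)}{M_\alpha} \leq (n/m)^{|\alpha|}\frac{v_m(x)}{v_n(x)}\|\varphi\|_{\mathcal{B}^{M_p,1/n}_{v_n}} \leq C(n/m)^{N_0+1}.
\]
Picking first $N_0$ large (Roumieu only), then $c > 0$ small, and finally $\delta > 0$ small produces the required inequality.

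The main obstacle is the Roumieu case, where the $h$-index changes along the spectrum: the ratio $(k/m)^{|\alpha|}(v_m/v_k)(x)$ between the source and target $\alpha$-weights on $Y_c$ blows up in $|\alpha|$, which forces the truncation $|\alpha| \leq N_0$. The boundedness of $B$ then tames the tail $|\alpha| > N_0$ exactly as in Proposition~\ref{boundedly-stable}. The Beurling case is considerably easier because the $h$-index is shared by the source and target seminorms, so no $\alpha$-truncation is needed and the $x$-splitting alone suffices.
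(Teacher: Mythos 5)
Your proof is correct, but it takes a different route from the paper's. The paper reduces the statement to the bounded stability of the scalar weighted space $\mathcal{V}C(\R^d)$: it fixes $h$, maps a bounded set $B\subset\mathcal{B}^{(M_p)}_{v_n}(\R^d)$ to the bounded set $B'=\{h^{|\alpha|}\varphi^{(\alpha)}/M_\alpha\}$ of $Cv_n(\R^d)$, and invokes the cited fact that a regularly decreasing $\mathcal{V}$ makes $\mathcal{V}C(\R^d)$ boundedly retractive (hence boundedly stable by Theorem \ref{reg-cond}); the Roumieu case is only asserted to be ``similar.'' You instead unpack the regularly decreasing condition from first principles: the sets $Y_c=\{v_m/v_n>c\}$ give the dichotomy $v_m\leq v_k/c'$ on $Y_c$ and $v_m\leq cv_n$ off $Y_c$, which is exactly the two-region splitting needed, and in the Roumieu case you correctly combine it with the $\alpha$-truncation of Proposition \ref{boundedly-stable} to absorb the factor $(k/m)^{|\alpha|}$ caused by the changing $h$-index along the cofinal Banach spectrum $(\mathcal{B}^{M_p,1/n}_{v_n}(\R^d))_n$ (a legitimate replacement by Grothendieck's factorization theorem). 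Your choices are consistently ordered ($N_0$, then $c$, then $c'$, then $\delta$), and your observation that only one direction of the topology comparison is nontrivial is right. What the paper's approach buys is brevity, at the price of importing Bierstedt's nontrivial theorem as a black box; what yours buys is a self-contained elementary argument that, in particular, makes the Roumieu case fully explicit rather than leaving it to the reader.
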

\begin{proof}
We only prove the Beurling case, the Roumieu case is similar. Recall that, since $\mathcal{V}$ is regularly decreasing, $\mathcal{V}C(\R^d)$ is boundedly stable (cf.\ Subsection \ref{sect-reg-cont}). Let $n \in \N$ be arbitrary and choose $m \geq n$ such that for all $k \geq m$ the spaces $Cv_k(\R^d)$ and $Cv_m(\R^d)$ induce the same topology on the bounded sets of $Cv_n(\R^d)$. Now let $B$ be a bounded set of $\mathcal{B}^{(M_p)}_{v_n}(\R^d)$. We shall show that, for all $k \geq m$, the spaces  $\mathcal{B}^{(M_p)}_{v_m}(\R^d)$ and $\mathcal{B}^{(M_p)}_{v_k}(\R^d)$ induce the same topology on $B$. As before, we may assume that $0\in B$ and we only verify that the filter of neighborhoods of $0$ induced by $\mathcal{B}^{(M_p)}_{v_k}(\mathbb{R}^d)$ in $B$ is finer than that induced by $\mathcal{B}^{(M_p)}_{v_m}(\mathbb{R}^d)$.  A basis of neighborhoods of $0$ in $\mathcal{B}^{(M_p)}_{v_m}(\R^d)$ given by 
$$
U(h,\varepsilon) =  \{ \varphi \in \mathcal{B}^{(M_p)}_{v_m}(\R^d) \, : \, \|\varphi \|_{\mathcal{B}^{M_p,h}_{v_m}} \leq \varepsilon \}, \qquad h,\varepsilon > 0.
$$
Let $h,\varepsilon > 0$ be arbitrary. Since $B$ is a bounded subset of $\mathcal{B}^{(M_p)}_{v_n}(\R^d)$, the set 
$$
B' = \left \{ \frac{h^{|\alpha|}\varphi^{(\alpha)}}{M_\alpha} \, : \, \alpha \in \N^d, \varphi \in B \right \}
$$
is bounded in $Cv_n(\R^d)$. Consider the following neighborhood of $0$ in $Cv_m(\R^d)$
$$
U' = \{ f \in Cv_m(\R^d) \, : \, \|f\|_{v_m} \leq \varepsilon\}.
$$
As $Cv_m(\R^d)$ and $Cv_k(\R^d)$ induce the same topology on $B'$, there is $\delta > 0$ such that, for
$$
V' = \{ f \in Cv_k(\R^d) \, : \, \|f\|_{v_k} \leq \delta\},
$$
it holds that $B' \cap V' \subseteq U'$. Set 
$$
V = \{ \varphi \in \mathcal{B}^{(M_p)}_{v_k}(\R^d) \, : \, \|\varphi \|_{\mathcal{B}^{M_p,h}_{v_k}} \leq \delta \},
$$
a neighborhood of $0$ in $\mathcal{B}^{(M_p)}_{v_k}(\R^d)$. Let $\varphi \in B \cap V$. Then, $h^{|\alpha|}\varphi^{(\alpha)}/M_\alpha \in B' \cap V' \subseteq U'$ for all $\alpha \in \N^d$ and, thus, $\varphi \in U$. Whence $B \cap V \subseteq U(h,\varepsilon)$.
\end{proof}
For later use, we remark that Proposition \ref{boundedly-stable} and Proposition \ref{boundedly-stable-1} can be improved if we assume that $\mathcal{W} $ and $\mathcal{V}$ satisfy \eqref{decay-weights} and \eqref{decay-weights-1}, respectively. Namely, in such a case all of the spaces  $\mathcal{B}^*_\mathcal{W}(\R^d)$ and $\mathcal{B}^*_\mathcal{V}(\R^d)$ are even Schwartz. To prove this, we need the following simple lemma whose verification is left to the reader.
\begin{lemma}\label{compact-inclusion}
Let $M_p$ be a weight sequence, let $w$ and $v$ be positive continuous functions on $\R^d$ such that $v/w$ vanishes at $\infty$, and let $0 < k <h$. Then, the inclusion mapping $\mathcal{B}^{M_p,h}_w(\R^d) \rightarrow \mathcal{B}^{M_p,k}_v(\R^d)$ is compact.
\end{lemma}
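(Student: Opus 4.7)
The plan is to prove compactness by showing that every bounded sequence in $\mathcal{B}^{M_p,h}_w(\R^d)$ has a subsequence converging in $\mathcal{B}^{M_p,k}_v(\R^d)$. Let $(\varphi_n)$ be such a sequence with $\|\varphi_n\|_{\mathcal{B}^{M_p,h}_w}\leq 1$. Since $w$ is positive and continuous, $w^{-1}$ is locally bounded, so for each multi-index $\alpha$ the functions $\varphi_n^{(\alpha)}$ are uniformly bounded on every compact set, and the bound on $\varphi_n^{(\alpha+e_j)}$ gives equicontinuity. A standard diagonal Arzel\`a--Ascoli argument produces a subsequence---still written $(\varphi_n)$---that converges in $C^\infty(\R^d)$ to some $\varphi$. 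Taking $\liminf$ inside the defining supremum shows $\varphi\in\mathcal{B}^{M_p,h}_w(\R^d)$ with $\|\varphi\|_{\mathcal{B}^{M_p,h}_w}\leq 1$. Setting $\psi_n := \varphi_n-\varphi$, it suffices to prove $\psi_n \to 0$ in $\mathcal{B}^{M_p,k}_v(\R^d)$; note $\|\psi_n\|_{\mathcal{B}^{M_p,h}_w}\leq 2$ and $\psi_n \to 0$ in $C^\infty(\R^d)$.

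The heart of the argument is to exploit the two sources of decay simultaneously: the factor $(k/h)^{|\alpha|}<1$ controls large $|\alpha|$, while $v(x)/w(x)\to 0$ controls large $|x|$. Set $C_1 := \sup_{x\in\R^d} v(x)/w(x)$, finite because $v/w$ is continuous and vanishes at infinity. Given $\varepsilon>0$, choose $N_0\in\N$ with $(k/h)^{N_0}\cdot 2C_1\leq\varepsilon$ and then $R>0$ with $v(x)/w(x)\leq \varepsilon/2$ for $|x|\geq R$. For arbitrary $x\in\R^d$ and $|\alpha|\geq N_0$,
\begin{equation*}
\frac{k^{|\alpha|}|\psi_n^{(\alpha)}(x)|v(x)}{M_\alpha} = \Bigl(\frac{k}{h}\Bigr)^{|\alpha|}\frac{v(x)}{w(x)}\cdot\frac{h^{|\alpha|}|\psi_n^{(\alpha)}(x)|w(x)}{M_\alpha} \leq \Bigl(\frac{k}{h}\Bigr)^{N_0}\! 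C_1\cdot 2\leq\varepsilon,
\end{equation*}
and the same computation for $|\alpha|\leq N_0$ but $|x|\geq R$ gives a bound $\leq \varepsilon$ as well.

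It remains to handle the region $\{|\alpha|\leq N_0,\ |x|\leq R\}$. On the compact ball $\{|x|\leq R\}$, the $C^\infty$-convergence $\psi_n\to 0$ means uniform convergence of each derivative $\psi_n^{(\alpha)}$; since only finitely many multi-indices $\alpha$ are involved and $v$ is bounded on this compact set,
\begin{equation*}
\sup_{|\alpha|\leq N_0}\ \sup_{|x|\leq R} \frac{k^{|\alpha|}|\psi_n^{(\alpha)}(x)|v(x)}{M_\alpha}\ \longrightarrow\ 0.
\end{equation*}
Combining the three estimates yields $\|\psi_n\|_{\mathcal{B}^{M_p,k}_v}\leq 2\varepsilon$ for all sufficiently large $n$, which proves the lemma. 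The only genuinely substantive step is the Arzel\`a--Ascoli extraction; both sources of decay are essential, since dropping either ($k=h$, or $v/w$ merely bounded) makes the inclusion fail to be compact.
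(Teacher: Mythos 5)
Your proof is correct: the diagonal Arzel\`a--Ascoli extraction followed by the three-region estimate (large $|\alpha|$ via $(k/h)^{|\alpha|}$, large $|x|$ via $v/w\to 0$, and the remaining compact region via $C^\infty$-convergence) is exactly the standard argument, and each step checks out. The paper leaves this verification to the reader, so there is no authorial proof to compare against, but this is clearly the intended route.
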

Lemma \ref{compact-inclusion} yields:
\begin{proposition} \label{LFS-1}
Let $M_p$ be a weight sequence and let  $\mathcal{W} = (w_n)_{n}$ be an increasing weight system satisfying \eqref{decay-weights}. Then, $\mathcal{B}^{(M_p)}_\mathcal{W}(\R^d)$ is an $(FS)$-space while  $\mathcal{B}^{\{M_p\}}_\mathcal{W}(\R^d)$ is an $(LFS)$-space.
\end{proposition}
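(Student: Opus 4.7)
Both parts will follow from Lemma~\ref{compact-inclusion} combined with the decay condition~\eqref{decay-weights}, which I would use first to extract a subsequence $N_1 < N_2 < \cdots$ of indices with $w_{N_k}/w_{N_{k+1}}$ vanishing at infinity.

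For the Beurling space, since both $h$ and $N$ increase the seminorm $\|\cdot\|_{\mathcal{B}^{M_p,h}_{w_N}}$, any pair of sequences with $h_k \nearrow \infty$ and $N_k \nearrow \infty$ yields an equivalent fundamental system of seminorms. Taking $h_k = k$ together with the $N_k$ above, I would rewrite
$$
\mathcal{B}^{(M_p)}_\mathcal{W}(\R^d) = \varprojlim_{k \in \N} \mathcal{B}^{M_p,h_k}_{w_{N_k}}(\R^d),
$$
and Lemma~\ref{compact-inclusion} makes every linking map compact, so $\mathcal{B}^{(M_p)}_\mathcal{W}(\R^d)$ is an $(FS)$-space.

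The Roumieu case is more delicate because in the natural defining spectrum $\mathcal{B}^{M_p,h_n}_\mathcal{W}(\R^d) = \varprojlim_N \mathcal{B}^{M_p,h_n}_{w_N}(\R^d)$ the parameter $h_n$ is fixed along $N$, so Lemma~\ref{compact-inclusion} does not apply to a single Fréchet step. My idea is to replace the natural spectrum by a different family of Fréchet spaces that \emph{are} $(FS)$, while preserving the inductive limit as a locally convex space. Set $H_n := 1/n$ and $h_{n,k} := H_n(1-2^{-k})$, so that $(h_{n,k})_k$ is strictly increasing in $k$ with supremum $H_n$ and $h_{n+1,k} < h_{n,k}$. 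Define
$$
F_n := \varprojlim_{k \in \N} \mathcal{B}^{M_p,h_{n,k}}_{w_{N_k}}(\R^d).
$$
Then Lemma~\ref{compact-inclusion} yields that each $F_n$ is $(FS)$, and the strict inequality $h_{n+1,k} < h_{n,k}$ gives continuous inclusions $F_n \hookrightarrow F_{n+1}$. The final step is to establish the topological sandwich
$$
\mathcal{B}^{M_p,H_n}_\mathcal{W}(\R^d) \hookrightarrow F_n \hookrightarrow \mathcal{B}^{M_p,h_{n,1}}_\mathcal{W}(\R^d),
$$
which I would verify by matching seminorms: the left inclusion is immediate from $h_{n,k} < H_n$, and for the right one, given any $N$, the increasing nature of $\mathcal{W}$ lets me choose $k$ with $N_k \geq N$ so that $\|\varphi\|_{\mathcal{B}^{M_p,h_{n,1}}_{w_N}} \leq \|\varphi\|_{\mathcal{B}^{M_p,h_{n,k}}_{w_{N_k}}}$. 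Since $H_n = 1/n \to 0^+$ and $h_{n,1} = 1/(2n) \to 0^+$, the outer inductive limits over $n$ both exhaust $\mathcal{B}^{\{M_p\}}_\mathcal{W}(\R^d)$ as l.c.s., whence $\mathcal{B}^{\{M_p\}}_\mathcal{W}(\R^d) = \varinjlim_n F_n$, an $(LFS)$-space.

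The main difficulty is precisely this Roumieu sandwich: one must arrange the two-parameter family $(h_{n,k}, N_k)$ so that the $F_n$ are simultaneously $(FS)$, nest continuously inside each other, and have their defining spectra squeezed between two natural Fréchet steps whose inductive limits both coincide with $\mathcal{B}^{\{M_p\}}_\mathcal{W}(\R^d)$. The explicit choice $h_{n,k} = \frac{1}{n}(1-2^{-k})$ balances these conflicting requirements in a transparent way.
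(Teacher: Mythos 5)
Your proof is correct and follows essentially the route the paper intends: Proposition~\ref{LFS-1} is stated there as a direct consequence of Lemma~\ref{compact-inclusion}, and your interpolation of the $h$-parameter in the Roumieu case is the exact mirror of the weight interpolation $v_n^{N}$ carried out in the paper's proof of the companion Proposition~\ref{LFS-2}. The sandwich argument and the verification that the three inductive spectra are mutually interleaved are both sound.
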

\begin{proposition} \label{LFS-2}
Let $M_p$ be a weight sequence and let  $\mathcal{V} = (v_n)_{n}$ be a decreasing weight system satisfying \eqref{decay-weights-1}. Then, the space $\mathcal{B}^{(M_p)}_\mathcal{V}(\R^d)$ is an $(LFS)$-space while $\mathcal{B}^{\{M_p\}}_\mathcal{V}(\R^d)$ is a $(DFS)$-space.
\end{proposition}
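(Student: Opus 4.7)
The condition \eqref{decay-weights-1}, together with the fact that cofinal sub-spectra yield the same inductive limit, lets me pass to a subsequence of $\mathcal{V}$ and assume without loss of generality that $v_{n+1}/v_n$ vanishes at infinity for every $n \in \N$. The $(DFS)$-assertion is then straightforward: $\mathcal{B}^{\{M_p\}}_\mathcal{V}(\R^d)$ is the inductive limit, over the countable directed set of pairs $(n,h)$ with $n\in\N$ and $h>0$, of the Banach spaces $\mathcal{B}^{M_p,h}_{v_n}(\R^d)$, and the diagonal $(k,1/k)$ is cofinal. Applied with $v_{k+1}/v_k$ vanishing at infinity and $1/(k+1)<1/k$, Lemma \ref{compact-inclusion} shows that the linking maps of the cofinal spectrum $(\mathcal{B}^{M_p, 1/k}_{v_k})_k$ are compact, yielding the $(DFS)$-property at once.

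The $(LFS)$-assertion is more delicate, because the natural Fr\'echet steps $\mathcal{B}^{(M_p)}_{v_n}(\R^d)$ are \emph{not} in general $(FS)$-spaces: within each step the weight is fixed, so Lemma \ref{compact-inclusion} cannot furnish compact linking maps between the seminorms defining the projective topology. The plan is to replace the natural spectrum by one whose steps interpolate two consecutive weights. Concretely, I would define
$$
F_n := \varprojlim_{k \in \N} \mathcal{B}^{M_p, k}_{w^{(n)}_k}(\R^d), \qquad w^{(n)}_k := v_n^{1-1/k}\, v_{n+1}^{1/k}.
$$
A direct computation gives $w^{(n)}_k/w^{(n)}_{k+1} = (v_{n+1}/v_n)^{1/(k(k+1))}$, which vanishes at infinity since $v_{n+1}/v_n \leq 1$ and $v_{n+1}/v_n \to 0$. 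Combined with $k < k+1$, Lemma \ref{compact-inclusion} shows that each linking map $\mathcal{B}^{M_p, k+1}_{w^{(n)}_{k+1}}(\R^d) \to \mathcal{B}^{M_p, k}_{w^{(n)}_k}(\R^d)$ is compact, so every $F_n$ is an $(FS)$-space.

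To close the argument, I would use the elementary pointwise estimates $v_{n+1} \leq w^{(n)}_k \leq v_n$ and $w^{(n+1)}_k \leq v_{n+1}$ to deduce continuous inclusions $\mathcal{B}^{(M_p)}_{v_n}(\R^d) \hookrightarrow F_n \hookrightarrow \mathcal{B}^{(M_p)}_{v_{n+1}}(\R^d) \hookrightarrow F_{n+1}$ by direct comparison of the defining seminorms. Set-theoretically this forces $\bigcup_n F_n = \bigcup_n \mathcal{B}^{(M_p)}_{v_n}(\R^d) = \mathcal{B}^{(M_p)}_\mathcal{V}(\R^d)$, and since the spectra $(F_n)_n$ and $(\mathcal{B}^{(M_p)}_{v_n})_n$ interleave one another, they define the same inductive locally convex topology. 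This exhibits $\mathcal{B}^{(M_p)}_\mathcal{V}(\R^d)$ as an inductive limit of $(FS)$-spaces, i.e., as an $(LFS)$-space. The main obstacle of the proof lies in \emph{finding} the interpolation weight $w^{(n)}_k$; once it is in place, the remaining verifications reduce to routine pointwise monotonicity of weights.
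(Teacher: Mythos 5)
Your proof is correct and follows essentially the same route as the paper: the paper likewise reduces to the case where $v_{n+1}/v_n$ vanishes at infinity and builds interpolating weights between $v_{n+1}$ and $v_n$ (via iterated geometric means $v_n^1=\sqrt{v_nv_{n+1}}$, etc.) whose consecutive quotients vanish at infinity, so that Lemma \ref{compact-inclusion} makes the new Fr\'echet steps Schwartz and the interleaved spectra give the same inductive limit. Your explicit choice $w^{(n)}_k = v_n^{1-1/k}v_{n+1}^{1/k}$ is just a closed-form variant of the paper's inductive construction, and the $(DFS)$ part matches the paper's one-line appeal to Lemma \ref{compact-inclusion}.
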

\begin{proof}
The fact that $\mathcal{B}^{\{M_p\}}_\mathcal{V}(\R^d)$ is a $(DFS)$-space follows directly from Lemma \ref{compact-inclusion}. Next, we consider the space $\mathcal{B}^{(M_p)}_\mathcal{V}(\R^d)$. We may assume without loss of generality that $v_{n+1}/v_n$ vanishes at $\infty$ for all $n \in \N$. Fix $n \in \N$ and set $v^{1}_n := \sqrt{v_nv_{n+1}}$. Notice that $v^{1}_n$ is a positive continuous function such that $v_{n+1} \leq v_n^{1} \leq v_n$ and $v_{n+1}/v^1_n$ vanishes at $\infty$. Hence we can inductively define a sequence $(v^N_n)_{N \in \N}$ of positive continuous functions such that $v_{n + 1} \leq v^1_{n} \leq \ldots \leq v^N_n \leq v^{N+1}_n \leq \ldots \leq v_{n}$ and $v^N_n/v^{N+1}_n$ vanishes at $\infty$ for each $N \in \N$. We have that
$$
\mathcal{B}^{(M_p)}_\mathcal{V}(\R^d) = \varinjlim_{n \in \N} \varprojlim_{N \in \N} \mathcal{B}^{M_p,N}_{v^N_n}(\R^d)
$$
as locally convex spaces. Moreover, the Fr\'echet spaces
$$
 \varprojlim_{N \in \N} \mathcal{B}^{M_p,N}_{v^N_n}(\R^d)
$$
are Schwartz because of Lemma \ref{compact-inclusion}.
\end{proof}
We are ready to study the regularity properties of the $(LF)$-spaces $\mathcal{B}^{\{M_p\}}_\mathcal{W}(\R^d)$ and  $\mathcal{B}^{(M_p)}_\mathcal{V}(\R^d)$. First we need to introduce some more notation.

Let $M_p$ be a non-quasianalytic weight sequence and let $K$ be a regular compact subset of $\R^d$. As customary \cite{Komatsu}, we denote by $\mathcal{D}^{M_p,h}_K$, $h >0$,  the Banach space consisting of all $\varphi \in C^\infty(\R^d)$ with $\operatorname{supp} \varphi \subseteq K$ such that
$$
\| \varphi \|_{\mathcal{D}^{M_p,h}_K} := \sup_{\alpha \in \N^d} \sup_{x \in K} \frac{h^{|\alpha|}|\varphi^{(\alpha)}(x)|}{M_\alpha} < \infty
$$
and set
$$
\mathcal{D}^{(M_p)}_K = \varprojlim_{h \to \infty}\mathcal{D}^{M_p,h}_K, \qquad  \mathcal{D}^{\{M_p\}}_K = \varinjlim_{h \to 0^+}\mathcal{D}^{M_p,h}_K.
$$
We shall also use the following mild condition on an increasing weight system $\mathcal{W} = (w_N)_N$:
\begin{equation}
\forall N \in \N \, \exists M \geq N \, \exists C > 0 \, \forall x \in \R^d \,: \, \sup_{y \in [-1,1]^d} w_N(x +y) \leq Cw_M(x).
\label{translation-weak}
\end{equation}

 \begin{theorem}\label{reg-Beurling}
 Let $M_p$ be a weight sequence and let $\mathcal{W} = (w_N)_{N}$ be an increasing weight system. Consider the following conditions:
\begin{itemize}
\item[$(i)$] $\mathcal{W}$ satisfies $(DN)$.
\item[$(ii)$] $\mathcal{B}^{\{M_p\}}_\mathcal{W}(\R^d)$ is boundedly retractive.
\item[$(iii)$] $\mathcal{B}^{\{M_p\}}_\mathcal{W}(\R^d)$ is complete.
\item[$(iv)$] $\mathcal{B}^{\{M_p\}}_\mathcal{W}(\R^d)$ is regular.
\item[$(v)$] $\mathcal{B}^{\{M_p\}}_\mathcal{W}(\R^d)$ satisfies $(wQ)$.
\end{itemize}
Then, $(i) \Rightarrow (ii)  \Rightarrow (iii)  \Rightarrow (iv)  \Rightarrow (v) \Rightarrow (ii)$. Moreover, if $M_p$ satisfies $(M.1)$, $(M.2)$, and $(M.3)$, and $\mathcal{W}$ satisfies \eqref{translation-weak}, then $(v) \Rightarrow (i)$. 
\end{theorem}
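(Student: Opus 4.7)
The plan is to derive the cycle $(ii) \Leftrightarrow (iii) \Leftrightarrow (iv) \Leftrightarrow (v)$ from off-the-shelf $(LF)$-space theory combined with the bounded stability already supplied by Proposition \ref{boundedly-stable}, and then to treat the two implications involving $(i)$ separately. Specifically, $(ii) \Rightarrow (iii)$ is the last statement of Theorem \ref{reg-cond}; $(iii) \Rightarrow (iv)$ follows from Grothendieck's factorization theorem; $(iv) \Rightarrow (v)$ is the result of Vogt cited in Subsection \ref{sect-reg-cond}; and $(v) \Rightarrow (ii)$ is obtained by combining $(wQ)$ with the bounded stability of $\mathcal{B}^{\{M_p\}}_\mathcal{W}(\R^d)$ and invoking Theorem \ref{reg-cond} once more.

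For $(i) \Rightarrow (v)$ my plan is a pointwise weighted AM--GM estimate. By iterating the base inequality $w_M^2 \leq C w_N w_K$ of $(DN)$ inside itself I would produce, for the fixed $N$ furnished by $(DN)$, any $M > N$, and any dyadic exponent $a = 1 - 2^{-j}$ (and hence, using $w_N \leq w_K$, any $a \in (0,1)$ bounded away from $1$), an index $K = K(M,a)$ and a constant $C$ with $w_M \leq C w_N^a w_K^{1-a}$. Given $h_n > h_m$ from the inductive spectrum of $\mathcal{B}^{\{M_p\}}_\mathcal{W}(\R^d)$ and any $h_k < h_m$, I would then set $a := (\log h_m - \log h_k)/(\log h_n - \log h_k) \in (0,1)$ so that $h_m^{|\alpha|} = h_n^{a|\alpha|} h_k^{(1-a)|\alpha|}$. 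Splitting $M_\alpha = M_\alpha^a M_\alpha^{1-a}$ and $|\varphi^{(\alpha)}(x)| = |\varphi^{(\alpha)}(x)|^a |\varphi^{(\alpha)}(x)|^{1-a}$, the interpolation applied pointwise in $(\alpha, x)$ yields
\[
\frac{h_m^{|\alpha|}|\varphi^{(\alpha)}(x)|w_M(x)}{M_\alpha} \leq C \left(\frac{h_n^{|\alpha|}|\varphi^{(\alpha)}(x)|w_N(x)}{M_\alpha}\right)^{a}\left(\frac{h_k^{|\alpha|}|\varphi^{(\alpha)}(x)|w_K(x)}{M_\alpha}\right)^{1-a};
\]
taking suprema and invoking Young's inequality $t^a s^{1-a} \leq t + s$ delivers precisely the $(wQ)$-estimate.

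The main obstacle is the converse $(v) \Rightarrow (i)$. The hypotheses $(M.1), (M.2), (M.3)$ guarantee a nontrivial cutoff $\eta \in \mathcal{D}^{\{M_p\}}_{[-1,1]^d}$ with $\eta(0) = 1$, and \eqref{translation-weak} lets me bound $\sup_{y \in x + [-1,1]^d} w_N(y) \leq C w_{N^{\ast}}(x)$ with $N^{\ast}$ depending only on $N$. The plan is to probe $(wQ)$ with a carefully chosen family of translates and dilates $\eta_{\lambda, x}(y) := \eta(\lambda(y - x))$ of $\eta$ localized at $x \in \R^d$; the translation condition converts the seminorms of these test functions into pointwise quantities of the form $A(h\lambda) w_{N^{\ast}}(x)$ (upper bound) and $B(h\lambda) w_M(x)$ (lower bound), so that $(wQ)$ produces a pointwise additive inequality of the schematic form
\[
B(h_m\lambda) w_M(x) \leq C (A(h_n\lambda) w_{N^{\ast}}(x) + A(h_k\lambda) w_{K^{\ast}}(x)).
\]
The hard part is to upgrade this \emph{additive} estimate into the \emph{multiplicative} interpolation $w_M^2 \leq C w_N w_K$ demanded by $(DN)$. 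My plan for this is to optimize $\lambda = \lambda(x)$ so that the two terms on the right become comparable at the geometric mean $\sqrt{w_{N^{\ast}}(x) w_{K^{\ast}}(x)}$, and to read off a square-root-type bound on $w_M(x)$ at the optimum. Making this optimization rigorous is where all four hypotheses enter: $(M.1), (M.2), (M.3)$ govern the admissible growth rates of $A$ and $B$ and allow the construction of $\eta$ whose natural lower and upper norms grow at matched rates that admit such balancing, while \eqref{translation-weak} is needed to absorb the translated indices $N^{\ast}, K^{\ast}$ into new ones of the same character, yielding $(DN)$ for $\mathcal{W}$.
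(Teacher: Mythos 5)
Your reduction of the cycle $(ii)\Rightarrow(iii)\Rightarrow(iv)\Rightarrow(v)\Rightarrow(ii)$ matches the paper exactly (general $(LF)$-theory plus Proposition \ref{boundedly-stable} and Theorem \ref{reg-cond}). Your argument for $(i)\Rightarrow(v)$ is correct and is a genuinely different, more elementary route than the paper's: the paper proves $(i)\Rightarrow(ii)$ directly by embedding $\mathcal{B}^{\{M_p\}}_\mathcal{W}(\R^d)$ into $\mathcal{V}C(\N^d;\mathcal{W}C(\R^d))$ and invoking Albanese's Theorem \ref{completeness-F} together with Proposition \ref{(DN)-Omega-1}, whereas your iterated $(DN)$-interpolation $w_M\leq Cw_N^aw_K^{1-a}$ combined with the exact splitting $h_m^{|\alpha|}=h_n^{a|\alpha|}h_k^{(1-a)|\alpha|}$ and Young's inequality gives $(wQ)$ by a two-line pointwise computation. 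One small imprecision: the exponents $a=(\log h_m-\log h_k)/(\log h_n-\log h_k)$ you need are \emph{not} bounded away from $1$ (they tend to $1$ as $k\to\infty$), but this is harmless since for each fixed $k$ you may take a dyadic exponent $1-2^{-j}\geq a$ and then lower it to $a$ using $w_N\leq w_K$; the quantifier order of $(wQ)$ lets $K$ depend on $k$.

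The genuine gap is in $(v)\Rightarrow(i)$. First, the probe family is wrong: for a dilate $\eta_{\lambda,x}=\eta(\lambda(\cdot-x))$ with $\eta\in\mathcal{D}^{M_p,r}_{[-1,1]^d}$, the quantity $A(h\lambda)=\sup_\alpha (h\lambda)^{|\alpha|}\|\eta^{(\alpha)}\|_\infty/M_\alpha$ is bounded for $h\lambda\leq r$ and generically infinite for $h\lambda$ large, so the admissible range of $\lambda$ is bounded and the profile is essentially a step function rather than a function with associated-function growth. Moreover, since $A$ and $B$ are both increasing with $B\leq A$ and $h_n>h_m>h_k$, the coefficient $A(h_n\lambda)/B(h_m\lambda)$ of the $w_{N^\ast}$-term is always $\geq 1$, and the coefficient $A(h_k\lambda)/B(h_m\lambda)$ of the $w_{K^\ast}$-term cannot be driven to $0$ on a bounded $\lambda$-range; hence the balancing $\lambda=\lambda(x)$ that should produce $w_M\lesssim\sqrt{w_{N^\ast}w_{K^\ast}}$ is not available. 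The probes that do work are \emph{modulated} translates $M_\xi T_x\eta$, whose norms behave two-sidedly like $w(x)e^{M(ch|\xi|)}$ (cf.\ Lemma \ref{time-freq-norm}), so that the ratio $e^{M(ch_k|\xi|)-M(ch_m|\xi|)}$ does tend to $0$ as $|\xi|\to\infty$; this is precisely the structure the paper exploits implicitly by passing, via translates only, to the Meise--Taylor isomorphism $\mathcal{D}^{\{M_p\}}_{[-1,1]^d}\cong\Lambda_0'(\beta)$ with $\beta=(M(j^{1/d}))_j$ and then quoting the Vogt-type Proposition \ref{W-fixed-1}, where the additive-to-multiplicative upgrade (including the discretization error controlled by the shift-stability \eqref{stable}) is carried out once and for all. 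Second, even granting a usable probe, the upgrade from the additive $(wQ)$-inequality to the multiplicative $(DN)$-inequality is exactly the content you defer ("the hard part"), so as written the implication is a plan rather than a proof.
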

\begin{proof} The chain of implications
$(ii)  \Rightarrow (iii)  \Rightarrow (iv)  \Rightarrow (v)$ holds for general $(LF)$-spaces (cf.\ Subsection \ref{sect-reg-cond}), while $(v) \Rightarrow (ii)$ follows from Theorem \ref{reg-cond} and Proposition \ref{boundedly-stable}. We now show $(i) \Rightarrow (ii)$. Set $E = \mathcal{W}C(\R^d)$. We first assume that $E$ is normable. Then there is $N_0 \in \N$ such that $E$ is topologically isomorphic to $Cw_{N_0}(\R^d)$. Consequently, $\mathcal{B}^{\{M_p\}}_\mathcal{W}(\R^d)$ is topologically isomorphic to the $(LB)$-space $\mathcal{B}^{\{M_p\}}_{w_{N_0}}(\R^d)$. Applying Proposition \ref{boundedly-stable} to the constant weight system $\mathcal{W} = (w_{N_0})_{N \in \N}$, we obtain that  $\mathcal{B}^{\{M_p\}}_{w_{N_0}}(\R^d)$ is boundedly stable and, thus, boundedly retractive by Theorem \ref{reg-cond}. Next, we assume that $E$ is non-normable. By Theorem \ref{reg-cond} it suffices to show that $\mathcal{B}^{\{M_p\}}_\mathcal{W}(\R^d)$ is sequentially retractive. Let $(\varphi_j)_{j \in \N}$ be a null sequence in $\mathcal{B}^{\{M_p\}}_\mathcal{W}(\R^d)$. We set $X = \N^d$ (endowed with the discrete topology) and 
$$
\mathcal{V} = (v_n)_{n}, \qquad  v_n(\alpha) = n^{-|\alpha|}, \qquad \alpha \in \N^d.
$$
Notice that $\mathcal{V}$ satisfies $(\Omega)$. Clearly, the mapping 
$$
T: \mathcal{B}^{\{M_p\}}_\mathcal{W}(\R^d) \rightarrow \mathcal{V}C(\N^d;E): \varphi \mapsto \left( \frac{\varphi^{(\alpha)}}{M_\alpha}\right)_{\alpha \in \N^d}
$$
is continuous. Moreover, by Theorem \ref{completeness-F} and Proposition \ref{(DN)-Omega-1}, $\mathcal{V}C(\N^d;E)$ is sequentially retractive. Hence, there is $n \in \N$ such that the sequence $(T(\varphi_j))_{j}$ is contained and converges to zero in $Cv_n(\N^d;E)$, which precisely means that $(\varphi_j)_{j}$ is contained and converges to zero in $\mathcal{B}^{M_p,1/n}_{\mathcal{W}}(\R^d)$. Finally, we assume that $M_p$ satisfies $(M.1)$, $(M.2)$, and $(M.3)$, and that $\mathcal{W}$ satisfies \eqref{translation-weak}, and show $(v) \Rightarrow (i)$. By \cite[Cor.\ 4.10]{M-T} we have that $\mathcal{D}^{\{M_p\}}_{[-1,1]^d} \cong \Lambda'_0(\beta)$ as l.c.s., where $\beta = ({M(j^{1/d})})_{j}$. The sequence $\beta$ satisfies \eqref{stable} because of \cite[Lemma 4.1]{Komatsu}. Hence,  by Proposition \ref{W-fixed-1} and Remark \ref{S-2-inv-1}, it suffices to show that $(\mathcal{W}, \mathcal{D}^{\{M_p\}}_{[-1,1]^d})$ satisfies $(S_2)^*$. Since $\mathcal{B}^{\{M_p\}}_\mathcal{W}(\R^d)$ satisfies $(wQ)$, we have that
\begin{gather*}
\forall n \in \N \, \exists m > n \, \exists N \in \N \, \forall k > m \, \forall M \in \N \, \exists K \in \N \, \exists C > 0\, \forall \varphi \in \mathcal{B}^{M_p, 1/n}_\mathcal{W}(\R^d) \, : \\
\|\varphi\|_{\mathcal{B}^{M_p, 1/m}_{w_M}} \leq C\Big(\|\varphi\|_{\mathcal{B}^{M_p, 1/n}_{w_N}} + \|\varphi\|_{\mathcal{B}^{M_p, 1/k}_{w_K}}\Big).
\end{gather*}
Let $\varphi \in \mathcal{D}^{M_p,h}_{[-1,1]^d}$, $h > 0$, and $x \in \R^d$ be arbitrary. Then, the translation $T_x\varphi \in \mathcal{B}^{M_p,h}_\mathcal{W}(\R^{d})$ and it holds that
$$
 \inf_{y \in [-1,1]^d}w_N(x+y) \|\varphi\|_{\mathcal{D}^{M_p,h}_{[-1,1]^d}} \leq  \| T_x\varphi\|_{\mathcal{B}^{M_p,h}_{w_N}} \leq
 \sup_{y \in [-1,1]^d}w_N(x+y) \|\varphi\|_{\mathcal{D}^{M_p,h}_{[-1,1]^d}}
$$
for all $N \in \N$. Therefore, condition \eqref{translation-weak} implies that
\begin{gather*}
\forall n \in \N \, \exists m > n \, \exists N \in \N \, \forall k > m \, \forall M \in \N \, \exists K \in \N \, \exists C > 0\, \forall \varphi \in \mathcal{D}^{M_p, 1/n}_{[-1,1]^d} \, \forall x \in \R^d : \\
\|\varphi\|_{\mathcal{D}^{M_p,{1/m}}_{[-1,1]^d}} w_M(x) \leq C\Big(\|\varphi\|_{\mathcal{D}^{M_p,{1/n}}_{[-1,1]^d}}w_N(x) +  \|\varphi\|_{\mathcal{D}^{M_p,{1/k}}_{[-1,1]^d}} w_K(x)\Big).
\end{gather*}
\end{proof}
Let $\mathcal{V} = (v_n)_{n}$ be a decreasing weight system. We introduce the condition:
 \begin{equation}
\forall n \in \N \, \exists m \geq n \, \exists C > 0 \, \forall x \in \R^d \,: \, \sup_{y \in [-1,1]^d} v_m(x +y) \leq Cv_n(x).
 \label{translation-weak-1}
 \end{equation}

\begin{theorem}\label{reg-Roumieu-1}
Let $M_p$ be a weight sequence and let $\mathcal{V} = (v_n)_{n}$ be a decreasing weight system. Consider the following conditions:
\begin{itemize}
\item[$(i)$] $\mathcal{V}$ satisfies $(\Omega)$.
\item[$(ii)$] $\mathcal{B}^{(M_p)}_\mathcal{V}(\R^d)$ is boundedly retractive.
\item[$(iii)$] $\mathcal{B}^{(M_p)}_\mathcal{V}(\R^d)$ is complete.
\item[$(iv)$] $\mathcal{B}^{(M_p)}_\mathcal{V}(\R^d)$ is regular.
\item[$(v)$] $\mathcal{B}^{(M_p)}_\mathcal{V}(\R^d)$ satisfies $(wQ)$.
\end{itemize}
Then, $(i) \Rightarrow (ii)  \Rightarrow (iii)  \Rightarrow (iv) \Rightarrow (v)$ and, if $\mathcal{V}$ is regularly decreasing, $(v) \Rightarrow (ii)$. Moreover, if $M_p$ satisfies $(M.1)$, $(M.2)$, and $(M.3)$, and $\mathcal{V}$ satisfies \eqref{translation-weak-1}, then $(v) \Rightarrow (i)$. 
\end{theorem}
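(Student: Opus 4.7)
The proof will closely parallel that of Theorem \ref{reg-Beurling}, with the roles of the two weight systems and of the Beurling/Roumieu structure interchanged. The chain $(ii) \Rightarrow (iii) \Rightarrow (iv) \Rightarrow (v)$ is automatic for $(LF)$-spaces (Subsection \ref{sect-reg-cond}). For $(v) \Rightarrow (ii)$ under the additional assumption that $\mathcal{V}$ is regularly decreasing, I would combine $(wQ)$ with the bounded stability provided by Proposition \ref{boundedly-stable-1} and invoke Theorem \ref{reg-cond}.

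For $(i) \Rightarrow (ii)$, the idea is to represent $\mathcal{B}^{(M_p)}_{\mathcal{V}}(\R^d)$ as a subspace of a weighted $(LF)$-space of Fr\'echet-valued continuous functions covered by Albanese's Theorem \ref{completeness-F}. Let $E$ be the Fr\'echet space of all sequences $(c_\alpha)_{\alpha \in \N^d}$ equipped with the fundamental increasing sequence of seminorms $\|c\|_h := \sup_{\alpha \in \N^d} h^{|\alpha|}|c_\alpha|$, $h \in \N$, and consider the ``Taylor coefficient'' mapping
$$
T: \mathcal{B}^{(M_p)}_\mathcal{V}(\R^d) \to \mathcal{V}C(\R^d;E), \qquad T\varphi(x) := \bigl(\varphi^{(\alpha)}(x)/M_\alpha\bigr)_{\alpha \in \N^d}.
$$
A direct computation yields $\sup_{x \in \R^d} \|T\varphi(x)\|_h \, v_n(x) = \|\varphi\|_{\mathcal{B}^{M_p,h}_{v_n}}$, so $T$ is a topological embedding at each step of the defining inductive spectrum, with $T^{-1}(Cv_n(\R^d;E)) = \mathcal{B}^{(M_p)}_{v_n}(\R^d)$ as topological subspaces. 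Since $m_p \to \infty$, the space $E$ is non-normable (test with sequences supported on a single multi-index of arbitrarily large length), and it satisfies $(DN)$ via the elementary estimate $\|c\|_h^2 \leq \|c\|_1 \|c\|_{h^2}$. Together with the hypothesis that $\mathcal{V}$ has $(\Omega)$, Proposition \ref{(DN)-Omega-1} yields $(S_2)^\ast$ for $(E, \mathcal{V})$, and Theorem \ref{completeness-F} then gives the bounded (hence sequential) retractivity of $\mathcal{V}C(\R^d;E)$. A null sequence $(\varphi_j)_j$ in $\mathcal{B}^{(M_p)}_\mathcal{V}(\R^d)$ is sent by $T$ to a null sequence in $\mathcal{V}C(\R^d;E)$, which must lie in and tend to zero in some $Cv_n(\R^d;E)$; the seminorm identity pulls this back to show that $(\varphi_j)_j$ is contained in and converges to zero in $\mathcal{B}^{(M_p)}_{v_n}(\R^d)$. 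Sequential retractivity of $\mathcal{B}^{(M_p)}_\mathcal{V}(\R^d)$ combined with Theorem \ref{reg-cond} then yields (ii).

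For $(v) \Rightarrow (i)$ under $(M.1)$, $(M.2)$, $(M.3)$, and \eqref{translation-weak-1}, I would invoke the Beurling analogue of \cite[Cor.\ 4.10]{M-T} to identify $\mathcal{D}^{(M_p)}_{[-1,1]^d} \cong \Lambda_\infty(\beta)$ for $\beta = (M(j^{1/d}))_j$, shift-stable by \cite[Lemma 4.1]{Komatsu}. By Proposition \ref{E-fixed-1} and Remark \ref{S-2-inv}, it then suffices to show that $(\mathcal{D}^{(M_p)}_{[-1,1]^d}, \mathcal{V})$ satisfies $(S_2)^\ast$. I would apply the $(wQ)$-estimate to the translates $T_x\varphi$ of $\varphi \in \mathcal{D}^{(M_p)}_{[-1,1]^d}$ and exploit the sandwich
$$
\inf_{y \in [-1,1]^d} v_\ell(x+y) \cdot \|\varphi\|_{\mathcal{D}^{M_p,h}_{[-1,1]^d}} \leq \|T_x\varphi\|_{\mathcal{B}^{M_p,h}_{v_\ell}} \leq \sup_{y \in [-1,1]^d} v_\ell(x+y) \cdot \|\varphi\|_{\mathcal{D}^{M_p,h}_{[-1,1]^d}},
$$
converting the local suprema and infima of $v_\ell$ to pointwise values via \eqref{translation-weak-1}; substituting $x \mapsto x - y$ there yields $v_{m'}(x) \leq C \inf_{y \in [-1,1]^d} v_m(x+y)$ for some $m' \geq m$, which produces exactly the $v_{m'}(x)$ factor needed on the LHS of $(S_2)^\ast$. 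The main technical obstacle is the resulting index juggling: \eqref{translation-weak-1} only permits conversions $v_\ell(x+y) \leftrightsquigarrow v_{\ell'}(x)$ at the cost of shifting the weight-system index by $\geq$, so one has to apply it both before and after $(wQ)$ in order to end up with indices $n < m < k$ in the ordering required by $(S_2)^\ast$.
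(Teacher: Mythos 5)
Your proposal follows the paper's proof essentially step for step: the same chain of trivial implications for $(ii)\Rightarrow(iii)\Rightarrow(iv)\Rightarrow(v)$, the same combination of Proposition \ref{boundedly-stable-1} with Theorem \ref{reg-cond} for $(v)\Rightarrow(ii)$ under regular decrease, the same embedding $\varphi \mapsto (\varphi^{(\alpha)}/M_\alpha)_{\alpha}$ into $\mathcal{V}C(\R^d;E)$ with the same Fr\'echet space $E$ (combined with Proposition \ref{(DN)-Omega-1} and Theorem \ref{completeness-F}) for $(i)\Rightarrow(ii)$, and the same reduction of $(v)\Rightarrow(i)$ to the condition $(S_2)^{\ast}$ for $(\mathcal{D}^{(M_p)}_{[-1,1]^d},\mathcal{V})\cong(\Lambda_\infty(\beta),\mathcal{V})$ via translates $T_x\varphi$ and condition \eqref{translation-weak-1}. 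The only differences are that you make explicit some details the paper leaves implicit (non-normability of $E$ and its $(DN)$ property, and the index bookkeeping requiring \eqref{translation-weak-1} both before and after the $(wQ)$ estimate), all of which are correct.
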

\begin{proof}
Again, the implications $(ii)  \Rightarrow (iii)  \Rightarrow (iv)  \Rightarrow (v)$ hold for general $(LF)$-spaces  (cf.\ Subsection \ref{sect-reg-cond}), while $(v) \Rightarrow (ii)$ (under the extra assumption that $\mathcal{V}$ is regularly decreasing) follows from Theorem \ref{reg-cond} and Proposition \ref{boundedly-stable-1}.  We now show $(i) \Rightarrow (ii)$.
By Theorem \ref{reg-cond} it suffices to show that $\mathcal{B}^{(M_p)}_\mathcal{V}(\R^d)$ is sequentially retractive.
Let $(\varphi_j)_{j \in \N}$ be a null sequence in $\mathcal{B}^{(M_p)}_\mathcal{V}(\R^d)$. We define $E$ as the Fr\'echet space consisting of all $(c_\alpha)_{\alpha} \in \C^{\N^d}$ such that
$$
 \sup_{\alpha \in \N^d} h^{|\alpha|} |c_\alpha| < \infty 
$$
for all $h > 0$. Notice that $E$ satisfies $(DN)$. Clearly, the mapping 
$$
T: \mathcal{B}^{(M_p)}_\mathcal{V}(\R^d) \rightarrow \mathcal{V}C(\R^d;E): \varphi \mapsto \left( \frac{\varphi^{(\alpha)}}{M_\alpha}\right)_{\alpha \in \N^d}
$$
is continuous. Moreover, by Theorem \ref{completeness-F} and Proposition \ref{(DN)-Omega-1}, $\mathcal{V}C(\N^d;E)$ is sequentially retractive. Hence there is $n \in \N$ such that the sequence $(T(\varphi_j))_{j}$ is contained and converges to zero in $Cv_n(\N^d;E)$, which precisely means that $(\varphi_j)_{j}$ is contained and converges to zero in $\mathcal{B}^{(M_p)}_{v_n}(\R^d)$. Next, we assume that  $M_p$ satisfies $(M.1)$, $(M.2)$, and $(M.3)$, and that $\mathcal{V}$ satisfies \eqref{translation-weak-1}, and show $(v) \Rightarrow (i)$. By \cite[Cor.\ 4.3]{M-T} we have that $\mathcal{D}^{(M_p)}_{[-1,1]^d} \cong \Lambda_\infty(\beta)$ as l.c.s., where $\beta = (e^{M(j^{1/d})})_{j}$. The sequence $\beta$ satisfies \eqref{stable} because of \cite[Lemma 4.1]{Komatsu}. Hence, by Proposition \ref{E-fixed-1} and Remark \ref{S-2-inv}, it suffices to show that $(\mathcal{D}^{(M_p)}_{[-1,1]^d}, \mathcal{V})$ satisfies $(S_2)^*$; but this can be established as in the last part of the proof of Theorem \ref{reg-Beurling}.
 \end{proof}
We end this section by giving several examples. Let $A_p$ be a weight sequence, we define the following weight systems
\begin{equation}
\mathcal{W}_{(A_p)} := (e^{A(N\cdot)})_{N \in \N}, \qquad \mathcal{W}_{\{A_p\}} := (e^{-A(\cdot /N)})_{N \in \N},
\label{WS-1}
\end{equation}
\begin{equation}
\mathcal{V}_{(A_p)} := (e^{-A(n\cdot)})_{n \in \N}, \qquad \mathcal{V}_{\{A_p\}} := (e^{A(\cdot /n)})_{n \in \N}.
\label{WS-2}
\end{equation}
We use the ensuing notation for the associated Gelfand-Shilov type spaces 
\begin{equation}
\mathcal{S}^{(M_p)}_{(A_p)}(\R^d) = \mathcal{B}^{(M_p)}_{\mathcal{W}_{(A_p)}}(\R^d), \qquad \mathcal{S}^{\{M_p\}}_{\{A_p\}}(\R^d) = \mathcal{B}^{\{M_p\}}_{\mathcal{V}_{\{A_p\}}}(\R^d),
\label{GS-1}
\end{equation}
\begin{equation}
\mathcal{S}^{(M_p)}_{\{A_p\}}(\R^d) := \mathcal{B}^{(M_p)}_{\mathcal{V}_{\{A_p\}}}(\R^d), \qquad \mathcal{S}^{\{M_p\}}_{(A_p)}(\R^d) := \mathcal{B}^{\{M_p\}}_{\mathcal{W}_{(A_p)}}(\R^d),
\label{GS-2}
\end{equation}
\begin{equation}
\mathcal{O}_C^{(M_p),(A_p)}(\R^d) := \mathcal{B}^{(M_p)}_{\mathcal{V}_{(A_p)}}(\R^d), \qquad \mathcal{O}_C^{\{M_p\},\{A_p\}}(\R^d):= \mathcal{B}^{\{M_p\}}_{\mathcal{W}_{\{A_p\}}}(\R^d),
\label{OC-1}
\end{equation}
\begin{equation}
\mathcal{O}_C^{(M_p),\{A_p\}}(\R^d) := \mathcal{B}^{(M_p)}_{\mathcal{W}_{\{A_p\}}}(\R^d), \qquad \mathcal{O}_C^{\{M_p\},(A_p)}(\R^d) := \mathcal{B}^{\{M_p\}}_{\mathcal{V}_{(A_p)}}(\R^d).
\label{OC-2}
\end{equation}
The spaces \eqref{GS-1} are the classical Gelfand-Shilov spaces already introduced in Subsection \ref{gelfand-shilov-spaces}, while the spaces \eqref{GS-2} may be viewed as mixed type Gelfand-Shilov spaces. The spaces \eqref{OC-1} and  \eqref{OC-2} are the natural analogue of the space $\mathcal{O}_C(\R^d)$ with respect to the spaces \eqref{GS-1} and  \eqref{GS-2}, respectively; we also mention that related spaces\footnote{Indeed our $\mathcal{O}_C^{(M_p),(M_p)}(\R^d)$ coincides with $\mathcal{O}_C^{(M_p)}(\R^d)$ from \cite{D-P-P-V}; on the other hand, it should be noticed that our space $\mathcal{O}_C^{
\{M_p\},\{M_p\}}(\R^d)$ differs from the one denoted by $\mathcal{O}_C^{\{M_p\}}(\R^d)$ in \cite{D-P-P-V}.}
  have been studied in \cite[Sect.\ 3]{D-P-P-V}.

In order to be able to apply Theorem \ref{reg-Beurling} and Theorem \ref{reg-Roumieu-1} to the special cases under consideration, we first study the properties of the weight systems  \eqref{WS-1} and  \eqref{WS-2}. As customary, we write $a_p:= A_p/A_{p-1}$, $p \geq 1$, and 
$$
a(t) := \sum_{a_p \leq t} 1, \qquad t \geq 0,
$$
for the counting function of the sequence $a_p$. If $A_p$ satisfies $(M.1)$, then \cite[Eq.\ (3.11)]{Komatsu}
\begin{equation}
A(t) = \int_0^{t} \frac{a(\lambda)}{\lambda} \dl, \qquad t \geq 0.
\label{representation-ass}
\end{equation}
\begin{lemma}
Let $A_p$ be a weight sequence satisfying $(M.1)$. Then,
\begin{itemize}
\item[$(i)$] $\mathcal{W}_{(A_p)}$ and $\mathcal{W}_{\{A_p\}}$ satisfy \eqref{decay-weights} and \eqref{translation-weak}, while $\mathcal{V}_{(A_p)}$ and $\mathcal{V}_{\{A_p\}}$ satisfy \eqref{decay-weights-1} and \eqref{translation-weak-1}.
\item[$(ii)$] $\mathcal{W}_{(A_p)}$ satisfies $(DN)$.
\item[$(iii)$] $\mathcal{V}_{\{A_p\}}$ satisfies $(\Omega)$.
\item[$(iv)$] $\mathcal{W}_{\{A_p\}}$ satisfies $(DN)$ if and only if 
\begin{equation}
\forall h > 0 \, \exists k > 0 \, \exists C > 0 \, \forall t \geq 0 \, : \, A(t) + A(kt) \leq 2A(ht) + C.
\label{Lang-cond}
\end{equation}
\item[$(v)$] If $A_p$ satisfies $(M.2)$, then $\mathcal{V}_{(A_p)}$ satisfies $(\Omega)$ if and only if $A_p$ satisfies $(M.2)^*$.
\end{itemize}
\end{lemma}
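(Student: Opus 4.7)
I use the integral representation $A(t) = \int_0^t a(\lambda)/\lambda\, \dl$ from \eqref{representation-ass} (valid under $(M.1)$, with $a$ the non-decreasing counting function of $a_p$) together with the resulting convexity of $B(u) := A(e^u)$ on $\R$ (since $B'(u) = a(e^u)$ is non-decreasing). All five parts then reduce to manipulations with these two structural facts.

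\textbf{Parts (i)--(iii).} For $(i)$, each of the four decay assertions follows from $A(Ms) - A(Ns) \geq a(Ns) \log(M/N) \to \infty$ as $s \to \infty$, yielding the vanishing at infinity of every quotient. Each translation assertion is verified by splitting $|x|$ into a large regime where $|x \pm y| \leq 2|x|$ for $|y| \leq \sqrt{d}$ (so doubling the dilation parameter and invoking monotonicity of $A$ suffices) and a bounded regime handled by a uniform constant (using $A(0) = 0$). For $(ii)$, taking $N = 1$ and $K = M^2$, monotonicity of $a$ gives $A(Mt) - A(t) \leq a(Mt) \log M \leq A(M^2 t) - A(Mt)$, i.e., the $(DN)$-inequality $2A(Mt) \leq A(t) + A(M^2 t)$ with $C = 1$. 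For $(iii)$, the required inequality $A(|x|/m) \leq (1-\theta) A(|x|/n) + \theta A(|x|/k)$ is precisely the convexity of $B$ at the forced convex combination $\log(|x|/m) = (1-\theta) \log(|x|/n) + \theta \log(|x|/k)$, with $\theta := \log(m/n)/\log(k/n) \in (0,1)$ and no additive constant needed.

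\textbf{Part (iv).} The substitution $s = |x|/M$ recasts the $(DN)$-inequality for $\mathcal{W}_{\{A_p\}}$ as $A(h_1 s) + A(k_1 s) \leq 2A(s) + \log C$ with $h_1 := M/N > 1$ and $k_1 := M/K \in (0,1)$, while the substitution $s := ht$ in \eqref{Lang-cond} yields the same shape with $h' := 1/h > 0$. Both implications are then a matter of matching quantifiers. For $(DN) \Rightarrow$ \eqref{Lang-cond}: if $h \geq 1$ take $k = 1$, $C = 0$; if $h \in (0,1)$, choose an integer $M > N$ with $M/N \geq 1/h$ and apply $(DN)$, using monotonicity of $A$ together with $(M/N) h \geq 1$ to absorb $A(t)$ into $A((M/N) ht)$. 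Conversely, fix $N = 1$; for each $M > 1$, apply \eqref{Lang-cond} with $h = 1/M$ to obtain $k > 0$, and pick an integer $K > M$ with $K \geq 1/k$, so that $A(|x|/K) \leq A(k|x|)$ by monotonicity of $A$.

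\textbf{Part (v) and main obstacle.} For $(\Omega) \Rightarrow (M.2)^*$, apply $(\Omega)$ to $\mathcal{V}_{(A_p)}$ with $n = 1$ and $k = 2m$; dropping the non-negative term $(1-\theta) A(t)$ from the left-hand side leaves $\theta A(2mt) \leq A(mt) + \log C$, i.e., $A(2s) \leq (1/\theta) A(s) + (\log C)/\theta$ for $s = mt$, which under $(M.1)$ and $(M.2)$ is equivalent to $(M.2)^*$ by the criterion displayed right after $(M.3)$ in the preliminaries. Conversely, assume $A(2t) \leq H' A(t) + \log C_0'$. Fix $n$, set $m = 2n$, and for each $k > m$ take $j := \lceil \log_2 (k/n) \rceil$. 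Iteration of the hypothesis gives $A(kt) \leq H'^{j} A(nt) + C'_j$, while a parallel iteration of $a(2\lambda) \leq Q a(\lambda) + Q$ (a direct consequence of $(M.2)^*$) gives $a(kt) \leq Q^{j} a(nt) + Q_j$. Combined with $A(kt) - A(nt) \leq a(kt) \log(k/n)$ and $A(2nt) - A(nt) \geq a(nt) \log 2$ from the integral representation, this yields, for all $t$ with $a(nt) \geq 1$, a uniform estimate $A(kt) - A(nt) \leq L(A(2nt) - A(nt)) + D$ with $L, D$ depending only on $n, k$; setting $\theta = 1/L$ then delivers $(\Omega)$. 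The main obstacle is precisely this uniform ratio control: in the degenerate regime $nt < a_1$ where $A(nt) = 0$ and the ratio is ill-defined, one observes that $A(kt) \leq A(k a_1/n)$ is bounded by a finite constant, which is absorbed into the additive constant $\log C$.
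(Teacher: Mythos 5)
Your proposal is correct and follows essentially the same route as the paper: everything is reduced to the integral representation $A(t)=\int_0^t a(\lambda)\lambda^{-1}\,{\rm d}\lambda$ and counting-function estimates of the form $a(r s)\leq a(s)\log(r)$-type bounds and $a(lt)\leq m\,a(ht)+C$ derived from $(M.2)^*$, which is exactly what the paper does in parts $(iii)$ and $(v)$ (and leaves implicit in the parts it declares obvious). The only cosmetic differences are that you prove $(ii)$ via the integral representation rather than the paper's one-line supremum identity $2A(ht)\leq A(t)+A(h^2t)$, and that you package $(iii)$ as convexity of $u\mapsto A(e^u)$, which is the same underlying fact.
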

\begin{proof}
$(i)$ Obvious.

$(ii)$ It suffices to notice that for all $h > 0$ we have that
$$
2A(ht) = \sup_{p \in \N} \log \frac{(ht)^{2p}A_0^2}{A^2_p} =  \sup_{p \in \N} \left( \log \frac{t^pA_0}{A_p} + \log \frac{(h^2t)^pA_0}{A_p}\right) \leq A(t) + A(h^2t).
$$

$(iii)$ It suffices to show that 
$$
\forall h > 0 \, \exists k < h \, \forall l < k \, \exists C > 0 \, \forall t \geq 0 \, : \, (C+1)A(kt) \leq CA(ht) + A(lt),
$$
which is equivalent to
$$
A(kt) - A(lt) \leq C(A(ht) - A(kt)).
$$
Set $k = h/e$. By \eqref{representation-ass} we have that
$$
A((ht)/e) - A(lt) \leq \log(h/(le)) a((ht)/e) \leq  \log(h/(le))(A(ht) - A((ht)/e).
$$

$(iv)$ Clear.

$(v)$ The direct implication follows from the fact that $A_p$ satisfies $(M.2)^*$ if and only if (cf.\ Subsection \ref{gelfand-shilov-spaces})
$$
A(2t) \leq H'A(t) + \log C'_0, \qquad t \geq 0,
$$
for some $H',C'_0 \geq  1$. Conversely, assume that $A_p$ satisfies $(M.2)^*$.  It suffices to show that 
$$
\forall h > 0 \, \exists k > h \, \forall l > k \, \exists C, C' > 0 \, \forall t \geq 0 \, : \, -(C+1)A(kt) \leq -CA(ht) - A(lt) + C',
$$
which is equivalent to
$$
A(lt) - A(kt) \leq C(A(kt) - A(ht)) + C'.
$$
Condition $(M.2)^*$ implies that there $C, m > 0$ such that 
$$
a(lt) \leq ma(ht) + C, \qquad t \geq 0.
$$
Set $k = he$.  Hence
\begin{align*}
A(lt) -A(het) &\leq \log (l/(he))a(lt) \\
&\leq \log (l/(he))ma(ht) + \log (l/(he)) C \\
&\leq  \log (l/(he))m(A(het) - A(ht)) + \log (l/(he))C.
\end{align*}
\end{proof}

We then have,

\begin{corollary}
\label{cor 1 complete special cases}
Let $M_p$ and $A_p$ be weight sequences satisfying $(M.1)$. Then,
\begin{itemize}
\item[$(i)$] $\mathcal{S}^{(M_p)}_{\{A_p\}}(\R^d)$ and $\mathcal{S}^{\{M_p\}}_{(A_p)}(\R^d)$ are complete.
\item[$(ii)$] Assume that $A_p$ satisfies $(M.2)$. $\mathcal{O}_C^{(M_p),(A_p)}(\R^d)$ is complete if $A_p$ satisfies $(M.2)^\ast$. If, in addition, $M_p$ satisfies $(M.1)$, $(M.2)$, and $(M.3)$, then $\mathcal{O}_C^{(M_p),(A_p)}(\R^d)$ is complete if and only if  $A_p$ satisfies $(M.2)^\ast$. 
\item[$(iii)$] $\mathcal{O}_C^{\{M_p\},\{A_p\}}(\R^d)$ is complete if $A_p$ satisfies \eqref{Lang-cond}. If $M_p$ additionally satisfies $(M.1)$, $(M.2)$, and $(M.3)$, then $\mathcal{O}_C^{\{M_p\},\{A_p\}}(\R^d)$ is complete if and only if $A_p$ satisfies \eqref{Lang-cond}.
\end{itemize}
\end{corollary}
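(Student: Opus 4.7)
The entire corollary reduces to Theorem \ref{reg-Beurling} and Theorem \ref{reg-Roumieu-1} once the preceding lemma is at hand. Indeed, each of the concrete spaces listed is, by the definitions \eqref{GS-1}--\eqref{OC-2}, one of the two generic spaces $\mathcal{B}^{\{M_p\}}_{\mathcal{W}}(\R^d)$ or $\mathcal{B}^{(M_p)}_{\mathcal{V}}(\R^d)$ with $\mathcal{W}\in\{\mathcal{W}_{(A_p)},\mathcal{W}_{\{A_p\}}\}$ or $\mathcal{V}\in\{\mathcal{V}_{(A_p)},\mathcal{V}_{\{A_p\}}\}$. Consequently, completeness is controlled by the condition $(DN)$ or $(\Omega)$ on the underlying weight system, and the lemma translates these abstract conditions into concrete conditions on $A_p$.

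For $(i)$, both assertions require only the implication $(i)\Rightarrow(iii)$ of the relevant theorem, so no extra hypothesis on $M_p$ is needed. The space $\mathcal{S}^{(M_p)}_{\{A_p\}}(\R^d)$ equals $\mathcal{B}^{(M_p)}_{\mathcal{V}_{\{A_p\}}}(\R^d)$ and $\mathcal{V}_{\{A_p\}}$ satisfies $(\Omega)$ by part $(iii)$ of the lemma, so Theorem \ref{reg-Roumieu-1} delivers completeness. Analogously, $\mathcal{S}^{\{M_p\}}_{(A_p)}(\R^d) = \mathcal{B}^{\{M_p\}}_{\mathcal{W}_{(A_p)}}(\R^d)$ and $\mathcal{W}_{(A_p)}$ satisfies $(DN)$ by part $(ii)$ of the lemma, so Theorem \ref{reg-Beurling} delivers completeness.

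For $(ii)$, apply Theorem \ref{reg-Roumieu-1} to $\mathcal{O}_C^{(M_p),(A_p)}(\R^d) = \mathcal{B}^{(M_p)}_{\mathcal{V}_{(A_p)}}(\R^d)$. Sufficiency of $(M.2)^*$ follows from the equivalence in part $(v)$ of the lemma combined with $(i)\Rightarrow(iii)$. For the converse, the assumption that $M_p$ satisfies $(M.1),(M.2),(M.3)$ together with condition \eqref{translation-weak-1} for $\mathcal{V}_{(A_p)}$ (part $(i)$ of the lemma) activates the sharp implication $(v)\Rightarrow(i)$ of Theorem \ref{reg-Roumieu-1}. Since completeness forces $(v)$ via the generic chain $(iii)\Rightarrow(iv)\Rightarrow(v)$ for $(LF)$-spaces, we obtain $(\Omega)$ for $\mathcal{V}_{(A_p)}$, and part $(v)$ of the lemma then returns $(M.2)^*$ for $A_p$.

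Part $(iii)$ is handled symmetrically using Theorem \ref{reg-Beurling} applied to $\mathcal{O}_C^{\{M_p\},\{A_p\}}(\R^d) = \mathcal{B}^{\{M_p\}}_{\mathcal{W}_{\{A_p\}}}(\R^d)$: part $(iv)$ of the lemma identifies $(DN)$ for $\mathcal{W}_{\{A_p\}}$ with condition \eqref{Lang-cond}, while under the additional assumptions $(M.2),(M.3)$ on $M_p$ and the property \eqref{translation-weak} for $\mathcal{W}_{\{A_p\}}$ (part $(i)$ of the lemma), the implication $(v)\Rightarrow(i)$ of Theorem \ref{reg-Beurling} yields the converse. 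The real work has already been carried out in the two completeness theorems and in the lemma; no genuine obstacle remains beyond carefully matching each concrete space with the correct weight system and invoking the appropriate part of the lemma.
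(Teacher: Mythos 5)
Your proposal is correct and follows exactly the intended route: the paper states this corollary with no written proof precisely because it is the immediate combination of the preceding lemma (identifying $(DN)$, $(\Omega)$, \eqref{translation-weak}, and \eqref{translation-weak-1} for the concrete weight systems \eqref{WS-1}--\eqref{WS-2}) with Theorems \ref{reg-Beurling} and \ref{reg-Roumieu-1}. Your matching of each space to the correct generic family and your use of the chain $(iii)\Rightarrow(iv)\Rightarrow(v)$ plus $(v)\Rightarrow(i)$ for the converse directions are all as the authors intend.
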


\begin{remark}\label{rk incomplete}
Condition \eqref{Lang-cond} is  satisfied by the so-called $q$-Gevrey sequences $A_p = q^{p^2}$, $q > 1$. On the other hand, it is very important to point out that if $A_p$ satisfies $(M.1)$ and $(M.2)$, then \eqref{Lang-cond} cannot hold for $A_p$. For example, this is always the case for the Gevrey sequences $A_p = p^\sigma$, $\sigma > 0$. We can thus supplement Corollary \ref{cor 1 complete special cases} as follows,

\end{remark}
\begin{corollary}
\label{cor 2 complete special cases}
Suppose that $M_p$ satisfies $(M.1)$, $(M.2)$, and $(M.3)$, while $A_p$ satisfies $(M.1)$ and $(M.2)$. Then, the space $\mathcal{O}_C^{\{M_p\},\{A_p\}}(\R^d)$ is incomplete.
\end{corollary}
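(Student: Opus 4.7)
The plan is to reduce the corollary to the ``only if'' direction of Corollary \ref{cor 1 complete special cases}$(iii)$ by verifying the assertion stated without proof in Remark \ref{rk incomplete}, namely that if $A_p$ satisfies $(M.1)$ and $(M.2)$ then condition \eqref{Lang-cond} must fail. Since the hypotheses of Corollary \ref{cor 2 complete special cases} include $(M.1)$, $(M.2)$, $(M.3)$ for $M_p$, Corollary \ref{cor 1 complete special cases}$(iii)$ then forces $\mathcal{O}_C^{\{M_p\},\{A_p\}}(\R^d)$ to be incomplete.

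To see that \eqref{Lang-cond} fails under $(M.1)$ and $(M.2)$ for $A_p$, I would invoke the characterization recorded in Subsection \ref{gelfand-shilov-spaces}: under $(M.1)$, condition $(M.2)$ is equivalent to the existence of constants $C_0, H \geq 1$ with
\[
2A(t) \leq A(Ht) + \log C_0, \qquad t \geq 0,
\]
or, equivalently, $2A(t/H) \leq A(t) + \log C_0$ for all $t \geq 0$. Now suppose, for a contradiction, that \eqref{Lang-cond} holds. Applying it with $h := 1/H$, there exist $k > 0$ and $C > 0$ such that
\[
A(t) + A(kt) \leq 2A(t/H) + C \leq A(t) + \log C_0 + C, \qquad t \geq 0,
\]
which gives $A(kt) \leq \log C_0 + C$ uniformly in $t \geq 0$. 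Since $A_p$ is a weight sequence, $a_p \to \infty$, hence the associated function $A$ is unbounded on $[0,\infty)$, yielding the desired contradiction.

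Combining this with Corollary \ref{cor 1 complete special cases}$(iii)$ (whose ``only if'' part requires precisely the assumptions $(M.1)$, $(M.2)$, $(M.3)$ on $M_p$), the incompleteness of $\mathcal{O}_C^{\{M_p\},\{A_p\}}(\R^d)$ follows. I do not anticipate any genuine obstacle here; the whole argument is a short contradiction using the equivalent reformulation of $(M.2)$ in terms of the associated function $A$, together with the unboundedness of $A$.
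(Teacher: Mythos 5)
Your proposal is correct and follows exactly the route the paper takes: the paper derives this corollary from Corollary \ref{cor 1 complete special cases}$(iii)$ together with the assertion in Remark \ref{rk incomplete} that \eqref{Lang-cond} fails whenever $A_p$ satisfies $(M.1)$ and $(M.2)$. Your verification of that assertion (applying \eqref{Lang-cond} with $h=1/H$ and using the reformulation $2A(t/H)\leq A(t)+\log C_0$ of $(M.2)$ to force $A(kt)$ to be bounded, contradicting the unboundedness of the associated function) is sound and supplies a detail the paper leaves unproved.
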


\section{The ultradistribution spaces $\mathcal{B}'^*_\mathcal{W}(\R^d)$ and $\mathcal{B}'^*_\mathcal{V}(\R^d)$}\label{sect-duals}
The aim of this section is to study the dual spaces $\mathcal{B}'^*_\mathcal{W}(\R^d)$ and $\mathcal{B}'^*_\mathcal{V}(\R^d)$. Our first goal is to characterize these spaces in terms of the growth of the convolution averages of their elements. In order to do so, we start by studying the STFT on these spaces. Motivated by our convolution characterization, we introduce three natural locally convex topologies on the spaces $\mathcal{B}'^*_\mathcal{W}(\R^d)$ and $\mathcal{B}'^*_\mathcal{V}(\R^d)$ and show that they are all identical. Finally, with the aid of the results from Section \ref{sect-reg-ultra}, we give necessary and sufficient conditions for these spaces to be ultrabornological. 

We start by  introducing the class of weight systems that will be considered throughout this section.
Let $A_p$ be a weight sequence. An increasing weight system $\mathcal{W} = (w_N)_{N}$ is said to be \emph{$(A_p)$-admissible} if 
$$
\forall N \in \N \, \exists \lambda > 0 \, \exists M \geq N  \, \exists \, C > 0 \, \forall x,t \in \R^d \, : \,  w_N(x+t) \leq C w_M(x) e^{A(\lambda t)},
$$
while it is said to be \emph{$\{A_p\}$-admissible} if
$$
\forall N \in \N \, \forall \lambda > 0 \, \exists M \geq N  \, \exists \, C > 0 \, \forall x,t \in \R^d \, : \, w_N(x+t) \leq C w_M(x) e^{A(\lambda t)}.
$$
Likewise, a decreasing weight system $\mathcal{V} = (v_n)_{n}$ is said to be \emph{$(A_p)$-admissible} if 
$$
\forall n \in \N \, \exists \lambda > 0 \, \exists m \geq n  \, \exists \, C > 0 \, \forall x,t \in \R^d \, : \, v_m(x+t) \leq C v_n(x) e^{A(\lambda t)},
$$
while it is said to be \emph{$\{A_p\}$-admissible} if
$$
\forall n \in \N \, \forall \lambda > 0 \, \exists m \geq n  \, \exists \, C > 0 \, \forall x,t \in \R^d \, : \, v_m(x+t) \leq C v_n(x) e^{A(\lambda t)}.
$$
Let $B_p$ be a weight sequence such that $A_p \subset B_p$, then $\mathcal{W}_{(B_p)}$ is $(A_p)$-admissible while $\mathcal{W}_{\{B_p\}}$ is $\{A_p\}$-admissible. Similarly, $\mathcal{V}_{(B_p)}$ is $(A_p)$-admissible while $\mathcal{V}_{\{B_p\}}$ is $\{A_p\}$-admissible. We also need the following strengthened versions of \eqref{decay-weights} and \eqref{decay-weights-1}:
\begin{equation}
\forall N \in \N \, \exists M > N \, : \, w_N(x)/w_M(x) = O(|x|^{-(d+1)}),
\label{decay-weights-L1}
\end{equation}
and
\begin{equation}
\forall n \in \N \, \exists m > n \, : \, v_m(x)/v_n(x) = O(|x|^{-(d+1)}).
\label{decay-weights-L1-1}
\end{equation}
If $B_p$ is a weight sequence satisfying $(M.1)$ and $(M.2)$, then $\mathcal{W}_{(B_p)}$ and $\mathcal{W}_{\{B_p\}}$ satisfy \eqref{decay-weights-L1} while $\mathcal{V}_{(B_p)}$ and $\mathcal{V}_{\{B_p\}}$ satisfy \eqref{decay-weights-L1-1}.

Unless otherwise stated, $M_p$ and $A_p$ will from now on \emph{always} stand for weight sequences satisfying \eqref{group-cond}. On the other hand, $\mathcal{W} = (w_N)_{N}$ and $\mathcal{V} = (v_n)_{n}$ will \emph{always} denote an increasing and decreasing weight system satisfying \eqref{decay-weights-L1} and \eqref{decay-weights-L1-1}, respectively, which are assumed to be $(A_p)$-admissible in the Beurling case and $\{A_p\}$-admissible in the Roumieu case. 

\begin{lemma}\label{density-lemma}
Let $w$ and $v$ be positive continuous functions on $\R^d$ such that $v/w$ vanishes at $\infty$ and
\begin{equation}
\label{bound v-w for STFT}
v(x+t) \leq Cw(x)e^{A(\lambda t)}, \qquad x,t \in \R^d,
\end{equation}
for some $C, \lambda > 0$. Then, for $0 < kH < h$, the space $\mathcal{S}^{(M_p)}_{(A_p)}(\R^d)$ is dense in $\mathcal{B}^{M_p,h}_{w}(\R^d)$ with respect to the norm $\| \, \cdot \,  \|_{\mathcal{B}^{M_p,k}_v}$.
\end{lemma}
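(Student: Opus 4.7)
\emph{Approach.} Since $\mathcal{S}^{(M_p)}_{(A_p)}(\R^d)$ is non-trivial, we may fix $\chi, \rho \in \mathcal{S}^{(M_p)}_{(A_p)}(\R^d)$ with $\chi(0) = 1$ and $\int_{\R^d} \rho(y) \, \mathrm{d}y = 1$. For $n \in \N$ and $\varepsilon > 0$, set $\chi_n(x) := \chi(x/n)$ and $\rho_\varepsilon(x) := \varepsilon^{-d} \rho(x/\varepsilon)$, and consider the approximants
$$
\psi_{n, \varepsilon} := (\chi_n \varphi) \ast \rho_\varepsilon.
$$
The plan is to prove that $\psi_{n, \varepsilon} \in \mathcal{S}^{(M_p)}_{(A_p)}(\R^d)$ for all $n, \varepsilon$, and that $\psi_{n, \varepsilon} \to \varphi$ in $\|\cdot\|_{\mathcal{B}^{M_p, k}_v}$ upon first letting $n \to \infty$ and then $\varepsilon \to 0$.

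\emph{Step 1: Membership in $\mathcal{S}^{(M_p)}_{(A_p)}$.} Substituting $t = -x$ in the hypothesis yields the lower bound $w(x) \geq c\, e^{-A(\lambda x)}$, so $|\varphi(x)| \leq \|\varphi\|_{\mathcal{B}^{M_p, h}_w}/w(x)$ has at most $A$-exponential growth. Combined with the super-$A$-exponential decay of $\chi$ and the $(M.2)$-equivalent $A(Hs) \geq 2A(s) - \log C_0$, this gives $|\chi_n \varphi(x)| \leq C_{\mu_0}\, e^{-A(\mu_0 x)}$ for every $\mu_0 > 0$ (by selecting the parameter of $\chi$ as $nH(\lambda + \mu_0)$). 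Transferring all derivatives onto $\rho$ via $\psi_{n, \varepsilon}^{(\alpha)} = (\chi_n \varphi) \ast \rho_\varepsilon^{(\alpha)}$ and splitting the convolution integral over $|y| \leq |x|/2$ and $|y| > |x|/2$ then shows that $\psi_{n, \varepsilon}$ enjoys both arbitrary smoothness $h_0^{-|\alpha|} M_\alpha$ (from $\rho$) and super-$A$-decay, so $\psi_{n, \varepsilon} \in \mathcal{S}^{(M_p)}_{(A_p)}(\R^d)$.

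\emph{Step 2: Convergence.} Decompose
$$
\varphi - \psi_{n, \varepsilon} \;=\; (1 - \chi_n) \varphi \;+\; \bigl(\chi_n \varphi - (\chi_n \varphi) \ast \rho_\varepsilon\bigr).
$$
For the first summand, Leibniz' rule expresses $\bigl((1-\chi_n)\varphi\bigr)^{(\alpha)}$ as $(1-\chi_n) \varphi^{(\alpha)}$ minus $\sum_{|\beta| \geq 1} \binom{\alpha}{\beta} n^{-|\beta|} \chi^{(\beta)}(\cdot/n) \varphi^{(\alpha-\beta)}$. The $\beta = 0$ term multiplied by $k^{|\alpha|} v(x)/M_\alpha$ is at most $\|\varphi\|_{\mathcal{B}^{M_p, h}_w} (k/h)^{|\alpha|} (v/w)(x) |1 - \chi_n(x)|$; since $kH < h$ gives $(k/h)^{|\alpha|} \leq 1$, and $(v/w) |1 - \chi_n|$ tends to $0$ uniformly on $\R^d$ by the standard compact/infinity split (using $\chi(0) = 1$ and $v/w \to 0$ at infinity), this contribution vanishes uniformly in $\alpha$. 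The $|\beta| \geq 1$ terms are estimated via log-convexity $M_\beta M_{\alpha-\beta} \leq M_\alpha$ from $(M.1)$, the multinomial identity $\sum_\beta \binom{\alpha}{\beta}(k/(nh_1))^{|\beta|}(k/h)^{|\alpha-\beta|} = (k/(nh_1) + k/h)^{|\alpha|}$, and the elementary fact that $t r^{t-1}$ is bounded in $t$ for fixed $r < 1$; they contribute $O(n^{-1})$ uniformly. For the second summand, first-order Taylor expansion
$$
(\chi_n \varphi)^{(\alpha)}(x - y) - (\chi_n \varphi)^{(\alpha)}(x) \;=\; -\int_0^1 y \cdot \nabla (\chi_n \varphi)^{(\alpha)}(x - sy) \, \mathrm{d}s
$$
inside the convolution, together with the bound $1/w(x - sy) \leq C e^{A(\lambda y)}/v(x)$ (rearranged from the hypothesis) and the $(M.2)$ estimate $M_{|\alpha|+1}/M_\alpha \leq C_0 H^{|\alpha| + 1} M_1$, yields an upper bound of the form $C (kH/h)^{|\alpha|} \cdot \varepsilon \int |u|\, e^{A(\lambda \varepsilon u)} |\rho(u)| \, \mathrm{d}u$, which is $O(\varepsilon)$ uniformly in $\alpha$ precisely because $kH < h$.

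\emph{Main obstacle.} The essential point is that the naive cutoff $\chi_n \varphi$ possesses only the finite-order smoothness inherited from $\varphi \in \mathcal{B}^{M_p, h}_w$, so it does not belong to the Beurling space $\mathcal{S}^{(M_p)}_{(A_p)}$; this forces mollification, which in turn introduces an $H$-factor through $M_{|\alpha|+1}/M_\alpha \leq C H^{|\alpha|}$. The hypothesis $kH < h$ (rather than merely $k < h$) is precisely what keeps $(kH/h)^{|\alpha|}$ uniformly bounded in $\alpha$, thereby reconciling the smoothness gain from mollification with the $\mathcal{B}^{M_p, k}_v$-seminorm.
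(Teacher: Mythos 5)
Your proposal is correct and follows essentially the same route as the paper: cut off with a dilated $\chi$ fixed at $\chi(0)=1$, mollify with an approximate identity from $\mathcal{S}^{(M_p)}_{(A_p)}(\R^d)$, split the error into the cutoff part (handled by Leibniz, $(k/h)^{|\alpha|}\le 1$, and the vanishing of $v/w$ at infinity) and the mollification part (handled by first-order Taylor, \eqref{bound v-w for STFT}, and $M_{\alpha+1}/M_\alpha\le C_0H^{|\alpha|+1}M_1$, which is exactly where $kH<h$ is consumed). The only cosmetic differences are that you decouple the cutoff and mollification parameters and that you spell out the membership of the approximants in $\mathcal{S}^{(M_p)}_{(A_p)}(\R^d)$, which the paper leaves implicit.
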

\begin{proof}
 Choose $\psi, \chi \in \mathcal{S}^{(M_p)}_{(A_p)}(\R^d)$ with $ \psi(0) = 1$ and $\int_{\R^d}\chi(x) \dx = 1$. We define $\psi_n =\psi(\cdot/n)$, $\chi_n = n^d\chi(n \cdot)$, and $\varphi_n = \chi_n \ast (\psi_n \varphi) \in \mathcal{S}^{(M_p)}_{(A_p)}(\R^d)$, $n \geq 1$. We now show that $\varphi_n \rightarrow \varphi$ in $\mathcal{B}^{M_p,k}_v(\R^d)$. Choose $l > 0$ so large that $h^{-1}+ l^{-1} \leq (kH)^{-1}$. Notice that
\begin{equation}
\| \varphi_n - \varphi\|_{\mathcal{B}^{M_p,k}_v} \leq  \| \varphi_n - \psi_n\varphi\|_{\mathcal{B}^{M_p,k}_v} + \| \psi_n\varphi - \varphi\|_{\mathcal{B}^{M_p,k}_v}. 
\label{triangle}
\end{equation} 
We start by estimating the second term in the right-hand side of \eqref{triangle}.  We have that
\begin{align*}
\| \psi_n\varphi - \varphi\|_{\mathcal{B}^{M_p,k}_v}  & \leq  \sup_{\alpha \in \N^d} \sup_{x \in \R^d} \frac{k^{|\alpha|}v(x)}{M_\alpha}|\psi(x/n)-1| |\varphi^{(\alpha)}(x)| \\
&+ \frac{1}{n}\sup_{\alpha \in \N^d} \sup_{x \in \R^d} \frac{k^{|\alpha|}v(x)}{M_\alpha}\sum_{\beta \leq \alpha, \beta \neq 0}\binom{\alpha}{\beta} |\psi^{(\beta)}(x/n)||
\varphi^{(\alpha-\beta)}(x)| \\
&\leq \|\varphi\|_{\mathcal{B}^{M_p,k}_w}\sup_{x \in \R^d}\frac{v(x)}{w(x)}|\psi(x/n)-1| + \frac{1}{n}\|\psi\|_{\mathcal{S}^{M_p,l}_{A_p,0}}\|\varphi\|_{\mathcal{B}^{M_p,h}_v},
\end{align*}
which tends to zero because $\psi(0) = 1$ and $v/w$ vanishes at $\infty$. Next, we estimate the first term at the right-hand side of \eqref{triangle}. Clearly,
\begin{align*}
\| \psi_n\varphi \|_{\mathcal{B}^{M_p,kH}_w}  &\leq \sup_{\alpha \in \N^d} \sup_{x \in \R^d} \frac{(kH)^{|\alpha|}w(x)}{M_\alpha}\sum_{\beta \leq \alpha}\binom{\alpha}{\beta} |\psi^{(\beta)}(x/n)||\varphi^{(\alpha-\beta)}(x)| \\
&\leq \|\psi\|_{\mathcal{S}^{M_p,l}_{A_p,0}}\|\varphi\|_{\mathcal{B}^{M_p,h}_w}
\end{align*}
for all $n\in \N$. Hence
\begin{align*}
&
\| \varphi_n - \psi_n\varphi\|_{\mathcal{B}^{M_p,k}_v}  \\
&
\leq \sup_{\alpha \in \N^d} \sup_{x \in \R^d}\frac{k^{|\alpha|}v(x)}{M_\alpha}\int_{\R^d} |\chi(t)| |(\psi_n\varphi)^{(\alpha)}(x-(t/n)) - (\psi_n\varphi)^{(\alpha)}(x)| \dt \\
&
\leq \frac{1}{n}\sup_{\alpha \in \N^d} \sup_{x \in \R^d}\frac{k^{|\alpha|}v(x)}{M_\alpha}\int_{\R^d} |\chi(t)||t| \sum_{j=1}^d \int_{0}^1|(\psi_n\varphi)^{(\alpha+e_j)}(x-(\gamma t/n))|\dgamma \dt \\
&\leq \frac{1}{n}dC\| \psi_n\varphi \|_{\mathcal{B}^{M_p,kH}_w}\sup_{\alpha \in \N^d} \sup_{x \in \R^d}\frac{k^{|\alpha|}M_{\alpha+1}}{(kH)^{|\alpha|+1}M_\alpha}\int_{\R^d} |\chi(t)||t| e^{A(\lambda t)}\dt \\
&\leq \frac{C'}{n}.
\end{align*}
\end{proof}

\begin{corollary}\label{dense-inclusion}
We have the following dense continuous inclusions
$$
\mathcal{S}^{\ast}_\dagger(\R^d) \hookrightarrow \mathcal{B}^{\ast}_{\mathcal{W}}(\R^d) \rightarrow \mathcal{W}C(\R^d) \hookrightarrow \mathcal{S}'^{\ast}_\dagger(\R^d)
$$
and 
$$
\mathcal{S}^{\ast}_\dagger(\R^d) \hookrightarrow \mathcal{B}^{\ast}_{\mathcal{V}}(\R^d) \rightarrow \mathcal{V}C(\R^d) \hookrightarrow \mathcal{S}'^{\ast}_\dagger(\R^d).
$$
\end{corollary}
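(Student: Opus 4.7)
The plan is to verify the six inclusions in order, using the admissibility and decay hypotheses to obtain the continuity bounds, Lemma \ref{density-lemma} for the density of test functions in the weighted spaces, and standard reflexivity to obtain density of $\mathcal{W}C(\R^d)$ and $\mathcal{V}C(\R^d)$ in $\mathcal{S}'^{\ast}_\dagger(\R^d)$.

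For the continuity of $\mathcal{S}^{\ast}_\dagger(\R^d) \hookrightarrow \mathcal{B}^{\ast}_{\mathcal{W}}(\R^d)$ (and the analogous $\mathcal{V}$-inclusion), I would set $x = 0$ in the admissibility inequality to get $w_N(t) \leq C w_M(0) e^{A(\lambda t)}$, with $\lambda$ depending on $N$ in the Beurling case and arbitrarily small in the Roumieu case. This directly dominates every defining seminorm of $\mathcal{B}^{\ast}_{\mathcal{W}}(\R^d)$ by a Gelfand--Shilov seminorm, and the universal property of the projective/inductive limits yields continuity. The next inclusion $\mathcal{B}^{\ast}_{\mathcal{W}}(\R^d) \to \mathcal{W}C(\R^d)$ is trivial: the $\alpha = 0$ case of the defining seminorms gives $\|\varphi\|_{w_N} \leq \|\varphi\|_{\mathcal{B}^{M_p,h}_{w_N}}$ for any $h > 0$.

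For the continuity of $\mathcal{W}C(\R^d) \hookrightarrow \mathcal{S}'^{\ast}_\dagger(\R^d)$, define the pairing $\langle f, \varphi \rangle := \int_{\R^d} f(x)\varphi(x)\dx$ for $f \in \mathcal{W}C(\R^d)$ and $\varphi \in \mathcal{S}^{\ast}_\dagger(\R^d)$. Plugging $x = -t$ into the admissibility inequality (and relabeling) yields $1/w_M(x) \leq C e^{A(\lambda x)}/w_N(0)$ for some $M$ and $\lambda$. Combining this with the decay condition \eqref{decay-weights-L1} applied to a suitably larger index gives $1/w_M(x) \leq C' (1+|x|)^{-(d+1)} e^{A(\lambda x)}$. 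Since $|\varphi(x)| e^{A(\lambda x)}$ is bounded by a Gelfand--Shilov seminorm of $\varphi$, the integrand is majorized by $C \|f\|_{w_M} \|\varphi\|_{\mathcal{S}^{M_p,h}_{A_p,\lambda}} (1+|x|)^{-(d+1)}$, which is absolutely integrable on $\R^d$; this proves both well-definedness and joint continuity. The $\mathcal{V}$-case is identical.

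For the density of $\mathcal{S}^{\ast}_\dagger(\R^d)$ in $\mathcal{B}^{\ast}_{\mathcal{W}}(\R^d)$, fix $\varphi$ in the ambient space and, for each $N$, choose an index $M$ large enough that both \eqref{decay-weights-L1} and the admissibility hypothesis between $N$ and $M$ hold. Applying Lemma \ref{density-lemma} with $v = w_N$ and $w = w_M$ produces a sequence $\varphi_n \in \mathcal{S}^{(M_p)}_{(A_p)}(\R^d) \subseteq \mathcal{S}^{\ast}_\dagger(\R^d)$ converging to $\varphi$ in $\|\cdot\|_{\mathcal{B}^{M_p,k}_{w_N}}$ whenever $kH < h$. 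In the Beurling case $h$ can be taken arbitrarily large, so this single sequence converges in every defining seminorm of $\mathcal{B}^{(M_p)}_{\mathcal{W}}(\R^d)$. In the Roumieu case the sequence remains in a single step $\mathcal{B}^{M_p,k}_{\mathcal{W}}(\R^d)$ of the inductive spectrum, where convergence in its Fr\'echet topology entails convergence in $\mathcal{B}^{\{M_p\}}_{\mathcal{W}}(\R^d)$. The argument for $\mathcal{B}^{\ast}_{\mathcal{V}}(\R^d)$ is entirely parallel with $v = v_m$, $w = v_n$. Finally, density of $\mathcal{W}C(\R^d)$ and $\mathcal{V}C(\R^d)$ in $\mathcal{S}'^{\ast}_\dagger(\R^d)$ is immediate, since these spaces contain $\mathcal{S}^{\ast}_\dagger(\R^d)$, which is dense in its strong dual by a standard Hahn--Banach argument using reflexivity of the $(FN)$/$(DFN)$ space $\mathcal{S}^{\ast}_\dagger(\R^d)$: any $T \in (\mathcal{S}'^{\ast}_\dagger(\R^d))'$ vanishing on $\mathcal{S}^{\ast}_\dagger(\R^d)$ corresponds to some $\psi \in \mathcal{S}^{\ast}_\dagger(\R^d)$ whose $L^2$-pairing against every test function vanishes, hence $\psi = 0$. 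The main obstacle is purely bookkeeping: ensuring a \emph{single} choice of the larger index simultaneously realizes the decay condition and the admissibility bound, and verifying that the loss $kH < h$ of Lemma \ref{density-lemma} is harmlessly absorbed by the structure of the projective / inductive spectra.
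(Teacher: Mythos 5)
Your proposal is correct and follows essentially the route the paper intends: the corollary is stated without proof precisely because it is meant to follow from Lemma \ref{density-lemma} (applied with $v=w_N$, $w=w_M$, resp.\ $v=v_m$, $w=v_n$, for a common larger index realizing both the admissibility and the decay conditions) together with the routine continuity estimates you give, and the standard reflexivity/Hahn--Banach argument for density in $\mathcal{S}'^{\ast}_{\dagger}(\R^d)$. The bookkeeping points you flag (a single larger index, absorbing the loss $kH<h$ inside the projective/inductive spectra, and the fact that the approximating sequence of Lemma \ref{density-lemma} does not depend on the target seminorm) are all handled correctly.
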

 Corollary \ref{dense-inclusion} of course tells that we may view the dual spaces $\mathcal{B}'^{\ast}_{\mathcal{W}}(\R^d)$ and $\mathcal{B}'^{\ast}_{\mathcal{V}}(\R^d)$ as vector subspaces of  $\mathcal{S}'^{\ast}_\dagger(\R^d)$.  
\subsection{Characterization via the STFT}\label{Char STFT dual inductive} The goal of this subsection is to characterize $\mathcal{B}^{\ast}_{\mathcal{W}}(\R^d)$, $\mathcal{B}^{\ast}_{\mathcal{V}}(\R^d)$, and their dual spaces in terms of the  STFT.  We start with three lemmas.
\begin{lemma}\label{time-freq-norm}
Let $\psi \in \mathcal{S}^{(M_p)}_{(A_p)}(\R^d)$ and let $w$ and $v$ be positive functions on $\R^d$ for which \eqref{bound v-w for STFT} holds. Then,
$$
\|M_\xi T_x \psi \|_{\mathcal{B}^{M_p,h}_v} \leq C\|\psi\|_{\mathcal{S}^{M_p,2h}_{A_p,\lambda}} w(x)e^{M(4\pi h \xi)}.
$$
\end{lemma}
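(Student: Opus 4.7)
The plan is to expand $\partial^{\alpha}(M_\xi T_x\psi)(t)=\partial^{\alpha}_t(e^{2\pi i \xi t}\psi(t-x))$ by the Leibniz rule, then use log-convexity of $M_p$ and the defining seminorm of $\psi$ to split the estimate into a spatial part (yielding the $\|\psi\|_{\mathcal{S}^{M_p,2h}_{A_p,\lambda}}$ factor with an $e^{-A(\lambda(t-x))}$ weight) and a frequency part (yielding the factor $e^{M(4\pi h\xi)}$), and finally to absorb the exponential weight by the hypothesis on $v$ and $w$.

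Concretely, from Leibniz,
\[
\partial^{\alpha}(e^{2\pi i \xi t}\psi(t-x))=\sum_{\beta\le\alpha}\binom{\alpha}{\beta}(2\pi i\xi)^{\beta}e^{2\pi i\xi t}\psi^{(\alpha-\beta)}(t-x).
\]
I would write $h^{|\alpha|}=h^{|\beta|}h^{|\alpha-\beta|}$, and use that $(M.1)$ with the standard normalization $M_0=1$ yields $M_\alpha\ge M_\beta M_{\alpha-\beta}$ whenever $\beta\le\alpha$, so that $h^{|\alpha|}/M_\alpha\le (h^{|\beta|}/M_\beta)\cdot(h^{|\alpha-\beta|}/M_{\alpha-\beta})$. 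The factor with index $\alpha-\beta$ can then be paired with $\psi^{(\alpha-\beta)}(t-x)$: writing $h^{|\alpha-\beta|}=2^{-|\alpha-\beta|}(2h)^{|\alpha-\beta|}$ and using the definition of $\|\cdot\|_{\mathcal{S}^{M_p,2h}_{A_p,\lambda}}$ gives
\[
\frac{h^{|\alpha-\beta|}|\psi^{(\alpha-\beta)}(t-x)|}{M_{\alpha-\beta}}\le 2^{-|\alpha-\beta|}\|\psi\|_{\mathcal{S}^{M_p,2h}_{A_p,\lambda}}e^{-A(\lambda(t-x))}.
\]

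For the frequency part I would use the bound $(4\pi h|\xi|)^{|\beta|}/M_{\beta}\le e^{M(4\pi h\xi)}$, which is just the definition of the associated function (noting $M_{\beta}=M_{|\beta|}$ and $M$ is radial); hence $(2\pi h|\xi|)^{|\beta|}/M_\beta\le 2^{-|\beta|}e^{M(4\pi h\xi)}$. Combining the two estimates, the whole expression becomes
\[
\frac{h^{|\alpha|}|\partial^{\alpha}(M_\xi T_x\psi)(t)|}{M_\alpha}\le\|\psi\|_{\mathcal{S}^{M_p,2h}_{A_p,\lambda}}e^{-A(\lambda(t-x))}e^{M(4\pi h\xi)}\sum_{\beta\le\alpha}\binom{\alpha}{\beta}2^{-|\beta|}2^{-|\alpha-\beta|},
\]
and the binomial sum is exactly $(1/2+1/2)^{|\alpha|}=1$, independently of $\alpha$.

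Finally, multiplying by $v(t)$ and invoking the hypothesis $v(t)=v(x+(t-x))\le Cw(x)e^{A(\lambda(t-x))}$ to absorb the factor $e^{-A(\lambda(t-x))}$, I take the supremum over $\alpha\in\N^d$ and $t\in\R^d$ to conclude. The only delicate point is the coupled choice of the scaling constant $2$: this same factor must appear both as $2h$ in the spatial seminorm of $\psi$ (so that half of $h^{|\alpha|}$ is available to dominate $\psi^{(\alpha-\beta)}$) and as the doubling $2\cdot 2\pi h = 4\pi h$ inside the associated function (so that a geometric factor $2^{-|\beta|}$ is freed up); the arithmetic $1/2+1/2=1$ is what makes the combinatorial sum collapse to a constant independent of $\alpha$.
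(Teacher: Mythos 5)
Your proof is correct and follows essentially the same route as the paper's: Leibniz expansion, the log-convexity inequality $M_\alpha \geq M_\beta M_{\alpha-\beta}$ to split the quotient, the factor $2h$ in the seminorm of $\psi$ paired with $4\pi h$ inside the associated function so that the binomial sum collapses to $1$, and finally the hypothesis on $v,w$ to absorb $e^{-A(\lambda(t-x))}$. The only cosmetic difference is bookkeeping of normalization constants such as $M_0$, which are harmlessly absorbed into $C$ in both arguments.
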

\begin{proof} We have that
\begin{align*}
&\|M_\xi T_x \psi \|_{\mathcal{B}^{M_p,h}_v} \\
&\leq \sup_{\alpha \in \N^d}\sup_{t \in \R^d} \frac{h^{|\alpha|}v(t)}{M_\alpha}\sum_{\beta \leq \alpha}\binom{\alpha}{\beta}(2\pi |\xi|)^{|\beta|}|\psi^{(\alpha - \beta)}(t-x)| \\
&\leq Cw(x)\sup_{\alpha \in \N^d}\sup_{t \in \R^d} \frac{1}{2^{|\alpha|}}\sum_{\beta \leq \alpha}\binom{\alpha}{\beta}\frac{(4\pi h |\xi|)^{|\beta|}}{M_\beta}\frac{(2h)^{|\alpha|-|\beta|}|\psi^{(\alpha - \beta)}(t-x)|e^{A(\lambda(t-x))}}{M_{\alpha-\beta}} \\
&\leq C\|\psi\|_{\mathcal{S}^{M_p,2h}_{A_p,\lambda}} w(x)e^{M(4\pi h \xi)}.
\end{align*}
\end{proof}

\begin{lemma}\label{STFT-test}
Let $\psi \in \mathcal{S}^{(M_p)}_{(A_p)}(\R^d)$ and let $w$ and $v$ be positive measurable functions on $\R^d$ for which \eqref{bound v-w for STFT} holds. Then, there is $C' > 0$ such that
$$
|V_\psi \varphi(x,\xi)| v(x)e^{M(\pi h\xi/\sqrt{d})}\leq C' \| \varphi\|_{\mathcal{B}^{M_p,h}_w}, \qquad \varphi \in \mathcal{B}^{M_p,h}_w(\R^d).
$$
\end{lemma}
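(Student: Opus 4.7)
The plan is to trade smoothness of $\varphi$ and $\psi$ for frequency decay by integration by parts in the STFT integral. After the substitution $s = t - x$, one rewrites
$$V_\psi\varphi(x,\xi) = e^{-2\pi i\xi x}\int_{\R^d}\varphi(x+s)\overline{\psi(s)}e^{-2\pi i\xi s}\,{\rm d}s.$$
For each fixed $\xi$, I would select an index $j = j(\xi)$ with $|\xi_j| \geq |\xi|/\sqrt{d}$ (which is the source of the factor $\sqrt{d}$ in the final exponential) and integrate by parts $k$ times with respect to $s_j$, obtaining
$$(2\pi i\xi_j)^k V_\psi\varphi(x,\xi) = e^{-2\pi i\xi x}\int_{\R^d}\partial_{s_j}^k\bigl[\varphi(x+s)\overline{\psi(s)}\bigr] e^{-2\pi i\xi s}\,{\rm d}s,$$
to which I would apply the Leibniz rule.

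Next I would bound $|\varphi^{(\ell e_j)}(x+s)| \leq \|\varphi\|_{\mathcal{B}^{M_p,h}_w}M_\ell h^{-\ell}/w(x+s)$ directly from the defining norm, and rewrite the admissibility hypothesis \eqref{bound v-w for STFT} in the equivalent form $v(x)/w(x+s) \leq C e^{A(\lambda s)}$ (valid since $A$ is radial, so $A(-s) = A(s)$). This absorbs the $v(x)$ factor at the cost of an extra weight $e^{A(\lambda s)}$ inside the integral. Since $\psi \in \mathcal{S}^{(M_p)}_{(A_p)}(\R^d)$, the norm $\|\psi\|_{\mathcal{S}^{M_p,h'}_{A_p,\lambda'}}$ is finite for every $(h',\lambda')$; by $(M.2)$ for the associated function $A$ together with the standard fact that $e^{-A(t)}$ decays faster than any polynomial, I may choose $\lambda'$ so large that $e^{A(\lambda s) - A(\lambda' s)}$ is integrable. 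Crucially I am also free to take $h' = h$, so that the $\psi$-factors contribute $M_{k-\ell}h^{-(k-\ell)}$.

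Combining these estimates with $(M.1)$ (which, after the normalization $M_0 = 1$, yields $M_\ell M_{k-\ell}\leq M_k$) and summing the binomial Leibniz sum produces
$$(2\pi|\xi_j|)^k\, v(x)|V_\psi\varphi(x,\xi)| \leq C''\|\varphi\|_{\mathcal{B}^{M_p,h}_w}M_k(2/h)^k.$$
Using $|\xi_j| \geq |\xi|/\sqrt{d}$ and dividing through gives
$$v(x)|V_\psi\varphi(x,\xi)| \leq C''\|\varphi\|_{\mathcal{B}^{M_p,h}_w}\frac{M_k}{(\pi h|\xi|/\sqrt{d})^k}$$
for every $k \geq 0$, and taking the infimum over $k$ will deliver the claim through the identity $\inf_k M_k/t^k = e^{-M(t)}$. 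The main obstacle is the careful bookkeeping of the multiplicative parameters so that they collapse to exactly $\pi h/\sqrt{d}$: the freedom to set the window-norm parameter $h' = h$ (available precisely because $\psi$ lies in the Beurling space) turns the Leibniz sum $(h^{-1}+(h')^{-1})^k$ into $(2/h)^k$, which then cancels against the $2\pi$ from integration by parts to leave the clean factor $\pi h$ in the exponential.
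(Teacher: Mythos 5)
Your proposal is correct and follows essentially the same route as the paper: integration by parts against $e^{-2\pi i\xi t}$, the Leibniz rule, the bound $v(x)\leq Cw(t)e^{A(\lambda(t-x))}$ from \eqref{bound v-w for STFT}, the freedom to choose the window parameters $h'=h$ and $\lambda'$ large (available since $\psi\in\mathcal{S}^{(M_p)}_{(A_p)}(\R^d)$), log-convexity to recombine $M_\ell M_{k-\ell}$, and finally $\inf_p M_p/t^p=M_0e^{-M(t)}$ with $|\xi_j|\geq|\xi|/\sqrt{d}$. The only cosmetic difference is that the paper phrases the intermediate estimate for all multi-indices $\alpha$ before specializing, whereas you differentiate in a single coordinate direction from the start; the bookkeeping of constants is identical.
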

\begin{proof}
For all $\alpha \in \N^d$ it holds that
\begin{align*}
|\xi^{\alpha}V_{\psi}\varphi(x, \xi)|v(x) &\leq \frac{C}{(2\pi)^{|\alpha|}} \sum_{\beta \leq \alpha} \binom{\alpha}{\beta} \int_{\R^d} |\varphi^{(\beta)}(t)|w(t) |\psi^{(\alpha - \beta)}(t-x)|e^{A(\lambda(t-x))} \dt \\
&\leq C' \|\varphi\|_{\mathcal{B}^{M_p,h}_w} (\pi h)^{-|\alpha|} M_\alpha.
\end{align*}
Hence, 
\begin{align*}
|V_{\psi}\varphi(x,\xi)|v(x) &\leq M_0C' \|\varphi\|_{\mathcal{B}^{M_p,h}_w} \inf_{p \in \N} \frac{M_p}{(\pi h|\xi|/\sqrt{d})^{p}M_0}  \leq M_0C'  \|\varphi\|_{\mathcal{B}^{M_p,h}_w} e^{-M(\pi h\xi/\sqrt{d})}.
\end{align*}
\end{proof}
\begin{lemma}\label{double-int-test}
Let $v$,$w$, and $u$ be positive measurable functions on $\R^d$ such that the pair $v,w$ satisfies \eqref{bound v-w for STFT}  and $w(x)/u(x) = O(|x|^{-d+1})$. Let $\psi \in \mathcal{S}^{(M_p)}_{(A_p)}(\R^d)$ and suppose that $F$ is a measurable function on $\R^{2d}$ such that 
$$
\sup_{(x,\xi) \in \R^{2d}}|F(x,\xi)|u(x)e^{M(k\xi)} < \infty.
$$ 
Then, the function
$$
t \rightarrow \int \int_{\R^{2d}} F(x,\xi) M_\xi T_x\psi(t) \dx \dxi
$$
belongs to $\mathcal{B}^{M_p,k/(4H\pi)}_v(\R^d)$.
\end{lemma}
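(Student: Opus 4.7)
Denote the function in question by $G(t) = \iint_{\R^{2d}} F(x,\xi) M_\xi T_x \psi(t)\, \dx \dxi$, set $k' = k/(4H\pi)$, and let $C_F := \sup_{(x,\xi)} |F(x,\xi)| u(x) e^{M(k\xi)}$ and $C_w := \sup_x (1+|x|)^{d+1} w(x)/u(x)$. The plan is to show that $\sup_{\alpha, t} (k')^{|\alpha|} |G^{(\alpha)}(t)| v(t)/M_\alpha < \infty$ by differentiating under the integral sign, distributing the derivatives via Leibniz's rule applied to $\partial_t^\alpha(e^{2\pi i \xi t}\psi(t-x))$, and then bounding each resulting factor with the appropriate weight estimate.

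The first step is to justify the differentiation under the integral sign; this follows from the rapid decay of $F$ in $\xi$ combined with the Schwartz-type bounds on $\psi$, which I will only sketch. After distributing derivatives, one finds
\[
|G^{(\alpha)}(t)| v(t) \leq \sum_{\beta \leq \alpha} \binom{\alpha}{\beta}(2\pi)^{|\beta|} \iint |F(x,\xi)|\, |\xi|^{|\beta|}\, v(t)\, |\psi^{(\alpha-\beta)}(t-x)| \dx \dxi.
\]
Now I apply the admissibility-type bound \eqref{bound v-w for STFT} to replace $v(t)$ by $C w(x) e^{A(\lambda(t-x))}$, and use the bound $|\psi^{(\alpha-\beta)}(t-x)|\, e^{A(\lambda(t-x))} \leq \|\psi\|_{\mathcal{S}^{M_p,h'}_{A_p,\lambda}} M_{\alpha-\beta}/(h')^{|\alpha-\beta|}$, valid for any $h' > 0$ because $\psi \in \mathcal{S}^{(M_p)}_{(A_p)}(\R^d)$. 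The hypothesis $|F(x,\xi)| \leq C_F/(u(x) e^{M(k\xi)})$ together with the decay $w/u = O(|x|^{-(d+1)})$ then makes the $x$-integration an integrable tail, giving $\int w(x)/u(x)\dx < \infty$.

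The key step is to handle the $\xi$-integral. Using the defining inequality $(k|\xi|/H)^{|\beta|} \leq M_\beta e^{M(k\xi/H)}/M_0$ and $(M.2)$ in the form $2M(k\xi/H) \leq M(k\xi) + \log C_0$, I obtain
\[
|\xi|^{|\beta|} \leq \frac{\sqrt{C_0}\, M_\beta H^{|\beta|}}{M_0\, k^{|\beta|}}\, e^{M(k\xi)/2},
\]
so that $|\xi|^{|\beta|} e^{-M(k\xi)}$ splits as a $\beta$-dependent constant times the integrable function $e^{-M(k\xi)/2}$ (integrability follows from $m_p \to \infty$, which forces $e^{-M(t)/2}$ to decay faster than any polynomial). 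Collecting factors, the bound becomes
\[
\frac{(k')^{|\alpha|} |G^{(\alpha)}(t)| v(t)}{M_\alpha} \leq C'\, \|\psi\|_{\mathcal{S}^{M_p,h'}_{A_p,\lambda}} \sum_{\beta \leq \alpha} \binom{\alpha}{\beta} \left(\frac{k'}{h'}\right)^{|\alpha-\beta|}\left(\frac{2\pi H k'}{k}\right)^{|\beta|} \frac{M_\beta M_{\alpha-\beta}}{M_\alpha}.
\]
By $(M.1)$ (and assuming WLOG $M_0 = 1$), log-convexity of $(m_p)$ gives $M_\beta M_{\alpha-\beta} \leq M_\alpha$. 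The choice $k' = k/(4\pi H)$ makes the $\beta$-factor equal to $1/2$, and picking $h' = 2k' = k/(2\pi H)$ makes the $(\alpha-\beta)$-factor equal to $1/2$ as well; the sum telescopes via $\sum_\beta \binom{\alpha}{\beta}(1/2)^{|\beta|}(1/2)^{|\alpha-\beta|} = 1$, and the result is independent of $\alpha$ and $t$.

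The main obstacle is the bookkeeping: tracking the constants so that the factor $k/(4H\pi)$ appears precisely, which requires the $\sqrt{C_0}$ and $H$ losses from $(M.2)$ to be absorbed when halving $M(k\xi)$, and then choosing $h'$ to cap the binomial sum at $1$. Everything else is routine.
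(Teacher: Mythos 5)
Your argument is correct and is essentially the paper's proof: the paper bounds the $\mathcal{B}^{M_p,k/(4H\pi)}_v$-norm of the integral by $\iint |F(x,\xi)|\,\|M_\xi T_x\psi\|_{\mathcal{B}^{M_p,h}_v}\,\dx\dxi$ and invokes Lemma \ref{time-freq-norm}, whose proof is exactly your Leibniz/associated-function/binomial computation with the same constants ($h'=2h$, the factor $e^{M(4\pi h\xi)}=e^{M(k\xi/H)}$, and $(M.2)$ to make the $\xi$-integral converge). You have merely inlined that lemma rather than citing it, so no substantive difference.
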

\begin{proof}
Set $h = k/(4H\pi)$. Lemma \ref{time-freq-norm} implies that
\begin{align*}
&\left \| \int \int_{\R^{2d}} F(x,\xi) M_\xi T_x\psi \dx \dxi \right \|_{\mathcal{B}^{M_p,h}_v} \\
&\leq \int \int_{\R^{2d}} |F(x,\xi)| \| M_\xi T_x\psi \|_{\mathcal{B}^{M_p,h}_v} \dx \dxi \\
&\leq C\|\psi\|_{\mathcal{S}^{M_p,2h}_{A_p,\lambda}} \int \int_{\R^{2d}} |F(x,\xi)| w(x)e^{M(4\pi h \xi)}  \dx \dxi \\
&\leq CC' \|\psi\|_{\mathcal{S}^{M_p,2h}_{A_p,\lambda}} \int \int_{\R^{2d}} \frac{w(x)}{u(x)}e^{-M(k\xi) + M(k\xi/H)}  \dx \dxi \\
&\leq C_0CC' \|\psi\|_{\mathcal{S}^{M_p,2h}_{A_p,\lambda}} \int_{\R^{d}} \frac{w(x)}{u(x)}\dx \int_{\R^d}e^{-M(k\xi/H)}\dxi < \infty.
\end{align*}
\end{proof}
We are now able to characterize the spaces $\mathcal{B}^{\ast}_{\mathcal{W}}(\R^d)$, $\mathcal{B}^{\ast}_{\mathcal{V}}(\R^d)$, and their duals via the STFT.
\begin{proposition}\label{STFT-test-char}
Let $\psi \in \mathcal{S}^{(M_p)}_{(A_p)}(\R^d) \backslash \{0\}$ and $f \in \mathcal{S}'^\ast_\dagger(\R^d)$. Then, $f \in \mathcal{B}^{(M_p)}_{\mathcal{W}}(\R^d)$ ($f \in \mathcal{B}^{\{M_p\}}_{\mathcal{W}}(\R^d)$) if and only if 
\begin{equation}
\forall h > 0 \, \forall N \in \N \, (\exists h > 0 \, \forall N \in \N) \, : \, \sup_{(x,\xi) \in \R^{2d}}|V_\psi f(x,\xi)|w_N(x)e^{M(h\xi)} < \infty.
\label{bounds-STFT-test-char}
\end{equation}
\end{proposition}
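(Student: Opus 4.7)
The plan is to prove both implications using the STFT machinery developed in Lemmas \ref{time-freq-norm}--\ref{double-int-test}. Necessity reduces to a direct application of Lemma \ref{STFT-test}, which produces a pointwise bound on $V_\psi f$ from a weighted $\mathcal{B}$-norm of $f$. Sufficiency relies on the reconstruction formula $f = \|\psi\|_{L^2}^{-2} V_\psi^\ast V_\psi f$ from Proposition \ref{STFT-duals} (taking $\psi$ itself as synthesis window, which is admissible since $\psi \neq 0$), combined with Lemma \ref{double-int-test}, which says that an integral of the form $\iint F(x,\xi) M_\xi T_x \psi\, \dx \dxi$ lands in $\mathcal{B}^{M_p,k/(4H\pi)}_v(\R^d)$ whenever $F$ has the correct global decay.

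For necessity in the Beurling case, fix $h > 0$ and $N \in \N$. Use $(A_p)$-admissibility of $\mathcal{W}$ to choose $M \geq N$ and $\lambda, C > 0$ so that $w_N(x+t) \leq C w_M(x) e^{A(\lambda t)}$; thus $(w_N, w_M)$ satisfies \eqref{bound v-w for STFT}. Since $f \in \mathcal{B}^{M_p,h'}_{w_M}(\R^d)$ for every $h' > 0$, Lemma \ref{STFT-test} (with the parameter $h' = h\sqrt{d}/\pi$) yields
\[
|V_\psi f(x,\xi)|\, w_N(x)\, e^{M(h\xi)} \leq C'\, \|f\|_{\mathcal{B}^{M_p,h'}_{w_M}}
\]
uniformly in $(x,\xi)$. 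In the Roumieu case, $f \in \mathcal{B}^{M_p,h'}_{\mathcal{W}}(\R^d)$ for some $h' > 0$; one sets $h = \pi h'/\sqrt{d}$ and runs the same argument, this time exploiting $\{A_p\}$-admissibility, which provides $M \geq N$ for every $\lambda > 0$ and thus handles every $N$ with the same $h$.

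For sufficiency, assume \eqref{bounds-STFT-test-char} and invoke the reconstruction
\[
f = \frac{1}{\|\psi\|_{L^2}^2} \iint_{\R^{2d}} V_\psi f(x,\xi)\, M_\xi T_x \psi\, \dx \dxi
\]
in $\mathcal{S}'^\ast_\dagger(\R^d)$. To place $f$ in $\mathcal{B}^{M_p,h}_{w_N}(\R^d)$ for a prescribed pair $(h,N)$, choose $M \geq N$ so that $(w_N, w_M)$ satisfies \eqref{bound v-w for STFT} via admissibility, then choose $L \geq M$ with $w_M(x)/w_L(x) = O(|x|^{-(d+1)})$ using \eqref{decay-weights-L1}, and set $k = 4H\pi h$, $u = w_L$. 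The hypothesis gives $\sup_{(x,\xi)}|V_\psi f(x,\xi)|\, w_L(x)\, e^{M(k\xi)} < \infty$, and Lemma \ref{double-int-test} (applied with $v = w_N$, $w = w_M$, $F = V_\psi f$) places the reconstruction integral in $\mathcal{B}^{M_p,h}_{w_N}(\R^d)$; since this integral equals $f$ in $\mathcal{S}'^\ast_\dagger(\R^d)$, we conclude $f \in \mathcal{B}^{M_p,h}_{w_N}(\R^d)$. In the Beurling case, $(h,N)$ is arbitrary, hence $f \in \mathcal{B}^{(M_p)}_\mathcal{W}(\R^d)$; in the Roumieu case the $h$ in \eqref{bounds-STFT-test-char} is fixed, so the same scheme deposits $f$ in $\mathcal{B}^{M_p,h/(4H\pi)}_{w_N}(\R^d)$ for every $N$, i.e., $f \in \mathcal{B}^{\{M_p\}}_\mathcal{W}(\R^d)$. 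No single step is difficult; the only subtlety is the three-step chain $N \to M \to L$, where $M$ absorbs the translation $T_x \psi$ via admissibility and $L$ provides the $L^1$-decay in $x$ needed for absolute convergence of the reconstruction integral. The conditions \eqref{decay-weights-L1}, \eqref{decay-weights-L1-1}, and $(A_p)/\{A_p\}$-admissibility were designed precisely to enable these shifts, and $\psi \in \mathcal{S}^{(M_p)}_{(A_p)}(\R^d)$ ensures all quantities $\|\psi\|_{\mathcal{S}^{M_p,2h}_{A_p,\lambda}}$ appearing in the auxiliary lemmas are finite.
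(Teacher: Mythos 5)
Your proposal is correct and follows essentially the same route as the paper: necessity via Lemma \ref{STFT-test} (after using admissibility to produce a pair of weights satisfying \eqref{bound v-w for STFT}), and sufficiency via the reconstruction/desingularization formula combined with Lemma \ref{double-int-test} applied to $F = V_\psi f$, with the chain of weights $w_N \to w_M \to w_L$ supplying the translation bound and the $L^1$-decay. The only cosmetic difference is that you take $\psi$ as its own synthesis window, whereas the paper picks a separate $\gamma$ with $(\gamma,\psi)_{L^2}=1$; both are equally valid.
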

\begin{proof}
The direct implication follows immediately from Lemma \ref{STFT-test}. Conversely, choose $\gamma \in \mathcal{S}^{(M_p)}_{(A_p)}(\R^d)$ such that $(\gamma, \psi)_{L^2} = 1$. By \eqref{regularization-via-STFT} we have that, for all $\varphi \in \mathcal{S}^\ast_\dagger(\R^d)$, it holds that
\begin{align*}
\langle f, \varphi \rangle &= \int \int_{\R^{2d}}V_\psi f(x, \xi) V_{\overline{\gamma}}\varphi(x, - \xi) \dx \dxi \\
&=  \int \int_{\R^{2d}}V_\psi f(x, \xi) \left(\int_{\R^{d}} \varphi(t) M_\xi T_x \gamma(t) \dt \right) \dx \dxi \\
&= \int_{\R^{d}}  \left( \int \int_{\R^{2d}} V_\psi f(x,\xi) M_\xi T_x \gamma(t) \dx \dxi \right) \varphi(t) \dt,
\end{align*}
where the switching of the integrals in the last step is permitted because of \eqref{bounds-STFT-test-char}. Hence,
$$
f =   \int \int_{\R^{2d}} V_\psi f(x,\xi) M_\xi T_x \gamma \dx \dxi 
$$
and we conclude that $f \in \mathcal{B}^{\ast}_{\mathcal{W}}(\R^d)$ by applying Lemma \ref{double-int-test} to $F = V_\psi f$.
\end{proof}
\begin{proposition}\label{STFT-char-1}
Let $\psi \in \mathcal{S}^{(M_p)}_{(A_p)}(\R^d) \backslash \{0\}$ and $f \in \mathcal{S}'^\ast_\dagger(\R^d)$. Then, $f \in \mathcal{B}^{(M_p)}_{\mathcal{V}}(\R^d)$ ($f \in \mathcal{B}^{\{M_p\}}_{\mathcal{V}}(\R^d)$) if and only if 
$$
\exists n \in \N \, \forall h>0 \, (\exists n \in \N \, \exists h> 0) \, : \, \sup_{(x,\xi) \in \R^{2d}}|V_\psi f(x,\xi)|v_n(x)e^{M(h\xi)} < \infty.
$$
\end{proposition}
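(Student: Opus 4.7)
The plan is to mirror the proof of Proposition \ref{STFT-test-char} exactly, simply substituting the decreasing weight system $\mathcal{V}$ for $\mathcal{W}$ and keeping careful track of the Beurling/Roumieu quantifiers. The two key lemmas are already in place: Lemma \ref{STFT-test} (which controls the STFT of functions in $\mathcal{B}^{M_p,h}_w$) for the direct implication, and Lemma \ref{double-int-test} (which controls the adjoint STFT acting on symbols with polynomial/exponential decay) for the converse.

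For the direct implication, suppose $f \in \mathcal{B}^\ast_\mathcal{V}(\R^d)$. Then there exists $n_0$ such that $f \in \mathcal{B}^{\ast}_{v_{n_0}}(\R^d)$; in the Beurling case $\|f\|_{\mathcal{B}^{M_p,h}_{v_{n_0}}} < \infty$ for \emph{every} $h>0$, while in the Roumieu case $\|f\|_{\mathcal{B}^{M_p,h_0}_{v_{n_0}}} < \infty$ for some fixed $h_0>0$. Using $(A_p)$-admissibility (resp.\ $\{A_p\}$-admissibility) of $\mathcal{V}$, pick $m \geq n_0$ and $\lambda > 0$ so that $v_m(x+t) \leq C v_{n_0}(x) e^{A(\lambda t)}$; Lemma \ref{STFT-test} then yields
$$
|V_\psi f(x,\xi)|\, v_m(x)\, e^{M(\pi h\xi/\sqrt d)} \leq C' \|f\|_{\mathcal{B}^{M_p,h}_{v_{n_0}}},
$$
which, upon setting $n := m$ and rescaling $h$, produces the asserted STFT bound with the correct quantifier structure in each case.

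For the converse, choose $\gamma \in \mathcal{S}^{(M_p)}_{(A_p)}(\R^d)$ with $(\gamma,\psi)_{L^2}=1$. Testing against $\varphi \in \mathcal{S}^\ast_\dagger(\R^d)$, the desingularization formula \eqref{regularization-via-STFT} together with Fubini's theorem, whose application is justified by the assumed decay of $V_\psi f$ and the rapid decay of $V_{\overline\gamma}\varphi$ provided by Proposition \ref{STFT-testfunctions}, gives the pointwise representation
$$
f = \int\!\!\int_{\R^{2d}} V_\psi f(x,\xi)\, M_\xi T_x \gamma \, dx\, d\xi
$$
in $\mathcal{S}'^\ast_\dagger(\R^d)$. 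Then I apply Lemma \ref{double-int-test} to $F := V_\psi f$: starting from the given index $n$ in the STFT bound, use \eqref{decay-weights-L1-1} to find $m>n$ with $v_m/v_n = O(|x|^{-(d+1)})$, and then use admissibility to find $m' \geq m$ and $\lambda$ with $v_{m'}(x+t) \leq C v_m(x) e^{A(\lambda t)}$. Taking $u := v_n$, $w := v_m$, $v := v_{m'}$ in Lemma \ref{double-int-test} places $f$ in $\mathcal{B}^{M_p,h/(4H\pi)}_{v_{m'}}(\R^d)$ for the relevant values of $h$, hence $f \in \mathcal{B}^\ast_\mathcal{V}(\R^d)$.

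The one step to handle with care is the quantifier matching. This is precisely what the Beurling/Roumieu admissibility dichotomy is designed to do: in the Beurling case a single $\lambda$ suffices because we quantify over all $h$, while in the Roumieu case admissibility for every $\lambda$ compensates for the single working scale. Once the admissibility quantifiers are lined up with those of the STFT estimate, no further obstacle remains: all the analytic work has already been absorbed into Lemmas \ref{STFT-test} and \ref{double-int-test}.
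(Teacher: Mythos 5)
Your proposal is correct and follows exactly the route the paper intends: the paper's own proof of this proposition is the single sentence ``This can be shown in the same way as Proposition \ref{STFT-test-char}'', and your argument is precisely that adaptation --- Lemma \ref{STFT-test} plus admissibility of $\mathcal{V}$ for the direct implication, and the desingularization formula, Fubini, and Lemma \ref{double-int-test} (with $u=v_n$, $w=v_m$ via \eqref{decay-weights-L1-1}, $v=v_{m'}$ via admissibility) for the converse. Your closing remark on the quantifier matching correctly identifies where the $\forall\lambda$ in $\{A_p\}$-admissibility is actually needed, namely to make the Fubini/integrability step work against the merely existential decay rate of Roumieu test functions.
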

\begin{proof}
This can be shown in the same way as Proposition \ref{STFT-test-char}.
\end{proof}

\begin{proposition}\label{STFT-dual}
Let $\psi \in \mathcal{S}^{(M_p)}_{(A_p)}(\R^d) \backslash \{0\}$ and $f \in \mathcal{S}'^\ast_\dagger(\R^d)$. Then, $f \in \mathcal{B}'^{(M_p)}_{\mathcal{W}}(\R^d)$ ($f \in \mathcal{B}'^{\{M_p\}}_{\mathcal{W}}(\R^d)$) if and only if 
$$
\exists h > 0 \, \exists N \in \N \, (\forall h > 0 \, \exists N \in \N) \, : \, \sup_{(x,\xi) \in \R^{2d}}\frac{|V_\psi f(x,\xi)|}{w_N(x)e^{M(h\xi)}} < \infty.
$$
\end{proposition}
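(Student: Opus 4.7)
The plan is to transfer estimates between $f$ and its STFT via the adjoint-STFT reconstruction machinery of Proposition \ref{STFT-duals}, with $\gamma \in \mathcal{S}^{(M_p)}_{(A_p)}(\R^d)$ a fixed synthesis window satisfying $(\gamma,\psi)_{L^2}=1$. For the direct implication, I would use $V_\psi f(x,\xi) = \langle f, \overline{M_\xi T_x\psi}\rangle$, noting that $\overline{M_\xi T_x\psi} = M_{-\xi}T_x \overline{\psi}$ belongs to $\mathcal{S}^\ast_\dagger(\R^d) \hookrightarrow \mathcal{B}^\ast_{\mathcal{W}}(\R^d)$ (Corollary \ref{dense-inclusion}). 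In the Beurling case, continuity of $f$ on $\mathcal{B}^{(M_p)}_{\mathcal{W}}(\R^d)$ furnishes $h > 0$, $N \in \N$, $C > 0$ with $|\langle f,\varphi\rangle| \leq C\|\varphi\|_{\mathcal{B}^{M_p,h}_{w_N}}$; by $(A_p)$-admissibility I enlarge $N$ to $M$ with $w_N(x+t) \leq C_1 w_M(x)e^{A(\lambda t)}$, and Lemma \ref{time-freq-norm} then yields $|V_\psi f(x,\xi)| \leq C' w_M(x) e^{M(4\pi h\xi)}$, the required estimate with $h' := 4\pi h$. The Roumieu case runs the same argument for each prescribed $h' > 0$ (set $h := h'/(4\pi)$ and use $\{A_p\}$-admissibility, which allows any $\lambda > 0$).

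For the converse, I appeal to the density of $\mathcal{S}^\ast_\dagger(\R^d)$ in $\mathcal{B}^\ast_{\mathcal{W}}(\R^d)$ and reduce to showing that $\langle f, \varphi\rangle$ is bounded by a continuous seminorm of $\mathcal{B}^\ast_{\mathcal{W}}(\R^d)$ for $\varphi \in \mathcal{S}^\ast_\dagger(\R^d)$. Starting from the desingularization formula \eqref{regularization-via-STFT},
$$
\langle f,\varphi\rangle = \int\int_{\R^{2d}} V_\psi f(x,\xi)\, V_{\overline{\gamma}}\varphi(x,-\xi)\dx \dxi,
$$
I would bound $|V_{\overline\gamma}\varphi(x,-\xi)|$ via Lemma \ref{STFT-test} and combine with the hypothesised STFT bound on $f$. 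Up to constants, the integrand is then dominated by $(w_N/w_M)(x)\, e^{M(h'\xi)-M(\pi h\xi/\sqrt{d})}\|\varphi\|_{\mathcal{B}^{M_p,h}_{w_{M'}}}$.

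The main technical task is calibrating the parameters so that this double integral converges. For the $x$-integral, I use the strengthened decay \eqref{decay-weights-L1} to pick $M > N$ with $w_N/w_M \in L^1(\R^d)$, and then admissibility once more to pass to $M' \geq M$ with $w_M(x+t) \leq C w_{M'}(x)e^{A(\lambda t)}$, matching the hypothesis of Lemma \ref{STFT-test} with $v = w_M$, $w = w_{M'}$. For the $\xi$-integral I invoke $(M.2)$ for $M_p$ in the form $2M(t) \leq M(Ht)+\log C_0$: provided $\pi h/\sqrt{d} \geq H h'$, one has $M(h'\xi) - M(\pi h\xi/\sqrt{d}) \leq -\tfrac{1}{2}M(\pi h\xi/\sqrt{d}) + \tfrac{1}{2}\log C_0$, which is integrable since the associated function grows faster than any logarithm. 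In the Beurling case $h'$ is fixed and $h$ chosen large; in the Roumieu case $h$ is prescribed and $h'$ chosen small, with $N$ supplied by the hypothesis for that $h'$. The resulting bound $|\langle f,\varphi\rangle| \leq C\|\varphi\|_{\mathcal{B}^{M_p,h}_{w_{M'}}}$ valid on $\mathcal{S}^\ast_\dagger(\R^d)$ then extends by density to a continuous functional on $\mathcal{B}^\ast_{\mathcal{W}}(\R^d)$, completing the argument.
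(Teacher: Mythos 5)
Your proposal is correct and follows essentially the same route as the paper: the direct implication via Lemma \ref{time-freq-norm} applied to $\overline{M_\xi T_x\psi}$, and the converse via the desingularization integral bounded through Lemma \ref{STFT-test}, with the admissibility of $\mathcal{W}$, condition \eqref{decay-weights-L1}, and $(M.2)$ supplying the convergence of the double integral. The only cosmetic difference is that the paper defines the functional $\tilde{f}$ directly on all of $\mathcal{B}^\ast_{\mathcal{W}}(\R^d)$ and identifies its restriction to $\mathcal{S}^\ast_\dagger(\R^d)$ with $f$, whereas you estimate on $\mathcal{S}^\ast_\dagger(\R^d)$ first and extend by density; these are equivalent.
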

\begin{proof} The direct implication follows immediately from Lemma \ref{time-freq-norm}. Conversely, choose $\gamma \in \mathcal{S}^{(M_p)}_{(A_p)}(\R^d)$ such that $(\gamma, \psi)_{L^2} = 1$. Lemma \ref{STFT-test} implies that 
$$
\langle \tilde{f}, \varphi \rangle := \int \int_{\R^{2d}}V_\psi f(x, \xi) V_{\overline{\gamma}}\varphi(x, - \xi) \dx \dxi, \qquad \varphi \in \mathcal{B}^\ast_\mathcal{W}(\R^d),
$$
defines a continuous linear functional on $\mathcal{B}^\ast_\mathcal{W}(\R^d)$. Since $\tilde{f}_{| \mathcal{S}^\ast_\dagger(\R^d)} = f$ (cf.\ \eqref{regularization-via-STFT}), we obtain that $f \in \mathcal{B}'^{\ast}_{\mathcal{W}}(\R^d)$.
\end{proof}

In a similar fashion one shows,

\begin{proposition}\label{STFT-dual-1}
Let $\psi \in \mathcal{S}^{(M_p)}_{(A_p)}(\R^d) \backslash \{0\}$ and $f \in \mathcal{S}'^\ast_\dagger(\R^d)$. Then, $f \in \mathcal{B}'^{(M_p)}_{\mathcal{V}}(\R^d)$ ($f \in \mathcal{B}'^{\{M_p\}}_{\mathcal{V}}(\R^d)$) if and only if 
$$
\forall n \in \N \, \exists h>0 \, (\forall n \in \N \, \forall h> 0) \, : \, \sup_{(x,\xi) \in \R^{2d}}\frac{|V_\psi f(x,\xi)|}{v_n(x)e^{M(h\xi)}} < \infty.
$$
\end{proposition}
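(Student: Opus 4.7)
The plan is to follow the argument for Proposition \ref{STFT-dual} in direct analogy, adapting the quantifier structure to the decreasing weight system $\mathcal{V}$ and invoking the decay condition \eqref{decay-weights-L1-1} in place of \eqref{decay-weights-L1}.

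For the direct implication, assume $f \in \mathcal{B}'^{\ast}_{\mathcal{V}}(\R^d)$. Given $n \in \N$, I would use $(A_p)$-admissibility (resp.\ $\{A_p\}$-admissibility) to select $m \geq n$ and $\lambda > 0$ with $v_m(x+t) \leq C v_n(x) e^{A(\lambda t)}$. Since the inclusion $\mathcal{B}^{\ast}_{v_m}(\R^d) \hookrightarrow \mathcal{B}^{\ast}_{\mathcal{V}}(\R^d)$ is continuous, restricting $f$ yields $|\langle f, \varphi \rangle| \leq C' \|\varphi\|_{\mathcal{B}^{M_p,h'}_{v_m}}$ for some $h' > 0$ in the Beurling case, and for every $h' > 0$ (with $C'$ depending on $h'$) in the Roumieu case. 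Applying Lemma \ref{time-freq-norm} with $(w,v) = (v_n,v_m)$ to $\overline{M_\xi T_x \psi}$ then gives
$$
|V_\psi f(x,\xi)| = |\langle f, \overline{M_\xi T_x \psi}\rangle| \leq C'' v_n(x) e^{M(4\pi h' \xi)},
$$
which is the desired estimate with $h = 4\pi h'$ (for the specific $h'$ in the Beurling case, and for an arbitrary preassigned $h$ by taking $h' = h/(4\pi)$ in the Roumieu case).

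For the converse, fix $\gamma \in \mathcal{S}^{(M_p)}_{(A_p)}(\R^d)$ with $(\gamma, \psi)_{L^2} = 1$ and define
$$
\langle \tilde f, \varphi \rangle := \int \int_{\R^{2d}} V_\psi f(x,\xi) V_{\overline{\gamma}} \varphi(x, -\xi) \dx \dxi, \qquad \varphi \in \mathcal{B}^{\ast}_{\mathcal{V}}(\R^d).
$$
The goal is to show that $\tilde f$ is a well-defined continuous linear functional on $\mathcal{B}^{\ast}_{\mathcal{V}}(\R^d)$; by the universal property of inductive limits it suffices to establish continuity on each step $\mathcal{B}^{(M_p)}_{v_n}(\R^d)$ (resp.\ on each Banach step $\mathcal{B}^{M_p,h}_{v_n}(\R^d)$ of the double spectrum in the Roumieu case). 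Admissibility supplies $m \geq n$ so that Lemma \ref{STFT-test} with $(w,v) = (v_n, v_m)$ yields
$$
|V_{\overline{\gamma}} \varphi(x, -\xi)| \leq C \frac{\|\varphi\|_{\mathcal{B}^{M_p,h}_{v_n}}}{v_m(x) e^{M(\pi h \xi/\sqrt{d})}}.
$$
Using \eqref{decay-weights-L1-1} I then pick $m' > m$ with $v_{m'}(x)/v_m(x) = O((1+|x|)^{-d-1})$, and the hypothesis applied at index $m'$ supplies $h_0 > 0$ (Beurling) or any $h_0 > 0$ (Roumieu) such that $|V_\psi f(x,\xi)| \leq C v_{m'}(x) e^{M(h_0 \xi)}$. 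Multiplying the two bounds gives an integrand controlled by
$$
C'' \|\varphi\|_{\mathcal{B}^{M_p,h}_{v_n}} (1+|x|)^{-d-1} e^{M(h_0 \xi) - M(\pi h \xi/\sqrt{d})}.
$$
Tuning $h$ large enough (Beurling) or $h_0$ small enough (Roumieu) and invoking $(M.2)$ on $M_p$ in the form $2M(h_0 \xi) \leq M(\pi h \xi/\sqrt d) + \log C_0$, the displayed expression becomes absolutely integrable on $\R^{2d}$, which at once yields the convergence of the integral defining $\tilde f$ and the required continuity bound. The desingularization formula \eqref{regularization-via-STFT} then shows that $\tilde f_{|\mathcal{S}^\ast_\dagger(\R^d)} = f$, whence $f \in \mathcal{B}'^{\ast}_{\mathcal{V}}(\R^d)$.

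The main obstacle is achieving simultaneous integrability in $x$ and $\xi$: without \eqref{decay-weights-L1-1} the product $v_{m'}(x)/v_m(x)$ that emerges from combining the two pointwise STFT bounds would leave no polynomial decay in $x$, so the decay hypothesis on $\mathcal{V}$ is essential; the $\xi$-integrability is then handled by the standard $(M.2)$-trick playing the two exponential factors $M(h_0\xi)$ and $M(\pi h \xi/\sqrt d)$ against each other.
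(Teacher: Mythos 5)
Your proposal is correct and follows essentially the same route as the paper: the paper proves this proposition "in a similar fashion" to Proposition \ref{STFT-dual}, namely the direct implication via Lemma \ref{time-freq-norm} applied to $\overline{M_\xi T_x\psi}$ on an admissibility-matched step, and the converse by defining $\tilde f$ through the desingularization integral and checking continuity stepwise via Lemma \ref{STFT-test}, \eqref{decay-weights-L1-1}, and the $(M.2)$-estimate. Your quantifier bookkeeping (choosing $h$ large in the Beurling case versus $h_0$ small in the Roumieu case) correctly reflects the projective versus inductive structure of the Banach steps in $h$.
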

\begin{corollary}\label{STFT-reg-dual-GS}
Let $\psi \in \mathcal{S}^{(M_p)}_{(A_p)}(\R^d) \backslash \{0\}$ and let $\gamma \in \mathcal{S}^{(M_p)}_{(A_p)}(\R^d)$ be a synthesis window for $\psi$, then
$$
\langle f, \varphi \rangle = \frac{1}{(\gamma, \psi)_{L^2}} \int \int_{\R^{2d}}V_\psi f(x, \xi) V_{\overline{\gamma}}\varphi(x, - \xi) \dx \dxi
$$
for all $f \in \mathcal{B}'^{\ast}_{\mathcal{W}}(\R^d)$ and $\varphi \in  \mathcal{B}^{\ast}_{\mathcal{W}}(\R^d)$ ($f \in \mathcal{B}'^{\ast}_{\mathcal{V}}(\R^d)$ and $\varphi \in  
\mathcal{B}^{\ast}_{\mathcal{V}}(\R^d)$), where the integral at the right-hand side is absolutely convergent.
\end{corollary}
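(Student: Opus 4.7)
The plan is to combine the STFT characterizations of Propositions \ref{STFT-test-char}--\ref{STFT-dual-1} with the desingularization formula already proved on $\mathcal{S}^\ast_\dagger(\R^d)$ in Proposition \ref{STFT-duals}, and to extend the latter by density via Corollary \ref{dense-inclusion}. I shall write out the argument for $\mathcal{B}^\ast_\mathcal{W}(\R^d)$; the $\mathcal{B}^\ast_\mathcal{V}(\R^d)$ version is entirely analogous after replacing $\mathcal{W}$-bounds and \eqref{decay-weights-L1} by the corresponding $\mathcal{V}$-bounds and \eqref{decay-weights-L1-1}.

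First I would establish absolute convergence of the integral. In the Beurling case, Proposition \ref{STFT-dual} furnishes $h_0 > 0$ and $N_0 \in \N$ with $|V_\psi f(x,\xi)| \leq C w_{N_0}(x) e^{M(h_0 \xi)}$, whereas Proposition \ref{STFT-test-char} gives, for every $h' > 0$ and $N' \in \N$, a constant $C'$ with $|V_{\overline{\gamma}}\varphi(x,-\xi)| \leq C'/(w_{N'}(x) e^{M(h'\xi)})$. Pick $N'$ via \eqref{decay-weights-L1} so that $w_{N_0}/w_{N'}$ is integrable on $\R^d$, and choose $h'$ so large that iterating $2M(t) \leq M(Ht) + \log C_0$ yields $M(h_0\xi) - M(h'\xi) \leq -M(\tilde h \xi) + \mathrm{const}$ for some $\tilde h > 0$; since $(M.1)$ forces $e^{-M(\tilde h \xi)}$ to decay faster than any polynomial, the product is integrable over $\R^{2d}$. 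The Roumieu case runs along the same lines, simply exchanging the roles of the existential and universal quantifiers in $(h, N)$.

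The same estimates actually show that the linear functional
$$
\varphi \longmapsto B(f, \varphi) := \frac{1}{(\gamma, \psi)_{L^2}} \int\int_{\R^{2d}} V_\psi f(x,\xi)\, V_{\overline{\gamma}}\varphi(x, -\xi)\, \dx \dxi
$$
is continuous on $\mathcal{B}^\ast_\mathcal{W}(\R^d)$. Indeed, in the Beurling case the bounds above give $|B(f, \varphi)| \leq C'' \|\varphi\|_{\mathcal{B}^{M_p,h'}_{w_{N'}}}$, a continuous seminorm on the Fr\'echet space $\mathcal{B}^{(M_p)}_\mathcal{W}(\R^d)$; in the Roumieu case one argues on each defining Banach step $\mathcal{B}^{M_p,h}_\mathcal{W}(\R^d)$ separately, selecting the matching $N$ via Proposition \ref{STFT-dual}. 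On the other hand, $\varphi \mapsto \langle f, \varphi \rangle$ is continuous by the very assumption $f \in \mathcal{B}'^\ast_\mathcal{W}(\R^d)$. Proposition \ref{STFT-duals} asserts that these two continuous functionals coincide on $\mathcal{S}^\ast_\dagger(\R^d)$, and Corollary \ref{dense-inclusion} tells us that $\mathcal{S}^\ast_\dagger(\R^d)$ is dense in $\mathcal{B}^\ast_\mathcal{W}(\R^d)$, so the identity extends to all of $\mathcal{B}^\ast_\mathcal{W}(\R^d)$.

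The only potential obstacle is the bookkeeping in the Roumieu case, where one must align a ``$\forall h\, \exists N$'' bound on $V_\psi f$ with an ``$\exists h\, \forall N$'' bound on $V_{\overline{\gamma}}\varphi$; but the two quantifier patterns dovetail precisely, and the analytic heart of the estimate is nothing more than the Fubini/$(M.2)$ manipulation already used in Lemma \ref{double-int-test}. No new ingredients beyond those assembled in Section \ref{sect-duals} are needed.
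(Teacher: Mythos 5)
Your proposal is correct and takes essentially the same route as the paper: the corollary is implicitly proved inside Proposition \ref{STFT-dual} (and \ref{STFT-dual-1}), where the functional $\tilde f$ given by the same absolutely convergent integral is shown, via the bounds of Lemma \ref{time-freq-norm} and Lemma \ref{STFT-test}, to be continuous on $\mathcal{B}^\ast_{\mathcal{W}}(\R^d)$ and to agree with $f$ on $\mathcal{S}^\ast_\dagger(\R^d)$ by \eqref{regularization-via-STFT}, whence equality follows from the density in Corollary \ref{dense-inclusion}. Your quantifier bookkeeping in the Roumieu case and the use of $(M.2)$ and \eqref{decay-weights-L1} for integrability are exactly the ingredients the paper relies on.
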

As an application of Corollary \ref{STFT-reg-dual-GS} we now give a projective description of the space $\mathcal{B}^{\{M_p\}}_{\mathcal{V}}(\R^d)$ (cf.\ \cite{Pilipovic-94}),  a result that will be 
used later on. We need some preparation.
Let $\mathcal{V} = (v_n)_{n}$ be a decreasing weight system on a completely regular Hausdorff space $X$. The \emph{maximal Nachbin family associated with $\mathcal{V}$}, denoted by $\overline{V}(\mathcal{V}) = \overline{V}$, is given by the space of all nonnegative upper semicontinuous functions $v$ on $X$ such that 
$$
\sup_{x \in X} \frac{v(x)}{v_n(x)} < \infty
$$ 
for all $n \in \N$. The \emph{projective hull of $\mathcal{V}C(X)$} is then defined as 
$$
C\overline{V}(X) = \varprojlim_{v \in \overline{V}} Cv(X),
$$
where $Cv(X)$ is the seminormed space consisting of all $f \in C(X)$ such that 
$$
\|f\|_v := \sup_{x \in X} |f(x)|v(x) < \infty.
$$
The spaces $\mathcal{V}C(X)$ and $C\overline{V}(X)$ are equal as sets and have the same bounded sets \cite{Bierstedt} (see also Lemma \ref{proj-desc-1} below). The  problem of projective description in this context is to find conditions on $\mathcal{V}$ which ensure that these spaces coincide topologically. If $\mathcal{V}$ is regularly decreasing, this is always the case\ \cite[Cor.\ 9, p.\ 121]{Bierstedt}. We now collect some useful facts about the maximal Nachbin family associated with $\mathcal{V}$.

\begin{lemma}\label{proj-desc-1}
Let $f$ be a nonnegative function on $X$ and let $\mathcal{V} = (v_n)_{n}$ be a decreasing weight system. Then,
\begin{itemize}
\item[$(i)$] $\sup_{x \in X}f(x)v_n(x) < \infty$ for some $n \in \N$ if and only if $ \sup_{x \in X}f(x)v(x) < \infty$ for all $v \in \overline{V}$.
\item[$(ii)$] $\sup_{x \in X}f(x)/v(x) < \infty$ for some $v \in \overline{V}$ if and only if $\sup_{x \in X}f(x)/v_n(x) < \infty$ for all $n \in \N$. 
\end{itemize}
\end{lemma}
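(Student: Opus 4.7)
The forward implication in each part is immediate from the definition of $\overline{V}$: every $v \in \overline{V}$ is bounded above by a constant multiple of each $v_n$, so bounds involving $f v_n$ (respectively, $f/v$) transfer to bounds on $f v$ (respectively, $f/v_n$) via the appropriate constant.

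For the backward implication of (ii), I would produce $v$ as the upper semicontinuous envelope of $f$ itself, namely $v(x) := \inf_{U \ni x}\sup_{y \in U} f(y)$, with $U$ ranging over open neighborhoods of $x$. By construction, $v$ is nonnegative, upper semicontinuous, and satisfies $v \geq f$ pointwise. Setting $C_n := \sup_x f(x)/v_n(x) < \infty$, the inequality $f \leq C_n v_n$ together with continuity (hence upper semicontinuity) of $v_n$ yields $v \leq C_n v_n$ after passing to envelopes, so $v \in \overline{V}$. The desired bound $\sup_x f(x)/v(x) \leq 1$ is then immediate.

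For the backward implication of (i), I would argue by contraposition. Assume $\sup_x f(x)v_n(x) = \infty$ for every $n$, and pick points $x_n \in X$ with $f(x_n) v_n(x_n) \geq n^4$. Using complete regularity of $X$, attach to each $x_n$ a continuous function $\varphi_n : X \to [0,1]$ with $\varphi_n(x_n) = 1$ supported in a neighborhood $U_n$ of $x_n$, chosen small enough that each of the continuous weights $v_1,\dots,v_n$ oscillates on $U_n$ by at most a factor of $2$. By passing to a subsequence of $(x_n)$ and further shrinking the $U_n$ if necessary, we may arrange the family $\{U_n\}$ to be locally finite. Then $v := \sum_n (v_n(x_n)/n^2)\,\varphi_n$ is locally a finite sum of continuous functions, hence upper semicontinuous. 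For any fixed $k$ and $n \geq k$, one has $v_n \leq v_k$, so on $U_n$ the $n$-th term contributes $(v_n(x_n)/n^2)\varphi_n(x)/v_k(x) \leq 2 v_n(x_n)/(n^2 v_k(x_n)) \leq 2/n^2$; summing these contributions gives a uniform bound, while the finitely many terms with $n < k$ contribute only a further finite constant. Hence $v \in \overline{V}$. On the other hand $v(x_n) \geq v_n(x_n)/n^2$, so $f(x_n) v(x_n) \geq f(x_n)v_n(x_n)/n^2 \geq n^2 \to \infty$, producing the required witness.

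The main technical subtlety is arranging the locally finite refinement of $\{U_n\}$ on a general completely regular Hausdorff space, which is the one step where the topological assumption on $X$ enters in an essential way; this is accomplished by a standard diagonal shrinking procedure, as detailed in \cite{Bierstedt}.
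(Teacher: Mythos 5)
Your forward implications and your proof of the backward implication in $(ii)$ are correct; for $(ii)$ the paper takes $v=\inf_{n}C_{n}v_{n}$ rather than the upper semicontinuous envelope of $f$, but the two constructions are essentially interchangeable (an infimum of continuous functions is automatically upper semicontinuous and is dominated by each $C_{n}v_{n}$).

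The backward implication of $(i)$, however, has a genuine gap. Your function $v=\sum_{n}(v_{n}(x_{n})/n^{2})\varphi_{n}$ is an infinite sum of nonnegative continuous functions and is therefore \emph{lower} semicontinuous --- the wrong half of semicontinuity for membership in $\overline{V}$ --- and the local finiteness you invoke to upgrade it to upper semicontinuity cannot be arranged in general. If the chosen points $(x_{n})_{n}$ accumulate at some $x_{\infty}\in X$ (which happens, for instance, whenever $X$ is compact, and which cannot be destroyed by passing to a subsequence), then every neighborhood of $x_{\infty}$ contains infinitely many of the $x_{n}$ and hence meets infinitely many of the sets $U_{n}$, no matter how these are shrunk. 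There is also a secondary problem in the verification that $v\in\overline{V}$: for fixed $k$ the finitely many terms with $n<k$ are only bounded by constants, and to dominate a constant by $Cv_{k}$ on $\operatorname{supp}\varphi_{n}$ one needs $v_{k}$ to be bounded below there, which your choice of $U_{n}$ (controlling only $v_{1},\dots,v_{n}$) does not guarantee on a general completely regular space. All of this is avoided by working with infima instead of sums, which is what the paper does: choose $x_{k}$ with $f(x_{k})v_{k}(x_{k})\geq k$, set $C_{n}:=\sup_{k\leq n}v_{k}(x_{k})/v_{n}(x_{k})\ (\geq 1)$ and $v:=\inf_{n}C_{n}v_{n}$. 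Then $v$ is upper semicontinuous and $v\leq C_{n}v_{n}$, so $v\in\overline{V}$, while for all $n$ and $k$ one has $C_{n}v_{n}(x_{k})\geq v_{k}(x_{k})$ (for $n\geq k$ by the definition of $C_{n}$, and for $n<k$ because $v_{n}\geq v_{k}$ and $C_{n}\geq 1$); hence $f(x_{k})v(x_{k})\geq f(x_{k})v_{k}(x_{k})\geq k$ and $\sup_{x\in X}f(x)v(x)=\infty$. No topological input beyond the continuity of the $v_{n}$ is needed.
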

\begin{proof}
The direct implications are clear, we only need to show the ``if" parts. 

$(i)$ Suppose that $\sup_{x \in X}f(x)v_n(x) < \infty$ does not hold for any $n \in \N$. Choose a sequence $(x_k)_{k \in \N}$ of points in $X$ such that $f(x_k)v_k(x_k) \geq k$ for all $k \in \N$. Set
$C_n = \sup_{k \leq n} v_k(x_k)/v_n(x_k)$, $n \in \N$, and $v = \inf_{n \in \N} C_n v_n \in \overline{V}$. Since
$$
f(x_k)C_n v_n(x_k) \geq k, \qquad k,n \in \N,
$$
we have that $\sup_{x \in X}f(x)v(x) = \infty$, a contradiction.

$(ii)$ There are $C_n > 0$ such that $f \leq C_n v_n$ for all $n \in \N$. Then, the function $v = \inf_{n \in \N} C_nv_n \in \overline{V}$ satisfies the requirement.
\end{proof}
Likewise,
\begin{lemma}\label{proj-desc-2}
Let $X$ and $Y$ be completely regular Hausdorff spaces, let $f$ be a nonnegative function on $X \times Y$, let $\mathcal{V} = (v_n)_{n}$ be a decreasing weight system on $X$, and let $\mathcal{U} = (u_n)_{n}$ be a decreasing weight system on $Y$. Then,
\begin{itemize}
\item[$(i)$] $\sup_{(x,y) \in X \times Y}f(x,y)v_n(x)u_n(y) < \infty$ for some $n \in \N$ if and only if \newline $ \sup_{(x,y) \in X \times Y}f(x,y)v(x)u(y) < \infty$ for all $v \in \overline{V}(\mathcal{V})$ and $u \in \overline{V}(\mathcal{U})$.
\item[$(ii)$]$ \sup_{(x,y) \in X \times Y}f(x,y)/(v(x)u(y)) < \infty$ for some $v \in \overline{V}(\mathcal{V})$ and $u \in \overline{V}(\mathcal{U})$ if and only if  
 $\sup_{(x,y) \in X \times Y}f(x,y)/(v_n(x)u_n(y)) < \infty$ for all $n \in \N$. 
 \end{itemize}
\end{lemma}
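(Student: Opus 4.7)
The proof parallels that of Lemma~\ref{proj-desc-1}. The direct implications ($\Rightarrow$ in (i) and (ii)) are immediate from the defining property of the maximal Nachbin family: any $v \in \overline{V}(\mathcal{V})$ satisfies $v \leq C_n v_n$ for every $n$ (and similarly for $u \in \overline{V}(\mathcal{U})$), so a uniform bound on $f(x,y) v_n(x) u_n(y)$ transfers to $f(x,y) v(x) u(y)$ for every admissible pair $v, u$, and a bound $f \leq C v(x) u(y)$ transfers to $f \leq C C_n D_n v_n u_n$ for every $n$. The real content lies in the two converses.

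For the converse of (i), I plan to imitate the diagonal argument of Lemma~\ref{proj-desc-1}(i) carried out simultaneously in both coordinates. Suppose for contradiction that $\sup_{(x,y)} f(x,y) v_n(x) u_n(y) = \infty$ for every $n$, pick $(x_k, y_k)$ with $f(x_k, y_k) v_k(x_k) u_k(y_k) \geq k$, and set
\[
C_n := \max\!\Bigl(1,\ \sup_{j \leq n}\frac{v_j(x_j)}{v_n(x_j)}\Bigr), \qquad D_n := \max\!\Bigl(1,\ \sup_{j \leq n}\frac{u_j(y_j)}{u_n(y_j)}\Bigr),
\]
both finite. Then $v := \inf_n C_n v_n$ and $u := \inf_n D_n u_n$ are upper semicontinuous, and $v \in \overline{V}(\mathcal{V})$, $u \in \overline{V}(\mathcal{U})$. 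A case split gives $v(x_k) \geq v_k(x_k)$: if $k \leq n$ this is the definition of $C_n$, while if $k > n$ it follows from $C_n \geq 1$ together with $v_n(x_k) \geq v_k(x_k)$ (monotonicity of $\mathcal{V}$); similarly $u(y_k) \geq u_k(y_k)$. Hence $f(x_k, y_k) v(x_k) u(y_k) \geq k$, contradicting the assumed boundedness for this admissible pair.

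The converse of (ii) is the genuinely two-variable step, and I expect it to be the main obstacle: a bound $f(x,y) \leq C_n v_n(x) u_n(y)$ does not \emph{a priori} split as a product $v(x) u(y)$. Given such constants $C_n$, which we may assume satisfy $C_n \geq 1$ by replacing them with $\max(C_n, 1)$, the plan is to set $v(x) := \inf_n C_n v_n(x)$ and $u(y) := \inf_n C_n u_n(y)$. Both are in the respective Nachbin families, and since $v(x) u(y) = \inf_{n,m} C_n C_m v_n(x) u_m(y)$, the required pointwise bound $v(x) u(y) \geq f(x,y)$ reduces to verifying $C_n C_m v_n(x) u_m(y) \geq f(x,y)$ for every pair $(n,m)$. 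When $n \leq m$, monotonicity gives $v_n \geq v_m$, hence
\[
C_n C_m v_n(x) u_m(y) \geq C_n \bigl(C_m v_m(x) u_m(y)\bigr) \geq C_n f(x,y) \geq f(x,y),
\]
and the case $m \leq n$ is symmetric. The key observation that unlocks the factorization is that $C_n C_m \geq \max(C_n, C_m)$ whenever $C_n \geq 1$, so the larger constant is always available to be absorbed into one of the two factors; this is precisely what makes the product structure go through in the two-variable setting.
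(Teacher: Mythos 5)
Your proof is correct and follows essentially the route the paper intends: the paper gives no separate argument for this lemma (it is introduced with ``Likewise,'' referring back to Lemma \ref{proj-desc-1}), and your argument is precisely the natural two-variable adaptation of that proof, with the diagonal construction $v=\inf_n C_nv_n$, $u=\inf_n D_nu_n$ in part $(i)$ and the observation that monotonicity of the weight systems plus $C_n\geq 1$ lets the product $C_nC_mv_n(x)u_m(y)$ be bounded below by $f(x,y)$ in part $(ii)$.
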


\begin{lemma}\label{proj-desc-3} Let $w$ be a positive function on $X$ and let  $\mathcal{V} = (v_n)_{n}$ be a decreasing weight system satisfying
$$
\forall n \in \N \, \exists m \geq n \, \exists C > 0 \, \forall x \in X \,  : \, v_m(x) \leq Cw(x)v_n(x).
$$
Then,
$$
\forall v \in \overline{V} \, \exists  \overline{v} \in \overline{V}  \, \forall x \in X \,  : \,  v(x) \leq w(x)\overline{v}(x).
$$
\end{lemma}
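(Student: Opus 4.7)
The plan is to apply Lemma \ref{proj-desc-1}$(ii)$ to the nonnegative function $f := v/w$ on $X$. That lemma will produce an element $\overline{v} \in \overline{V}$ with $\sup_{x \in X} (v(x)/w(x))/\overline{v}(x) < \infty$, provided I can verify its hypothesis, namely
\[
\forall n \in \N \, : \, \sup_{x \in X} \frac{v(x)}{w(x)v_n(x)} < \infty.
\]

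To verify this hypothesis, I will exploit the standing condition on $\mathcal{V}$: for each $n \in \N$, there exist $m \geq n$ and $C > 0$ such that $v_m(x) \leq Cw(x)v_n(x)$ for all $x \in X$. Dividing through, this yields $1/(w(x)v_n(x)) \leq C/v_m(x)$ pointwise, and hence
\[
\frac{v(x)}{w(x)v_n(x)} \leq C\,\frac{v(x)}{v_m(x)}, \qquad x \in X.
\]
Since $v \in \overline{V}$, the right-hand side is uniformly bounded in $x$ by the very definition of $\overline{V}$, which gives the required finiteness for every $n$.

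Having secured the hypothesis, Lemma \ref{proj-desc-1}$(ii)$ furnishes some $\overline{v}_0 \in \overline{V}$ and a constant $K > 0$ such that $v(x)/w(x) \leq K\overline{v}_0(x)$ for every $x \in X$. Setting $\overline{v} := K\overline{v}_0$, I obtain the pointwise inequality $v(x) \leq w(x)\overline{v}(x)$ as required. The only point worth noting is that $\overline{V}$ is stable under multiplication by positive scalars (both the upper semicontinuity and the bounds $\sup_x \overline{v}/v_n < \infty$ are preserved), so $\overline{v} \in \overline{V}$. There is no real obstacle in this argument; the whole content of the lemma is the observation that the condition $v_m \leq C w v_n$ is exactly what one needs in order to convert a $v$-bound into a $(wv)$-bound at the cost of shifting the index from $n$ to $m$, after which the projective description lemma does all the work.
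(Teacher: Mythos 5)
Your proof is correct. The verification of the hypothesis of Lemma \ref{proj-desc-1}$(ii)$ for $f=v/w$ is exactly right: the standing condition $v_m\leq Cwv_n$ gives $v/(wv_n)\leq Cv/v_m$, and the right-hand side is bounded because $v\in\overline{V}$; absorbing the resulting constant into $\overline{v}$ is harmless since $\overline{V}$ is stable under positive scalar multiples. Your route does differ from the paper's in organization: the paper does not invoke Lemma \ref{proj-desc-1} here but instead repeats the infimum construction by hand, choosing an increasing sequence $(n_k)_k$ with $v_{n_{k+1}}\leq C_k w v_{n_k}$ and constants $C_k'$ with $v\leq C_k' v_{n_k}$, and then setting $\overline{v}=\inf_k C_kC_{k+1}'v_{n_k}$, checking directly that this lies in $\overline{V}$ and satisfies $v\leq w\overline{v}$. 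The two arguments rest on the same mechanism (an infimum of suitably scaled $v_{n}$'s, which is what the proof of Lemma \ref{proj-desc-1}$(ii)$ produces), but your reduction is more economical and avoids re-verifying upper semicontinuity and membership in $\overline{V}$, at the cost of routing the quantifier bookkeeping through the earlier lemma; the paper's explicit construction has the minor advantage of exhibiting $\overline{v}$ concretely. Either version is acceptable.
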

\begin{proof}
Let $(n_k)_{k \in \N}$ be an increasing sequence of natural numbers such that $v_{n_{k+1}} \leq C_kwv_{n_k}$ for all $k \in \N$ and some $C_k > 0$. Next, choose $C'_k > 0$ such that $v \leq C'_k v_{n_k}$ for all $k \in \N$. Set $\overline{v} = \inf_{k \in \N} C_kC'_{k+1}v_{n_k} \in \overline{V}$. We have that
$$
v \leq \inf_{k \in \N} C'_{k+1}v_{n_{k+1}} \leq w  \inf_{k \in \N} C_kC'_{k+1}v_{n_{k}} = w\overline{v}.
$$
\end{proof}
Similarly, 
\begin{lemma}\label{proj-desc-4} Let $A_p$ be a weight sequence and let $\mathcal{V} = (v_n)_{n}$ be an  $\{A_p\}$-admissible decreasing weight system. Then,
$$
\forall v \in \overline{V} \, \forall \lambda > 0 \, \exists  \overline{v} \in \overline{V} \, \forall x,t \in \R^d \,  : \,  v(x+t) \leq \overline{v}(x)e^{A(\lambda t)}.
$$
\end{lemma}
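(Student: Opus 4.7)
The plan is to mimic the diagonalization argument from the proof of Lemma \ref{proj-desc-3}, with the fixed multiplier $w(x)$ there replaced by the translation factor $e^{A(\lambda t)}$. Since $e^{A(\lambda t)}$ is independent of $x$, the construction goes through essentially verbatim; the only thing one has to pay attention to is that the sequence of indices must be chosen with a single $\lambda$ fixed from the start, which is exactly what $\{A_p\}$-admissibility (as opposed to the weaker $(A_p)$-admissibility) provides.

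Concretely, given $v \in \overline{V}$ and $\lambda > 0$, I first use the $\{A_p\}$-admissibility of $\mathcal{V}$ iteratively to extract an increasing sequence $(n_k)_{k \in \N}$ of natural numbers together with constants $C_k > 0$ such that
$$
v_{n_{k+1}}(x+t) \leq C_k\, v_{n_k}(x)\, e^{A(\lambda t)}, \qquad x, t \in \R^d,\ k \in \N.
$$
Since $v \in \overline{V}$, I also pick constants $C'_k > 0$ with $v \leq C'_k v_{n_k}$ on $\R^d$, and then set
$$
\overline{v} := \inf_{k \in \N} C_k C'_{k+1}\, v_{n_k}.
$$

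Three short routine checks complete the argument: (i) $\overline{v}$ is upper semicontinuous, being the pointwise infimum of a countable family of continuous functions; (ii) for each $n \in \N$, choosing any $k$ with $n_k \geq n$ and using that $(v_n)_n$ is pointwise decreasing yields $\overline{v}/v_n \leq C_k C'_{k+1}$, so $\overline{v} \in \overline{V}$; and (iii) for each $k$, chaining the two inequalities gives $v(x+t) \leq C'_{k+1} v_{n_{k+1}}(x+t) \leq C_k C'_{k+1}\, v_{n_k}(x)\, e^{A(\lambda t)}$, and taking the infimum over $k$ on the right produces the desired bound $v(x+t) \leq \overline{v}(x) e^{A(\lambda t)}$. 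There is no real obstacle here; once the analogy with Lemma \ref{proj-desc-3} is recognized, the proof reduces to this bit of bookkeeping.
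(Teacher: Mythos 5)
Your proof is correct and is precisely the argument the paper intends: Lemma \ref{proj-desc-4} is introduced with ``Similarly,'' and no written proof, the expectation being exactly the diagonalization from Lemma \ref{proj-desc-3} with $w$ replaced by $e^{A(\lambda t)}$ and with $\lambda$ fixed in advance via $\{A_p\}$-admissibility, as you do. The three checks (upper semicontinuity, membership in $\overline{V}$ via $n_k\to\infty$, and the chained inequality) are all in order.
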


We write $\mathcal{R}$ for the set of all increasing sequences $(h_j)_{j \in \N}$ of positive numbers tending to infinity. There is a natural order on $\mathcal{R}$ defined by $h_j \prec k_j$ if $h_j\leq k_j$ for all $j \in \N$, and with it $(\mathcal{R},\prec)$ becomes a directed set. Given a weight sequence $M_p$ 
with associated function $M$ and $h_j \in \mathcal{R}$, we denote by $M^{h_j}$ and $M_{h_j}$ the associated function of the sequence $M_p / \prod_{j = 0}^ph_j$  and $M_p \prod_{j = 0}^p h_j$, respectively. 
\begin{lemma}\label{projectivedescriptionlemma} Let $M_p$ be a weight sequence satisfying $(M.1)$ and $(M.2)$. Then,
\begin{itemize}
\item[$(i)$] for every $v \in \overline{V}(\mathcal{V}_{(M_p)})$ there is $h_j \in \mathcal{R}$ such that  
$$
\sup_{\xi \in \R^d} v(\xi)e^{M^{h_j}(\xi)} < \infty.
$$
\item[$(ii)$] for every $v \in \overline{V}(\mathcal{V}_{\{M_p\}})$ there is $h_j \in \mathcal{R}$ such that  
$$
\sup_{\xi \in \R^d} v(\xi)e^{-M_{h_j}(\xi)} < \infty.
$$
\end{itemize}
\end{lemma}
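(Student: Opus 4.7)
My plan is to handle both parts through a common scheme: in each case I will extract from $v$ a positive sequence $\alpha_p$ with $\alpha_p^{1/p}\to 0$, invoke a standard Komatsu-type diagonalization (if $\alpha_p^{1/p}\to 0$, then there exists $(h_j)\in\mathcal{R}$ with $K_0:=\sup_p \alpha_p \prod_{j=1}^p h_j<\infty$), and combine the resulting estimates to bound $v$ as required.

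For (i), I begin from $v(\xi)\leq C_n e^{-M(n|\xi|)}$ for all $n\in\N$ and apply the elementary inequality $e^{-M(s)}\leq M_p/(s^p M_0)$ (immediate from the definition of the associated function) with $s=n|\xi|$; taking supremum over $\xi$ and infimum over $n$ yields
$$
\sup_\xi v(\xi)|\xi|^p M_0/M_p \leq \inf_n C_n/n^p =: \alpha_p
$$
for every $p$. The decay $\alpha_p^{1/p}\to 0$ follows from $\alpha_p\leq C_N/N^p$ for any fixed $N$. The diagonalization then produces $(h_j)\in\mathcal R$ with $H_p:=\prod_{j=1}^p h_j$ and $\sup_p H_p\alpha_p<\infty$. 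Since $e^{M^{h_j}(|\xi|)}=\sup_p |\xi|^p H_p M_0/M_p$, interchanging the two suprema will give
$$
\sup_\xi v(\xi) e^{M^{h_j}(|\xi|)} = \sup_p H_p \sup_\xi v(\xi)|\xi|^p M_0/M_p \leq \sup_p H_p\alpha_p < \infty,
$$
which is the desired conclusion.

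For (ii), my strategy is parallel. Starting from $v(\xi)\leq C_n e^{M(|\xi|/n)}$, I take the same $\alpha_p:=\inf_n C_n/n^p$ (still $\alpha_p^{1/p}\to 0$) and obtain $(h_j)\in\mathcal R$ with $H_p\alpha_p\leq K_0$. The goal will be to verify that, for every $\xi$, there is $p\in\N$ with $v(\xi)H_p M_p/(|\xi|^p M_0)\leq K$; the supremum over $p$ in $e^{M_{h_j}(|\xi|)}=\sup_p |\xi|^p M_0/(M_p H_p)$ then gives $v(\xi)\leq K e^{M_{h_j}(|\xi|)}$. To produce such a $p$, I will fix a near-minimizer $n_p$ of $C_n/n^p$ (so that $C_{n_p}H_p/n_p^p\leq 2K_0$) and, for each $\xi$, choose $p$ such that $|\xi|$ lies in the transition interval $[n_p m_p, n_p m_{p+1}]$ of the sequence $(n_p m_q)_q$; on this interval $e^{M(|\xi|/n_p)}=(|\xi|/n_p)^p M_0/M_p$, so substituting into $v(\xi)\leq C_{n_p}e^{M(|\xi|/n_p)}$ yields $v(\xi)H_p M_p/(|\xi|^p M_0)\leq C_{n_p}H_p/n_p^p\leq 2K_0$.

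The main obstacle for (ii) is a coverage argument: I must ensure that the intervals $\{[n_p m_p, n_p m_{p+1}]\}_p$ cover $[T,\infty)$ for some $T>0$, the region $|\xi|<T$ being handled trivially (there $v$ is bounded and $e^{M_{h_j}(|\xi|)}\geq 1$). For strictly increasing $(n_p)$ one has gaps $(n_p m_{p+1}, n_{p+1}m_{p+1})$; for $|\xi|$ in such a gap I will use a non-extremal $n$ (for instance $n=\lceil |\xi|/m_{p+1}\rceil$) and keep the ensuing expression $C_n H_{p+1}/n^{p+1}$ under control by choosing $(h_j)$ with slightly more care than the bare diagonalization, exploiting the assumptions $(M.1)$ and $(M.2)$ on $(M_p)$ to bound the ratios $n_{p+1}/n_p$ and $m_{p+1}/m_p$ in the gap intervals.
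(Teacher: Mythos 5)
Your part (i) is correct and complete, and it takes a genuinely different route from the paper. The paper first builds a \emph{supordinate} function $\varepsilon$ (with $t=o(\varepsilon(t))$) satisfying $\sup_\xi v(\xi)e^{M(\varepsilon(|\xi|))}<\infty$ and then constructs $h_j$ from $\varepsilon$ via $\varepsilon(m_p/h_p)=m_p$. You instead pass directly to the quantities $\alpha_p=\inf_n C_n/n^p$ and apply the Komatsu-type diagonalization (the same \cite[Lemma 3.4]{Komatsu3} the paper uses elsewhere). This is shorter, and it works precisely because $e^{-M(s)}\leq M_p/(s^pM_0)$ holds for \emph{every} $p$ simultaneously, so no index has to be selected and the two suprema simply commute; as a bonus, your argument does not even use $(M.1)$ or $(M.2)$.

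Part (ii) is where the genuine gap lies. (For the record, the paper does not prove (ii) either; it cites \cite[Lemma 4.5$(i)$]{D-V-VEmbeddingUltra16}.) The asymmetry with (i) is structural: since $e^{M(s)}$ equals $s^pM_0/M_p$ only for the extremal index $p$ with $s\in[m_p,m_{p+1}]$, your scheme forces a matching between $n$ and $p$, and the resulting intervals $[n_pm_p,n_pm_{p+1}]$ need not cover a neighbourhood of infinity. Your proposed repair does not close this. First, for $|\xi|$ in a gap the index $n=\lceil|\xi|/m_{p+1}\rceil$ is generally not a near-minimizer, and the only information available about $C_n$ for such $n$ is the \emph{lower} bound $C_n\geq\alpha_{p+1}n^{p+1}$ coming from the definition of $\alpha_{p+1}$ as an infimum; there is no upper bound on $C_nH_{p+1}/n^{p+1}$. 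Second, the suggestion to ``bound the ratios $n_{p+1}/n_p$'' using $(M.1)$ and $(M.2)$ cannot work: those conditions constrain $M_p$ only, whereas the near-minimizers $n_p$ are governed by the constants $C_n$, which are arbitrary (they come from the given $v\in\overline{V}$), so the jumps $n_{p+1}/n_p$ can be made as wild as one likes. A workable self-contained proof of (ii) mirrors the paper's argument for (i) in the opposite direction: from $v(\xi)\leq C_ne^{M(|\xi|/n)}$ one builds an increasing \emph{subordinate} function $\epsilon$ with $\epsilon(t)=o(t)$, $\epsilon(t)\to\infty$ and $v(\xi)\leq Ce^{M(\epsilon(|\xi|))}$ (choosing $t_n\uparrow\infty$ so that $M(t/n)-M(t/(n+1))\geq\log C_{n+1}$ for $t\geq t_{n+1}$, which uses the integral representation \eqref{representation-ass} and hence $(M.1)$), and then shows $M(\epsilon(t))\leq M_{h_j}(t)+C$ for $h_p:=\epsilon^{-1}(m_p)/m_p\in\mathcal{R}$. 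I recommend either carrying out that construction or simply citing the reference, as the paper does.
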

\begin{proof}
$(i)$ We first show that there is a \emph{supordinate} function $\varepsilon$, i.e., a continuous strictly increasing function $\varepsilon: [0,\infty) \rightarrow [0,\infty)$ with $\varepsilon(0) = 0$ and $ t= o(\varepsilon (t))$, such that 
\begin{equation}
\sup_{\xi \in \R^d} v(\xi)e^{M(\varepsilon(|\xi|))} < \infty.
\label{condition-supordinate}
\end{equation}
 Conditions $(M.1)$ and $(M.2)$ imply that for all $h> 0$ there is $t> 0$ such that $v(\xi) \leq e^{-M(h\xi)}$ for all $|\xi| > t$. Hence we can inductively determine a strictly increasing sequence $(t_n)_{n \in \N}$ with $t_0 = 0$ and $t_n \rightarrow \infty$ that satisfies 
$$ 
v(\xi) \leq e^{-M((n+1)\xi)}, \qquad |\xi| \geq t_n, n \geq 1.
$$
Let $l_n$ denote the line through the points $(t_n, nt_n)$ and $(t_{n+1}, (n+1)t_{n+1})$, and define
$\varepsilon(t) = l_n(t)$ for $t \in [t_n, t_{n+1})$. The function $\varepsilon$ is supordinate and satisfies \eqref{condition-supordinate}.

Therefore it suffices to show that for every supordinate function $\varepsilon$ there is a sequence $h_j \in \mathcal{R}$ and $C > 0$ such that $M^{h_j}(t) \leq M(\varepsilon(t)) + C$ for all $t \geq 0$. We may assume without loss of generality that $\varepsilon(t)/t$ tends increasingly  to $\infty$, for otherwise we can find another supordinate function $\tilde \varepsilon$ with $\tilde \varepsilon \leq \varepsilon$ that does satisfy this property.   Define $h_p$, $p \geq 1$, as the unique solution of $\varepsilon(m_p/h_p) = m_p$ and $h_0 = h_1$. The sequences $m_p/h_p$ and $h_p$ tend increasingly to infinity. Let $t \geq \max \{m_1/h_1, M_1h_1/M^2_0\}$ be arbitrary and suppose that $m_p/h_p \leq t \leq m_{p+1}/h_{p+1}$ for some $p \geq 1$. Since $h_p t \leq \varepsilon(t)$, we have that
\begin{align*}
M^{h_j}(t) &\leq \sup_{q \geq 1} \log \prod_{j = 1}^q \frac{h_jt}{m_j} + \log h_0 = \log \prod_{j = 1}^p \frac{h_jt}{m_j} + \log h_0 \leq \log \prod_{j = 1}^p \frac{\varepsilon(t)}{m_j}  + \log h_0 \\
&\leq M(\varepsilon(t)) + \log h_0 .
\end{align*}
$(ii)$ See \cite[Lemma 4.5$(i)$]{D-V-VEmbeddingUltra16}. 
\end{proof}
We are now able to state and prove the projective description of the space $\mathcal{B}^{\{M_p\}}_{\mathcal{V}}(\R^d)$.
\begin{proposition}\label{projectivedescription} A function $\varphi \in C^\infty(\R^d)$ belongs to $\mathcal{B}^{\{M_p\}}_{\mathcal{V}}(\R^d)$ if and only if 
$$
\| \varphi \|_{\mathcal{B}^{M_p,h_j}_{v}} := \sup_{\alpha \in \N^d}\sup_{x \in \R^d} \frac{|\varphi^{(\alpha)}(x)|v(x)}{M_{\alpha}\prod_{j = 0}^{|\alpha|}h_j} < \infty
$$
for all $v \in \overline{V}$ and $h_j  \in \mathcal{R}$. Moreover, the topology of $\mathcal{B}^{\{M_p\}}_{\mathcal{V}}(\R^d)$ is generated by the 
system of seminorms $\{\| \, \cdot \,  \|_{\mathcal{B}^{M_p,h_j}_{v}} \, : \, v \in \overline{V}, h_j  \in \mathcal{R} \}$.
\end{proposition}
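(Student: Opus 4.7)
Following the preamble, I will use the STFT-based reproducing formula (Corollary~\ref{STFT-reg-dual-GS}) together with Lemma~\ref{projectivedescriptionlemma}(ii) and Lemma~\ref{proj-desc-2}(i) to reduce the two-sided projective description to a one-sided projective description on phase space. The easy direction is immediate: if $\varphi \in \mathcal{B}^{M_p,h}_{v_n}(\R^d)$ for some $n \in \N$ and $h > 0$, then any $v \in \overline{V}$ satisfies $v \leq C_n v_n$ pointwise and any $h_j \in \mathcal{R}$ has $hh_j \geq 1$ eventually, so $h^{|\alpha|+1}\prod_{j=0}^{|\alpha|} h_j$ is bounded below by a positive constant; combining these observations gives $\|\varphi\|_{\mathcal{B}^{M_p,h_j}_v} < \infty$ with a bound uniform on each Banach step, so each projective seminorm is continuous on $\mathcal{B}^{\{M_p\}}_{\mathcal{V}}(\R^d)$ and the projective topology $\tau_P$ is coarser than the $(LB)$-topology $\tau_{LB}$.

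For the hard direction, fix $\psi \in \mathcal{S}^{(M_p)}_{(A_p)}(\R^d) \setminus \{0\}$ and, given $v \in \overline{V}(\mathcal{V})$, use Lemma~\ref{proj-desc-4} to choose $w \in \overline{V}(\mathcal{V})$ with $v(x+t) \leq Cw(x)e^{A(\lambda t)}$. Substituting the projective bound $|\varphi^{(\beta)}(t)|w(t) \leq CM_\beta\prod_{j=0}^{|\beta|}h_j$ into the identity from the proof of Lemma~\ref{STFT-test}, using the decay of $\psi$ to bound $\int|\psi^{(\gamma)}(s)|e^{A(\lambda s)}\,ds \leq D_k M_\gamma/k^{|\gamma|}$ for any $k > \lambda$, applying $(M.2)$ to obtain $M_\beta M_{\alpha-\beta} \leq C_0 H^{|\alpha|} M_\alpha$, summing the multinomial (after replacing $h_j$ by $\max(h_j,1)$ so that $\prod_{j=0}^{|\beta|}h_j$ is non-decreasing in $|\beta|$), and optimizing over $\alpha$ via $|\xi^\alpha| \geq |\xi|^{|\alpha|}/d^{|\alpha|/2}$ yields
\[
|V_\psi\varphi(x,\xi)|\,v(x)\,e^{M_{\tilde h_j}(|\xi|)} \leq C', \qquad (x,\xi) \in \R^{2d},
\]
with $\tilde h_j \in \mathcal{R}$ a constant multiple of $h_j$. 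Since this holds for every $v \in \overline{V}(\mathcal{V})$ and every $\tilde h_j \in \mathcal{R}$, and since Lemma~\ref{projectivedescriptionlemma}(ii) identifies $\{e^{M_{\tilde h_j}} : \tilde h_j \in \mathcal{R}\}$ up to pointwise domination with $\overline{V}(\mathcal{V}_{\{M_p\}})$, Lemma~\ref{proj-desc-2}(i) applied on $\R^d_x \times \R^d_\xi$ with weight systems $\mathcal{V}$ and $\mathcal{V}_{\{M_p\}}$ produces some $n \in \N$ with $|V_\psi\varphi(x,\xi)|v_n(x)e^{M(\xi/n)} \leq C$, so that $\varphi \in \mathcal{B}^{\{M_p\}}_{\mathcal{V}}(\R^d)$ by Proposition~\ref{STFT-char-1}.

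For the topological claim $\tau_P = \tau_{LB}$, running the above chain uniformly across any $\tau_P$-bounded family of functions (the pointwise projective bounds become uniform bounds on the family) shows that $\tau_P$- and $\tau_{LB}$-bounded sets coincide; since $\tau_{LB}$ is bornological this forces the two topologies to agree. The principal technical hurdle is the STFT estimate itself: tracking the product $\prod_{j=0}^{|\beta|}h_j$ under the multinomial sum over $\beta \leq \alpha$ requires the normalization $h_j \geq 1$, and the constants $(H(1+1/k)\sqrt{d}/(2\pi))^{|\alpha|}$ must be absorbed into the rescaled sequence $\tilde h_j$ so that $M_p\prod\tilde h_j$ and its associated function $M_{\tilde h_j}$ emerge naturally once one optimizes over $|\alpha|$.
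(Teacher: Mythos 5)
Your set-theoretic equivalence and the easy inclusion of the projective topology $\tau_P$ into the inductive topology $\tau_{LB}$ are essentially sound, though you take a longer route than necessary: the paper obtains the first part without any STFT, by applying Lemma \ref{proj-desc-2} to $f(x,\alpha)=|\varphi^{(\alpha)}(x)|/M_\alpha$ on $\R^d\times\N^d$ with the weight system $(n^{-|\alpha|})_n$ on $\N^d$, and then invoking Komatsu's lemma to identify the associated maximal Nachbin family with the weights $1/\prod_{j=0}^{|\alpha|}h_j$, $h_j\in\mathcal{R}$. Your STFT detour does work (and a small correction: the inequality $M_\beta M_{\alpha-\beta}\leq M_0M_\alpha$ you need is a consequence of $(M.1)$, i.e.\ log-convexity, not of $(M.2)$, which gives the reverse-type estimate).

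The genuine gap is your final step for the topological identity. The inference ``$\tau_P$ and $\tau_{LB}$ have the same bounded sets and $\tau_{LB}$ is bornological, hence $\tau_P=\tau_{LB}$'' is not valid: two comparable locally convex topologies with identical bounded sets need not coincide even when the finer one is bornological (the norm and weak topologies of an infinite-dimensional Banach space are the standard counterexample); bornologicity of $\tau_{LB}$ would only help if you already knew that $(\,\cdot\,,\tau_P)$ is bornological, which is precisely what must be proved. Indeed, the projective description problem recalled in Subsection \ref{Char STFT dual inductive} exists exactly because $\mathcal{V}C(X)$ and $C\overline{V}(X)$ always have the same bounded sets yet need not agree topologically. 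The paper closes this gap by duality: since $\mathcal{B}^{\{M_p\}}_{\mathcal{V}}(\R^d)$ is a $(DFS)$-space, an arbitrary continuous seminorm $p$ is dominated by $\sup_{f\in B}|\langle f,\cdot\rangle|$ for some strongly bounded $B\subset\mathcal{B}'^{\{M_p\}}_{\mathcal{V}}(\R^d)$; Lemma \ref{time-freq-norm}, Lemma \ref{proj-desc-2}, and Lemma \ref{projectivedescriptionlemma} then give a uniform bound $|V_\psi f(x,\xi)|\leq v(x)e^{M_{h_j}(\xi)}$ on $B$, the sequence $h_j$ is regularized via Prangoski's lemma so that $M_p\prod_{j\leq p}h'_j$ still satisfies $(M.1)$ and $(M.2)$, and the desingularization formula of Corollary \ref{STFT-reg-dual-GS} combined with Lemma \ref{STFT-test}, Lemma \ref{proj-desc-3}, and Lemma \ref{proj-desc-4} finally dominates $p$ by a single projective seminorm $\|\cdot\|_{\mathcal{B}^{M_p,\pi h'_j/(2H\sqrt{d})}_{\overline{\overline{v}}}}$. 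You need to supply an argument of this kind for the inclusion $\tau_{LB}\subseteq\tau_P$; the uniform-boundedness observation alone does not yield it.
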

\begin{proof}
The first part follows from Lemma \ref{proj-desc-2} and the fact that for a sequence of positive numbers $(a_j)_{j \in \N}$ it holds that $\sup_{j \in \N} a_j n^j < \infty$ for all $n \in \N$ 
if and only if $\sup_{j \in \N} a_j \prod_{m = 0}^jh_m < \infty$ for some $h_j \in \mathcal{R}$ \cite[Lemma 3.4]{Komatsu3}. Next, we show the topological assertion. Clearly, every seminorm $\| 
\, \cdot \,  \|_{\mathcal{B}^{M_p,r_j}_{v}}$ acts continuously on $\mathcal{B}^{\{M_p\}}_{\mathcal{V}}(\R^d)$. Conversely, let $p$ be an arbitrary seminorm on $\mathcal{B}^{\{M_p\}}_{\mathcal{V}}
(\R^d)$. There is a strongly bounded set $B \subset \mathcal{B}'^{\{M_p\}}_{\mathcal{V}}(\R^d)$ such that
$$
p(\varphi) \leq \sup_{f \in B} |\langle f, \varphi \rangle|, \qquad \varphi \in \mathcal{B}^{\{M_p\}}_{\mathcal{V}}(\R^d).
$$
Let $\psi, \gamma \in \mathcal{S}^{(M_p)}_{(A_p)}(\R^d) \backslash \{0\}$ with $(\gamma, \psi)_{L^2} = 1$.  Lemma \ref{time-freq-norm} implies that 
$$
\sup_{f \in B} \sup_{(x,\xi) \in \R^{2d}}\frac{|V_\psi f(x,\xi)|}{v_n(x)e^{M(\xi/n)}} < \infty
$$
for all $n \in \N$. Lemma \ref{proj-desc-2} and Lemma \ref{projectivedescriptionlemma} therefore imply that
$$
|V_\psi f(x,\xi)| \leq v(x)e^{M_{h_j}(\xi)}, \qquad f \in B,
$$
for some $v \in \overline{V}$ and $h_j  \in \mathcal{R}$. By \cite[Lemma 2.3]{Prangoski} there is $h'_j \in \mathcal{R}$ such that $h'_j \leq h_j$ for $j$ large enough and 
$$
\prod_{j = 0}^{p+q}h'_j \leq 2^{p+q} \prod_{j = 0}^{p}h'_j \prod_{j = 0}^{q}h'_j, \qquad p,q \in \N,
$$ 
which implies that the weight sequence $M_p\prod_{j = 0}^ph'_j$ satisfies $(M.1)$ and $(M.2)$ (with the constant $2H$ instead of $H$). Furthermore, by Lemma \ref{proj-desc-3}, there is $\overline{v} \in \overline{V}$ such that $v(x)/\overline{v}(x) = O(|x|^{-(d+1}))$ while Lemma \ref{proj-desc-4} yields the existence of $\overline{\overline{v}} \in \overline{V}$ such that
$$
\overline{v}(x+t) \leq \overline{\overline{v}}(x)e^{A(t)}, \qquad x,t \in \R^d.
$$
Hence Corollary \ref{STFT-reg-dual-GS} and Lemma \ref{STFT-test} imply that
\begin{align*}
\sup_{f \in B} |\langle f, \varphi \rangle| &\leq \sup_{f \in B}  \int \int_{\R^{2d}}|V_\psi f(x, \xi)| |V_{\overline{\gamma}}\varphi(x, - \xi)| \dx \dxi \\
&\leq C\int \int_{\R^{2d}} v(x)e^{M_{h'_j}(\xi)} |V_{\overline{\gamma}}\varphi(x, - \xi)| \dx \dxi \\
&\leq C'\sup_{(x,\xi) \in \R^{2d}} |V_{\overline{\gamma}}\varphi(x,\xi)| \overline{v}(x)e^{M_{h'_j/(2H)}(\xi)} \\
&\leq C''\| \varphi \|_{\mathcal{B}^{M_p,\pi h'_j/(2H\sqrt{d})}_{\overline{\overline{v}}}}. 
\qedhere
\end{align*}
\end{proof}
\subsection{Characterization via convolution averages}\label{conv average subsection} In this subsection we characterize the elements $f$ of $\mathcal{B}'^{\ast}_{\mathcal{W}}(\R^d)$ and $\mathcal{B}'^{\ast}
_{\mathcal{V}}(\R^d)$ in terms of the growth of the convolution averages $f \ast \varphi$ with respect to $\varphi \in \mathcal{S}^\ast_\dagger(\R^d)$. We start with the following simple lemma whose verification is left to the reader. 
\begin{lemma}\label{translation-norm}
Let $w$ and $v$ be positive functions on $\R^d$ satisfying \eqref{bound v-w for STFT}. Then,
$$
\| T_x\varphi \|_{\mathcal{B}^{M_p,h}_v} \leq Cw(x) \|\varphi\|_{\mathcal{S}^{M_p,h}_{A_p,\lambda}}, \qquad   \varphi \in \mathcal{S}^{M_p,h}_{A_p,\lambda}(\R^d).
$$
\end{lemma}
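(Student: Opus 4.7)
The plan is a direct unfolding of the definitions together with the pointwise bound \eqref{bound v-w for STFT}; the author has in fact just dismissed this as a routine verification. First, I would expand the left-hand side using $(T_x\varphi)^{(\alpha)}(t) = \varphi^{(\alpha)}(t-x)$ and the definition of the $\mathcal{B}^{M_p,h}_v$-seminorm, which gives
$$
\|T_x\varphi\|_{\mathcal{B}^{M_p,h}_v} = \sup_{\alpha \in \N^d}\sup_{t \in \R^d} \frac{h^{|\alpha|} |\varphi^{(\alpha)}(t-x)|\,v(t)}{M_\alpha}.
$$
Then I would perform the translation $s := t-x$ inside the supremum, obtaining the same expression with $v(t)$ replaced by $v(x+s)$ and $\varphi^{(\alpha)}(t-x)$ replaced by $\varphi^{(\alpha)}(s)$.

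At this point the hypothesis \eqref{bound v-w for STFT} yields $v(x+s) \leq C w(x)\, e^{A(\lambda s)}$ uniformly in $s$. Since $Cw(x)$ does not depend on $\alpha$ or $s$, I can pull it out of the supremum and recognize the remaining quantity as exactly $\|\varphi\|_{\mathcal{S}^{M_p,h}_{A_p,\lambda}}$, which gives the claimed inequality. There is no genuine obstacle here: the only ingredient beyond writing out definitions is the admissibility bound \eqref{bound v-w for STFT}, and even the constant $C$ transfers verbatim from that bound.
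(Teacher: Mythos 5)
Your proof is correct: unfolding the definition of $\|\cdot\|_{\mathcal{B}^{M_p,h}_v}$, substituting $s=t-x$, and applying the bound $v(x+s)\leq Cw(x)e^{A(\lambda s)}$ from \eqref{bound v-w for STFT} immediately produces $Cw(x)\|\varphi\|_{\mathcal{S}^{M_p,h}_{A_p,\lambda}}$. The paper leaves this verification to the reader, and your argument is exactly the intended one, with the constant $C$ carried over verbatim as you note.
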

\begin{lemma}\label{STFT-conv}
Let $\psi \in \mathcal{S}^{(M_p)}_{(A_p)}(\R^d)$. For every $h_j \in \mathcal{R}$ ($h > 0$) there is a bounded set $B \subset \mathcal{S}
^{(M_p)}_{(A_p)}(\R^d)$ ($B \subset \mathcal{S}^{\{M_p\}}_{\{A_p\}}(\R^d)$) such that
$$ 
|V_\psi f(x,\xi)|e^{-M^{h_j}(\xi)} \leq \sup_{\varphi \in B} |(f \ast \varphi)(x)|, \qquad f \in \mathcal{S}'^{(M_p)}_{(A_p)}(\R^d),
$$
$$
\left(|V_\psi f(x,\xi)|e^{-M(h\xi)} \leq \sup_{\varphi \in B} |(f \ast \varphi)(x)|, \qquad f \in \mathcal{S}'^{\{M_p\}}_{\{A_p\}}(\R^d) \right).
$$
\end{lemma}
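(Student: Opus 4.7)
The approach is to take $B$ to consist of properly normalized modulations of the reflected conjugate window. From the preliminary identity $V_\psi f(x,\xi) = e^{-2\pi i \xi x}(f \ast M_\xi\check{\overline{\psi}})(x)$ we have $|(f \ast M_\xi \check{\overline{\psi}})(x)| = |V_\psi f(x,\xi)|$. I would therefore set
$$
\varphi_\xi := e^{-M^{h_j}(\xi)}M_\xi\check{\overline{\psi}} \quad (\text{Beurling}), \qquad \varphi_\xi := e^{-M(h\xi)}M_\xi\check{\overline{\psi}} \quad (\text{Roumieu}),
$$
and define $B := \{\varphi_\xi : \xi \in \R^d\}$. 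With either choice the pointwise inequality in the statement holds with equality at $\varphi = \varphi_\xi$, so the whole content of the lemma reduces to showing that $B$ is bounded in the appropriate Gelfand-Shilov space.

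The core computation is an estimate on $\|M_\xi \check{\overline{\psi}}\|_{\mathcal{S}^{M_p,h_2}_{A_p,\lambda_2}}$. Expanding the derivative with Leibniz's rule, bounding $|\psi^{(\gamma)}(s)|e^{A(\lambda_2 s)} \leq C_\psi(2h_2)^{-|\gamma|}M_\gamma$ via $\psi \in \mathcal{S}^{(M_p)}_{(A_p)}(\R^d)$ with $h_3 = 2h_2$, $\lambda_3 = \lambda_2$, and using the log-convexity $M_{\alpha-\beta}/M_\alpha \leq M_0/M_\beta$ derived from $(M.1)$, I obtain
$$
\frac{h_2^{|\alpha|}|(M_\xi\check{\overline{\psi}})^{(\alpha)}(t)|e^{A(\lambda_2 t)}}{M_\alpha} \leq C_\psi M_0 \sum_{\beta \leq \alpha} \binom{\alpha}{\beta} 2^{-|\alpha-\beta|}\frac{(2\pi h_2|\xi|)^{|\beta|}}{M_\beta}.
$$

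For the Beurling case, I would then split $(2\pi h_2 |\xi|)^{|\beta|}/M_\beta \leq (2\pi h_2)^{|\beta|}e^{M^{h_j}(\xi)}/(M_0\prod_{j=0}^{|\beta|}h_j)$ using the definition of $M^{h_j}$, collapse the multi-index sum with $\sum_{|\beta|=p,\,\beta\leq\alpha}\binom{\alpha}{\beta} = \binom{|\alpha|}{p} \leq 2^{|\alpha|}$, and arrive at
$$
\|M_\xi\check{\overline{\psi}}\|_{\mathcal{S}^{M_p,h_2}_{A_p,\lambda_2}} \leq C_\psi\,e^{M^{h_j}(\xi)}\sum_{p=0}^\infty\frac{(4\pi h_2)^p}{\prod_{j=0}^p h_j},
$$
where the last series converges for every $h_2 > 0$ because $h_j \to \infty$. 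For the Roumieu case I instead use $|\xi|^{|\beta|}/M_\beta \leq h^{-|\beta|}e^{M(h\xi)}/M_0$ together with the coordinate-wise factorization
$$
\sum_{\beta \leq \alpha} \binom{\alpha}{\beta}2^{-|\alpha-\beta|}r^{|\beta|} = \bigl(\tfrac{1}{2} + r\bigr)^{|\alpha|},
$$
with $r = 2\pi h_2/h$; picking $h_2 = h/(8\pi)$ makes this factor $\leq (3/4)^{|\alpha|} \leq 1$. Together these estimates give $\sup_\xi \|\varphi_\xi\|_{\mathcal{S}^{M_p,h_2}_{A_p,\lambda_2}} < \infty$ for every $h_2,\lambda_2 > 0$ in the Beurling case and at the single step $h_2 = \lambda_2 = h/(8\pi)$ in the Roumieu case, so $B$ is bounded in $\mathcal{S}^{(M_p)}_{(A_p)}(\R^d)$ respectively in the $(DFN)$-space $\mathcal{S}^{\{M_p\}}_{\{A_p\}}(\R^d)$.

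The main obstacle is the careful choice of auxiliary parameters in the Leibniz expansion so that the resulting binomial sum remains bounded uniformly in $\alpha$. The Beurling case crucially exploits the divergence $\prod_{j=0}^p h_j \to \infty$ (faster than any geometric sequence) to absorb the growth $(4\pi h_2)^p$; the Roumieu case instead trades on the $(LB)$-freedom to pick $h_2$ small enough relative to $h$ for a coordinate-wise geometric contraction.
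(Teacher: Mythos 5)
Your proposal is correct and coincides with the paper's own proof: the paper takes exactly the same set $B=\{e^{-M^{h_j}(\xi)}M_\xi\check{\overline{\psi}}:\xi\in\R^d\}$ (resp.\ $\{e^{-M(h\xi)}M_\xi\check{\overline{\psi}}:\xi\in\R^d\}$) and the identity $|V_\psi f(x,\xi)|=|(f\ast M_\xi\check{\overline{\psi}})(x)|$, merely asserting the boundedness of $B$ without computation. Your Leibniz/log-convexity estimate supplies the omitted verification correctly in both cases.
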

\begin{proof} The set
$$
B = \{  e^{-M^{h_j}(\xi)}M_\xi \check{\overline{\psi}} \, :  \, \xi \in \R^d \}  \qquad \left( B = \{  e^{-M(h\xi)} M_\xi \check{\overline{\psi}} \, :  \, \xi \in \R^d \} \right)
$$
is bounded in $\mathcal{S}^{(M_p)}_{(A_p)}(\R^d)$ (in $\mathcal{S}^{\{M_p\}}_{\{A_p\}}(\R^d)$). Hence
$$ 
|V_\psi f(x,\xi)|e^{-M^{h_j}(\xi)} = |(f \ast M_\xi \check{\overline{\psi}})(x)|e^{-M^{h_j}(\xi)} \leq \sup_{\varphi \in B} |(f \ast \varphi)(x)|
$$
$$
\left(|V_\psi f(x,\xi)|e^{-M(h\xi)} = |(f \ast M_\xi \check{\overline{\psi}})(x)| e^{-M(h\xi)} \leq \sup_{\varphi \in B} |(f \ast \varphi)(x)|\right)
$$
for all $f \in \mathcal{S}'^{(M_p)}_{(A_p)}(\R^d)$ ($f \in \mathcal{S}'^{\{M_p\}}_{\{A_p\}}(\R^d)$).
\end{proof}

Given an increasing (decreasing) weight system $\mathcal{W} = (w_N)_{N}$ ($\mathcal{V} = (v_n)_{n}$) we define its \emph{dual weight system} as the decreasing (increasing) weight system given by $\mathcal{W}^\circ := (w^{-1}_N)_{N}$ ($\mathcal{V}^\circ := (v^{-1}_n)_{n}$). 

\begin{theorem}\label{conv-average-Beurling}
Let $f \in \mathcal{S}'^\ast_\dagger(\R^d)$. The following statements are equivalent:
\begin{itemize}
\item[$(i)$] $f \in \mathcal{B}'^{\ast}_{\mathcal{W}}(\R^d)$.
\item[$(ii)$] $f \ast \varphi \in \mathcal{W}^\circ C(\R^d)$ for all $\varphi \in \mathcal{S}^\ast_\dagger(\R^d)$.
\item[$(iii)$] $f \ast \varphi \in \mathcal{W}^\circ C(\R^d)$ for all $\varphi \in \mathcal{S}^\ast_\dagger(\R^d)$ and the mapping
$$
\ast_f : \mathcal{S}^\ast_\dagger(\R^d) \rightarrow \mathcal{W}^\circ C(\R^d): \varphi \rightarrow f \ast \varphi
$$
is continuous.
\item[$(iv)$] $f \ast \varphi \in \mathcal{B}^\ast_{\mathcal{W}^\circ}(\R^d)$ for all $\varphi \in \mathcal{S}^\ast_\dagger(\R^d)$.
\item[$(v)$] $f \ast \varphi \in \mathcal{B}^\ast_{\mathcal{W}^\circ}(\R^d)$ for all $\varphi \in \mathcal{S}^\ast_\dagger(\R^d)$ and the mapping
$$
\ast_f : \mathcal{S}^\ast_\dagger(\R^d) \rightarrow \mathcal{B}^\ast_{\mathcal{W}^\circ}(\R^d): \varphi \rightarrow f \ast \varphi
$$
is continuous.
\end{itemize}
\end{theorem}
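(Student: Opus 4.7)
The trivial implications are immediate: $(v)\Rightarrow (iv)\Rightarrow (ii)$ and $(v)\Rightarrow (iii)\Rightarrow (ii)$ follow by forgetting the continuity statement and using the continuous embedding $\mathcal{B}^\ast_{\mathcal{W}^\circ}(\R^d)\hookrightarrow \mathcal{W}^\circ C(\R^d)$, so it suffices to establish the two non-trivial implications $(i)\Rightarrow (v)$ and $(ii)\Rightarrow (i)$, which together close the circle.

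For $(ii)\Rightarrow (i)$ my plan is to first upgrade $(ii)$ to continuity of $\ast_f\colon \mathcal{S}^\ast_\dagger(\R^d)\to\mathcal{W}^\circ C(\R^d)$ by the closed graph theorem: the source is ultrabornological (Fr\'echet in the Beurling case, $(DFN)$ in the Roumieu case), the target is a webbed $(LB)$-space, and the graph is closed because $\ast_f$ is continuous into $\mathcal{S}'^\ast_\dagger(\R^d)$ while $\mathcal{W}^\circ C(\R^d)\hookrightarrow\mathcal{S}'^\ast_\dagger(\R^d)$ by Corollary \ref{dense-inclusion}. Grothendieck's factorization theorem then produces, in the Beurling case, some $N$, $h>0$ and $C$ with $\|f\ast\chi\|_{w_N^{-1}}\leq C\|\chi\|_{\mathcal{S}^{M_p,h}_{A_p,h}}$ for every $\chi\in\mathcal{S}^{(M_p)}_{(A_p)}(\R^d)$, and, in the Roumieu case, the analogous estimate for every $h>0$ with $N=N(h)$ and $C=C_h$. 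Fixing $\psi\in\mathcal{S}^{(M_p)}_{(A_p)}(\R^d)\setminus\{0\}$ and applying this bound to $\chi=M_\xi\check{\overline{\psi}}$, the identity $V_\psi f(x,\xi)=e^{-2\pi i \xi x}(f\ast M_\xi\check{\overline{\psi}})(x)$ together with a straightforward computation in the spirit of Lemma \ref{time-freq-norm} yielding $\|M_\xi\check{\overline{\psi}}\|_{\mathcal{S}^{M_p,h}_{A_p,h}}\leq C_\psi\, e^{M(4\pi h\xi)}$ produces precisely the STFT bound required by Proposition \ref{STFT-dual}, so that $f\in\mathcal{B}'^\ast_\mathcal{W}(\R^d)$.

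For $(i)\Rightarrow (v)$ I would estimate $\partial^\alpha(f\ast\varphi)$ directly. From $\partial^\alpha(f\ast\varphi)(x)=(-1)^{|\alpha|}\langle f,T_x\partial^\alpha\check\varphi\rangle$ and the continuity of $f$ on $\mathcal{B}^\ast_\mathcal{W}(\R^d)$ one gets $|\partial^\alpha(f\ast\varphi)(x)|\leq C\|T_x\partial^\alpha\check\varphi\|_{\mathcal{B}^{M_p,h}_{w_N}}$. Lemma \ref{translation-norm} combined with the $(A_p)$- (resp.\ $\{A_p\}$-) admissibility of $\mathcal{W}$ then upgrades this to $C'w_M(x)\|\partial^\alpha\varphi\|_{\mathcal{S}^{M_p,h}_{A_p,\lambda}}$, with $M$ depending only on $N$ and $\lambda$, while condition $(M.2)$ on $M_p$ delivers $\|\partial^\alpha\varphi\|_{\mathcal{S}^{M_p,h}_{A_p,\lambda}}\leq C''\|\varphi\|_{\mathcal{S}^{M_p,2Hh}_{A_p,\lambda}} M_\alpha/(2h)^{|\alpha|}$. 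Assembling these estimates gives $\|f\ast\varphi\|_{\mathcal{B}^{M_p,2h}_{w_M^{-1}}}\leq C'''\|\varphi\|_{\mathcal{S}^{M_p,2Hh}_{A_p,\lambda}}$, a single inequality valid with $h$ arbitrarily large (Beurling) or arbitrarily small (Roumieu); this simultaneously yields $f\ast\varphi\in\mathcal{B}^\ast_{w_M^{-1}}(\R^d)\subseteq\mathcal{B}^\ast_{\mathcal{W}^\circ}(\R^d)$ and the continuity of $\ast_f$ via the universal property of the appropriate (projective/inductive) limits.

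The main technical obstacle will be the bookkeeping of the quantifiers: admissibility enters in a Beurling flavor $(\exists\lambda,M)$ and a Roumieu flavor $(\forall\lambda,\exists M)$, which must be matched against the existential/universal structure of the STFT characterization in Proposition \ref{STFT-dual} and of the inductive limits defining $\mathcal{B}^\ast_{\mathcal{W}^\circ}(\R^d)$. In $(i)\Rightarrow (v)$ one must ensure that the target weight index $M$ depends only on $N$, so that the derivative step $h$ may be freely varied, while in $(ii)\Rightarrow (i)$ one must check that Grothendieck's factorization produces the correct ``$\exists h$'' (Beurling) versus ``$\forall h$'' (Roumieu) quantifier required by Proposition \ref{STFT-dual}; all the analytic inputs---STFT characterizations, desingularization formula, translation estimates---are already available from the previous subsection.
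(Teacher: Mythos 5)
Your proposal is correct, and while it relies on the same analytic toolkit as the paper (the STFT characterization of Proposition \ref{STFT-dual}, the translation estimate of Lemma \ref{translation-norm}, De Wilde's closed graph theorem, and Grothendieck's factorization theorem), it organizes the cycle differently and streamlines two steps in a genuine way. The paper proves $(i)\Rightarrow(ii)\Rightarrow(iii)\Rightarrow(iv)\Rightarrow(v)\Rightarrow(iii)\Rightarrow(i)$; in particular it reaches $(iv)$ from the sup-norm estimate $(iii)$ via the bounded sets $B_h=\{h^{|\alpha|}\varphi^{(\alpha)}/M_\alpha\}$, whereas you obtain $(v)$ directly from $(i)$ by differentiating under the duality pairing and absorbing $\|\varphi^{(\alpha)}\|_{\mathcal{S}^{M_p,h}_{A_p,\lambda}}\leq C M_\alpha (2h)^{-|\alpha|}\|\varphi\|_{\mathcal{S}^{M_p,2Hh}_{A_p,\lambda}}$ via $(M.2)$ — your observation that the target index $M$ depends only on $N$ (and $\lambda$), so that $h$ can be varied freely with $N$ fixed, is exactly the point that makes the Beurling projective limit work. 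More notably, in $(ii)\Rightarrow(i)$ the paper routes through Lemma \ref{STFT-conv} and, in the Beurling case, through the $\mathcal{R}$-sequence machinery of Lemmas \ref{proj-desc-1} and \ref{projectivedescriptionlemma} (needed there because $\{e^{-M(h\xi)}M_\xi\check{\overline{\psi}}:\xi\in\R^d\}$ is not bounded in the Fr\'echet space $\mathcal{S}^{(M_p)}_{(A_p)}$, only its $M^{h_j}$-weighted variant is); you bypass this entirely by exploiting that Grothendieck factorization yields domination by a \emph{single} seminorm $\|\cdot\|_{\mathcal{S}^{M_p,h}_{A_p,h}}$, against which $\|M_\xi\check{\overline{\psi}}\|_{\mathcal{S}^{M_p,h}_{A_p,h}}\leq C_\psi e^{M(4\pi h\xi)}$ can be estimated directly, and the resulting quantifiers ($\exists h\,\exists N$ in the Beurling case, $\forall h\,\exists N$ in the Roumieu case, the latter by letting the step $h\to 0^+$) match Proposition \ref{STFT-dual} exactly. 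The only cosmetic imprecision is the citation of Corollary \ref{dense-inclusion} for the inclusion $\mathcal{W}^\circ C(\R^d)\hookrightarrow\mathcal{S}'^\ast_\dagger(\R^d)$ used to close the graph — that corollary concerns $\mathcal{W}C$ and $\mathcal{V}C$, not the dual weight system, though the inclusion itself follows at once from the admissibility of $\mathcal{W}$ and is used just as tacitly in the paper.
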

\begin{proof}
$(i) \Rightarrow (ii)$ Consequence of Lemma \ref{translation-norm}.

$(ii) \Rightarrow (iii)$ Follows from De Wilde's closed graph theorem and the fact that the mapping
$\ast_f:  \mathcal{S}^\ast_\dagger(\R^d) \rightarrow  \mathcal{S}'^\ast_\dagger(\R^d)$ is continuous.

$(iii) \Rightarrow (iv)$ We first show the Beurling case. By Grothendieck's factorization theorem there is $N \in \N$ such that the mapping
$\ast_f: \mathcal{S}^{(M_p)}_{(A_p)}(\R^d) \rightarrow Cw^{-1}_N(\R^d)$ is well-defined and continuous. Let $\varphi \in \mathcal{S}^{(M_p)}_{(A_p)}(\R^d)$ be arbitrary. For every $h> 0$ the set
$$
B_h = \left\{ \frac{h^{|\alpha|}\varphi^{(\alpha)}}{M_\alpha} \, : \, \alpha \in \N^d \right\}
$$
is bounded in $\mathcal{S}^{(M_p)}_{(A_p)}(\R^d)$. Hence
$$
\| f \ast \varphi \|_{\mathcal{B}^{M_p,h}_{w_N^{-1}}} = \sup_{\alpha \in \N^d} \sup_{x \in \R^d}\frac{h^{|\alpha|}|(f\ast \varphi^{(\alpha)})(x)|}{M_\alpha w_N(x)} = \sup_{\chi \in B_h}\| f \ast \chi\|_{w^{-1}_N} < \infty.
$$
Next, we consider the Roumieu case. Let $\varphi \in \mathcal{S}^{\{M_p\}}_{\{A_p\}}(\R^d)$ be arbitrary and suppose that  $\varphi \in \mathcal{S}^{M_p,h}_{A_p,\lambda}(\R^d)$ for some $h, \lambda > 0$. The set
$$
B = \left\{ \frac{(h/H)^{|\alpha|}\varphi^{(\alpha)}}{M_\alpha} \, : \, \alpha \in \N^d \right\}
$$
is bounded in $\mathcal{S}^{M_p,h/H}_{A_p,\lambda}(\R^d)$. There is $N \in \N$  such that the mapping $\ast_f: \mathcal{S}^{M_p,h/H}_{A_p,\lambda}(\R^d) \rightarrow Cw^{-1}_N(\R^d)$ is well-defined and continuous. Hence
$$
\| f \ast \varphi \|_{\mathcal{B}^{M_p,h/H}_{w_N^{-1}}} = \sup_{\alpha \in \N^d} \sup_{x \in \R^d}\frac{(h/H)^{|\alpha|}|(f\ast \varphi^{(\alpha)})(x)|}{M_\alpha w_N(x)} = \sup_{\chi \in B}\| f \ast \chi \|_{w^{-1}_N} < \infty.
$$

$(iv) \Rightarrow (v)$ Follows from De Wilde's closed graph theorem and the fact that the mapping
$\ast_f:  \mathcal{S}^\ast_\dagger(\R^d) \rightarrow  \mathcal{S}'^\ast_\dagger(\R^d)$ is continuous.

$(v) \Rightarrow (iii)$ Obvious.

$(iii) \Rightarrow (i)$ Let $\psi \in \mathcal{S}^{(M_p)}_{(A_p)}(\R^d) \backslash \{0\}$. By Proposition \ref{STFT-dual} it suffices to show
$$
\exists h > 0 \, \exists N \in \N \, ( \forall h > 0 \, \exists N \in \N) \,: \, \sup_{(x,\xi) \in \R^{2d}}\frac{|V_\psi f(x,\xi)|}{w_N(x)e^{M(h\xi)}} < \infty.
$$
We first show the Beurling case. The above estimate is equivalent to (cf.\ Lemma \ref{proj-desc-1} and Lemma \ref{projectivedescriptionlemma})
$$
\exists N \in \N \, \forall h_j \in \mathcal{R} \, : \, \sup_{(x,\xi) \in \R^{2d}}\frac{|V_\psi f(x,\xi)|}{w_N(x)e^{M^{h_j}(\xi)}} < \infty.
$$
By Grothendieck's factorization theorem there is $N \in \N$ such that the mapping $\ast_f: \mathcal{S}^{(M_p)}_{(A_p)}(\R^d) \rightarrow Cw^{-1}_N(\R^d)$ is well-defined and continuous. Lemma \ref{STFT-conv} implies that for every $h_j \in \mathcal{R}$ there is a bounded set $B \subset \mathcal{S}^{(M_p)}_{(A_p)}(\R^d)$ such that
$$
|V_\psi f(x,\xi)|e^{-M^{h_j}(\xi)} \leq \sup_{\varphi \in B} |(f \ast \varphi)(x)| \leq \sup_{\varphi \in B}\| f \ast \varphi\|_{w^{-1}_N} w_N(x).
$$
Next, we consider the Roumieu case. Let $h > 0$ be arbitrary. By Lemma \ref{STFT-conv} we find a bounded set $B \subset \mathcal{S}^{\{M_p\}}_{\{A_p\}}(\R^d)$ such that
$$
|V_\psi f(x,\xi)|e^{M(h\xi)} \leq \sup_{\varphi \in B} |(f \ast \varphi)(x)|.
$$
Suppose that $B$ is contained and bounded in $\mathcal{S}^{M_p,k}_{A_p,\lambda}(\R^d)$ for some $k,\lambda > 0$. There is $N \in \N$  such that the mapping $\ast_f: \mathcal{S}^{M_p,k}_{A_p,\lambda}(\R^d) \rightarrow Cw^{-1}_N(\R^d)$ is well-defined and continuous. Hence
$$
|V_\psi f(x,\xi)| e^{M(h\xi)}  \leq  \sup_{\varphi \in B}\| f \ast \varphi\|_{w^{-1}_N} w_N(x).
$$
\end{proof}
\begin{theorem}\label{conv-average-Roumieu}
Let $f \in \mathcal{S}'^\ast_\dagger(\R^d)$. The following statements are equivalent:
\begin{itemize}
\item[$(i)$] $f \in \mathcal{B}'^{\ast}_{\mathcal{V}}(\R^d)$.
\item[$(ii)$] $f \ast \varphi \in \mathcal{V}^\circ C(\R^d)$ for all $\varphi \in \mathcal{S}^\ast_\dagger(\R^d)$.
\item[$(iii)$] $f \ast \varphi \in \mathcal{V}^\circ C(\R^d)$ for all $\varphi \in \mathcal{S}^\ast_\dagger(\R^d)$ and the mapping
$$
\ast_f : \mathcal{S}^\ast_\dagger(\R^d) \rightarrow \mathcal{V}^\circ C(\R^d): \varphi \rightarrow f \ast \varphi
$$
is continuous.
\item[$(iv)$] $f \ast \varphi \in \mathcal{B}^\ast_{\mathcal{V}^\circ}(\R^d)$ for all $\varphi \in \mathcal{S}^\ast_\dagger(\R^d)$.
\item[$(v)$] $f \ast \varphi \in \mathcal{B}^\ast_{\mathcal{V}^\circ}(\R^d)$ for all $\varphi \in \mathcal{S}^\ast_\dagger(\R^d)$ and the mapping
$$
\ast_f : \mathcal{S}^\ast_\dagger(\R^d) \rightarrow \mathcal{B}^\ast_{\mathcal{V}^\circ}(\R^d): \varphi \rightarrow f \ast \varphi
$$
is continuous.
\end{itemize}
\end{theorem}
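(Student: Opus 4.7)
The plan is to mirror the proof of Theorem \ref{conv-average-Beurling} step by step, with the key structural change that here $\mathcal{V}$ is decreasing, so its dual weight system $\mathcal{V}^\circ = (v_n^{-1})_n$ is \emph{increasing}. Consequently $\mathcal{V}^\circ C(\R^d)$ is a Fr\'echet space (and $\mathcal{B}^{(M_p)}_{\mathcal{V}^\circ}(\R^d)$ is Fr\'echet, while $\mathcal{B}^{\{M_p\}}_{\mathcal{V}^\circ}(\R^d)$ is an $(LF)$-space). The characterization via the STFT to be used is Proposition \ref{STFT-dual-1}, not Proposition \ref{STFT-dual}, and the quantifier profile in that proposition is the one we shall eventually need to reproduce: $\forall n\,\exists h$ in the Beurling case, $\forall n\, \forall h$ in the Roumieu case.

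The implications $(i)\Rightarrow (ii)$, $(ii)\Rightarrow(iii)$, $(iv)\Rightarrow(v)$, and $(v)\Rightarrow(iii)$ are carried out exactly as in Theorem \ref{conv-average-Beurling}. For $(i)\Rightarrow(ii)$ one writes $(f\ast\varphi)(x)=\langle f, T_{x}\check\varphi\rangle$ and uses Lemma \ref{translation-norm} to bound $\|T_{x}\check\varphi\|_{\mathcal{B}^{M_p,h}_{v_m}}\leq C v_n^{-1}(x)\|\check\varphi\|_{\mathcal{S}^{M_p,h}_{A_p,\lambda}}$ via $(A_p)$- or $\{A_p\}$-admissibility of $\mathcal{V}$. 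The steps $(ii)\Rightarrow(iii)$ and $(iv)\Rightarrow(v)$ follow from De Wilde's closed graph theorem together with the continuity of $\ast_f\colon \mathcal{S}^\ast_\dagger(\R^d)\to \mathcal{S}'^\ast_\dagger(\R^d)$; note that the target Fr\'echet (respectively Fr\'echet) structure of $\mathcal{V}^\circ C(\R^d)$ and $\mathcal{B}^\ast_{\mathcal{V}^\circ}(\R^d)$ makes the closed graph argument apply verbatim. Finally $(v)\Rightarrow(iii)$ is obvious from Corollary \ref{dense-inclusion}.

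For $(iii)\Rightarrow(iv)$ the essential observation is that because $\mathcal{V}^\circ C(\R^d)=\varprojlim_n Cv_n^{-1}(\R^d)$, continuity of $\ast_f$ into $\mathcal{V}^\circ C(\R^d)$ yields continuity into each $Cv_n^{-1}(\R^d)$ \emph{without invoking Grothendieck's factorization theorem}. In the Beurling case, given $\varphi\in\mathcal{S}^{(M_p)}_{(A_p)}(\R^d)$, the set $B_h=\{h^{|\alpha|}\varphi^{(\alpha)}/M_\alpha:\alpha\in\N^d\}$ is bounded in $\mathcal{S}^{(M_p)}_{(A_p)}(\R^d)$ for every $h>0$, and the identity
\[
\|f\ast\varphi\|_{\mathcal{B}^{M_p,h}_{v_n^{-1}}}=\sup_{\chi\in B_h}\|f\ast\chi\|_{v_n^{-1}}
\]
is finite for all $h>0$ and $n\in\N$, placing $f\ast\varphi$ in $\mathcal{B}^{(M_p)}_{\mathcal{V}^\circ}(\R^d)$. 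In the Roumieu case, if $\varphi\in\mathcal{S}^{M_p,h}_{A_p,\lambda}(\R^d)$, the set $B=\{(h/H)^{|\alpha|}\varphi^{(\alpha)}/M_\alpha:\alpha\in\N^d\}$ is bounded in $\mathcal{S}^{M_p,h/H}_{A_p,\lambda}(\R^d)$, and since $\ast_f\colon \mathcal{S}^{M_p,h/H}_{A_p,\lambda}(\R^d)\to Cv_n^{-1}(\R^d)$ is continuous for every $n$, one infers $\|f\ast\varphi\|_{\mathcal{B}^{M_p,h/H}_{v_n^{-1}}}<\infty$ for every $n$, which means $f\ast\varphi\in\mathcal{B}^{\{M_p\}}_{\mathcal{V}^\circ}(\R^d)$.

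The only implication that requires a real argument is $(iii)\Rightarrow(i)$, and this is where care with the quantifiers on $h$ is needed; it is the main (though mild) obstacle. I apply Proposition \ref{STFT-dual-1} and Lemma \ref{STFT-conv}. In the Beurling case, fix $n\in\N$; for any $h_j\in\mathcal{R}$ Lemma \ref{STFT-conv} supplies a bounded set $B\subset \mathcal{S}^{(M_p)}_{(A_p)}(\R^d)$ with
\[
|V_\psi f(x,\xi)|e^{-M^{h_j}(\xi)}\leq \sup_{\varphi\in B}|(f\ast\varphi)(x)|\leq \Big(\sup_{\varphi\in B}\|f\ast\varphi\|_{v_n^{-1}}\Big)v_n^{-1}(x),
\]
and the right-hand supremum is finite by continuity of $\ast_f\colon \mathcal{S}^{(M_p)}_{(A_p)}(\R^d)\to Cv_n^{-1}(\R^d)$. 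Choosing $h_j$ so that $M^{h_j}(\xi)\leq M(h\xi)+C$ for some $h>0$ (any $h_j$ with $h_j\geq 1/h$ eventually works, since then $1/\prod_{j=0}^p h_j\leq C' h^p$) yields the required bound $\sup|V_\psi f(x,\xi)|/(v_n(x)e^{M(h\xi)})<\infty$. In the Roumieu case, fix $n\in\N$ and $h>0$ arbitrary; Lemma \ref{STFT-conv} gives a set $B\subset\mathcal{S}^{\{M_p\}}_{\{A_p\}}(\R^d)$ bounded in some step $\mathcal{S}^{M_p,k}_{A_p,\lambda}(\R^d)$, and the continuous map $\ast_f\colon \mathcal{S}^{M_p,k}_{A_p,\lambda}(\R^d)\to Cv_n^{-1}(\R^d)$ produces $|V_\psi f(x,\xi)|e^{M(h\xi)}\leq C v_n^{-1}(x)$, which is the condition in Proposition \ref{STFT-dual-1} for the Roumieu case.
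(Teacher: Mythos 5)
Your overall route is the one the paper intends (its proof of this theorem is literally ``similar to the one of Theorem \ref{conv-average-Beurling} and therefore omitted''), and the structural adaptations are correct: since $\mathcal{V}^\circ=(v_n^{-1})_n$ is increasing, the targets $\mathcal{V}^\circ C(\R^d)$ and $\mathcal{B}^{(M_p)}_{\mathcal{V}^\circ}(\R^d)$ are Fr\'echet, Grothendieck's factorization theorem is not needed in $(iii)\Rightarrow(iv)$, and the STFT characterization to hit is Proposition \ref{STFT-dual-1} with its $\forall n\,\exists h$ (resp.\ $\forall n\,\forall h$) quantifiers. The implications $(i)\Rightarrow(ii)\Rightarrow(iii)$, $(iii)\Rightarrow(iv)\Rightarrow(v)\Rightarrow(iii)$, and the Roumieu half of $(iii)\Rightarrow(i)$ are fine, modulo a recurring sign slip: the displayed bounds should read $v_n(x)$, not $v_n^{-1}(x)$. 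For instance, Lemma \ref{translation-norm} together with the admissibility estimate $v_m(x+t)\leq Cv_n(x)e^{A(\lambda t)}$ gives $\|T_x\check{\varphi}\|_{\mathcal{B}^{M_p,h}_{v_m}}\leq Cv_n(x)\|\check{\varphi}\|_{\mathcal{S}^{M_p,h}_{A_p,\lambda}}$, which is precisely what makes $\|f\ast\varphi\|_{v_n^{-1}}$ finite.

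The genuine problem is the last step of the Beurling case of $(iii)\Rightarrow(i)$. You correctly obtain, for every $h_j\in\mathcal{R}$, the bound $|V_\psi f(x,\xi)|\leq C_{h_j}\,v_n(x)e^{M^{h_j}(\xi)}$, but you then pass to ``$\exists h$'' by claiming that some $h_j\in\mathcal{R}$ satisfies $M^{h_j}(\xi)\leq M(h\xi)+C$. This is false, and your parenthetical computation goes the wrong way: since every $h_j\in\mathcal{R}$ tends to infinity, $\prod_{j=0}^p h_j$ eventually dominates $\lambda^p$ for every $\lambda>0$, so $M_p/\prod_{j=0}^p h_j\prec \lambda^{-p}M_p$ and hence $M^{h_j}(t)\geq M(\lambda t)-C_\lambda$ for \emph{every} $\lambda>0$; in particular $M^{h_j}$ eventually exceeds $M(h\cdot)+C$ for any fixed $h$ and $C$. (Your estimate $1/\prod_{j=0}^p h_j\leq C'h^p$ yields a lower bound for the associated function $M^{h_j}$, not an upper bound.) The conclusion you want does follow from what you already proved, but via a different mechanism: the statement ``$\sup_{(x,\xi)}|V_\psi f(x,\xi)|e^{-M^{h_j}(\xi)}/v_n(x)<\infty$ for all $h_j\in\mathcal{R}$'' is \emph{equivalent} to ``$\exists h>0:\ \sup_{(x,\xi)}|V_\psi f(x,\xi)|e^{-M(h\xi)}/v_n(x)<\infty$'' by combining Lemma \ref{proj-desc-1}$(i)$, applied in the $\xi$-variable to the decreasing weight system $(e^{-M(h\cdot)})_{h}$, with Lemma \ref{projectivedescriptionlemma}$(i)$, which says that every element of the corresponding maximal Nachbin family is dominated by $Ce^{-M^{h_j}}$ for some $h_j\in\mathcal{R}$. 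This is exactly how the paper closes the analogous step in the Beurling case of Theorem \ref{conv-average-Beurling}; replacing your parenthetical argument by this citation repairs the proof.
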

\begin{proof}
The proof is similar to the one of Theorem \ref{conv-average-Beurling} and therefore omitted.
\end{proof}
In the next two corollaries we employ the notation $\ddagger = (B_p)$ or $\{B_p\}$ to treat the Beurling and Roumieu case simultaneously.  However, $\ast$ and $\ddagger$ are always both either of Roumieu or Beurling type, that is, we only consider the classical Gelfand-Shilov spaces \eqref{GS-1} and not the mixed type spaces \eqref{GS-2}. Notice that these corollaries improve results from\footnote{Our spaces  $\mathcal{O}'^{\ast,\ast}_C(\R^d)$ are the same as the convolutor spaces denoted by $\mathcal{O}'^{\ast}_C(\R^d)$  in \cite{D-P-P-V}. We point out however that, due to an error carried from \cite[Prop.\ 2]{D-P-Ve} to the proof of \cite[Thm.\ 3.2]{D-P-P-V} in the Roumieu case, the space $X=\mathcal{O}^{\{M_p\}}_C(\R^d)$  defined on \cite[p.\ 407]{D-P-P-V} is not a predual of $\mathcal{O}'^{\{M_p\}}_C(\R^d)$ since one only has $X'\subsetneq \mathcal{O}'^{\{M_p\}}_C(\R^d)$.} \cite{D-P-P-V,P-P-V} that were obtained under much stronger assumptions and rather different methods (via the parametrix method).
\begin{corollary}
\label{cor conv char 1}
Let $B_p$ be a weight sequence satisfying $(M.1)$ and $(M.2)$ such that $A_p \subset B_p$. For $f \in \mathcal{S}'^{\ast}_{\dagger}(\R^d)$ the following statements are equivalent:
\begin{itemize}
\item[$(i)$] $f  \in \mathcal{S}'^{\ast}_{\ddagger}(\R^d)$.
\item[$(ii)$] $f \ast \varphi \in \mathcal{V}_{(B_p)} C(\R^d)$ ($f \ast \varphi \in \mathcal{W}_{\{B_p\}}C(\R^d)$) for all $\varphi \in \mathcal{S}^{\ast}_{\dagger}(\R^d)$.
\item[$(iii)$] $f \ast \varphi \in \mathcal{O}^{\ast,\ddagger}_C(\R^d)$ for all $\varphi \in \mathcal{S}^{\ast}_{\dagger}(\R^d)$.
\end{itemize}
\end{corollary}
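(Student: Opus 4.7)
The plan is to realize this corollary as a direct specialization of Theorem \ref{conv-average-Beurling} (Beurling case) and Theorem \ref{conv-average-Roumieu} (Roumieu case) by feeding in the appropriate weight systems from the list \eqref{WS-1}--\eqref{WS-2}. Concretely, in the Beurling case I would apply Theorem \ref{conv-average-Beurling} to $\mathcal{W} = \mathcal{W}_{(B_p)} = (e^{B(N\cdot)})_N$, and in the Roumieu case I would apply Theorem \ref{conv-average-Roumieu} to $\mathcal{V} = \mathcal{V}_{\{B_p\}} = (e^{B(\cdot/n)})_n$. Using the definitions listed in Section \ref{sect-reg-ultra}, one has the identifications $\mathcal{S}^{(M_p)}_{(B_p)} = \mathcal{B}^{(M_p)}_{\mathcal{W}_{(B_p)}}$ and $\mathcal{O}^{(M_p),(B_p)}_C = \mathcal{B}^{(M_p)}_{\mathcal{V}_{(B_p)}}$, together with the obvious equalities $\mathcal{W}^\circ_{(B_p)} = \mathcal{V}_{(B_p)}$ and $\mathcal{V}^\circ_{\{B_p\}} = \mathcal{W}_{\{B_p\}}$ between the weight systems and their duals. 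With these identifications, the three conditions of the corollary are precisely conditions $(i)$, $(ii)$, and $(iv)$ of Theorem \ref{conv-average-Beurling} (respectively Theorem \ref{conv-average-Roumieu}).

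The only substantive step is to check that $\mathcal{W}_{(B_p)}$ (respectively $\mathcal{V}_{\{B_p\}}$) genuinely fits under the standing hypotheses that were imposed on the weight systems at the start of Section \ref{sect-duals}. Two things have to be verified: admissibility with respect to $A_p$, and the $L^1$-type decay conditions \eqref{decay-weights-L1} and \eqref{decay-weights-L1-1}. Both are already recorded explicitly in the paper: the inclusion $A_p \subset B_p$ together with the definition of the associated function yields that $\mathcal{W}_{(B_p)}$ is $(A_p)$-admissible and $\mathcal{V}_{\{B_p\}}$ is $\{A_p\}$-admissible, while $(M.1)$ and $(M.2)$ for $B_p$ are exactly what is needed to deduce \eqref{decay-weights-L1} for $\mathcal{W}_{(B_p)}$ and \eqref{decay-weights-L1-1} for $\mathcal{V}_{\{B_p\}}$.

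Once these prerequisites are in place, the proof is purely a translation exercise. In the Beurling case, condition $(i)$ becomes $f \in \mathcal{B}'^{(M_p)}_{\mathcal{W}_{(B_p)}}(\R^d)$; condition $(ii)$ becomes $f*\varphi \in \mathcal{W}^\circ_{(B_p)}C(\R^d)$ for all $\varphi \in \mathcal{S}^{(M_p)}_{(A_p)}(\R^d)$; condition $(iii)$ becomes $f*\varphi \in \mathcal{B}^{(M_p)}_{\mathcal{W}^\circ_{(B_p)}}(\R^d)$ for all $\varphi \in \mathcal{S}^{(M_p)}_{(A_p)}(\R^d)$. Their equivalence is exactly the content of Theorem \ref{conv-average-Beurling}. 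The Roumieu case is handled in the same way via Theorem \ref{conv-average-Roumieu} applied to $\mathcal{V}_{\{B_p\}}$.

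There is no genuine obstacle here; the work has been done in Theorems \ref{conv-average-Beurling} and \ref{conv-average-Roumieu}, and the corollary is essentially a dictionary lookup. The only point that merits a brief word in the written proof is to remark that the admissibility and decay hypotheses of Section \ref{sect-duals} are in force for the chosen weight systems, so that the general theorems apply.
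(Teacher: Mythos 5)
Your proposal is correct and is exactly the derivation the paper intends: Corollary \ref{cor conv char 1} is stated without proof as the specialization of Theorem \ref{conv-average-Beurling} to $\mathcal{W}=\mathcal{W}_{(B_p)}$ and of Theorem \ref{conv-average-Roumieu} to $\mathcal{V}=\mathcal{V}_{\{B_p\}}$, with $\mathcal{W}_{(B_p)}^{\circ}=\mathcal{V}_{(B_p)}$, $\mathcal{V}_{\{B_p\}}^{\circ}=\mathcal{W}_{\{B_p\}}$, and the identifications \eqref{GS-1} and \eqref{OC-1} turning conditions $(i)$, $(ii)$, $(iv)$ of those theorems into the three conditions of the corollary. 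You also correctly note that the hypotheses $(M.1)$, $(M.2)$, and $A_p\subset B_p$ are precisely what the paper records as guaranteeing the standing admissibility and decay assumptions \eqref{decay-weights-L1} and \eqref{decay-weights-L1-1} of Section \ref{sect-duals}.
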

\begin{corollary}
\label{cor conv char 2}
Let $B_p$ be a weight sequence satisfying $(M.1)$ and $(M.2)$ such that $A_p \subset B_p$. For $f \in \mathcal{S}'^{\ast}_{\dagger}(\R^d)$ the following statements are equivalent:
\begin{itemize}
\item[$(i)$] $f  \in \mathcal{O}_C'^{\ast,\ddagger}(\R^d)$.
\item[$(ii)$] $f \ast \varphi \in \mathcal{W}_{(B_p)} C(\R^d)$ ($f \ast \varphi \in \mathcal{V}_{\{B_p\}}C(\R^d)$) for all $\varphi \in \mathcal{S}^{\ast}_{\dagger}(\R^d)$.
\item[$(iii)$] $f \ast \varphi \in \mathcal{S}^{\ast}_{\ddagger}(\R^d)$ for all $\varphi \in \mathcal{S}^{\ast}_{\dagger}(\R^d)$.
\end{itemize}
\end{corollary}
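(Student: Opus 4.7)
The plan is to deduce Corollary \ref{cor conv char 2} as an immediate consequence of Theorems \ref{conv-average-Beurling} and \ref{conv-average-Roumieu}, applied to the specific weight systems defining $\mathcal{O}_C^{\ast,\ddagger}(\R^d)$, once the corresponding dual weight systems are identified.

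First I would unravel the definitions. By \eqref{OC-1}, in the Beurling case $\ast=(M_p)$, $\ddagger=(B_p)$ one has $\mathcal{O}_C^{(M_p),(B_p)}(\R^d)=\mathcal{B}^{(M_p)}_{\mathcal{V}_{(B_p)}}(\R^d)$, so I want to apply Theorem \ref{conv-average-Roumieu} with $\mathcal{V}=\mathcal{V}_{(B_p)}$; analogously, in the Roumieu case $\mathcal{O}_C^{\{M_p\},\{B_p\}}(\R^d)=\mathcal{B}^{\{M_p\}}_{\mathcal{W}_{\{B_p\}}}(\R^d)$, and Theorem \ref{conv-average-Beurling} applies with $\mathcal{W}=\mathcal{W}_{\{B_p\}}$. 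The standing hypotheses on the weight systems are met: as recorded in the discussion preceding Lemma \ref{density-lemma}, the inclusion $A_p\subset B_p$ guarantees that $\mathcal{V}_{(B_p)}$ is $(A_p)$-admissible and $\mathcal{W}_{\{B_p\}}$ is $\{A_p\}$-admissible, while $(M.1)$ and $(M.2)$ for $B_p$ yield the strengthened decay conditions \eqref{decay-weights-L1-1} for $\mathcal{V}_{(B_p)}$ and \eqref{decay-weights-L1} for $\mathcal{W}_{\{B_p\}}$. Hence both theorems are at our disposal.

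Next I would compute the dual weight systems using \eqref{WS-1}--\eqref{WS-2}: a direct reading of the definitions shows $(\mathcal{V}_{(B_p)})^\circ=\mathcal{W}_{(B_p)}$ and $(\mathcal{W}_{\{B_p\}})^\circ=\mathcal{V}_{\{B_p\}}$. These are exactly the weight systems appearing in item (ii) of the corollary. Combining with \eqref{GS-1}, I further identify $\mathcal{B}^{(M_p)}_{\mathcal{W}_{(B_p)}}(\R^d)=\mathcal{S}^{(M_p)}_{(B_p)}(\R^d)$ and $\mathcal{B}^{\{M_p\}}_{\mathcal{V}_{\{B_p\}}}(\R^d)=\mathcal{S}^{\{M_p\}}_{\{B_p\}}(\R^d)$, which is the space $\mathcal{S}^\ast_\ddagger(\R^d)$ appearing in item (iii). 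With these identifications in place, the equivalence (i)$\Leftrightarrow$(ii) of Corollary \ref{cor conv char 2} coincides with (i)$\Leftrightarrow$(ii) of the relevant theorem, and (i)$\Leftrightarrow$(iii) coincides with (i)$\Leftrightarrow$(iv) of the same theorem.

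There is no real obstacle to overcome: the entire analytic content has already been absorbed into Theorems \ref{conv-average-Beurling} and \ref{conv-average-Roumieu}, which rest on the STFT characterizations and the density/kernel estimates of Subsection \ref{Char STFT dual inductive}. The task reduces to bookkeeping, namely matching admissibility and decay hypotheses for $\mathcal{V}_{(B_p)}$ and $\mathcal{W}_{\{B_p\}}$ and recognizing their dual weight systems as the standard weights defining the classical Gelfand--Shilov spaces $\mathcal{S}^\ast_\ddagger(\R^d)$.
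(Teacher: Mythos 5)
Your proposal is correct and matches the paper's (implicit) argument: the corollary is indeed obtained by specializing Theorem \ref{conv-average-Roumieu} to $\mathcal{V}=\mathcal{V}_{(B_p)}$ in the Beurling case and Theorem \ref{conv-average-Beurling} to $\mathcal{W}=\mathcal{W}_{\{B_p\}}$ in the Roumieu case, with the hypotheses $(M.1)$, $(M.2)$, and $A_p\subset B_p$ supplying exactly the admissibility and decay conditions \eqref{decay-weights-L1}, \eqref{decay-weights-L1-1} required of the weight systems, and with the dual weight systems $(\mathcal{V}_{(B_p)})^\circ=\mathcal{W}_{(B_p)}$ and $(\mathcal{W}_{\{B_p\}})^\circ=\mathcal{V}_{\{B_p\}}$ identified as in your computation. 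The equivalences (i)$\Leftrightarrow$(ii) and (i)$\Leftrightarrow$(iv) of those theorems then give items (ii) and (iii) of the corollary, exactly as you describe.
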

\begin{remark}
\label{rk conv char} If $M_p$ is non-quasianalytic, it is  clear that one may replace ``for all $\varphi\in \mathcal{S}^{\ast}_{\dagger}(\mathbb{R}^{d})$'' by ``for all $\varphi\in \mathcal{D}^{\ast}(\mathbb{R}^{d})$'' in all statements from Theorem \ref{conv-average-Beurling}, Theorem \ref{conv-average-Roumieu}, Corollary \ref{cor conv char 1}, and Corollary \ref{cor conv char 2}. 
\end{remark}
\subsection{Topological properties} \label{subsection topological properties} For an $(LF)$-space $E = \varinjlim E_n$ we define 
$$
\mathfrak{S} = \{ B \subset E \, : \, B \mbox{ is contained and bounded in $E_n$ for some $n \in \N$} \}.
$$
We write $bs(E',E)$ for the $\mathfrak{S}$-topology on $E'$ (the topology of uniform convergence on sets of $\mathfrak{S}$). Grothendieck's factorization theorem implies that $bs(E',E)$ does not depend on the defining inductive spectrum of $E$. The first goal of this subsection is to show the ensuing two theorems.
\begin{theorem}\label{topology-Beurling}
The following three topologies coincide on $\mathcal{B}'^{\ast}_{\mathcal{W}}(\R^d)$:
\begin{itemize}
\item[$(i)$] The initial topology with respect to the mapping
$$
\mathcal{B}'^{\ast}_{\mathcal{W}}(\R^d) \rightarrow L_b(\mathcal{S}^\ast_\dagger(\R^d), \mathcal{B}^\ast_{\mathcal{W}^\circ}(\R^d)): f \rightarrow \ast_f.
$$
\item[$(ii)$] The initial topology with respect to the mapping
$$
\mathcal{B}'^{\ast}_{\mathcal{W}}(\R^d) \rightarrow L_b(\mathcal{S}^\ast_\dagger(\R^d), \mathcal{W}^\circ C(\R^d)): f \rightarrow \ast_f.
$$
\item[$(iii)$] $b(\mathcal{B}'^{(M_p)}_{\mathcal{W}}(\R^d),\mathcal{B}^{(M_p)}_{\mathcal{W}}(\R^d))$ ($bs(\mathcal{B}'^{\{M_p\}}_{\mathcal{W}}(\R^d),\mathcal{B}^{\{M_p\}}_{\mathcal{W}}(\R^d))$).
\end{itemize}
\end{theorem}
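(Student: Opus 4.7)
The plan is to close the cycle $\mathrm{(i)}\geq\mathrm{(ii)}\geq\mathrm{(iii)}\geq\mathrm{(i)}$, whence all three topologies will coincide. The first inequality is immediate: the continuous inclusion $\mathcal{B}^\ast_{\mathcal{W}^\circ}(\R^d)\hookrightarrow\mathcal{W}^\circ C(\R^d)$ yields a continuous map $L_b(\mathcal{S}^\ast_\dagger(\R^d),\mathcal{B}^\ast_{\mathcal{W}^\circ}(\R^d))\to L_b(\mathcal{S}^\ast_\dagger(\R^d),\mathcal{W}^\circ C(\R^d))$, so the initial topology with respect to the first is finer than that with respect to the second.

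For $\mathrm{(ii)}\geq\mathrm{(iii)}$, I would show that each $\tau_3$-seminorm is majorised by a $\tau_2$-seminorm. Fix $\eta,\gamma\in\mathcal{S}^{(M_p)}_{(A_p)}(\R^d)$ with $(\gamma,\eta)_{L^2}=1$ and let $B'\subset\mathcal{B}^\ast_\mathcal{W}(\R^d)$ be bounded. Corollary \ref{STFT-reg-dual-GS} gives the pointwise bound $|\langle f,\psi\rangle|\leq\iint|V_\eta f(x,\xi)|\,|V_{\overline\gamma}\psi(x,-\xi)|\,\dx\dxi$ for $\psi\in B'$. Lemma \ref{STFT-conv} produces a bounded set $B_1\subset\mathcal{S}^\ast_\dagger(\R^d)$ with $|V_\eta f(x,\xi)|e^{-M^{h_j}(\xi)}\leq\sup_{\varphi\in B_1}|(f\ast\varphi)(x)|$ in the Beurling case (and the analogous version in the Roumieu case); combining this with Grothendieck's factorization of $\ast_f:\mathcal{S}^\ast_\dagger(\R^d)\to\mathcal{W}^\circ C(\R^d)$ through a step $Cw_N^{-1}(\R^d)$ gives $|V_\eta f(x,\xi)|\leq w_N(x)e^{M^{h_j}(\xi)}\sup_{\varphi\in B_1}\|f\ast\varphi\|_{w_N^{-1}}$. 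Simultaneously, Lemma \ref{STFT-test} together with Proposition \ref{STFT-test-char} furnishes the decay $|V_{\overline\gamma}\psi(x,-\xi)|\leq Cw_{N'}^{-1}(x)e^{-M(h'\xi)}$ uniformly in $\psi\in B'$, for arbitrary $N'$ and $h'$. Choosing $N'$ so that $w_N/w_{N'}\in L^1(\R^d)$ via \eqref{decay-weights-L1}, and $h'$ large enough (respectively $h_j$ slowly increasing) to ensure that $e^{M^{h_j}(\xi)-M(h'\xi)}\in L^1(\R^d_\xi)$, the double integral converges absolutely and produces the desired estimate $\sup_{\psi\in B'}|\langle f,\psi\rangle|\leq C\sup_{\varphi\in B_1}\|f\ast\varphi\|_{w_N^{-1}}$.

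For $\mathrm{(iii)}\geq\mathrm{(i)}$, I would argue via equicontinuity and Lemma \ref{translation-norm}. In the Beurling case, $\mathcal{B}^{(M_p)}_\mathcal{W}(\R^d)$ is Fr\'echet, hence barrelled, so by the Banach--Steinhaus theorem every $\tau_3$-bounded set $B_f\subset\mathcal{B}'^{(M_p)}_\mathcal{W}(\R^d)$ is equicontinuous: there exist $k_0>0$, $K_0\in\N$ and $C>0$ with $|\langle f,\psi\rangle|\leq C\|\psi\|_{\mathcal{B}^{M_p,k_0}_{w_{K_0}}}$ for all $f\in B_f$ and $\psi\in\mathcal{B}^{(M_p)}_\mathcal{W}(\R^d)$. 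Writing $(f\ast\varphi^{(\alpha)})(x)=\langle f,T_x\check{\varphi^{(\alpha)}}\rangle$ and combining Lemma \ref{translation-norm} with $(M.2)$-estimates to absorb the derivative index, one obtains
\[
\sup_{f\in B_f,\,\varphi\in B}\|f\ast\varphi\|_{\mathcal{B}^{M_p,h}_{w_{M_0}^{-1}}}<\infty,\qquad h\leq k_0/H,
\]
for every bounded $B\subset\mathcal{S}^{(M_p)}_{(A_p)}(\R^d)$, where $M_0\geq K_0$ is provided by the $(A_p)$-admissibility of $\mathcal{W}$. Thus $\{\ast_f:f\in B_f\}$ is bounded in $L_b(\mathcal{S}^{(M_p)}_{(A_p)}(\R^d),\mathcal{B}^{(M_p)}_{\mathcal{W}^\circ}(\R^d))$, i.e.\ $\tau_3$-bounded sets are $\tau_1$-bounded. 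Since $\mathcal{B}'^{(M_p)}_\mathcal{W}[\tau_3]$ is the strong dual of the $(FS)$-space $\mathcal{B}^{(M_p)}_\mathcal{W}(\R^d)$ (cf.\ Proposition \ref{LFS-1}), it is $(DFS)$ and therefore bornological, so the identity $\mathcal{B}'^{(M_p)}_\mathcal{W}[\tau_3]\to\mathcal{B}'^{(M_p)}_\mathcal{W}[\tau_1]$, being bounded, is continuous. The Roumieu case is analogous, replacing equicontinuity by the step-equicontinuity furnished by the $bs$-topology and using the $(LFS)$ structure of $\mathcal{B}^{\{M_p\}}_\mathcal{W}(\R^d)$.

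The main obstacle will be the integrability analysis in $\mathrm{(ii)}\geq\mathrm{(iii)}$, as the $\xi$-integral must reconcile the growth of $V_\eta f$, controlled by $M^{h_j}$ (Beurling) or $M(h\cdot)$ (Roumieu), with the decay of $V_{\overline\gamma}\psi$ controlled by $M(h'\cdot)$. Successfully balancing these exponents requires careful use of the freedom to choose $h_j\in\mathcal{R}$ or $h>0$ together with the monotonicity of the associated function $M$ under $(M.2)$.
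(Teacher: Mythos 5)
Your cyclic scheme $(i)\geq(ii)\geq(iii)\geq(i)$ is exactly the paper's, and both the step $(i)\geq(ii)$ and the Beurling half of $(iii)\geq(i)$ (bornologicity of the $(DFS)$-dual plus equicontinuity and Lemma \ref{translation-norm}) are correct and coincide with the paper's argument. The first genuine gap is in $(ii)\geq(iii)$. Your final bound is not a seminorm estimate: the index $N$ produced by Grothendieck's factorization of $\ast_f$ depends on $f$, and $\|\cdot\|_{w_N^{-1}}$ is in any case \emph{not} a continuous seminorm on the $(LB)$-space $\mathcal{W}^\circ C(\R^d)$ (it is continuous on the step $Cw_N^{-1}(\R^d)$ but not on later steps, since $w_N^{-1}\geq w_M^{-1}$ for $M\geq N$), so $\sup_{\varphi\in B_1}\|f\ast\varphi\|_{w_N^{-1}}$ does not define a $\tau_2$-continuous seminorm. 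Moreover, the integrability you need in the Beurling case, $e^{M^{h_j}(\xi)-M(h'\xi)}\in L^1$, is unachievable: since $h_j\to\infty$, one has $M^{h_j}(t)\geq M(ht)-C_h$ for \emph{every} $h>0$, so $M^{h_j}(\xi)-M(h'\xi)\to+\infty$ no matter how $h_j$ and $h'$ are chosen. The quantifiers must be arranged the other way around: from the boundedness of $B$ one first upgrades the decay of $V_{\overline{\gamma}}\varphi$ to $v(x)e^{-M^{h_j}(\xi)}$ for a single $v\in\overline{V}(\mathcal{W}^\circ)$ and a single $h_j\in\mathcal{R}$ depending only on $B$ (Lemma \ref{proj-desc-2} and Lemma \ref{projectivedescriptionlemma}), passes to $\overline{v}\in\overline{V}(\mathcal{W}^\circ)$ with $v/\overline{v}$ integrable (Lemma \ref{proj-desc-3}), and only then applies Lemma \ref{STFT-conv} with the rescaled sequence $h_j/H$ to the factor $V_\psi f$; this yields $\sup_{\varphi\in B}|\langle f,\varphi\rangle|\leq C\sup_{\varphi\in B'}\|f\ast\varphi\|_{\overline{v}}$ with $B'$, $\overline{v}$, $C$ independent of $f$ and $\|\cdot\|_{\overline{v}}$ genuinely $\tau_2$-continuous. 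This maximal-Nachbin-family step is the crux and is missing from your sketch.

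The second gap is the Roumieu half of $(iii)\geq(i)$. The reduction ``bounded implies continuous'' requires the source $(\mathcal{B}'^{\{M_p\}}_{\mathcal{W}}(\R^d),bs)$ to be bornological, which is precisely what is \emph{not} available here: whether this space is (ultra)bornological is the content of Theorem \ref{Roumieu-dual} and fails in general. The paper instead proves a direct seminorm estimate, and for that it needs the projective description of the $(LB)$-space $\mathcal{B}^{\{M_p\}}_{\mathcal{W}^\circ}(\R^d)$ (Proposition \ref{projectivedescription}) to know that its topology is generated by the seminorms $\|\cdot\|_{\mathcal{B}^{M_p,h_j}_{v}}$ with $v\in\overline{V}(\mathcal{W}^\circ)$ and $h_j\in\mathcal{R}$; each seminorm $\sup_{\varphi\in B}\|f\ast\varphi\|_{\mathcal{B}^{M_p,h_j}_{v}}$ is then dominated by $\sup_{\varphi\in B'}|\langle f,\varphi\rangle|$ for an explicit set $B'$ of weighted translates, bounded in a single step $\mathcal{B}^{M_p,k/H}_{\mathcal{W}}(\R^d)$ by Lemma \ref{translation-norm}. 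Without Proposition \ref{projectivedescription} you have no handle on the continuous seminorms of the inductive limit target, so the Roumieu case of this step is a real hole in your plan.
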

\begin{proof}
The first topology is clearly finer than the second one. In order to prove that the second topology is finer than the third one, we need to show that for every bounded set $B \subset \mathcal{B}^{(M_p)}_{\mathcal{W}}(\R^d)$ (for every $h> 0$ and every bounded set $B \subset \mathcal{B}^{M_p,h}_{\mathcal{W}}(\R^d)$) there is a bounded set $B' \subset \mathcal{S}^{(M_p)}_{(A_p)}(\R^d)$ ($B' \subset \mathcal{S}^{\{M_p\}}_{\{A_p\}}(\R^d)$), $v \in \overline{V}(\mathcal{W}^{\circ})$, and $C > 0$ such that
$$
\sup_{\varphi \in B} |\langle f, \varphi \rangle| \leq C \sup_{\varphi \in B'} \| f \ast \varphi \|_{v}
$$
for all $f \in \mathcal{B}'^{(M_p)}_{\mathcal{W}}(\R^d)$ ($f \in \mathcal{B}'^{\{M_p\}}_{\mathcal{W}}(\R^d)$). Choose $\psi, \gamma \in \mathcal{S}^{(M_p)}_{(A_p)}(\R^d)$ such that 
$(\gamma, \psi)_{L^2} = 1$. We first treat the Beurling case. Lemma \ref{STFT-test} implies that  
$$
\sup_{\varphi \in B} \sup_{(x, \xi) \in \R^{2d}}|V_{\overline{\gamma}} \varphi(x,\xi)| w_N(x)e^{M(h\xi)} < \infty
$$
for all $N \in \N$ and $h> 0$, which, by Lemma \ref{proj-desc-2} and Lemma \ref{projectivedescriptionlemma}, is equivalent to 
$$
\sup_{\varphi \in B} \sup_{(x, \xi) \in \R^{2d}}\frac{|V_{\overline{\gamma}} \varphi(x,\xi)|e^{M^{h_j}(\xi)}}{v(x)} < \infty
$$
for some $v \in \overline{V}(\mathcal{W}^\circ)$ and $h_j \in \mathcal{R}$. By Lemma \ref{proj-desc-3} there is $\overline{v} \in \overline{V}(\mathcal{W}^\circ)$ such that $
v(x)/\overline{v}(x) = O(|x|^{-(d+1)})$. Hence Corollary \ref{STFT-reg-dual-GS} and Lemma \ref{STFT-conv} yield that
\begin{align*}
\sup_{\varphi \in B} |\langle f, \varphi \rangle| &\leq \sup_{\varphi \in B}\int \int_{\R^{2d}}|V_\psi f(x, \xi)| |V_{\overline{\gamma}}\varphi(x, - \xi)| \dx \dxi \\
&\leq C\sup_{(x,\xi) \in \R^{2d}} |V_\psi f(x, \xi)|\overline{v}(x)e^{-M^{h_j/H}(\xi)} \\
&\leq C\sup_{\varphi \in B'} \| f \ast \varphi \|_{\overline{v}},
\end{align*}
for some bounded set $B' \subset \mathcal{S}^{(M_p)}_{(A_p)}(\R^d)$. Next, we consider the Roumieu case. Set $k = \pi h/\sqrt{d}$. Lemma \ref{STFT-test} implies that  
$$
\sup_{\varphi \in B} \sup_{(x, \xi) \in \R^{2d}}|V_{\overline{\gamma}} \varphi(x,\xi)| w_N(x)e^{M(k\xi)} < \infty
$$
for all $N \in \N$, which, by Lemma \ref{proj-desc-2}, is equivalent to
$$
\sup_{\varphi \in B} \sup_{(x, \xi) \in \R^{2d}}\frac{|V_{\overline{\gamma}} \varphi(x,\xi)|e^{M(k\xi)}}{v(x)} < \infty
$$
for some $v \in \overline{V}(\mathcal{W}^\circ)$. Using Lemma \ref{proj-desc-3}, there is $\overline{v} \in \overline{V}(\mathcal{W}^\circ)$ such that $v(x)/\overline{v}(x) = O(|x|^{-(d+1)})$. Hence, in view of Corollary \ref{STFT-reg-dual-GS} and Lemma \ref{STFT-conv},
\begin{align*}
\sup_{\varphi \in B} |\langle f, \varphi \rangle| &\leq \sup_{\varphi \in B}\int \int_{\R^{2d}}|V_\psi f(x, \xi)| |V_{\overline{\gamma}}\varphi(x, - \xi)| \dx \dxi \\
&\leq C\sup_{(x,\xi) \in \R^{2d}} |V_\psi f(x, \xi)|\overline{v}(x)e^{-M(k\xi/H)} \\
&\leq C\sup_{\varphi \in B'} \| f \ast \varphi \|_{\overline{v}},
\end{align*}
for some bounded set $B' \subset \mathcal{S}^{\{M_p\}}_{\{A_p\}}(\R^d)$.

Finally, we show that the third topology is finer than the first one. We start with the Beurling case. Since the strong dual of $\mathcal{B}^{(M_p)}_{\mathcal{W}}(\R^d)$ is bornological (cf.\ Proposition \ref{LFS-1}), it suffices to show that every strongly bounded set $B$ in $\mathcal{B}'^{(M_p)}_{\mathcal{W}}(\R^d)$ is also bounded for the first topology. Let $N \in \N$ and $C,h > 0$ be such that
$$
|\langle f, \varphi \rangle | \leq C\|\varphi \|_{\mathcal{B}^{M_p,h}_{w_N}}, \qquad \varphi \in \mathcal{B}^{(M_p)}_{\mathcal{W}}(\R^d),
$$
for all $f \in B$. There is $M > N$ such that
$$
w_N(x+t) \leq C' w_M(x) e^{A(\lambda t)}, \qquad x,t \in \R^d,
$$ 
for some $C', \lambda > 0$. Hence Lemma \ref{translation-norm} implies that for every bounded set $B' \subset \mathcal{S}^{(M_p)}_{(A_p)}(\R^d)$ and every $k > 0$ it holds that
\begin{align*}
\sup_{f \in B} \sup_{\varphi \in B'} \|f \ast \varphi\|_{B^{M_p,k}_{w_M}}  &\leq C\sup_{\varphi \in B'} \sup_{\alpha \in \N^d} \sup_{x \in \R^d} \frac{k^{|\alpha|}\|T_{x}(\check{\varphi}^{(\alpha)})\|_{\mathcal{B}^{M_p,h}_{w_N}}}{M_\alpha w_M(x)} \\
&\leq CC' \sup_{\varphi \in B'} \sup_{\alpha \in \N^d} \frac{k^{|\alpha|}\|\varphi^{(\alpha)}\|_{\mathcal{S}^{M_p,h}_{A_p, \lambda}}}{M_\alpha} < \infty,
\end{align*}
whence $B$ is bounded for the first topology. Next, we consider the Roumieu case. By Proposition \ref{projectivedescription} it suffices to show that for every bounded set $B \subset \mathcal{S}^{\{M_p\}}_{\{A_p\}}(\R^d)$, every $v \in \overline{V}(\mathcal{W}^\circ)$ and every $h_j \in \mathcal{R}$, there are $h > 0$ and a bounded set $B' \subset \mathcal{B}^{M_p,h}_{\mathcal{W}}(\R^d)$ such that
$$
\sup_{\varphi \in B} \|f \ast \varphi \|_{\mathcal{B}^{M_p,h_j}_{v}} \leq \sup_{\varphi \in B'} |\langle f, \varphi \rangle|,
$$
for all $f \in \mathcal{B}'^{\{M_p\}}_{\mathcal{W}}(\R^d)$. Let $k, \lambda > 0$ be such that $B$ is contained and bounded in $\mathcal{S}^{M_p,k}_{A_p, \lambda}(\R^d)$. Lemma \ref{translation-norm} implies that 
$$
B' = \left\{ \frac{T_{x}(\check{\varphi}^{(\alpha)})v(x)}{M_\alpha \prod_{j = 0}^{|\alpha|}h_j } \, : \, \varphi \in B, \alpha \in \N^d, x \in \R^d \right\}
$$
is bounded in $\mathcal{B}^{M_p,k/H}_{\mathcal{W}}(\R^d)$. Hence
\begin{align*}
\sup_{\varphi \in B} \|f \ast \varphi \|_{\mathcal{B}^{M_p,h_j}_{v}} &\leq \sup_{\varphi \in B} \sup_{\alpha \in \N^d} \sup_{x \in \R^d} \frac{|\langle f, T_{x}(\check{\varphi}^{(\alpha)}) \rangle| v(x)}{M_\alpha \prod_{j = 0}^{|\alpha|}h_j} \\
&\leq \sup_{\varphi \in B'} |\langle f, \varphi \rangle|.
\end{align*}
\end{proof} 
\begin{theorem}\label{topology-Roumieu}
The following three topologies coincide on $\mathcal{B}'^{\ast}_{\mathcal{V}}(\R^d)$:
\begin{itemize}
\item[$(i)$] The initial topology with respect to the mapping
$$
\mathcal{B}'^{\ast}_{\mathcal{V}}(\R^d) \rightarrow L_b(\mathcal{S}^\ast_\dagger(\R^d), \mathcal{B}^\ast_{\mathcal{V}^\circ}(\R^d)): f \rightarrow \ast_f.
$$
\item[$(ii)$] The initial topology with respect to the mapping
$$
\mathcal{B}'^{\ast}_{\mathcal{V}}(\R^d) \rightarrow L_b(\mathcal{S}^\ast_\dagger(\R^d), \mathcal{V}^\circ C(\R^d)): f \rightarrow \ast_f.
$$
\item[$(iii)$] $bs(\mathcal{B}'^{(M_p)}_{\mathcal{V}}(\R^d),\mathcal{B}^{(M_p)}_{\mathcal{V}}(\R^d))$ ($b(\mathcal{B}'^{\{M_p\}}_{\mathcal{V}}(\R^d),\mathcal{B}^{\{M_p\}}_{\mathcal{V}}(\R^d))$).
\end{itemize}
\end{theorem}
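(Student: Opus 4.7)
The plan is to follow the proof of Theorem \ref{topology-Beurling} closely, with modifications arising from the fact that $\mathcal{V}$ is decreasing rather than increasing. Observe that $\mathcal{V}^\circ = (v_n^{-1})_n$ is an increasing weight system, so $\mathcal{V}^\circ C(\R^d)$ is Fr\'echet with fundamental seminorms $\|\cdot\|_{v_n^{-1}}$, and $\mathcal{B}^{(M_p)}_{\mathcal{V}^\circ}(\R^d)$ is Fr\'echet with fundamental seminorms $\|\cdot\|_{\mathcal{B}^{M_p,h}_{v_n^{-1}}}$ indexed by $h>0, n\in\N$; the space $\mathcal{B}^{\{M_p\}}_{\mathcal{V}^\circ}(\R^d)$ is an $(LF)$-space. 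In particular, no projective description \`a la Proposition \ref{projectivedescription} is required on the target side. The implication (i) $\Rightarrow$ (ii) follows immediately from the continuous inclusion $\mathcal{B}^{\ast}_{\mathcal{V}^\circ}(\R^d) \hookrightarrow \mathcal{V}^\circ C(\R^d)$.

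For (ii) $\Rightarrow$ (iii), I would show that for every bounded $B \subset \mathcal{B}^{\ast}_\mathcal{V}(\R^d)$, contained in some step $\mathcal{B}^{\ast}_{v_{n_0}}(\R^d)$, there exist $n \geq n_0$, a bounded $B' \subset \mathcal{S}^\ast_\dagger(\R^d)$, and $C>0$ such that $\sup_{\varphi \in B}|\langle f,\varphi\rangle| \leq C\sup_{\chi \in B'}\|f \ast \chi\|_{v_n^{-1}}$ for every $f \in \mathcal{B}'^{\ast}_\mathcal{V}(\R^d)$. Starting from the desingularization formula in Corollary \ref{STFT-reg-dual-GS}, Lemma \ref{STFT-test} combined with the $\ast$-admissibility of $\mathcal{V}$ yields a uniform bound $|V_{\bar\gamma}\varphi(x,\xi)| \leq C_h v_m^{-1}(x)e^{-M(h\xi)}$ for $\varphi \in B$, valid for every $h > 0$ in the Beurling case and for some single $h = h_0$ in the Roumieu case, with $m \geq n_0$ supplied by admissibility. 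The decay hypothesis \eqref{decay-weights-L1-1} allows one to split $v_m^{-1} = (v_{m'}/v_m)\cdot v_{m'}^{-1}$ with $v_{m'}/v_m \in L^1(\R^d)$, while condition $(M.2)$ on $M_p$ splits $e^{-M(h\xi)}$ into an $L^1$-integrable factor $e^{-M(h\xi)/2}$ and a residual $e^{-M(h\xi/H)}$ (up to the constant $C_0^{1/2}$). Pulling $\sup_{(x,\xi)}|V_\psi f(x,\xi)| v_{m'}^{-1}(x)e^{-M(h\xi/H)}$ out of the double integral and invoking Lemma \ref{STFT-conv} converts the remaining supremum into $\sup_{\chi \in B'}\|f \ast \chi\|_{v_{m'}^{-1}}$, which gives the claim with $n = m'$.

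For (iii) $\Rightarrow$ (i), the two cases need different arguments. In the Roumieu case, $\mathcal{B}^{\{M_p\}}_\mathcal{V}(\R^d)$ is a $(DFS)$-space by Proposition \ref{LFS-2}, so its strong dual is Fr\'echet and hence bornological; it therefore suffices to check that every strongly bounded set $B \subset \mathcal{B}'^{\{M_p\}}_\mathcal{V}(\R^d)$ is bounded for topology (i). Such $B$ is equicontinuous (Banach--Steinhaus, the space being barrelled), and combining this with Lemma \ref{translation-norm}, the $\{A_p\}$-admissibility of $\mathcal{V}$, and $(M.2)$, one shows that $\{f \ast \varphi : f \in B, \varphi \in B'\}$ lies bounded inside a single Fr\'echet step $\mathcal{B}^{M_p,1}_{\mathcal{V}^\circ}(\R^d)$ for every bounded $B' \subset \mathcal{S}^{\{M_p\}}_{\{A_p\}}(\R^d)$, mirroring the Beurling case of Theorem \ref{topology-Beurling}. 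In the Beurling case of the present theorem, $\mathcal{B}^{(M_p)}_\mathcal{V}(\R^d)$ is only an $(LFS)$-space and topology (iii) is $bs$, so the bornological shortcut fails; instead I would directly rewrite the defining seminorm $f\mapsto\sup_{\chi\in B'}\|f\ast\chi\|_{\mathcal{B}^{M_p,h}_{v_n^{-1}}}$ of (i) as $f\mapsto\sup_{\tilde\chi\in\tilde B}|\langle f,\tilde\chi\rangle|$, where $\tilde B := \{\frac{h^{|\alpha|}}{M_\alpha v_n(x)} T_x(\check{\chi}^{(\alpha)}) : \chi \in B', \alpha \in \N^d, x \in \R^d\}$, and verify that $\tilde B$ is bounded in $\mathcal{B}^{(M_p)}_{v_m}(\R^d)$ for a suitable $m \geq n$ using Lemma \ref{translation-norm}, the $(A_p)$-admissibility bound $v_m(x+t) \leq C v_n(x) e^{A(\lambda t)}$, and $(M.2)$ on $M_p$ used to absorb the factor $h^{|\alpha|}$ into $M_\alpha$.

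The main obstacle lies in the (ii) $\Rightarrow$ (iii) step: the coordinated splitting of the integrand in both the spatial variable (via \eqref{decay-weights-L1-1}) and the frequency variable (via $(M.2)$) requires careful bookkeeping of the auxiliary weights and indices, to guarantee convergence of the double integral while simultaneously preserving the identification of the remaining supremum as $\sup_{\chi \in B'}\|f \ast \chi\|_{v_n^{-1}}$. The (iii) $\Rightarrow$ (i) step is more routine, although it splits naturally into two technically distinct arguments depending on whether the space $\mathcal{B}^\ast_\mathcal{V}(\R^d)$ is $(LFS)$ or $(DFS)$.
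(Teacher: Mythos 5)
Your overall architecture is the intended one (the paper omits this proof, saying only that it is ``similar (but simpler)'' to that of Theorem \ref{topology-Beurling}), and most of it is sound: the observation that $\mathcal{V}^\circ$ is increasing so that the target spaces $\mathcal{V}^\circ C(\R^d)$ and $\mathcal{B}^{(M_p)}_{\mathcal{V}^\circ}(\R^d)$ are (projective limits of) Fr\'echet spaces, the direct rewriting of the seminorms of topology $(i)$ as $bs$-seminorms in the Beurling case of $(iii)\Rightarrow(i)$, and the bornological shortcut through the Fr\'echet strong dual of the $(DFS)$-space $\mathcal{B}^{\{M_p\}}_{\mathcal{V}}(\R^d)$ in the Roumieu case are all correct. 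There is, however, a genuine gap in the Beurling case of $(ii)\Rightarrow(iii)$. You keep a \emph{fixed} $h>0$, split $e^{-M(h\xi)}$ into an integrable factor times the residual $e^{-M(h\xi/H)}$, and then ``invoke Lemma \ref{STFT-conv}'' to convert $\sup_{(x,\xi)}|V_\psi f(x,\xi)|v_{m'}^{-1}(x)e^{-M(h\xi/H)}$ into $\sup_{\chi\in B'}\|f\ast\chi\|_{v_{m'}^{-1}}$ with $B'$ bounded in $\mathcal{S}^{(M_p)}_{(A_p)}(\R^d)$. The Beurling half of Lemma \ref{STFT-conv} does not give this: it only controls $|V_\psi f(x,\xi)|e^{-M^{h_j}(\xi)}$ for $h_j\in\mathcal{R}$, and $e^{-M^{h_j}(\xi)}$ decays faster than every $e^{-M(l\xi)}$. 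This restriction is unavoidable, because the window set $\{e^{-M(h\xi/H)}M_\xi\check{\overline{\psi}}\,:\,\xi\in\R^d\}$ is \emph{not} bounded in the Fr\'echet space $\mathcal{S}^{(M_p)}_{(A_p)}(\R^d)$ (its $\mathcal{S}^{M_p,l}_{A_p,l}$-norms are unbounded in $\xi$ once $l$ is large relative to $h$); applying instead the Roumieu version of the lemma would only produce a $B'$ bounded in $\mathcal{S}^{\{M_p\}}_{\{A_p\}}(\R^d)$, which does not yield a seminorm of topology $(ii)$ in the Beurling case.

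The repair is precisely the piece of the Beurling case of Theorem \ref{topology-Beurling} that you declared unnecessary: the projective-description machinery is indeed dispensable in the \emph{spatial} variable (this is the sense in which the proof is simpler), but it is still required in the \emph{frequency} variable. From ``$\forall h>0:\ \sup_{\varphi\in B}\sup_{(x,\xi)}|V_{\overline{\gamma}}\varphi(x,\xi)|v_m(x)e^{M(h\xi)}<\infty$'' one must first deduce, via Lemma \ref{proj-desc-1}$(ii)$ and Lemma \ref{projectivedescriptionlemma}$(i)$ applied in $\xi$, a single estimate $|V_{\overline{\gamma}}\varphi(x,\xi)|\leq C\,v_m(x)^{-1}e^{-M^{h_j}(\xi)}$ for some $h_j\in\mathcal{R}$ (regularized as in the proof of Proposition \ref{projectivedescription} so that $M_p/\prod_{j=0}^{p}h_j$ still satisfies $(M.2)$), then split off the integrable factor $e^{M^{h_j/H}(\xi)-M^{h_j}(\xi)}$ and apply Lemma \ref{STFT-conv} with $h_j/H$. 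Your Roumieu case of $(ii)\Rightarrow(iii)$ is fine as written, since there a single $h$ genuinely suffices and the corresponding window set is bounded in $\mathcal{S}^{\{M_p\}}_{\{A_p\}}(\R^d)$. (A cosmetic point: in the Roumieu case of $(iii)\Rightarrow(i)$ the step containing $\{f\ast\varphi: f\in B,\ \varphi\in B'\}$ is $\mathcal{B}^{M_p,k/H}_{\mathcal{V}^\circ}(\R^d)$ with $k$ determined by $B'$, not a fixed step such as $\mathcal{B}^{M_p,1}_{\mathcal{V}^\circ}(\R^d)$; this does not affect boundedness in the inductive limit.)
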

\begin{proof}
The proof is similar (but simpler) to the one of Theorem \ref{topology-Beurling} and therefore omitted.
\end{proof}
We endow $\mathcal{B}'^{\{M_p\}}_{\mathcal{W}}(\R^d)$ and $\mathcal{B}'^{(M_p)}_{\mathcal{V}}(\R^d)$ with one of the three identical topologies considered in Theorem \ref{topology-Beurling} and Theorem \ref{topology-Roumieu}, respectively.

\begin{lemma}\label{density-S-in-dual}
The embedding 
\begin{equation}
\iota: \mathcal{S}^\ast_\dagger(\R^d) \rightarrow \mathcal{B}'^\ast_{\mathcal{W}}(\R^d): \varphi \rightarrow \left(\psi \rightarrow \int_{\R^d} \varphi(x)\psi(x)\dx \right)
\label{density-S-def}
\end{equation}
has dense range.
\end{lemma}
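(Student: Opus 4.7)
The plan is to approximate an arbitrary $f\in\mathcal{B}'^\ast_\mathcal{W}(\R^d)$ by explicit elements of $\mathcal{S}^{(M_p)}_{(A_p)}(\R^d)\subseteq \mathcal{S}^\ast_\dagger(\R^d)$ obtained by truncating the STFT desingularization of $f$. Fix $\psi,\gamma\in \mathcal{S}^{(M_p)}_{(A_p)}(\R^d)$ with $(\gamma,\psi)_{L^2}=1$ and set
$$
f_n := \int\!\!\int_{[-n,n]^{2d}} V_\psi f(x,\xi)\, M_\xi T_x\gamma \, dx\, d\xi,\qquad n\in\N.
$$
Since $(x,\xi)\mapsto M_\xi T_x\gamma$ is continuous from the compact cube $[-n,n]^{2d}$ into the Fr\'echet space $\mathcal{S}^{(M_p)}_{(A_p)}(\R^d)$ and $V_\psi f$ is smooth, this is a Bochner integral, so $f_n\in \mathcal{S}^{(M_p)}_{(A_p)}(\R^d)\subseteq \mathcal{S}^\ast_\dagger(\R^d)$.

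I would then show that $\iota(f_n)\to f$ in $\mathcal{B}'^\ast_\mathcal{W}(\R^d)$. By Theorem~\ref{topology-Beurling}, the target topology is the topology of uniform convergence on the bounded subsets of $\mathcal{B}^{(M_p)}_\mathcal{W}(\R^d)$ (Beurling case) or on the subsets bounded in some step $\mathcal{B}^{M_p,h}_\mathcal{W}(\R^d)$ (Roumieu case). Combining the desingularization formula of Corollary~\ref{STFT-reg-dual-GS} with Fubini (whose applicability will be secured by the pointwise estimates below) yields
$$
\langle f-\iota(f_n),\varphi\rangle=\int\!\!\int_{\R^{2d}\setminus[-n,n]^{2d}} V_\psi f(x,\xi)\, V_{\bar\gamma}\varphi(x,-\xi)\, dx\, d\xi,
$$
so it suffices to bound the integrand uniformly for $\varphi$ in the relevant bounded set $B$ by a fixed $L^1$ function on $\R^{2d}$ and then invoke dominated convergence.

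These pointwise estimates come from the tools already set up. Proposition~\ref{STFT-dual} provides $|V_\psi f(x,\xi)|\leq C\, w_{N_0}(x)e^{M(h_0\xi)}$, where in the Beurling case $N_0,h_0$ are fixed by $f$, while in the Roumieu case $h_0$ may be prescribed arbitrarily with $N_0$ then depending on $h_0$. Using the $(A_p)$-, resp.\ $\{A_p\}$-, admissibility of $\mathcal{W}$ to choose, for a given $N$, an index $M$ with $w_N(x+t)\leq Cw_M(x)e^{A(\lambda t)}$, Lemma~\ref{STFT-test} supplies a constant $C_{N,h}$ (with $h$ free in the Beurling case and $h$ dictated by $B$ in the Roumieu case) such that
$$
\sup_{\varphi\in B}\,|V_{\bar\gamma}\varphi(x,-\xi)|\leq C_{N,h}\, w_N(x)^{-1}e^{-M(\pi h\xi/\sqrt d)}.
$$
Now pick $N>N_0$ so that $w_{N_0}/w_N\in L^1(\R^d)$ by \eqref{decay-weights-L1}, and choose $h$ large enough that, thanks to $(M.2)$, the exponent $M(h_0\xi)-M(\pi h\xi/\sqrt d)$ is eventually dominated by $-M(\pi h\xi/(2H\sqrt d))+\mathrm{const}$, which is $L^1$ in $\xi$. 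The product $|V_\psi f(x,\xi)|\,|V_{\bar\gamma}\varphi(x,-\xi)|$ is then uniformly dominated by an $L^1$ function of $(x,\xi)$, and the tail integral vanishes as $n\to\infty$.

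The main obstacle is the coordination of quantifiers in the Roumieu case, where $B$ only yields control inside a single step $\mathcal{B}^{M_p,h}_\mathcal{W}(\R^d)$, fixing the frequency decay in the bound for $V_{\bar\gamma}\varphi$, whereas $h_0$ in the bound for $V_\psi f$ must then be chosen to fit; matching these with the admissibility and \eqref{decay-weights-L1} is where careful use of Lemma~\ref{STFT-test} (with a suitable $\lambda$) is needed. Everything else is bookkeeping with properties of the associated function $M$.
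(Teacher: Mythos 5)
Your argument is correct, but it follows a genuinely different route from the paper. The paper first reduces the problem, via Corollary \ref{dense-inclusion}, to showing that $\iota(\mathcal{B}^\ast_{\mathcal{W}^\circ}(\R^d))$ has dense range, and then approximates a fixed $f$ by the convolution averages $f\ast\chi_n$, where $\chi_n=n^d\chi(n\cdot)$ is an approximate identity in $\mathcal{S}^{(M_p)}_{(A_p)}(\R^d)$: Theorem \ref{conv-average-Beurling} puts $f\ast\chi_n$ in $\mathcal{B}^\ast_{\mathcal{W}^\circ}(\R^d)$, and the convergence $\iota(f\ast\chi_n)\to f$ drops out of the factorization $\ast_{\iota(f\ast\chi_n)}=\ast_f\circ\ast_{\chi_n}$ together with $\ast_{\chi_n}\to\operatorname{id}$ in $L_b(\mathcal{S}^\ast_\dagger(\R^d),\mathcal{S}^\ast_\dagger(\R^d))$; that is, it works with description $(i)$ of the topology in Theorem \ref{topology-Beurling} and delegates every estimate to results already proved. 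You instead truncate the STFT reconstruction and verify convergence in description $(iii)$ of the topology by an explicit $L^1$-domination of the integrand in Corollary \ref{STFT-reg-dual-GS}, uniform over the bounded set $B$. The quantifier issue you flag in the Roumieu case is indeed the only delicate point, and your resolution is in a workable order: $h$ is fixed by the step containing $B$, then $h_0\leq \pi h/(H\sqrt d)$ may be prescribed in Proposition \ref{STFT-dual} to get $N_0$, then $N>N_0$ with $w_{N_0}/w_N\in L^1(\R^d)$ via \eqref{decay-weights-L1}, and finally $M\geq N$ from admissibility so that boundedness of $B$ in $\mathcal{B}^{M_p,h}_{w_M}(\R^d)$ feeds Lemma \ref{STFT-test}. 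What your approach buys is approximants lying directly in $\mathcal{S}^{(M_p)}_{(A_p)}(\R^d)$ without the detour through $\mathcal{B}^\ast_{\mathcal{W}^\circ}(\R^d)$, and it avoids the (standard but unproved in the paper) fact that $\ast_{\chi_n}\to\operatorname{id}$; the price is redoing estimates the paper has already packaged. One small suggestion: justify $f_n\in\mathcal{S}^{(M_p)}_{(A_p)}(\R^d)$ by differentiating under the compactly supported integral and estimating the seminorms directly (essentially a finite-domain version of Lemma \ref{double-int-test}), which is less fussy than setting up a Bochner integral with values in the Fr\'echet space.
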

\begin{proof}
By Corollary \ref{dense-inclusion} it suffices to show that
$$
\iota: \mathcal{B}^\ast_{\mathcal{W}^\circ}(\R^d) \rightarrow \mathcal{B}'^\ast_{\mathcal{W}}(\R^d): \varphi \rightarrow \left(\psi \rightarrow \int_{\R^d} \varphi(x)\psi(x)\dx \right)
$$
has dense range. Choose $\chi \in \mathcal{S}^{(M_p)}_{(A_p)}(\R^d)$ with $\int_{\R^d}\chi(x) \dx = 1$ and set $\chi_n = n^d\chi(n \cdot)$, $n \geq 1$. Fix $f \in \mathcal{B}'^\ast_{\mathcal{W}}(\R^d)$, by Theorem \ref{conv-average-Beurling} we have that $f \ast \chi_n \in \mathcal{B}^\ast_{\mathcal{W}^\circ}(\R^d)$. We claim that $\iota(f \ast \chi_n) \rightarrow f$ in $\mathcal{B}'^\ast_{\mathcal{W}}(\R^d)$, or, equivalently,  that
$ \ast_{\iota(f \ast \chi_n)} \rightarrow \ast_f$ in  $L_b(\mathcal{S}^\ast_\dagger(\R^d), \mathcal{B}^\ast_{\mathcal{W}^\circ}(\R^d))$. Since $\ast_{\iota(f \ast \chi_n)} = \ast_f \circ \ast_{\chi_n}$, where
$ \ast_{\chi_n} \in L(\mathcal{S}^\ast_\dagger(\R^d),\mathcal{S}^\ast_\dagger(\R^d))$ is defined via $\ast_{\chi_n}(\varphi) = \chi_n \ast \varphi$, $\varphi \in \mathcal{S}^\ast_\dagger(\R^d)$, the claim follows from the fact that $\ast_{\chi_n} \rightarrow \operatorname{id}$ in $L_b(\mathcal{S}^\ast_\dagger(\R^d),\mathcal{S}^\ast_\dagger(\R^d))$. 
\end{proof}
\begin{lemma}
The embedding 
$$
\iota: \mathcal{S}^\ast_\dagger(\R^d) \rightarrow \mathcal{B}'^\ast_{\mathcal{V}}(\R^d): \varphi \rightarrow \left(\psi \rightarrow \int_{\R^d} \varphi(x)\psi(x)\dx \right)
$$
has dense range.
\end{lemma}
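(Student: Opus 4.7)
The plan is to mimic almost verbatim the proof of Lemma \ref{density-S-in-dual}, substituting the $\mathcal{V}$-theory developed in this section for the $\mathcal{W}$-theory used there. The crucial observation is that the dual weight system $\mathcal{V}^\circ = (v_n^{-1})_n$ is an increasing weight system which, via a direct check using \eqref{decay-weights-L1-1} and the admissibility of $\mathcal{V}$, satisfies the analogues of \eqref{decay-weights-L1} and the appropriate admissibility condition. Consequently, Corollary \ref{dense-inclusion} (its first form) applies to $\mathcal{V}^\circ$ and yields a dense continuous inclusion $\mathcal{S}^\ast_\dagger(\R^d) \hookrightarrow \mathcal{B}^\ast_{\mathcal{V}^\circ}(\R^d)$. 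Hence it suffices to show that the natural embedding
\[
\iota \colon \mathcal{B}^\ast_{\mathcal{V}^\circ}(\R^d) \rightarrow \mathcal{B}'^\ast_{\mathcal{V}}(\R^d), \qquad \varphi \mapsto \Big(\psi \mapsto \int_{\R^d} \varphi(x)\psi(x)\dx\Big),
\]
has dense range.

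Next, I would fix a function $\chi \in \mathcal{S}^{(M_p)}_{(A_p)}(\R^d)$ with $\int_{\R^d}\chi(x)\dx = 1$ and form the standard mollifier family $\chi_n = n^d\chi(n\,\cdot\,)$, $n \geq 1$. For any $f \in \mathcal{B}'^\ast_\mathcal{V}(\R^d)$, Theorem \ref{conv-average-Roumieu} (implication $(i)\Rightarrow(iv)$) guarantees that $f \ast \chi_n \in \mathcal{B}^\ast_{\mathcal{V}^\circ}(\R^d)$, so each $\iota(f\ast \chi_n)$ is a legitimate candidate in the range of $\iota$. The claim to establish is that $\iota(f \ast \chi_n) \to f$ in $\mathcal{B}'^\ast_\mathcal{V}(\R^d)$.

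To verify convergence, I would use Theorem \ref{topology-Roumieu}: the topology on $\mathcal{B}'^\ast_\mathcal{V}(\R^d)$ agrees with the initial topology induced by the convolution mapping $f \mapsto \ast_f$ into $L_b(\mathcal{S}^\ast_\dagger(\R^d), \mathcal{B}^\ast_{\mathcal{V}^\circ}(\R^d))$. Thus it is enough to check that $\ast_{\iota(f \ast \chi_n)} \to \ast_f$ in $L_b(\mathcal{S}^\ast_\dagger(\R^d), \mathcal{B}^\ast_{\mathcal{V}^\circ}(\R^d))$. Factoring $\ast_{\iota(f\ast\chi_n)} = \ast_f \circ \ast_{\chi_n}$ with $\ast_{\chi_n} \in L(\mathcal{S}^\ast_\dagger(\R^d), \mathcal{S}^\ast_\dagger(\R^d))$ reduces matters to the familiar statement that $\ast_{\chi_n} \to \operatorname{id}$ in $L_b(\mathcal{S}^\ast_\dagger(\R^d), \mathcal{S}^\ast_\dagger(\R^d))$, a standard mollification fact in Gelfand-Shilov spaces.

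The whole argument is routine given the work already done, so I do not anticipate a genuine obstacle; the only mildly delicate point is checking that $\mathcal{V}^\circ$ fits the standing assumptions of the section so that Corollary \ref{dense-inclusion} and Theorem \ref{topology-Roumieu} apply in the stated form (in particular, that $(A_p)$- or $\{A_p\}$-admissibility of $\mathcal{V}$ transfers to $\mathcal{V}^\circ$ by swapping the roles of $x$ and $x+t$). Once this is noted, the proof proceeds mutatis mutandis as the proof of Lemma \ref{density-S-in-dual}.
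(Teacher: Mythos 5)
Your proposal is correct and follows exactly the route the paper intends: the published proof consists of the single remark that the argument is the same as for Lemma \ref{density-S-in-dual}, and you have carried out precisely that adaptation (mollification, Theorem \ref{conv-average-Roumieu} for membership of $f\ast\chi_n$ in $\mathcal{B}^\ast_{\mathcal{V}^\circ}(\R^d)$, and Theorem \ref{topology-Roumieu} for the topology). Your attention to the transfer of the standing hypotheses from $\mathcal{V}$ to $\mathcal{V}^\circ$ is the right point to check and is handled correctly.
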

\begin{proof}
This can be shown in the same way as Lemma \ref{density-S-in-dual}.
\end{proof}
The ensuing two theorems may be considered as quantified versions of Grothendieck's theorem \cite[Chap.\ II, Thm.\ 16, p.\ 131]{Grothendieck} in the setting of tempered ultradistributions.
\begin{theorem}\label{Roumieu-dual}
$\mathcal{B}'^{\{M_p\}}_{\mathcal{W}}(\R^d)$ is a $(PLS)$-space\footnote{A l.c.s.\ is said to be a $(PLS)$-space if it can be written as the projective limit of a projective spectrum consisting of $(DFS)$-spaces. We refer to the survey article \cite{Domanski-artykul} for more information on $(PLS)$-spaces.
} whose strong dual is canonically isomorphic to $\mathcal{B}^{\{M_p\}}_{\mathcal{W}}(\R^d)$, i.e., the embedding
\begin{equation}
\mathcal{B}^{\{M_p\}}_{\mathcal{W}}(\R^d) \rightarrow (\mathcal{B}'^{\{M_p\}}_{\mathcal{W}}(\R^d))'_b: \varphi \rightarrow (f \rightarrow \langle f, \varphi \rangle)
\label{canonical-embedding}
\end{equation}
is a topological isomorphism.
Moreover, consider the following conditions:
\begin{itemize}
\item[$(i)$] $\mathcal{W}$ satisfies $(DN)$.
\item[$(ii)$] $\mathcal{B}^{\{M_p\}}_\mathcal{W}(\R^d)$ satisfies one of the equivalent conditions $(ii)$--$(v)$ from Theorem \ref{reg-Beurling}.
\item[$(iii)$] $\mathcal{B}'^{\{M_p\}}_{\mathcal{W}}(\R^d)$ is ultrabornological.
\item[$(iv)$] $\mathcal{B}'^{\{M_p\}}_{\mathcal{W}}(\R^d)$ is equal to the strong dual of $\mathcal{B}^{\{M_p\}}_{\mathcal{W}}(\R^d)$.
\end{itemize}
Then, $(i) \Rightarrow (ii)  \Rightarrow (iii)  \Rightarrow (iv) \Rightarrow (ii)$. Furthermore, if $M_p$ satisfies $(M.1)$, $(M.2)$, and $(M.3)$, then $(ii) \Rightarrow (i)$. 
\end{theorem}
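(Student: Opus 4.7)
My plan is to establish three things in order: (a) the $(PLS)$-structure together with the canonical isomorphism \eqref{canonical-embedding} with the strong bidual; (b) the circular chain $(ii)\Rightarrow(iii)\Rightarrow(iv)\Rightarrow(ii)$; and (c) the bracketing $(i)\Leftrightarrow(ii)$ (the former implication always, the latter only under the extra assumptions on $M_p$), for which the content is already in Theorem \ref{reg-Beurling}.

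For (a), I would combine Proposition \ref{STFT-dual} with Theorem \ref{topology-Beurling} to realize $\mathcal{B}'^{\{M_p\}}_{\mathcal{W}}(\R^d)$ via $V_\psi$ as a closed subspace of the projective limit $\varprojlim_{h\to 0^{+}}\varinjlim_{N\in\N}L^{\infty}_{w_N^{-1}e^{-M(h\cdot)}}(\R^{2d})$. The decay \eqref{decay-weights-L1} and the estimate $2M(t)\leq M(Ht)+\log C_0$ force the inclusions between consecutive inner $L^\infty$-steps to be compact, so each inner $(LB)$-space is $(DFS)$; this exhibits the $(PLS)$-structure. For the bidual statement, injectivity of \eqref{canonical-embedding} is Lemma \ref{density-S-in-dual}; for surjectivity, a continuous $\Phi$ on $\mathcal{B}'^{\{M_p\}}_{\mathcal{W}}(\R^d)$ is bounded by $\sup_{\varphi\in B}|\langle\cdot,\varphi\rangle|$ for some $B$ bounded in a step $\mathcal{B}^{M_p,h}_{\mathcal{W}}(\R^d)$ by Theorem \ref{topology-Beurling}, its restriction to $\mathcal{S}^\ast_\dagger(\R^d)$ is a tempered ultradistribution, and its STFT is controlled via Lemma \ref{STFT-test} uniformly in $B$, so Proposition \ref{STFT-test-char} places this ultradistribution in $\mathcal{B}^{\{M_p\}}_{\mathcal{W}}(\R^d)$. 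Comparing seminorms through the STFT reconstruction (Corollary \ref{STFT-reg-dual-GS}) upgrades this identification to a topological isomorphism.

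For (b), the implication $(iv)\Rightarrow(ii)$ follows from a bipolar argument in the dual pair $\langle\mathcal{B}'^{\{M_p\}}_{\mathcal{W}},\mathcal{B}^{\{M_p\}}_{\mathcal{W}}\rangle$: if the $bs$-topology coincides with the strong topology on $\mathcal{B}'^{\{M_p\}}_{\mathcal{W}}(\R^d)$, then every bounded subset of $\mathcal{B}^{\{M_p\}}_{\mathcal{W}}(\R^d)$ must lie in the closed absolutely convex hull of a set bounded in some step, and each step being a closed Fr\'echet subspace of the $(LF)$-space then places it in the step itself---i.e., regularity, which is $(ii)$ via Theorem \ref{reg-Beurling}. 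For $(iii)\Rightarrow(iv)$: part (a) gives $(\mathcal{B}'^{\{M_p\}}_{\mathcal{W}},bs)'=\mathcal{B}^{\{M_p\}}_{\mathcal{W}}(\R^d)$, and semi-reflexivity of the latter (from the $(FS)$-steps, Proposition \ref{LFS-1}) yields $(\mathcal{B}'^{\{M_p\}}_{\mathcal{W}},b)'=\mathcal{B}^{\{M_p\}}_{\mathcal{W}}(\R^d)$ as well; since an ultrabornological topology coincides with the Mackey topology of its dual pair, $(iii)$ forces $bs=\tau(\mathcal{B}'^{\{M_p\}}_{\mathcal{W}},\mathcal{B}^{\{M_p\}}_{\mathcal{W}})$, and then $bs\leq b\leq\tau$ gives $bs=b$.

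The real obstacle is $(ii)\Rightarrow(iii)$. Under $(ii)$, $\mathcal{B}^{\{M_p\}}_{\mathcal{W}}(\R^d)$ is boundedly retractive with $(FS)$-steps, so $\mathcal{B}'^{\{M_p\}}_{\mathcal{W}}(\R^d)\cong\varprojlim_n(\mathcal{B}^{M_p,1/n}_{\mathcal{W}}(\R^d))'_b$ topologically---a $(PLS)$-space---and the core question is whether it is ultrabornological. My approach is to transfer the problem via $V_\psi$ and the projective description developed in Lemma \ref{projectivedescriptionlemma} and Proposition \ref{projectivedescription}, realizing $\mathcal{B}'^{\{M_p\}}_{\mathcal{W}}(\R^d)$ as an explicit inductive limit of Banach spaces indexed by Nachbin weights and sequences $h_j\in\mathcal{R}$, and then feeding in the quantitative $(wQ)$-estimate supplied by $(ii)$ and Theorem \ref{reg-Beurling} to verify that this inductive limit is Hausdorff with properly nested steps, hence ultrabornological. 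Matching the bounded-retractivity data with the STFT window bounds from Subsection \ref{Char STFT dual inductive} is where the technical weight of the proof lies.
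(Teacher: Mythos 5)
Your outline diverges from the paper's proof at three points, and at each one there is a genuine gap. First, the $(PLS)$-structure: you embed $\mathcal{B}'^{\{M_p\}}_{\mathcal{W}}(\R^d)$ via $V_\psi$ into $\varprojlim_{h}\varinjlim_{N}L^{\infty}_{w_N^{-1}e^{-M(h\cdot)}}(\R^{2d})$ and claim that \eqref{decay-weights-L1} makes the inner linking maps compact. It does not: an inclusion $L^{\infty}_{u}(\R^{2d})\hookrightarrow L^{\infty}_{v}(\R^{2d})$ with $v/u$ vanishing at infinity is not compact, since on any ball where both weights are bounded above and below the unit ball of $L^{\infty}_{u}$ contains a sequence such as $\sin(2^{n}\pi x_1)$ with no sup-norm convergent subsequence. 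Compactness in this paper always comes from derivative bounds (Lemma \ref{compact-inclusion}), which raw $L^\infty$-steps in the $(x,\xi)$-variables lack; so your inner $(LB)$-spaces are not $(DFS)$ and the ambient space is not $(PLS)$. The paper instead realizes $\mathcal{B}'^{\{M_p\}}_{\mathcal{W}}(\R^d)$ as the \emph{closed} range of $f\mapsto \ast_f$ in $L_b(\mathcal{S}^{\{M_p\}}_{\{A_p\}}(\R^d),\mathcal{B}^{\{M_p\}}_{\mathcal{W}^\circ}(\R^d))$, which is $(PLS)$ because both arguments are $(DFS)$-spaces, and verifies closedness by a direct net computation combined with Theorem \ref{conv-average-Beurling}.

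Second, your $(iii)\Rightarrow(iv)$ rests on semi-reflexivity of $\mathcal{B}^{\{M_p\}}_{\mathcal{W}}(\R^d)$ ``from the $(FS)$-steps''. Semi-reflexivity implies quasi-completeness, which is precisely one of the equivalent conditions in $(ii)$, so this is circular: the entire point of the theorem is that the space may fail to be quasi-complete. (Relatedly, your inequality $b\leq\tau$ is backwards; in general $\tau\leq b$, with equality exactly under semi-reflexivity.) The paper's route is the identity map $(\mathcal{B}'^{\{M_p\}}_{\mathcal{W}},b)\rightarrow(\mathcal{B}'^{\{M_p\}}_{\mathcal{W}},bs)$: the source is strictly webbed as the strong dual of an $(LF)$-space, the target is ultrabornological by $(iii)$, and De Wilde's open mapping theorem forces $b=bs$. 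Third, and most seriously, $(ii)\Rightarrow(iii)$ --- which you yourself flag as the core difficulty --- is left as a program rather than a proof, and the program (writing the $bs$-topology as a Hausdorff inductive limit of Banach spaces indexed by $\overline{V}$ and $\mathcal{R}$) is neither carried out nor plausible as stated, since Proposition \ref{STFT-dual} describes the Roumieu dual by a projective condition in $h$. The missing idea is short: under $(ii)$ the $(LF)$-space is regular, so $bs=b$ and $\mathcal{B}'^{\{M_p\}}_{\mathcal{W}}(\R^d)$ is the strong dual of a complete $(LFS)$-, hence complete Schwartz, space, and strong duals of complete Schwartz spaces are ultrabornological. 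Your bipolar argument for $(iv)\Rightarrow(ii)$ can be repaired --- the steps are not closed subspaces of the $(LF)$-space as you assert, but bounded subsets of an $(FS)$-step have compact closed absolutely convex hulls there, which suffices --- though the paper's one-line argument, that \eqref{canonical-embedding} together with $(iv)$ gives reflexivity and hence quasi-completeness, is simpler.
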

\begin{proof}
We first show that $\mathcal{B}'^{\{M_p\}}_{\mathcal{W}}(\R^d)$ is a $(PLS)$-space. $L_b(\mathcal{S}^{\{M_p\}}_{\{A_p\}}(\R^d), \mathcal{B}^{\{M_p\}}_{\mathcal{W}^\circ}(\R^d))$ is a $(PLS)$-space because of Proposition \ref{LFS-1} and the fact that $L_b(E,F) \cong L_b(F'_b,E'_b)$ is a $(PLS)$-space for general $(DFS)$-spaces $E$ and $F$ \cite[Prop.\ 4.3]{D-L}. Since a closed subspace of a $(PLS)$-space is again a $(PLS)$-space, it therefore suffices to show that the embedding $\mathcal{B}'^{\{M_p\}}_{\mathcal{W}}(\R^d) \rightarrow  L_b(\mathcal{S}^{\{M_p\}}_{\{A_p\}}(\R^d), \mathcal{B}^{\{M_p\}}_{\mathcal{W}^\circ}(\R^d)): f \rightarrow \ast_f$ has closed range. Let $(f_i)_i \subset \mathcal{B}'^{\{M_p\}}_{\mathcal{W}}(\R^d)$ be a net such that $\ast_{f_i} \rightarrow S$ in $L_b(\mathcal{S}^{\{M_p\}}_{\{A_p\}}(\R^d), \mathcal{B}^{\{M_p\}}_{\mathcal{W}^\circ}(\R^d))$. Define
$$
\langle f, \varphi \rangle :=  S(\check{\varphi})(0), \qquad \varphi \in \mathcal{S}^{\{M_p\}}_{\{A_p\}}(\R^d). 
$$
Clearly, $f \in \mathcal{S}'^{\{M_p\}}_{\{A_p\}}(\R^d)$. Moreover, for each $\varphi \in \mathcal{S}^{\{M_p\}}_{\{A_p\}}(\R^d)$ we have that
\begin{align*}
(f\ast \varphi)(x) &= \langle f, T_x \check{\varphi} \rangle = S(T_{-x}\varphi) (0) = \lim_{i} (f_i \ast T_{-x} \varphi)(0) \\
&= (T_{-x} \lim_{i} (f_i \ast \varphi))(0) = (T_{-x} S(\varphi))(0) = S(\varphi)(x).
\end{align*}
This shows that $S = \ast_f$ and, by Theorem \ref{conv-average-Beurling}, $f \in \mathcal{B}'^{\{M_p\}}_{\mathcal{W}}(\R^d)$. 

Next, we show that \eqref{canonical-embedding} is a topological isomorphism. Clearly, the $bs$-topology is coarser than the strong topology and finer than the weak-$\ast$ topology on $\mathcal{B}'^{\{M_p\}}_{\mathcal{W}}(\R^d)$. The fact that $\mathcal{B}^{\{M_p\}}_{\mathcal{W}}(\R^d)$ is barreled (as it is an $(LF)$-space) therefore implies that a subset of $\mathcal{B}'^{\{M_p\}}_{\mathcal{W}}(\R^d)$ is equicontinuous if and only if it is $bs$-bounded, which in turn yields that \eqref{canonical-embedding} is a strict morphism. We now show that it is surjective. Let $\Phi \in  (\mathcal{B}'^{\{M_p\}}_{\mathcal{W}}(\R^d))'$ be arbitrary and set $f = \Phi \circ \iota \in \mathcal{S}'^{\{M_p\}}_{\{A_p\}}(\R^d)$, where $\iota$ is defined via \eqref{density-S-def}.  We claim that $f \in \mathcal{B}^{\{M_p\}}_\mathcal{W}(\R^d)$, i.e., that there is $\chi \in \mathcal{B}^{\{M_p\}}_\mathcal{W}(\R^d)$ such that $\langle f , \varphi \rangle = \int_{\R^d}\chi(x)\varphi(x) \dx$ for all $\varphi \in \mathcal{S}^{\{M_p\}}_{\{A_p\}}(\R^d)$. Hence $\Phi(\iota(\varphi)) = \int_{\R^d}\chi(x)\varphi(x) \dx = \langle \iota(\varphi), \chi \rangle$ for all $\varphi \in \mathcal{S}^{\{M_p\}}_{\{A_p\}}(\R^d)$ and the result would follow from the fact that $\iota( \mathcal{S}^{\{M_p\}}_{\{A_p\}}(\R^d))$ is dense in $\mathcal{B}'^{\{M_p\}}_{\mathcal{W}}(\R^d)$ (Lemma \ref{density-S-in-dual}). We now prove the claim with the aid of Proposition \ref{STFT-test-char}. Let $\psi \in \mathcal{S}^{(M_p)}_{(A_p)}(\R^d) \backslash \{0\}$. Since $\Phi$ is continuous, there are $h > 0$ and a bounded set $B \subset \mathcal{B}^{M_p,h}_\mathcal{W}(\R^d)$ such that
\begin{align*}
|V_\psi f(x,\xi)| = |\langle f, \overline{M_\xi T_x\psi}\rangle| &= |\Phi(\iota( \overline{M_\xi T_x\psi}))| \leq \sup_{\varphi \in B} |\langle \iota( \overline{M_\xi T_x\psi}), \varphi \rangle| \\
&= \sup_{\varphi \in B} \left | \int_{\R^d} \varphi(t)  \overline{M_\xi T_x\psi(t)} \dt \right | = \sup_{\varphi \in B} |V_\psi \varphi(x,\xi)|.
\end{align*}
Hence, the required bounds for $|V_\psi f|$ directly follow from Lemma \ref{STFT-test}. 

We now turn to the chain of implications. $(i) \Rightarrow (ii)$ and $(ii) \Rightarrow (i)$ (under the extra assumption that $M_p$ satisfies $(M.1)$, $(M.2)$, and $(M.3)$) have already been shown in Theorem \ref{reg-Beurling}. 

$(ii) \Rightarrow (iii)$  Since for general $(LF)$-spaces $E$ it holds that $bs(E',E) = b(E',E)$ if $E$ is regular, we obtain that $\mathcal{B}'^{\{M_p\}}_{\mathcal{W}}(\R^d)$ is equal to the strong dual of $\mathcal{B}^{\{M_p\}}_{\mathcal{W}}(\R^d)$. Hence $(iii)$ follows from Proposition \ref{LFS-1} and the fact that the strong dual of a complete Schwartz space is ultrabornological \cite[p.\ 43]{Schwartz-57}. 

$(iii) \Rightarrow (iv)$ The strong topology on $\mathcal{B}'^{\{M_p\}}_{\mathcal{W}}(\R^d)$ is clearly finer than the $bs$-topology. As the strong dual of an $(LF)$-space is strictly webbed \cite[Prop.\ IV.3.3]{DeWilde}, they are identical by De Wilde's open mapping theorem. 

$(iv) \Rightarrow (ii)$ Since the mapping \eqref{canonical-embedding} is a topological isomorphism, $\mathcal{B}^{\{M_p\}}_\mathcal{W}(\R^d)$ is reflexive and, thus, quasi-complete. 
\end{proof}
\begin{theorem}\label{Beurling-dual}
$\mathcal{B}'^{(M_p)}_{\mathcal{V}}(\R^d)$ is a $(PLS)$-space whose strong dual is canonically isomorphic to $\mathcal{B}^{(M_p)}_{\mathcal{V}}(\R^d)$. Moreover, consider the following conditions:
\begin{itemize}
\item[$(i)$] $\mathcal{V}$ satisfies $(\Omega)$.
\item[$(ii)$] $\mathcal{B}^{(M_p)}_\mathcal{V}(\R^d)$ satisfies one of the equivalent conditions $(ii)$-$(v)$ from Theorem \ref{reg-Roumieu-1}.
\item[$(iii)$]$\mathcal{B}'^{(M_p)}_\mathcal{V}(\R^d)$ is ultrabornological.
\item[$(iv)$] $\mathcal{B}'^{(M_p)}_{\mathcal{V}}(\R^d)$ is equal to the strong dual of $\mathcal{B}^{(M_p)}_{\mathcal{V}}(\R^d)$.
\end{itemize}
Then, $(i) \Rightarrow (ii)  \Rightarrow (iii)  \Rightarrow (iv) \Rightarrow (ii)$. Furthermore, if $M_p$ satisfies $(M.1)$, $(M.2)$, and $(M.3)$, then $(ii) \Rightarrow (i)$. 
\end{theorem}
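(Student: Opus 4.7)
The plan is to adapt the proof of Theorem~\ref{Roumieu-dual} to the present ``Beurling-$\mathcal{V}$'' setting by substituting each ingredient with its earlier-established counterpart: Theorem~\ref{topology-Roumieu} for Theorem~\ref{topology-Beurling}, Proposition~\ref{STFT-char-1} for Proposition~\ref{STFT-test-char}, Proposition~\ref{LFS-2} for Proposition~\ref{LFS-1}, Theorem~\ref{conv-average-Roumieu} for Theorem~\ref{conv-average-Beurling}, Theorem~\ref{reg-Roumieu-1} for Theorem~\ref{reg-Beurling}, and the unnamed density lemma for $\mathcal{S}^\ast_\dagger(\R^d)\hookrightarrow \mathcal{B}'^\ast_{\mathcal{V}}(\R^d)$ (stated just after Lemma~\ref{density-S-in-dual}) for Lemma~\ref{density-S-in-dual} itself.

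First I would establish the $(PLS)$-structure. By Theorem~\ref{topology-Roumieu}(i), the map $f\mapsto \ast_f$ embeds $\mathcal{B}'^{(M_p)}_{\mathcal{V}}(\R^d)$ topologically into $L_b(\mathcal{S}^{(M_p)}_{(A_p)}(\R^d), \mathcal{B}^{(M_p)}_{\mathcal{V}^\circ}(\R^d))$. The first factor is $(FN)$ and the second is $(FS)$ by Proposition~\ref{LFS-1} (note that $\mathcal{V}^\circ$ inherits \eqref{decay-weights} from \eqref{decay-weights-1}). Invoking reflexivity, $L_b(E,F)\cong L_b(F'_b,E'_b)$ recasts the ambient space as $L_b$ of two $(DFS)$-spaces, which is $(PLS)$ by \cite[Prop.~4.3]{D-L}. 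Closedness of the range is verified exactly as in the Roumieu case: given a convergent net $\ast_{f_i}\to S$, the prescription $\langle f,\varphi\rangle := S(\check\varphi)(0)$ yields $f\in \mathcal{S}'^{(M_p)}_{(A_p)}(\R^d)$ with $\ast_f=S$ (via the translation identity $S(\varphi)(x)=(T_{-x}S(\varphi))(0)$), and Theorem~\ref{conv-average-Roumieu} upgrades $f$ to an element of $\mathcal{B}'^{(M_p)}_{\mathcal{V}}(\R^d)$.

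Next I would identify the strong bidual. Barrelledness of $\mathcal{B}^{(M_p)}_{\mathcal{V}}(\R^d)$ equates equicontinuous with $bs$-bounded subsets of its dual, making the canonical map $\mathcal{B}^{(M_p)}_{\mathcal{V}}(\R^d)\to(\mathcal{B}'^{(M_p)}_{\mathcal{V}}(\R^d))'_b$ a strict morphism. For surjectivity, a functional $\Phi$ on $\mathcal{B}'^{(M_p)}_{\mathcal{V}}(\R^d)$ gives $f:=\Phi\circ\iota\in\mathcal{S}'^{(M_p)}_{(A_p)}(\R^d)$; continuity of $\Phi$ dominates the STFT of $f$ pointwise by $\sup_{\varphi\in B}|V_\psi\varphi|$ for some bounded $B\subset \mathcal{B}^{(M_p)}_{v_n}(\R^d)$ (via Lemma~\ref{STFT-test}), so Proposition~\ref{STFT-char-1} places $f$ in $\mathcal{B}^{(M_p)}_{\mathcal{V}}(\R^d)$; the $\mathcal{V}$-density lemma then extends $\Phi=\langle\,\cdot\,,f\rangle$ from $\iota(\mathcal{S}^{(M_p)}_{(A_p)}(\R^d))$ to all of $\mathcal{B}'^{(M_p)}_{\mathcal{V}}(\R^d)$.

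For the chain of implications, $(i)\Rightarrow(ii)$ is in Theorem~\ref{reg-Roumieu-1}. For $(ii)\Rightarrow(iii)$: by the standing hypothesis \eqref{decay-weights-L1-1} the system $\mathcal{V}$ is regularly decreasing, so conditions $(ii)$--$(v)$ of Theorem~\ref{reg-Roumieu-1} are genuinely equivalent; in particular $\mathcal{B}^{(M_p)}_\mathcal{V}(\R^d)$ is complete and regular, hence a complete Schwartz space by Proposition~\ref{LFS-2}, so its strong dual is ultrabornological by \cite[p.~43]{Schwartz-57}, and regularity additionally yields $bs=\beta$ on the dual. For $(iii)\Rightarrow(iv)$: the strong topology is always finer than $bs$, while by $(iii)$ the $bs$-topology is ultrabornological and the strong dual of an $(LF)$-space is strictly webbed \cite[Prop.~IV.3.3]{DeWilde}, so De~Wilde's closed-graph theorem forces the two topologies to agree. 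For $(iv)\Rightarrow(ii)$, the bidual identification combined with $(iv)$ gives reflexivity of $\mathcal{B}^{(M_p)}_\mathcal{V}(\R^d)$, hence quasi-completeness, which is condition $(iii)$ of Theorem~\ref{reg-Roumieu-1}. Finally $(ii)\Rightarrow(i)$ under $(M.1)$, $(M.2)$, $(M.3)$ is once more Theorem~\ref{reg-Roumieu-1}. The step I expect to demand the most care is the $(PLS)$-assertion, since $\mathcal{B}^{(M_p)}_{\mathcal{V}^\circ}(\R^d)$ is Fr\'echet rather than $(DF)$, so the result from \cite{D-L} does not apply directly; the dualization $L_b(E,F)\cong L_b(F'_b,E'_b)$ is the maneuver that makes it applicable.
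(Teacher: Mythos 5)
Your proposal is correct and is exactly the argument the paper intends: the paper omits this proof with the remark that it is ``similar to the one of Theorem \ref{Roumieu-dual}'', and your substitution of ingredients (Theorem \ref{topology-Roumieu}, Propositions \ref{STFT-char-1} and \ref{LFS-2}, Theorem \ref{conv-average-Roumieu}, Theorem \ref{reg-Roumieu-1}, and the $\mathcal{V}$-density lemma) carries that adaptation out faithfully. You also correctly flag the two points where the transfer is not purely mechanical --- the dualization $L_b(E,F)\cong L_b(F'_b,E'_b)$ needed because $\mathcal{B}^{(M_p)}_{\mathcal{V}^\circ}(\R^d)$ is Fr\'echet--Schwartz rather than $(DFS)$, and the fact that the standing hypothesis \eqref{decay-weights-L1-1} makes $\mathcal{V}$ regularly decreasing so that conditions $(ii)$--$(v)$ of Theorem \ref{reg-Roumieu-1} are genuinely equivalent.
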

\begin{proof}
The proof is similar to the one of Theorem \ref{Roumieu-dual} and therefore omitted.
\end{proof}

The next two corollaries settle answers to the question posed after \cite[Thm.\ 3.3, p. 413]{D-P-P-V}.

\begin{corollary}\label{topology-OCD}
Let $M_p$ and $A_p$ be weight sequences satisfying our standard assumptions \eqref{group-cond}. Then, $\mathcal{O}_C'^{(M_p),(A_p)}(\R^d)$ is ultrabornological if $A_p$ satisfies $(M.2)^\ast$. If, in addition, $M_p$ satisfies $(M.1)$, $(M.2)$, and $(M.3)$, then $\mathcal{O}_C'^{(M_p),(A_p)}(\R^d)$ is ultrabornological if and only if $A_p$ satisfies $(M.2)^\ast$. 
\end{corollary}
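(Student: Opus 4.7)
The plan is to assemble this as a direct consequence of Theorem \ref{Beurling-dual} applied to the specific weight system $\mathcal{V} = \mathcal{V}_{(A_p)}$, combined with part $(v)$ of the unnamed lemma immediately preceding Corollary \ref{cor 1 complete special cases}, which characterizes when $\mathcal{V}_{(A_p)}$ satisfies $(\Omega)$. By the definition \eqref{OC-1}, we have $\mathcal{O}_C'^{(M_p),(A_p)}(\R^d) = \mathcal{B}'^{(M_p)}_{\mathcal{V}_{(A_p)}}(\R^d)$, so it suffices to check that all standing hypotheses of Section \ref{sect-duals} are satisfied for $\mathcal{V}_{(A_p)}$ and then apply those two results.

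First I would verify the setup. Since $A_p$ satisfies $(M.1)$ by \eqref{group-cond}, part $(i)$ of the lemma before Corollary \ref{cor 1 complete special cases} gives that $\mathcal{V}_{(A_p)}$ satisfies \eqref{decay-weights-1} and \eqref{translation-weak-1}; since $A_p$ additionally satisfies $(M.2)$, the paragraph introducing the admissibility conditions (just after the two definitions of $(A_p)$-admissible weight systems) gives that $\mathcal{V}_{(A_p)}$ satisfies \eqref{decay-weights-L1-1} and, as trivially $A_p \subset A_p$, is $(A_p)$-admissible. All standing assumptions of Section \ref{sect-duals} therefore hold.

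For the first (sufficient) direction, suppose $A_p$ satisfies $(M.2)^\ast$. Part $(v)$ of the lemma then yields that $\mathcal{V}_{(A_p)}$ satisfies $(\Omega)$. Applying the chain $(i) \Rightarrow (ii) \Rightarrow (iii)$ of Theorem \ref{Beurling-dual} with $\mathcal{V} = \mathcal{V}_{(A_p)}$, we conclude that $\mathcal{O}_C'^{(M_p),(A_p)}(\R^d)$ is ultrabornological.

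For the converse, assume further that $M_p$ satisfies $(M.1)$, $(M.2)$, and $(M.3)$, and that $\mathcal{O}_C'^{(M_p),(A_p)}(\R^d)$ is ultrabornological. Theorem \ref{Beurling-dual} gives the implications $(iii) \Rightarrow (iv) \Rightarrow (ii)$ unconditionally, and the extra implication $(ii) \Rightarrow (i)$ under the assumption on $M_p$; hence $\mathcal{V}_{(A_p)}$ satisfies $(\Omega)$. Applying part $(v)$ of the lemma in the reverse direction forces $A_p$ to satisfy $(M.2)^\ast$, completing the proof. There is no genuine obstacle here: the corollary is entirely a synthesis of previously established results, and the only thing to do carefully is to confirm that the standing framework of Section \ref{sect-duals} applies to the weight system at hand.
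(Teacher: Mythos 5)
Your proposal is correct and follows exactly the route the paper intends: the corollary is stated without proof as an immediate consequence of Theorem \ref{Beurling-dual} applied to $\mathcal{V}=\mathcal{V}_{(A_p)}$ together with part $(v)$ of the lemma preceding Corollary \ref{cor 1 complete special cases}, and your verification of the standing hypotheses of Section \ref{sect-duals} (admissibility and condition \eqref{decay-weights-L1-1} for $\mathcal{V}_{(A_p)}$) is the only nontrivial bookkeeping required.
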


Naturally, in view of  Theorem \ref{topology-Roumieu} and Theorem \ref{Beurling-dual}, the space $\mathcal{O}_C'^{(M_p),(A_p)}(\R^d)$ is ultrabornological if and only if the strong topology on $\mathcal{O}_C'^{(M_p),(A_p)}(\R^d)$ coincides with the initial topology with respect to the mapping 
$$
\mathcal{O}_C'^{(M_p),(A_p)}(\R^d)\to L_b(\mathcal{S}^{(M_p)}_{(A_p)}(\R^d),\mathcal{S}^{(M_p)}_{(A_p)}(\R^d)): \ f\to \ast_{f}.
$$
 On the other hand, by Remark \ref{rk incomplete}, the situation in the Roumieu case is completely different from that of Corollary 
\ref{topology-OCD}.

\begin{corollary}
\label{topology-OCD-Roumieu}
Let $M_p$ be a weight sequence satisfying $(M.1)$, $(M.2)$, and $(M.3)$, and let $A_p$ be a weight sequence satisfying $(M.1)$ and $(M.2)$. Then, $\mathcal{O}_C'^{\{M_p\},\{A_p\}}(\R^d)$ is not ultrabornological; in particular,  the strong topology on $\mathcal{O}_C'^{\{M_p\},\{A_p\}}(\R^d)$ is strictly finer than the initial topology with respect to the mapping 
$$\mathcal{O}_C'^{\{M_p\},\{A_p\}}(\R^d)\to L_b(\mathcal{S}^{\{M_p\}}_{\{A_p\}}(\R^d),\mathcal{S}^{\{M_p\}}_{\{A_p\}}(\R^d)): \ f\to \ast_{f}.$$
\end{corollary}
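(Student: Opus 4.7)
The plan is to deduce Corollary \ref{topology-OCD-Roumieu} as a direct consequence of the incompleteness statement Corollary \ref{cor 2 complete special cases} together with Theorem \ref{Roumieu-dual} applied to the increasing weight system $\mathcal{W}=\mathcal{W}_{\{A_p\}}$, and finally match the topology in Theorem \ref{topology-Beurling} with the one appearing in the statement.

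First, I would recall that under our hypotheses on $M_p$ and $A_p$ the weight system $\mathcal{W}_{\{A_p\}}$ satisfies the standing assumptions of Section \ref{sect-duals}: it is $\{A_p\}$-admissible and satisfies \eqref{decay-weights-L1} because $A_p$ satisfies $(M.1)$ and $(M.2)$. Thus Theorem \ref{Roumieu-dual} applies to $\mathcal{B}^{\{M_p\}}_{\mathcal{W}_{\{A_p\}}}(\R^d)=\mathcal{O}_C^{\{M_p\},\{A_p\}}(\R^d)$.

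Next, by Corollary \ref{cor 2 complete special cases}, $\mathcal{O}_C^{\{M_p\},\{A_p\}}(\R^d)$ is incomplete, so condition $(ii)$ in Theorem \ref{Roumieu-dual} fails (completeness is one of the equivalent conditions in Theorem \ref{reg-Beurling}). Since the cyclic chain $(ii)\Rightarrow(iii)\Rightarrow(iv)\Rightarrow(ii)$ in Theorem \ref{Roumieu-dual} makes these three conditions equivalent, condition $(iii)$ fails as well, i.e., $\mathcal{O}_C'^{\{M_p\},\{A_p\}}(\R^d)$ is not ultrabornological, and condition $(iv)$ fails, meaning the strong topology $b(\mathcal{B}'^{\{M_p\}}_{\mathcal{W}_{\{A_p\}}},\mathcal{B}^{\{M_p\}}_{\mathcal{W}_{\{A_p\}}})$ on $\mathcal{O}_C'^{\{M_p\},\{A_p\}}(\R^d)$ is strictly finer than the canonical topology with which we have equipped $\mathcal{B}'^{\{M_p\}}_{\mathcal{W}_{\{A_p\}}}(\R^d)$, namely the $bs$-topology.

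Finally, I would identify the latter $bs$-topology with the initial topology appearing in the corollary. By Theorem \ref{topology-Beurling}, the $bs$-topology coincides with the initial topology with respect to $\ast_f:\mathcal{S}^{\{M_p\}}_{\{A_p\}}(\R^d)\to \mathcal{B}^{\{M_p\}}_{\mathcal{W}_{\{A_p\}}^\circ}(\R^d)$. Since $\mathcal{W}_{\{A_p\}}^\circ=\mathcal{V}_{\{A_p\}}$, definition \eqref{GS-1} gives $\mathcal{B}^{\{M_p\}}_{\mathcal{W}_{\{A_p\}}^\circ}(\R^d)=\mathcal{S}^{\{M_p\}}_{\{A_p\}}(\R^d)$, so this initial topology is precisely the one described in the statement. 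The combination of these three ingredients yields the corollary. There is no real obstacle: the work has already been done in Corollary \ref{cor 2 complete special cases}, Theorem \ref{Roumieu-dual} and Theorem \ref{topology-Beurling}, and the only bookkeeping is the identification $\mathcal{W}_{\{A_p\}}^\circ=\mathcal{V}_{\{A_p\}}$.
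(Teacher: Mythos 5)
Your proposal is correct and follows exactly the route the paper intends: incompleteness of $\mathcal{O}_C^{\{M_p\},\{A_p\}}(\R^d)$ from Corollary \ref{cor 2 complete special cases} kills condition $(ii)$ of Theorem \ref{Roumieu-dual}, hence $(iii)$ and $(iv)$ fail, and the identification $\mathcal{W}_{\{A_p\}}^\circ=\mathcal{V}_{\{A_p\}}$ together with Theorem \ref{topology-Beurling} translates the failure of $(iv)$ into the stated strict comparison of topologies. No gaps.
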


\end{document}